\newtheorem{theorem}{Theorem}[section]
\newtheorem{lemma}[theorem]{Lemma}
\newtheorem{proposition}[theorem]{Proposition}
\newtheorem{corollary}[theorem]{Corollary}
\theoremstyle{definition}
\newtheorem{definition}[theorem]{Definition}
\newtheorem{remark}[theorem]{Remark}
\newtheorem{assumption}[theorem]{Assumption}
\newtheorem{resultado}{Theorem}
\newcommand\res{\mathop{\hbox{\vrule height 7pt width .3pt depth 0pt\vrule height .3pt width 5pt depth 0pt}}\nolimits}
\newcommand{\RR}{\mathbb{R}}
\newcommand{\CC}{\mathbb{C}}
\newcommand{\HH}{\mathcal{H}}
\newcommand{\dist}{\text{dist}}
\newcommand{\bB}{\mathbf{B}}
\def\a#1{\left\llbracket{#1}\right\rrbracket}
\numberwithin{equation}{section}
\title[Multiplicity $2$ essential boundary singularity of Allard-type]{An essential one sided boundary singularity for a $3$-dimensional area minimizing current in $\RR^5$}
\author[I. Fleschler]{Ian Fleschler}
\address{Department of Mathematics, Fine Hall, Princeton University, Washington Road, Princeton, NJ 08540, USA}
\email{imf@princeton.edu}
\begin{document}
    \begin{abstract}
We construct a $3$-dimensional area minimizing current $T$ in $\RR^5$ whose boundary contains a real analytic surface of multiplicity $2$ at which $T$ has a density $1$ essential boundary singularity with a flat tangent cone. This example shows that the boundary regularity theory we developed with Reinaldo Resende in another paper, which extends Allard’s classical boundary regularity result to higher boundary multiplicity, is dimensionally sharp.

The construction of $T$ relies on the prescription of boundary data with non-trivial topology, which makes it a flexible technique and gives rise to a wide family of singular examples. 

In order to understand the examples, we develop a boundary regularity theory for a class of area minimizing $m$-dimensional currents whose boundary consists of smooth $(m-1)$-dimensional surfaces with multiplicities meeting along an $(m-2)$-dimensional smooth submanifold.
\end{abstract}
\maketitle
\tableofcontents
\section{Introduction}

This is the final paper in a series of three —\cite{ian2024uniqueness}, \cite{ianreinaldo2024regularity}, and the present article — providing a \textit{sharp} answer to a long-standing open question posed in Allard’s 1969 PhD thesis \cite{AllPhD} on the boundary regularity of area-minimizing $m$-currents in arbitrary dimension, codimension, and boundary multiplicity, under a convexity assumption. Our results sharply extend Allard’s celebrated 1975 boundary regularity theorem \cite{AllB} to the much more delicate and previously unexplored case of higher boundary multiplicity in the minimizing setting. Before our work, even the existence of a single regular boundary point was not known in this context. 

In this paper, we construct a \textit{family} of sharp examples that demonstrates the dimensional sharpness of the regularity theory we obtain in \cite{ianreinaldo2024regularity}. The construction relies on considering certain area-minimizing currents whose boundary has nontrivial topology. For topological reasons, this forces an essential singularity at the boundary of such an area-minimizing current. This makes the methods presented in this paper for constructing essential boundary singularities extremely flexible, and notably, they do not rely on constructing a calibration.

As part of the study of these examples, we need to prove an excess decay-type theorem at points of ''corner type''. The boundaries considered here consist of smooth surfaces with multiplicity meeting along a smooth $m-2$ dimensional common interface and we get decay to the tangent cone at the points on this interface. Under a suitable ''multiplicity $1$'' assumption, every corner point is regular.

The methods behind the construction of the examples go \textit{beyond} demonstrating the sharpness of the boundary regularity theory: they highlight a conceptually new and flexible mechanism which combines topological constraints with structural regularity results, producing a family of area-minimizing currents with essential boundary singularities.

The problem of understanding regularity theory with higher multiplicity boundaries, in the precise form we study here, was first raised in Allard’s 1969 PhD thesis \cite{AllPhD}, and has remained open ever since. It was later highlighted in White’s famous list of open problems from the 1984 AMS Summer Institute in Geometric Measure Theory \cite[Problem~4.19]{GMT_prob}, and more recently in the ICM 2022 survey by De Lellis \cite{de2022regularity}.

For a short survey, which announces the results in this article together with \cite{ian2024uniqueness} and \cite{ianreinaldo2024regularity}, we refer the reader to the proceeding article \cite{ian2024allard}.

We will give a statement of the example provided in this paper. In order to do this, we must first define the notion of essential singularity.

\begin{definition}\label{d:essential}Let $T, \Gamma$ in $\bB_1 \subset \RR^{m+n}$ such that $T$ is an $m$-dimensional area-minimizing integral current, $\Gamma$ an $m-1$-dimensional submanifold of $\RR^{m+n}$ with $\partial T \res \bB_1= Q\a{\Gamma}$. We say that $0$ is an \textit{essential one-sided singularity} if $\Theta(T,0)=Q/2$ and there is no radius $\rho>0$ such that
\begin{equation}\label{eq:removable}
T \res \bB_{\rho}=\sum Q_i \a{\Sigma_i}
\end{equation}
where $\Sigma_i$ are smooth minimal surfaces in $\bB_{\rho}$ and $\partial \Sigma_i=\Gamma \cap \bB_{\rho}$ (but they are not required to be disjoint outside of the boundary or meet transversal at the boundary). 
\end{definition}

\begin{resultado}\label{t:exampleintro}
There exist a $3$-dimensional area-minimizing current $T$ in $\bB_1 \subset \mathbb{R}^5$ and a real analytic surface  $\Gamma$ such that $\partial T = 2\a{\Gamma}$, 
$\Theta(T,p) = 1$ for every $p \in \Gamma \cap \bB_1 $, and $0$ is an essential one-sided boundary singularity.
\end{resultado}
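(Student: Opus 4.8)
The plan is to build $\Gamma$ and $T$ by prescribing boundary data with a \emph{half-twist}, so that any minimizer is forced to be a twisted double cover near $0$ and the singularity is produced by a monodromy obstruction rather than by a calibration. First I would start from a $2$-dimensional model configuration inside a $4$-plane --- a smooth, real-analytic family of half-planes (or ``pages'') through a common $1$-dimensional axis --- and rotate it, as the axis moves, by a half-twist that uses the fifth coordinate, so that after winding once around the axis the two ``sides'' of the configuration have been interchanged. Carried out carefully (matching the rotation rate to the opening of the configuration and keeping every profile real analytic) this produces a connected real analytic surface $\Gamma \subset \bB_1 \subset \RR^5$, containing a real analytic curve $\gamma \ni 0$, which is of corner type in the sense of \cite{ianreinaldo2024regularity} away from $0$ and for which any integral current $T$ with $\partial T \res \bB_1 = 2\a{\Gamma}$ must, near $0$, close up only after passing to a (branched) double cover of a half-$3$-plane along $\gamma$. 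The fifth coordinate is exactly what leaves room for the half-twist while keeping the infinitesimal picture at $0$ flat, and the extra codimension is what permits a genuine branching of an area minimizer at all; this is the combination that makes the example match the dimensional sharpness of the theory in \cite{ianreinaldo2024regularity}.

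Next I would let $T$ be any area minimizing integral current in $\bB_1$ with $\partial T \res \bB_1 = 2\a{\Gamma}$, which exists by the standard compactness and lower-semicontinuity argument, so that no calibration is needed. At every point $p \in (\Gamma \cap \bB_1)\setminus\{0\}$ the ``multiplicity $1$'' corner hypothesis of \cite{ianreinaldo2024regularity} holds by construction, so the corner excess-decay theorem proved there applies and shows that $T$ is regular near $p$ with $\Theta(T,p) = 1$. By upper semicontinuity of the density this forces $\Theta(T,0) \ge 1$, while comparing $T$ inside small balls with an explicit Lipschitz competitor realizing the twisted double cover above, and rescaling, forces $\Theta(T,0) \le 1$; hence $\Theta(T,0) = 1$. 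Every tangent cone of $T$ at $0$ is then an area minimizing cone with boundary $2\a{T_0\Gamma}$ and density $1$, and a classification of such cones together with the branched structure of $T$ along $\gamma$ near $0$ identifies it with a half-$3$-plane taken with multiplicity $2$, which is flat. This gives the density identity on all of $\Gamma \cap \bB_1$ together with flatness of the tangent cone at $0$.

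The core of the argument, and the step I expect to be the main obstacle, is showing that $0$ is \emph{essential} in the sense of Definition~\ref{d:essenstial}: that there is no $\rho > 0$ with $T \res \bB_\rho = \sum_i Q_i \a{\Sigma_i}$ for smooth minimal surfaces $\Sigma_i$ with $\partial \Sigma_i = \Gamma \cap \bB_\rho$. Assuming such a decomposition, I would restrict to $\bB_\rho \setminus \gamma$, where by the previous step $T$ consists of two smooth multiplicity one sheets of the twisted cover; after discarding the pieces of the $\Sigma_i$ that do not touch $\Gamma$ and using $\Theta(T,0)=1$ to force each relevant $Q_i=1$ and each such $\Sigma_i$ transverse to $\gamma$ at $0$, the $\Sigma_i$ would provide a labelling of these two sheets that extends smoothly across $\gamma$, in particular across $0$; transporting this labelling around the loop in $\Gamma$ along which the construction interchanges the two sides yields a contradiction --- equivalently, the double cover of the half-$3$-plane along $\gamma$ admits no continuous section near $0$, so its support is not a finite union of graphs of smooth functions. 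Making this rigorous requires combining the blow-up classification at $0$ from the second step with a precise description of how the $\Sigma_i$ must meet $\gamma$, and ruling out the degenerate alternatives (an $\Sigma_i$ tangent to $\gamma$ at $0$, or with multiplicity $\ge 2$) using the minimality of $T$ and the density bound; this is exactly where the explicit half-twist in the construction of $\Gamma$ does the work, since it is what obstructs a smooth splitting while costing nothing in the tangent cone.
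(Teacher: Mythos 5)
Your proposal diverges from the paper's construction in several essential respects, and the differences are not merely cosmetic: as written, your scheme contains a genuine gap in the mechanism that is supposed to force the singularity.

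\textbf{Where the boundary data lives.} You propose a single connected real-analytic $\Gamma$ carrying an intrinsic half-twist and a curve $\gamma\ni 0$ "of corner type." In the paper the boundary data globally is $\partial T = 2\a{\Gamma^0}+\a{\Gamma^1}$, where $\Gamma^0$ is a real-analytic disk (the lower hemisphere of $\mathbb{S}^4\cap\RR^3$) taken with multiplicity $2$, and $\Gamma^1$ is a separate surface of multiplicity $1$ that is a Möbius strip near the corner $M=\partial\Gamma^0$. The real-analytic $\Gamma$ of Theorem~\ref{t:exampleintro} is just $\Gamma^0$ restricted to a small ball around the singular point, which sits in the \emph{interior} of $\Gamma^0$, away from $M$ and $\Gamma^1$. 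There is no connected twisted $\Gamma$ and no curve $\gamma$ passing through the singular point; in the small ball realizing the theorem, $\Gamma$ is a smooth embedded real-analytic disk with trivial normal bundle, and the corner structure is invisible. Your picture conflates the corner (where the topological data resides) with the singular point (where the topology forces the singularity to materialize).

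\textbf{The local structure at $0$.} You describe $T$ near $0$ as a "(branched) double cover of a half-$3$-plane along $\gamma$." But the tangent cone at the singular point is a flat half-$3$-plane with multiplicity $2$, and there is no branching locus through $0$: the "essential" character is precisely that $T\res\bB_\rho$ cannot be written as $\sum Q_i\a{\Sigma_i}$ for smooth minimal $\Sigma_i$ with $\partial\Sigma_i=\Gamma$ (Definition~\ref{d:essenstial}), even though the tangent cone is flat. A branched double cover over a half-plane branched along a boundary curve through $0$ would not produce a flat tangent cone with density exactly $1$ at $0$, so this part of your picture is inconsistent with the tangent-cone flatness that you correctly assert later.

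\textbf{The mechanism.} The paper's argument is a topological propagation, not a local monodromy: the corner regularity theory (Theorems~\ref{t:decomposition}, \ref{t:cornerholdernormal}, Corollary~\ref{c:wholeboundaryholder}) shows that the multivalued normal map $\eta:\Gamma^0\to\mathcal{A}_2(N\Gamma^0)$ extends Hölder continuously to $M$, where it is forced to equal the unit normals of the Möbius strip $\Gamma^1$ and hence trace out a topologically nontrivial multiplicity-$1$ loop; Proposition~\ref{p:essentialsingularity} then says a continuous $2$-valued map on a disk with such boundary values must have an interior essential singularity, and this pulls back to an essential one-sided boundary singularity for $T$. Your suggestion of "transporting a labelling around a loop in $\Gamma$" gestures at the same covering-space intuition, but without the corner regularity and the Hölder continuity of $\eta$ up to $M$ there is nothing to transport: near an arbitrary regular boundary point the two sheets of $T$ can meet tangentially and cannot be canonically labeled.

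\textbf{Density and convexity.} You say $\Theta(T,p)=1$ "by construction." This is not automatic: the paper places $\Gamma^0$ and $M$ on a uniformly convex barrier $\partial\Omega$ (Assumption~\ref{a:densityhypothesis}) and uses Lemma~\ref{l:convexbarrierlocal} together with the cone classifications (Theorem~\ref{thm:corneredconvex} at corner points, the open-book classification at smooth boundary points) to pin down all the densities and tangent cones. Without the convex barrier, $T$ could pass through $\Gamma^0$ and the densities would be wrong.

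In short, the idea that the example comes from topologically nontrivial boundary data, not from a calibration, is correct; but the realization you propose — a connected twisted $\Gamma$ with a branching locus through $0$ — does not match the paper's construction and, as stated, does not identify a coherent obstruction. The paper's argument requires (i) two distinct boundary pieces meeting at a smooth corner, (ii) a convex barrier to control densities and tangent cones, (iii) a corner regularity and Hölder-normal-map theory to transport the Möbius twist from $M$ to the interior of $\Gamma^0$, and (iv) the final topological proposition on multivalued maps on a disk. Steps (i)--(iii) are missing from your outline.
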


\subsection{Historical overview} 
The construction of singular examples of area-minimizing currents presents substantial challenges, typically requiring the construction of a calibration.

In codimension $1$, the proof of the minimality of the Simons cone in 1969 \cite{BDG} was a turning point in the history of regularity of area-minimizing currents since it provided the first example of a singular area minimizing integral current in codimension $1$. The proof was later simplified in \cite{de2009short}. In higher codimension, a large family of examples is given by complex varieties due to Federer \cite{Federer1965}, since they are automatically calibrated and thus area minimizing.

It is worth mentioning that recently Liu, in a series of papers, constructed examples of interior singularities of area-minimizing currents, of fractal type, of non-smoothable type, and examples of non-calibtrated area-minimizing currents,  in \cite{liu2021conjecture,liu2022homologically,liu2023homologicallyareaminimizingsurfacescalibrated}.

In the context of boundary regularity for area-minimizing currents, White in 1997 showed that in a parametric context there are no true boundary branch points for a 2-dimensional area-minimizing current with a real analytic boundary curve \cite{White97}. A true branch point is a critical point of the parameterization that is also a singular point for the area-minimizing current induced by the map. In that work, White posed the question of whether an area-minimizing current with a real analytic boundary has finite topology, which is equivalent to finiteness of the interior and boundary singular sets and to the area-minimizing current being a Douglas - Radò solution. In contrast, for a smooth boundary, Gulliver in 1991 \cite{Gulliver} provided an example of a true boundary branch point for a minimal surface (not minimizing) in $\RR^3$.

In \cite{Jonas2}, Hirsch shows an example of a holomorphic function vanishing to infinite order at the boundary in the half-space. This example is used in \cite{DDHM} to show that there is a two dimensional area-minimizing current whose boundary is a smooth (but not real analytic) curve whose boundary singular set has Hausdorff dimension $1$. Furthermore, in \cite{de2022nonclassical}, De Lellis, De Philipis and Hirsch provide an example of an area-minimizing current in a Riemannian manifold whose boundary is a smooth curve and the associated surface has infinite topology. These examples show that the behaviour when the boundary is smooth but not real analytic can be quite singular - unlike the regularity predicted by White's conjecture in the real analytic case.

\subsection{Brief description of the example} Theorem \ref{t:exampleintro} demonstrates the dimensional sharpness of the regularity theory developed with Reinaldo Resende in \cite{ianreinaldo2024regularity}. The example contains an essential boundary singularity and provides a counterexample to full Allard-type boundary regularity for a 3-dimensional area-minimizing current with boundary multiplicity 2. The essential singularities are analogous to the true branch points considered by White and Gulliver, but arise in the higher multiplicity setting of density 1, multiplicity 2 boundary points.

The example comes from a more general theorem, which roughly speaking says that if for $T$ an area-minimizing current and $\partial T$ looks like Figure \ref{img:boundaryexample}, then it has an essential boundary singularity. The defining features of the picture are that $\partial T$ consists of two pieces, one which is topologically a disk $\Gamma^0$ (taken with multiplicity $2$) and the other which is a Möbius strip $\Gamma^1$ which contains $\partial \Gamma^0$ and twists around it. There are additional, technical requirements which can be seen in Theorem \ref{t:examplegeneral}.

The theorem is a consequence of the Hölder continuity of the multivalued normal map, which we show to hold on the spherical cap in the picture up to its boundary given by the black wire. This relies on the multivalued Hölder estimate at smooth boundary points established in \cite{ian2024uniqueness} and employs, at the corner points (the black wire), Simon's method for the uniqueness of tangent cones. The Hölder continuity of the multivalued normal map in the spherical cap up to the boundary forces the map to agree at the black wire to the normal map induced by the Möbius strip. The non-trivial topology induced by the boundary data of this multivalued map generates an essential boundary singularity for the spherical cap away from the black wire.

\begin{figure}[H] 
    \centering 
    \includegraphics[width=0.7\linewidth]{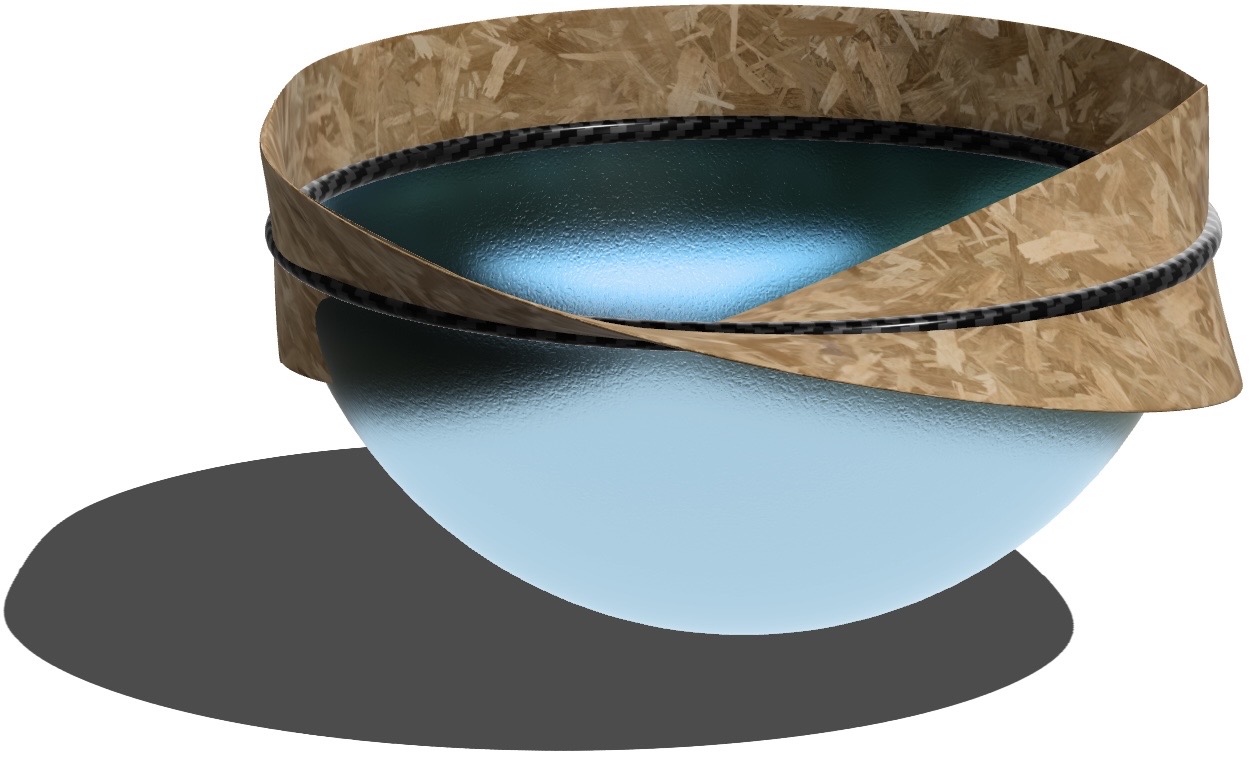}
    \caption{\small Boundary of example}
    \vspace{-10pt}
    \label{img:boundaryexample}
\end{figure}

See the following sketch of the proof for a more detailed discussion of the argument in the simplified setting of the linear problem.

\subsection{Sketch of the proof}
We explain the procedure we use to construct the example in the particular case of the linear problem. The linear problem consists of multi-valued functions $u$ which vanish at the boundary of a certain domain. We will work with multivalued functions, understood as in \cite{Alm} and \cite{DS1}.

\begin{assumption}[Linear Problem]
Given a Dir-minimizing function $u \in W^{1,2}(\Omega,A_{Q}(\RR^n))$ for an open set $\Omega \subset \RR^m$. The boundary linear problem consists of functions $u$ such that $u \res \partial \Omega \cap \bB_1=Q\a{0}$.
\end{assumption}

In the interior setting, a simple example of an essential singularity is the two-valued square root
$u \in W^{1,2}(\RR^2, \mathcal{A}_{2}(\RR^2))$ 

\begin{equation*}
u(z)=\sum_{w^2=z}\a{w}
\end{equation*}
where we identify $\mathbb{C}$ with $\RR^2$.

A natural singular candidate for the boundary linear problem that we would be interested whether it is energy minimizing is $f \in W^{1,2}(\RR^2 \times \RR^{+},\mathcal{A}_2(\RR^2))$ defined by  
\begin{equation*}
    f(z,t)=\sum_{w^2=z}\a{wt}
\end{equation*}
where we identify $\RR^2$ with $\mathbb{C}$ both in the domain and in the image.

This function can be shown to be stationary and it is unclear whether it is energy minimizing. Moreover, even if one were able to demonstrate, in a complicated way, that $f$ is energy minimizing, there seems to be no reason to expect that such a strategy would translate directly into the construction of boundary singularities for an area-minimizing current of the desired type. 

\textbf{Construction of the singular Dir-minimizer}

We consider the cylinder $\mathbf{C}:=\left\{z \in \RR^2: |z| \leq 1\right\} \times [0,1]$. We can just take $u$ a solution for the linear problem with boundary data $f$ as above. This is
\begin{equation*}
u \in W^{1,2}(\mathbf{C},\mathcal{A}_2(\RR^2)) \; \textup{Dir-minimizing with} \; u \res \partial \mathbf{C} =f \res \partial \mathbf{C}.
\end{equation*}

We might have $u=f$ if the function $f$ were energy minimizing. We can show that regardless of what $u$ is, it will have an essential boundary singularity at some non-corner boundary point (i.e., in $\left\{ |z|<1\right\} \times \left\{0\right\}$).

As we mentioned earlier, showing that $u=f$ or that $f$ is energy minimizing is a technically challenging question which would likely (if true) have a very rigid proof. On the other hand, the proof we provide of the singular behaviour is very flexible.

\textbf{The normal derivative as a multivalued map}

By Theorem 10.3 of \cite{ian2024uniqueness}, we know that there is a unique blowup for the linear problem.
This implies that there exists  a $2$-valued "normal derivative" map that is well defined at the boundary $\left\{|z|<1,t=0\right\}$. The map $\eta: \left\{|z|<1, t=0\right\} \rightarrow \mathcal{A}_2(\RR^2)$ turns out to locally be a Hölder continuous map in $\left\{|z|<1,t=0\right\}$, by the theory developed in \cite{ian2024uniqueness}.

The theory that we develop in this paper allows us to extend the Hölder continuity to the boundary of the domain where the multivalued normal derivative is defined. We show an excess decay type theorem at points in corners, which for the linear setting corresponds to understanding the rate of decay of $u$ at $\mathbb{S}^1 \times \left \{0\right\}$. Moreover, those points can shown to be regular points. This is, the function $u$ decouples in a neighborhood of $\mathbb{S}^1 \times \left \{0\right\}$ as two classical harmonic functions whose graphs do not intersect in the interior.

This allows us to extend the definition of $\eta$ to $\left \{|z| \leq 1 ,t=0\right\}$ in a Hölder continuous way. 

By definition, $u$ agrees with $f$ in $\partial C$. Furthermore, since $u$ is regular at $\mathbb{S}^1\times \left\{0\right\}$
\begin{equation*}
\eta(z)= \sum_{w^2=z} \a{w}.
\end{equation*}

\textbf{Topological existence of an essential singularity}
We concluded that $\eta$ is a continuous $2$-valued function on the unit disk $\left\{|z| \leq 1 ,t=0\right\}$, which at the boundary  $\left \{|z|=1 ,t=0\right\}$ must take the complex square root.  This implies that $\eta$ must have at least one essential singularity $p \in \left \{|z|<1 ,t=0\right\}$. The argument goes as follows: if all the points where either regular or non-essentially singular, then in a neighborhood of every point $\eta$ splits as two smooth functions. Since $\left \{|z| \leq 1, t=0\right\}$ is simply connected, we must thus have that $\eta$ splits globally on $\left \{ |z| \leq 1, t=0\right\}$ as two smooth graphs. This gives a contradiction to the fact that $\eta(\left\{|z|=1,t=0 \right\})$, as a subset of $\RR^5$, is a smooth connected curve with no-self intersections which is taken with multiplicity $1$. See Proposition \ref{p:essentialsingularity}.

A point $p$ at which $\eta$ has an essential singularity must be an essential boundary singularity for $u$. Indeed, if $p$ was not an essential singularity for $u$ then there exists $\rho$ such that
\begin{equation*}
    u \res \bB_{\rho}(p)=\a{u_1}+\a{u_2}
\end{equation*}
where $u_1$ and $u_2$ are two classical harmonic functions. This would imply that the normal derivative at the flat piece of the boundary of $\mathbf{C}$ (i.e., $\left \{(z,t) \in \mathbb{C} \times \RR:|z| \leq 1, t=0 \right\}$) can be written as
\begin{equation*}
\eta \res \bB_{\rho}(p)= \a{\frac{\partial}{\partial t}|_{t=0}u_1}+\a{\frac{\partial}{\partial t}|_{t=0}u_2}.
\end{equation*}
This contradicts the fact that $p$ is an essential singularity for $\eta$.

As we have seen, we only require topological information and this construction does not rely on a calibration. Indeed, these methods work on any sufficiently smooth boundary data which induces non-trivial topology.

\subsection{Higher multiplicity Allard boundary regularity}
We introduce the general type of assumption we considered in \cite{ian2024uniqueness} and \cite{ianreinaldo2024regularity}.
\begin{assumption}[Assumption $C^{k,\alpha}$]\label{A:general}
Let integers $m, n\geq 2$, $Q\geq 1$, $k \in \mathbb{Z}_{\geq 0} \cup \left\{\infty, \omega \right\}$ and $\alpha \in [0,1]$. Consider $\Gamma$ a $C^{k,\alpha}$ oriented $(m-1)$-submanifold of $\RR^{m+n}$ without boundary, and assume $0\in\Gamma$ (when $k=\omega$, $\Gamma$ is a real-analytic submanifold of $\RR^{m+n}$). Let $T$ be an integral $m$-dimensional area-minimizing current in $\bB_1$ with boundary $\partial T \res \bB_1=Q\a{\Gamma \cap \bB_1}$.
\end{assumption}
The convex barrier assumption is the following:
\begin{assumption}[Convex Barrier] \label{convexbarrier} Let $\Omega \subset \RR^{m+n}$ be a domain such that $\partial \Omega$ is a $C^2$ uniformly convex submanifold of $\RR^{m+n}$. We say that $\sum_{i=1}^N Q_i \a{\Gamma_i}$ has a convex barrier if $Q_i$ are positive integers and $\Gamma_i \subset \partial \Omega$ are disjoint $C^2$ closed oriented submanifolds of $\partial \Omega$. In this setting we consider $T$ an area-minimizing current with $\partial T=\sum_i Q_i \a{\Gamma_i}.$ 
\end{assumption}

Theorem \ref{t:exampleintro} shows that our regularity theory is dimensionally sharp and moreover that in this setting (i.e., density $Q/2$) the situation is not dimensionally better if we wish to study only essential singularities. Moreover, the situation does not get better with the assumption of a real analytic boundary. The theorem we obtained in \cite{ianreinaldo2024regularity} is the following:

\begin{theorem}[Rectifiability of the one-sided singular set \cite{ianreinaldo2024regularity}]\label{t:retifiability} Let $T$ be an area-minimizing current which satisfies Assumption \ref{A:general} with $k=3$, $\alpha>0$ and $m \geq 3$. Further assume that $\Theta(T,p)=Q/2$ for every $p \in \bB_1 \cap \Gamma$. The boundary singular set (the set of points on $\bB_1 \cap \Gamma$ which are not one-sided regular points) is $\HH^{m-3}$ rectifiable (countable if $m=3$). Furthermore, if $\partial T=\sum_{i=1}^N Q_i \a{\Gamma_i}$ has a convex barrier (i.e. it satisfies Assumption \ref{convexbarrier}), the boundary singular set is $\HH^{m-3}$ rectifiable (countable if $m=3$). 
\end{theorem}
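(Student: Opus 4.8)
The plan is to follow the by-now-standard architecture for rectifiability of singular sets of area-minimizing currents — Almgren's stratification, refined through Naber--Valtorta quantitative differentiation and the rectifiable-Reifenberg theorem — but carried out at the boundary and in the collapsed, density $Q/2$ regime. Because $\Theta(T,p)=Q/2$ at every $p\in\Gamma\cap\bB_1$, the Allard-type boundary monotonicity formula applies at all boundary points: tangent cones exist, are area-minimizing, have density $Q/2$, and the density is upper semicontinuous along $\Gamma$. Combining this with the one-sided blow-up analysis and the uniqueness of blow-ups (Theorem~10.3 of \cite{ian2024uniqueness}), one obtains at each boundary point a well-defined tangent cone together with an Almgren-type frequency function for the associated one-sided linear problem; transplanting this quantity to $T$ via a one-sided center manifold and its multi-valued graphical approximation produces an almost-monotone quantity on $T$ whose total variation across dyadic scales controls the closeness of $T$ to a cone at those scales.

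Next I would set up the stratification. Write $\Sigma$ for the set of points of $\Gamma\cap\bB_1$ that are not one-sided regular, and decompose $\Sigma=\bigcup_{j=0}^{m-2}\mathcal S^j$ according to the maximal dimension of the translation-invariance spine of tangent cones at the point. The crucial geometric input — and the step I expect to be the main obstacle — is the classification at the top stratum: if some tangent cone at $p\in\Sigma$ is $(m-2)$-symmetric, then $p$ must be one-sided regular, so that $\Sigma$ coincides with $\mathcal S^{m-3}$ up to an $\HH^{m-3}$-null set, and $\dim\mathcal S^{m-3}\le m-3$ by the usual Federer dimension-reduction. After splitting off the $(m-2)$ invariant directions this reduces to a genuinely two-dimensional one-sided problem, where the multi-valued Hölder estimate at smooth boundary points of \cite{ian2024uniqueness}, together with Simon's uniqueness-of-tangent-cones method at the (analytic, after rescaling) boundary, upgrades the blow-up to a decomposition into $Q$ classical minimal sheets. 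Making this reduction rigorous in the presence of branching and higher multiplicity — in particular controlling the errors of the center manifold and of the graphical approximation uniformly up to the boundary — is the technical heart of the argument.

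With the classification in hand I would run the quantitative scheme in the customary way: define the quantitative strata $\mathcal S^{j}_{\epsilon,r}$ (points near which no ball of radius at least $r$ is $\epsilon$-close to $(j+1)$-symmetric), prove a quantitative cone-splitting lemma (simultaneous closeness to a $j$-symmetric cone and to sufficiently many independent near-symmetries forces $(j+1)$-symmetry, via rigidity of the frequency function), and then establish the key Minkowski/packing estimate for $\mathcal S^{m-3}_{\epsilon,r}$ through a Naber--Valtorta covering: the summed frequency drop over a maximal $r$-separated family is bounded by the total drop between scales $1$ and $r$, while the $L^2$ best-approximation numbers are controlled by the same drop, and feeding these into the discrete rectifiable-Reifenberg theorem yields that the measure $\HH^{m-3}$ restricted to $\Sigma$ is locally finite and $\HH^{m-3}$-rectifiable; when $m=3$ this is precisely the assertion that $\Sigma$ is countable.

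Finally, for the convex-barrier formulation with $\partial T=\sum_{i=1}^N Q_i\a{\Gamma_i}$ satisfying Assumption~\ref{convexbarrier}: the convex barrier confines $T$ to one side of a supporting hyperplane at each boundary point, forcing every boundary point to be one-sided (its density cannot exceed half the local boundary multiplicity), so all of the above applies sheet by sheet away from the common interfaces of the $\Gamma_i$. On the interfaces one either absorbs them into the exceptional $\HH^{m-3}$-null set or, where an interface is $(m-2)$-dimensional and the points are of corner type, invokes the corner excess-decay theorem of this paper to obtain regularity there as well. I expect this last part to be essentially bookkeeping once the one-sided machinery is in place, the only genuinely new point being to verify that the barrier hypothesis passes to blow-ups.
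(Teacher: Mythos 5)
This theorem is imported verbatim from the companion paper \cite{ianreinaldo2024regularity} and is not proved in the present manuscript, so there is no in-paper proof to compare your proposal against. That said, two substantive remarks are in order.

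First, your high-level outline --- boundary monotonicity in the one-sided $Q/2$ regime, an Almgren-type frequency, a classification of $(m-2)$-symmetric tangent cones as regular (the $\varepsilon$-regularity step), and a Naber--Valtorta quantitative-stratification argument fed into rectifiable Reifenberg --- is a reasonable architecture for a theorem of this type, but you explicitly defer the technical heart of the matter (uniform boundary control of the center manifold and of the multivalued graphical approximation, and the precise rigidity of the frequency at the top stratum). At the level of a blind sketch that honestly flags the hard part this is acceptable, but it is not yet an argument.

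Second, your handling of the convex-barrier case contains a concrete error. Under Assumption~\ref{convexbarrier} the boundary pieces $\Gamma_i$ are required to be \emph{disjoint} closed submanifolds of the convex hypersurface $\partial\Omega$. There are therefore no common $(m-2)$-dimensional interfaces, no corner points, and no need to invoke the corner excess-decay theory developed in this paper (which is developed solely for the construction of the example, not for Theorem~\ref{t:retifiability}). The role of the barrier is exactly what you identify in passing and then abandon: by the wedge-confinement argument (cf.\ Lemma~\ref{l:convexbarrierlocal}) the tangent cone at any $p\in\Gamma_i$ is an open book of half-planes all lying to one side of $T_p\Gamma_i$, so the multiplicities cannot cancel, giving $\Theta(T,p)=Q_i/2$ automatically. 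This reduces the convex-barrier clause to the $\Theta=Q/2$ clause on each $\Gamma_i$ separately, which is precisely the ``bookkeeping'' you were after; the interface discussion in your last paragraph should be deleted.
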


In the context of $Q=1$ and dimension $m=2$, based on known examples of \cite{DDHM}, the situation with essential boundary singularities seems to be different with that of general boundary singularities. The general boundary singular set can be of Hausdorff dimension $1$, but those singularities are of removable type, in their context it means there exists $\rho>0$ such that
\begin{equation*}
T \res \bB_{\rho}= \a{\Sigma_1} +\a{\Sigma_2}
\end{equation*}
with $\Sigma_1$ and $\Sigma_2$ two smooth minimal surfaces with $\partial \a{\Sigma_1}=\a{\Gamma}$ and $\partial \a{\Sigma_2}=0.$ There is no known example in the literature of a smooth embedded loop $\Gamma \subset \RR^{2+n}$ and a $2$-dimensional area-minimizing current $T$ with $\partial T=\a{\Gamma}$ such that $T$ has infinitely many essential boundary singularities.

\subsection{Main theorems of this work}

In order to construct the example we develop a regularity theory for currents whose boundaries have corners.

\begin{assumption}\label{a:corneredboundarymonotonicity} Consider the following:
\begin{enumerate}[(i)]
\item Let $j \in \left\{0,1\right\}$, $N^j$ be two positive integers. Let $\Sigma$ be a $C^2$ ambient manifold in $\RR^{m+n}$.
\item Let $\Gamma_i^j$, $1 \leq i \leq N^j$ be $m-1$ dimensional $C^2$ submanifolds of $\Sigma$ such that $\partial \a{\Gamma_i^j}=(-1)^j\a{M}$ for $M$ a $m-2$ dimensional $C^2$ submanifold of $\Sigma$ with $0 \in M$. We further assume that $\Gamma_i^j$ are transversal to each other at $M$. We also denote $\mathbf{\Gamma}:=\cup_{i,j} \Gamma_i^j.$
\item Let $Q_i^j$ be positive integer multiplicities with  $\sum (-1)^jQ_i^j =0.$  We also denote $Q=\sum_i Q_i^0=\sum_i Q_i^1.$
\item Every point in $M$ is a good corner point, as defined in later in Definition \ref{d:goodcorner}. This is a technical condition that ensures a bound on the mass.
\end{enumerate}
The current $T$ is area minimizing in $\bB_1 \cap \Sigma$  with cornered boundaries $\Gamma_i^j$ if 
\begin{equation*}
    \partial T=\sum_{0 \leq j \leq 1} \sum_{1 \leq i \leq N^j} Q_i^j \a{\Gamma_i^j}.
\end{equation*}
\end{assumption}

The boundary pieces $\Gamma_i^0$ are those with positive orientation with respect to the orientation of $M$. The boundary pieces $\Gamma_i^1$ are those with negative orientation with respect to $M$.

We provide an example of a current taking the boundary data.
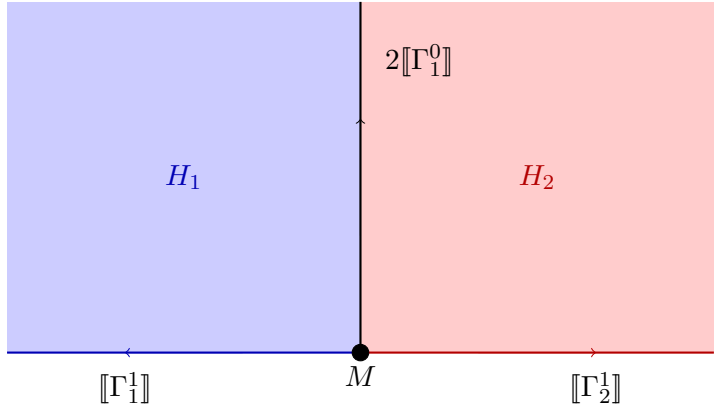
\begin{figure}[H]
    \centering
    \begin{minipage}{0.35\textwidth}  
        \raggedright  
        
        We consider $T=H_1+H_2$, where $H_1$ and $H_2$ are quarter pieces of a plane which are oriented such that:  
        \begin{equation*}
        \partial [\![ H_1 ]\!] = [\![\Gamma_1^0]\!]  - [\![ \Gamma_1^1 ]\!]
        \end{equation*}
        \begin{equation*}
        \partial [\![ H_2 ]\!] = [\![\Gamma_1^0]\!] - [\![ \Gamma_2^1 ]\!]
        \end{equation*}
        The boundary pieces $\Gamma_0^1$, $\Gamma_1^1$, \\ and $\Gamma_2^1$ are half-planes.\\
        $N^0=1,N^1=2$, $Q_1^0=2$, $Q_1^1=Q_2^1=1$.    \end{minipage}%
    \hfill  
    \begin{minipage}{0.65\textwidth}  
        \begin{tikzpicture}[scale=1.55] 
            \fill[fill=blue!20] (-3,0) rectangle (0,3); 
            \fill[fill=red!20] (0,0) rectangle (3,3); 
            \draw[thick, blue!70!black] (-3,0) -- (0,0); 
            \draw[thick, red!70!black] (0,0) -- (3,0); 
            \draw[thick, black] (0,0) -- (0,3); 
            \filldraw [black] (0,0) circle (2pt);
            \node[black] at (0, -0.2) {$M$}; 
            \draw[->, red!70!black] (1,0) -- (2,0); 
            \draw[->, blue!70!black] (-1,0) -- (-2,0); 
            \draw[->, black] (0,1) -- (0,2); 
            \node[black] at (0.5, 2.5) {$2[\![\Gamma_1^0]\!]$}; 
            \node[black] at (-2, -0.3) {$[\![ \Gamma_1^1 ]\!]$}; 
            \node[black] at (2, -0.3) {$[\![ \Gamma_2^1 ]\!]$}; 
            \node[blue!70!black] at (-1.5, 1.5) {$H_1$}; 
            \node[red!70!black] at (1.5, 1.5) {$H_2$}; 
        \end{tikzpicture}
    \end{minipage}
    \caption{Example of Corner}
    \label{img:nicecorneredopenbook}
\end{figure}

We simplify the notation of the main theorem for cornered 
boundaries. For the precise statements see Assumption \ref{a:corneredboundary} and  Theorem \ref{t:decomposition}
\begin{resultado}[Excess decay]
\label{r:excessdecay}We consider $T, \Gamma_{i,j},M, \Sigma$ as in Assumption \ref{a:corneredboundarymonotonicity}.
\begin{itemize}
    \item Assume for every $1 \leq i_0 \leq N^0$, $1 \leq i_1 \leq N^1$ we have $\Gamma_{i_0}^0$ and $\Gamma_{i_1}^1$ meet perpendicularly at $M$.
    \item Assume, in addition, that for any pair of surfaces with the same orientation $j=0,1$, $\Gamma_{i_0}^j$ and $\Gamma_{i_1}^j$, meet at $M$ with an angle which is uniformly bounded.
\end{itemize}  Then there is a unique tangent cone at every point of $M$ at which $T$ decays with a power rate.
\end{resultado}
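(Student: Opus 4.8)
The plan is to establish uniqueness of the tangent cone at a point $p \in M$ by combining a compactness/decay argument at the linear level with Simon's approach to uniqueness of tangent cones via a \L ojasiewicz–Simon inequality, adapted to the cornered boundary setting. First I would set up the normalization: center at $p$, rescale $T$ so that the multiplicity constraint $\sum_i (-1)^j Q_i^j = 0$ guarantees that some tangent cone $\mathbf{C}$ at $p$ is a superposition of half-planes (more precisely, a sum of $m$-dimensional half-spaces bounded by the $\Gamma_i^j$-tangents, glued along the tangent $(m-2)$-plane $T_pM$), using the monotonicity formula made available by Assumption \ref{a:corneredboundarymonotonicity}. The key structural input is that, by the perpendicularity hypothesis between the $j=0$ and $j=1$ families and the uniform angle bound within each family, the space of admissible tangent cones is a finite-dimensional real-analytic manifold (parametrized by the angles/positions of the half-planes subject to the cancellation condition), so the cone is \emph{integrable} in the sense of Simon.

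The main steps, in order, would be: (1) a \textbf{linearization} step — near a tangent cone $\mathbf{C}$, decompose $T$ (away from the spine $M$) as a multivalued graph over $\mathbf{C}$ with small $W^{1,2}$-Lipschitz norm, using the boundary regularity theory at the \emph{smooth} boundary points of $\Gamma_i^j$ (which lies outside $M$) from \cite{ian2024uniqueness}; here the multiplicities $Q_i^j$ and the cancellation condition are what allow the graph to close up consistently across the spine. (2) An \textbf{excess decay / reverse Poincaré} step at the linear level: for Dir-minimizing multivalued functions vanishing appropriately on the cornered boundary, one proves a dyadic decay $\mathbf{E}(T, \mathbf{B}_{r/2}) \le \tfrac12 \mathbf{E}(T,\mathbf{B}_r) + C r^{2\mu}$ of the excess relative to the \emph{best} cone at scale $r$, via a contradiction-and-blowup argument where the blowup limit is a Dir-minimizer on the cone with the corner boundary condition, whose possible homogeneous degree-$1$ blowups are exactly the tangent cones — so the decay gap comes from the spectral gap in the cross-section operator (this is where the uniform angle bounds are used to get a \emph{uniform} gap). (3) A \textbf{\L ojasiewicz–Simon inequality} on the cross-section: because the set of tangent cones is a real-analytic finite-dimensional manifold, the energy on $\mathbb{S}^{m-1} \cap \Sigma$ satisfies $|\mathcal{E}(\phi) - \mathcal{E}(\mathbf{C})|^{1-\theta} \le C\|\nabla_{\mathbb{S}}\mathcal{E}(\phi)\|$ for $\phi$ near a cross-section of $\mathbf{C}$. (4) Feed the \L ojasiewicz inequality together with the (almost-)monotonicity of the Almgren frequency / the mass ratio into the standard ODE argument to conclude that the ``cone at scale $r$'' is Cauchy as $r \to 0$, giving a unique tangent cone $\mathbf{C}_0$ with the quantitative rate $\|\,(\iota_{p,r})_\# T - \mathbf{C}_0\,\| \le C r^{\gamma}$ for some $\gamma = \gamma(\theta) > 0$, which is the asserted power-rate decay.

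The hard part will be step (2), and specifically obtaining the decay \emph{uniformly near the spine} $M$: away from $M$ one is at smooth boundary points and can invoke \cite{ian2024uniqueness}, but the whole point is the behavior at corner points, so the blowup argument must be carried out in a way that controls the multivalued graph \emph{up to and across} $M$, handling the fact that different sheets attach to different boundary pieces $\Gamma_i^j$ with different multiplicities. This requires a careful choice of the class of competitor functions (matching the cornered boundary data) and a Hardt–Simon-type boundary monotonicity at corner points to rule out mass concentrating on $M$; the perpendicularity assumption is essential here because it makes the cross-sectional boundary-value problem on $\mathbb{S}^{m-1} \cap \Sigma$ self-adjoint with a clean spectral gap, and the uniform-angle assumption within each family is what prevents the gap from degenerating as the cone varies. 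A secondary technical obstacle is verifying \emph{integrability} of the tangent cones rigorously — i.e. that every Jacobi field on $\mathbf{C}$ (solution of the linearized equation with the corner boundary condition) is generated by an actual one-parameter family of admissible cones — which is what licenses the real-analytic manifold structure needed for \L ojasiewicz–Simon; I would expect this to reduce to a finite-dimensional computation once the admissible cones are parametrized.
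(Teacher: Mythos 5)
Your proposal correctly identifies the \emph{general} Simon-machinery (\L ojasiewicz--Simon inequality, integrability, spectral gap) as one route to uniqueness of tangent cones with a rate, but it misses the structural feature that the cornered setup makes available and that the paper exploits to take a fundamentally simpler path.

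The key point you do not use: at a corner point $p \in M$, the admissible cornered open books (Definition \ref{d:corneredopenbookboundarydata}) form a \emph{finite}, hence discrete, set $\mathcal{C}_p$, not a positive-dimensional real-analytic manifold. Indeed $L=T_pM$ is fixed, each half-plane $V_i^j$ is forced to equal $T_p(\Gamma_{k(i,j)}^j)$, and the multiplicities $Q_i'$ are integers subject to the cancellation constraints — so the only freedom is the finite combinatorial choice of index assignments and multiplicity splittings. The paper explicitly flags this: ``the tangent cone is automatically unique by soft arguments since the boundary data does not give any degree of freedom.'' Consequently uniqueness follows from a one-paragraph connectedness argument (Lemma \ref{l:uniquetangent}): the map $r \mapsto (\mathbb{E}(T,C,\bB_r))_{C\in\mathcal{C}_p}$ is continuous, its set of subsequential limits as $r\to 0$ is connected, and the possible limit values $(\mathbb{E}(C,C',\bB_1))_{C\in\mathcal{C}_p}$ form a finite set, hence a single point. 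No \L ojasiewicz--Simon inequality and no integrability discussion are needed for uniqueness.

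Given uniqueness, the power-rate decay is obtained not by a \L ojasiewicz ODE but by a compactness/contradiction argument against the \emph{fixed} unique tangent cone (Lemma \ref{l:excessdecay}): one takes a blowup sequence $T_k$ as in Assumption \ref{a:blowupsequence}, builds Lipschitz approximations $u_i^k$ over the quadrants $H_i$, uses the cornered-boundary Simon-type non-concentration estimates (Lemma \ref{l:simonnonconcentration} and Theorem \ref{t:nonconcentration}) to obtain strong $W^{1,2}$ convergence of the rescaled approximations, and passes to a Dir-minimizing blowup $\bar u_i$ on the quarter-space that vanishes on $V_{l(i)}^0 \cup V_{l(i)}^1$. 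The decay gap then comes from the fact that any homogeneous Dir-minimizer on the quarter plane with zero corner boundary data has frequency at least $2$ (Proposition \ref{p:heightdecay} and the lemma preceding it), giving a height decay strictly faster than $r^m\cdot r^2$; the reverse excess bound (Lemma \ref{l:reverseexcessbound}) converts this back into decay of $\mathbb{E}$. The only place Simon's technology enters is the non-concentration estimates needed to make the compactness argument work near the spine $M$; the \L ojasiewicz inequality itself plays no role. Your approach, if carried out rigorously, would eventually produce the conclusion, but at the cost of verifying an integrability/\L ojasiewicz structure that is both harder to establish and unnecessary here — and as written, your claim that the admissible cones form a positive-dimensional real-analytic manifold ``parametrized by the angles/positions of the half-planes'' is false in this setting, since those tangent directions are all pinned by the boundary data at the corner.

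Two smaller remarks. First, you should be careful asserting that the cross-sectional problem becomes ``self-adjoint with a clean spectral gap'' from the perpendicularity hypothesis alone; the paper uses perpendicularity to reduce, via iterated cylindrical blowups, to the $2$-dimensional quarter-plane problem where the relevant eigenvalue computation is explicit (frequency $\geq 2$ with equality only on $\cos(2\theta)$-type solutions). Second, the decay you ultimately need to propagate is against a \emph{fixed} cone $C$, not the ``best cone at scale $r$''; handling a moving best cone is precisely what you would have to do in the $\cite{ian2024uniqueness}$ setting where the spine can rotate, but at a corner it is unnecessary, and Lemma \ref{l:convergencetangentcone} only needs to track the small change when moving the base point $p\to q$ along $M$, controlled by $\mathbf{A}_\Gamma$.
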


\begin{resultado}[Regularity in the multiplicity $1$ case]  Assume that we are in the hypothesis of Theorem \ref{r:excessdecay}. Additionally assume that there is only one classical boundary piece with positive orientation and all the sheets with negative orientation have multiplicity $1$ (i.e., $N^0=1$, $N^1=N$, $Q_1^0=N$, $Q_1^1=Q_2^1=...=1$).

Then $T$ is regular along $M$: for every $p \in M$ there exists $\rho$ such that 
\begin{equation*}
T \res \bB_{\rho}(p)=\sum_{i=1}^N\a{\Sigma_i}
\end{equation*}
where $\Sigma_i$ are minimal surfaces, which intersect only at $M$ and $\partial \a{\Sigma_i}=\a{\Gamma_0^0}-\a{\Gamma_i^1}$. The minimal surface $\Sigma_i$ is smooth away from $M$ and attaches smoothly to $\Gamma_0^0$ and $\Gamma_i^1$.
\end{resultado}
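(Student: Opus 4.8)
The plan is to combine the excess decay statement of Theorem~\ref{r:excessdecay} with the multiplicity~$1$ hypothesis. Theorem~\ref{r:excessdecay} gives, at every $p\in M$, a unique tangent cone $\mathbf C_p$ and a power-rate bound $\mathbf E(T,\mathbf C_p,\bB_r(p))\le C\,r^{2\mu}$ for some $\mu>0$ and all small $r$; the multiplicity~$1$ hypothesis is what forces $\mathbf C_p$ to be ``flat'' and then turns this decay into a genuine graphical decomposition. First I would identify $\mathbf C_p$. It is an area minimizing cone, invariant along $T_pM$, with $\partial\mathbf C_p = N\a{T_p\Gamma_1^0} - \sum_{i=1}^N\a{T_p\Gamma_i^1}$; since $\Gamma_1^0$ meets each $\Gamma_i^1$ perpendicularly along $M$ and the $\Gamma_i^1$ are pairwise transversal, the constraint $\sum(-1)^jQ_i^j=0$ together with minimality leaves only one possibility: $\mathbf C_p=\sum_{i=1}^N\a{\mathbf Q_i}$, where $\mathbf Q_i=\Pi_i\times T_pM$ with $\Pi_i\subset (T_pM)^\perp$ a two-dimensional right-angle sector whose boundary rays are the tangent rays of $\Gamma_1^0$ and $\Gamma_i^1$, and the $\mathbf Q_i$ pairwise distinct. (Here one uses that a negative boundary piece of multiplicity $1$ cannot support any cancelling or higher-multiplicity sheet — Allard's boundary theorem applies near $T_p\Gamma_i^1$ away from the spine — and that each positive sheet must then close up with exactly one negative sheet; the opening angle is $\pi/2$ by perpendicularity, so $\Theta(T,p)=N/4$.)

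Next, using the power decay together with this description, I would produce $\rho>0$ with
\[
T\res\bB_\rho(p)=\sum_{i=1}^N\a{\Sigma_i},
\]
where $\Sigma_i\cap\bB_\rho(p)$ is a $C^{1,\mu}$-graph over $\mathbf Q_i$ vanishing along $T_pM$, $\Sigma_i\cap\Sigma_j=M\cap\bB_\rho(p)$ for $i\ne j$, and $\partial\a{\Sigma_i}=\a{\Gamma_1^0\cap\bB_\rho(p)}-\a{\Gamma_i^1\cap\bB_\rho(p)}$. Away from a narrow cone around $M$ the current $T$ already splits into $N$ well-separated multiplicity-$1$ sheets, because the sectors $\Pi_i$ point in uniformly distinct directions; there the decomposition exists by the sheeting/constancy theorem. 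The content is to show these pieces close up consistently across $M$, and for this I would run a continuity argument along $M$: by Theorem~\ref{r:excessdecay} every nearby point of $M$ carries a decomposed tangent cone of the same type, so the number of sheets and their boundary assignments are locally constant; the multiplicity-$1$ hypothesis forces each $\Gamma_i^1$ to be taken up in full by a single $\Sigma_i$, while the positive piece $\Gamma_1^0$ (multiplicity $N$) distributes exactly one unit to each $\Sigma_i$. This gives the stated decomposition, and each $\Sigma_i$ is then area minimizing in $\bB_\rho(p)$ on its own.

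Each $\Sigma_i$ now satisfies the hypotheses of the theorem with $N^0=N^1=1$ and all multiplicities $1$: its boundary is the pair of smooth surfaces $\Gamma_1^0,\Gamma_i^1$ meeting transversally along $M$, and by construction it has the unique multiplicity-$1$ tangent cone $\mathbf Q_i$ at every point of $M$, with power-rate decay. From that decay, the usual harmonic/Lipschitz approximation and iteration upgrade $\Sigma_i$ to a $C^{1,\mu}$ graph over $\mathbf Q_i$ up to $M$. Away from $M$, $\Sigma_i$ is a multiplicity-$1$ area minimizing current either in the interior or along the smooth pieces $\Gamma_1^0\setminus M$, $\Gamma_i^1\setminus M$, so interior regularity and Allard's boundary regularity theorem make it a smooth embedded minimal surface attaching smoothly to $\Gamma_1^0$ and $\Gamma_i^1$. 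Patching over $p\in M$ gives the conclusion.

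The hard part will be the decomposition step: converting all-scales closeness to $\sum_i\a{\mathbf Q_i}$ into a genuine splitting $T\res\bB_\rho(p)=\sum_i\a{\Sigma_i}$ valid up to and across the spine $M$, where all $N$ sheets coincide. One must show that the multiplicity-$1$ assumption on the negative pieces (with a single positive piece of multiplicity $N$) rules out branching between sheets or the emergence of higher-multiplicity behavior along $M$, so that each $\Sigma_i$ inherits precisely the $N=1$ corner structure; the perpendicularity and bounded-angle hypotheses of Theorem~\ref{r:excessdecay} are exactly what keep the $N$ sectors uniformly separated away from $M$ and make this quantitative. The classification of $\mathbf C_p$ in the first step is a secondary point of the same flavor.
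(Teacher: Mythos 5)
Your proposal follows essentially the same strategy as the paper's proof of Theorem~\ref{t:decomposition}, from which this statement is derived: identify the unique cornered open book tangent cone $C_p=\sum_i\a{H_i}$ using the multiplicity-$1$ constraints, use the power-rate excess decay (Lemma~\ref{l:convergencetangentcone}) to obtain uniform smallness of the excess at every scale and every nearby corner point, split $T$ into the $N$ pieces $T_i$ via a splitting lemma, and finally apply Allard's interior and boundary $\varepsilon$-regularity to each multiplicity-$1$ piece away from $M$.

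The two places where you diverge from, or are vaguer than, the paper are the following. First, where you "run a continuity argument along $M$" to glue the local splittings, the paper does this concretely: it builds a Whitney-type cubical decomposition of a tubular neighborhood of $M$ out of dyadic cubes on $L=T_p M$ pushed forward by a bi-Lipschitz parametrization, uses the decay estimate~\eqref{e:excessbound} to verify the hypotheses of Lemma~\ref{l:splitting} on each region $2R(P)$, and then patches the resulting decompositions over overlapping Whitney regions; the continuity of $p\mapsto C_p$ and the uniqueness of the admissible cornered open book in the multiplicity-$1$ case is exactly what makes the multiplicities match across overlaps, so your instinct is right but the machinery behind it is the cube decomposition plus the separation lemma. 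Second, your claimed upgrade to a $C^{1,\mu}$ graph over $\mathbf{Q}_i$ up to $M$ is more than the statement asks for and more than the paper proves: the paper only establishes smoothness of each $\Sigma_i$ away from $M$ and smooth attachment to $\Gamma_1^0$ and $\Gamma_i^1$, by covering each Whitney region with dimensionally-many balls of comparable radius and invoking Allard's interior regularity on the interior balls and Allard's boundary regularity on the balls centered along the smooth boundary pieces. The $C^{1,\mu}$-up-to-the-corner regularity is plausible given the power excess decay, but it requires a separate $\varepsilon$-regularity argument at the corner that the paper does not carry out and does not need for this theorem.
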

These assumptions are not restrictive, as the theorems above suffice for our applications. Given that we have a decomposition theorem, the regularity theory with higher multiplicity is equivalent to the case when $N^0=1$, $N^1=1$, $Q^0=Q^1=Q$. It would be reasonable to expect the same type of regularity theory we developed in \cite{ianreinaldo2024regularity} to apply for this setting, using the same methods. Assuming the same boundary and ambient manifold smoothness which allows us to run the regularity theory in \cite{ianreinaldo2024regularity}, one would expect the singular set of $T$ along $M$ to be $\HH^{m-3}$ rectifiable.

As a corollary, we get the following theorem which holds for a flexible family of boundaries
\begin{resultado}\label{t:examplegeneral}
Let $\Omega$ be a $C^2$ uniformly convex open set. 
\begin{itemize}
\item Let $M \subset \partial \Omega$ be a $C^2$ simple closed curve.
\item Let $\Gamma^0$ be a $C^2$ surface with $\partial \a{\Gamma^0}=M$ diffeomorphic to a $2$ dimensional disk.
\item Let $\Gamma^1 \subset \partial \Omega$ be an integral $2$ dimensional current. We assume that $M \subset \textup{spt}(\Gamma^1)$ and in a neighborhood $U$ of $M$, $U \cap \textup{spt}(\Gamma^1)$ is a $C^2$ surface diffeomorphic to a Möbius strip which twists once around $M$. Indeed, $U \cap \textup{spt}(\Gamma^1) \setminus M$ will be connected. We assign multiplicity $1$ to $\textup{spt}(\Gamma^1)$ in $U$. We suppose that $\Gamma^1$ is oriented such that $\partial \a{\Gamma^1}=-2\a{M}$. The surface $U \cap\Gamma^1$ is a smooth surface whose orientation switches across $M$. 
\end{itemize}
Let $T$ be an area-minimizing current with $\partial T=2\a{\Gamma^0}+\a{\Gamma^1}$. Then $T$ is regular in a neighborhood of $M$ and has an essential boundary singularity at some point of $\Gamma^0 \setminus M$. 
\end{resultado}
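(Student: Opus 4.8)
The plan is to mimic the linear sketch provided in the introduction, transferring the topological argument to the nonlinear minimal surface setting via the results of the companion papers. First I would set up the \emph{multivalued normal map} on $\Gamma^0$. Fix $p \in M$. By Theorem~\ref{r:excessdecay} applied to $T$ (with $N^0=1$, $Q_1^0=2$, the Möbius pieces as the two negatively oriented sheets each of multiplicity $1$, and using that $\Gamma^0 \perp \Gamma^1$ at $M$ and the two Möbius halves meet at a bounded angle), $T$ has a unique tangent cone at $p$ with a power rate of decay; by the multiplicity $1$ regularity result that follows, $T$ actually decouples near $M$ into two smooth minimal sheets $\Sigma_1, \Sigma_2$ with $\partial \a{\Sigma_i} = \a{\Gamma^0} - \a{\Gamma^1_i}$ attaching smoothly to $\Gamma^0$ and to the respective Möbius half. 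This gives the first conclusion — regularity in a neighborhood of $M$ — and, crucially, shows that near $M$ the ``normal derivative'' of $T$ along $\Gamma^0$ is a well-defined $2$-valued map whose two values are the two unit normals to $\Sigma_1$ and $\Sigma_2$ pointing out of $\Gamma^0$; because $\Gamma^1$ is a Möbius strip, going once around $M$ interchanges these two values, so this boundary map is topologically the two-valued square root.

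Next I would extend this $2$-valued map to the interior of $\Gamma^0$. Define $\eta\colon \Gamma^0 \to \mathcal{A}_2(\text{tangent data})$ by assigning to each $q \in \Gamma^0 \setminus M$, if $q$ is a one-sided regular point or a density $1$ essential-singularity-free point, the unordered pair of normal derivatives of the (finitely many, with multiplicity) smooth sheets of $T$ emanating from $q$; near $M$ this agrees with the map just constructed. The Hölder continuity of $\eta$ on the relative interior where it is defined follows from the multivalued Hölder estimate at smooth boundary points of \cite{ian2024uniqueness}, exactly as in the linear sketch; the content of the present paper's corner theory is that this Hölder continuity persists \emph{up to} $M$. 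Thus, \emph{provided} $\eta$ were defined and continuous on all of $\Gamma^0$ (a closed $2$-disk), we would have a continuous $2$-valued map on a disk whose restriction to the boundary circle $M$ is (topologically) $z \mapsto \sum_{w^2=z}\a{w}$, which is impossible — this is the content of Proposition~\ref{p:essentialsingularity}. Hence $\eta$ must \emph{fail} to extend continuously at some interior point $p_0 \in \Gamma^0 \setminus M$.

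Finally I would argue that such a failure point $p_0$ is an essential one-sided boundary singularity of $T$ in the sense of Definition~\ref{d:essenstial}. This is the contrapositive step from the sketch: if $p_0$ were \emph{not} essential, there would be $\rho>0$ with $T \res \bB_\rho(p_0) = \sum Q_i \a{\Sigma_i}$ for smooth minimal surfaces $\Sigma_i$ with $\partial \a{\Sigma_i} = \a{\Gamma^0 \cap \bB_\rho(p_0)}$; taking the (with multiplicity) normal derivatives of these finitely many sheets along $\Gamma^0$ would produce a continuous $Q$-valued extension of $\eta$ near $p_0$, hence — after checking the count is $2$, which follows from $\Theta(T,p_0) = Q/2 = 1$ and the decomposition — a continuous $2$-valued extension, contradicting the definition of $p_0$. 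One has to be slightly careful that $\eta$ as defined from the decomposition genuinely matches the blow-up normal map; this is where Theorem~10.3 of \cite{ian2024uniqueness} (uniqueness of blow-up) is used, as in the linear discussion. The convexity of $\Omega$ together with Assumption~\ref{convexbarrier} is invoked to guarantee $\Theta(T,\cdot) = 1$ on $\Gamma^0$ (via the convex barrier version of the density lower bound) and to rule out the boundary current ``escaping'' the region where the theory applies.

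The main obstacle I anticipate is the \emph{globalization of $\eta$ and the verification that the two-valuedness is exactly what Proposition~\ref{p:essentialsingularity} needs}: one must check that away from $M$ the density is identically $1$ so that the relevant multivalued structure is $\mathcal{A}_2$ (not $\mathcal{A}_Q$ with $Q$ varying), that the normal map is well-defined and single- or two-valued on all of $\Gamma^0 \setminus (\text{singular set})$, and — the genuinely delicate point — that continuity of $\eta$ on a full neighborhood of every non-essential point, combined with its prescribed boundary monodromy around $M$, forces a continuous global section over the disk minus a point in a way that contradicts the square-root monodromy. Matching up the orientation/monodromy bookkeeping of the Möbius strip $\Gamma^1$ with the sheet-swap of the two-valued normal map, and ensuring the corner regularity theorem's hypotheses (perpendicular meeting, bounded angle between same-orientation sheets, the monotonicity condition of Assumption~\ref{a:corneredboundarymonotonicity}) are genuinely met by the given $\Gamma^0, \Gamma^1$, is where the bulk of the technical care will go.
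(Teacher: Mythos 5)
Your proposal follows the same architecture as the paper: control densities and tangent cones at $M$ via the convex barrier (the paper does this in Theorem~\ref{t:densityexample}, using the cornered-cone classification Theorem~\ref{thm:corneredconvex} at corner points and Theorem~4.8 of \cite{ian2024uniqueness} at classical boundary points), decouple $T$ near $M$ through Theorem~\ref{t:decomposition}, establish H\"older continuity of the multivalued normal $\eta$ up to $M$ (Corollary~\ref{c:wholeboundaryholder}), read off the M\"obius monodromy on $M$, and close with Proposition~\ref{p:essentialsingularity} and the contrapositive step. The structure is the paper's.

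There is, however, one step that is false as written and needs to be reformulated. You conclude that ``$\eta$ must \emph{fail} to extend continuously at some interior point $p_0$,'' reading Proposition~\ref{p:essentialsingularity} as saying that a continuous $2$-valued map on the disk with square-root boundary data cannot exist. It can: the $2$-valued square root $z\mapsto\sum_{w^2=z}\a{w}$ is continuous on the whole disk. The paper defines $\eta$ at \emph{every} $p\in\Gamma^0$ through the unique tangent cone (Definition~\ref{d:multivaluednormalmap}, with uniqueness from Theorem~10.3 of \cite{ian2024uniqueness}) and proves, via Corollary~\ref{c:wholeboundaryholder}, that $\eta$ is H\"older continuous as an $\mathcal{A}_2$-valued map on all of $\Gamma^0\cup M$, including at whatever essential singularity exists. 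What Proposition~\ref{p:essentialsingularity} produces is a point $p_0$ at which no local continuous \emph{selection} of $\eta$ into two single-valued branches exists. It is this selection obstruction, not a continuity failure, that the local decomposition $T\res\bB_\rho(p_0)=\sum Q_i\a{\Sigma_i}$ (which would hold if $p_0$ were not an essential boundary singularity) contradicts: the normal derivatives of the individual smooth sheets $\Sigma_i$ furnish precisely such a local selection. Your closing paragraph already hints at the right notion (``forces a continuous global section''), but the main body of the argument must replace the continuity-failure framing with the selection-obstruction framing, or the contradiction you invoke simply does not arise.
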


We show that, at the disk $\Gamma^0$, we can define a $2$-valued normal map, induced by the unique tangent cone to $T$. The normal map, defined at $\Gamma^0$, is Hölder all the way to the boundary $M$. At the boundary $M$, the normal map is given by the tangent to $\Gamma^1$ at $M$, which is topologically non-trivial since $\textup{spt}(\Gamma^1)$ is a Möbius strip in a neighborhood of $M$. The topological non-triviality implies, as we saw in the linear problem, that there is an essential singularity at some interior point of $\Gamma^0$.

We note that $\Gamma^0$ and $\Gamma^1$ are required to lie on a convex barrier in order to be able to apply the theorems of \cite{ian2024uniqueness} as well as the corresponding analogues at points of $M$ established in this paper.

\subsection{Acknowledgments} 
The author is extremely grateful to his PhD advisor, Professor Camillo De Lellis, for his outstanding mentorship and extraordinary support throughout this project. The author also wishes to express gratitude to Reinaldo Resende for providing detailed feedback on the manuscript. 
\subsection{Basic definitions}
Throughout the paper, we use the notation '$\lesssim$' to denote inequality up to a multiplicative constant, which may depend only on $m$, $n$, $\overline{n}$, and $Q$. We also use $C(\alpha_1,...,\alpha_k)$ to denote a constant which depends on $\alpha_1,...,\alpha_k$.

We define $\bB_{r}(p) := \{x \in \RR^{m+n} : |x - p| < r\}$. We typically use $V$ for $m-1$ dimensional planes. We use $L := \RR^{m-2} \times \{0\} \subset \RR^{m+n}$ as the spine of our cones. For a plane $\pi$, let $\mathbf{p}_{\pi}$ denote the projection onto $\pi$. For any $\varepsilon > 0$, we use $B_{\varepsilon}(L)$ to represent the $\varepsilon$-neighborhood of $L$, that is, $B_{\varepsilon}(l) = \{x \in \RR^{m+n} : \dist(x, L) < \varepsilon\}$. 

We assume each $k$-dimensional linear subspace \(\pi\) of $\RR^{m+n}$ is oriented by a \(k\)-vector \(\vec{\pi} = v_1 \wedge \cdots \wedge v_k\), where \((v_i)_{i=1}^k\) forms an orthonormal basis for $\pi$. Additionally, we adopt the notation \(\left|\pi_2 - \pi_1\right|\) for \(\left|\vec{\pi}_2 - \vec{\pi}_1\right|\), where \(|\cdot|\) represents the norm tied to the standard inner product of $k$-vectors. The $k$-dimensional Hausdorff measure on $\RR^{m+n}$ is written as $\HH^k$. We denote $\mathcal{A}_Q(\RR^{m+n})$  the $Q$-tuples in $\RR^{m+n}$ for some integer $Q \geq 1$. We denote by $\mathcal{G}(p_1,p_2)$, the Wasserstein distance between a pair of points $p_1, p_2$ on the space $\mathcal{A}_Q(\RR^{m+n})$. For more on multi-valued functions, see \cite{DS1}. We denote by $\mathbf{A}_M$ the $C^0$-norm of the second fundamental form of a smooth submanifold $M$ of $\RR^{m+n}$. We also denote under Assumption \ref{a:corneredboundarymonotonicity}, $\mathbf{A_{\Gamma}}:=\max_{i,j}\mathbf{A}_{\Gamma_i^j}$. 

For further basic definitions and standard notations, we refer to \cite{Fed} and the survey \cite{delellis2015size}.

We will use $\mathbf{E}$ to denote the $L^2$ excess, and local versions which we introduce later. The $L^2$ excess relative to an open book $C$ or cornered open book is defined as
\begin{equation}
    \mathbf{E}(T, C, \bB_r(p)) := \frac{1}{r^{m+2}} \int_{\bB_r(p)} \dist(x, C)^2 d|\!|T|\!|(x).
\end{equation}
We will use $\mathbb{E}$ for a suitable measure-theoretic excess, which will be defined later in equation \eqref{e:strongexcess}. Finally, we introduce the circular projection $\pi_{\circ} : \RR^{m+n} \to \RR^{m}$, defined as    $\pi_{\circ}(x, y) = (x, |y|)$ for $(x, y) \in \RR^{m-1} \times \RR^{n+1}$.
\begin{definition}\label{D:AreaMin}
An $m$-dimensional integral current $T$ in $U$ with $\textup{spt}(T)\subset\Sigma$ is \emph{area minimizing in $\Sigma\cap U$}, if it holds that $\mathbf{M}(T) \leq \mathbf{M}(T+\partial \tilde{T})$, for all $\tilde{T}$ $(m+1)$-dimensional integral current in $U$ with $\textup{spt}(\tilde{T})\subset\Sigma\cap U$. 
\end{definition}
\subsubsection{Cones}
We recall the definition of an open book and introduce their angle:
\begin{definition}
A cone $C$ is an \textit{open book} if it can be written as
 $C=\sum_{i=1}^N Q_i\a{H_i}$ where $H_i$ are distinct half-planes with  $\partial\a{H_i}=\a{V}$
where $V$ is an $m-1$ dimensional plane, referred to as the spine of the cone. We define the minimum angle of an open book as
\begin{equation*}
\alpha(C):=\min_{i \neq j}
\langle H_i,H_j \rangle
\end{equation*}
where $\alpha(C):=1$ if $C=Q\a{H}$ for $H$ a half-plane.
\end{definition}

\begin{definition}
A cone $C$ is a \textit{two-sided flat cone} if 
\begin{equation*}
 C=(Q/2+Q^*)\a{H}-(Q^*)\a{-H} 
\end{equation*} for $Q$ and $Q^*$ integers and $H$ a half plane with $\partial H=V$ where $-H$ denotes the reflection of $H$ along $V$.
\end{definition}
We introduce a distance between open books with the same spine. 
\begin{definition}[Distance Open Books]
Given a pair of open books $C^j$ for $j=0,1$ which are of the form
\begin{equation*}
C^j:=\sum_i Q_i^j\a{H_i^j}
\end{equation*}
 where $\partial \a{H_i^j}=\a{V}$. We take $\eta_i^j$ to be the unit normal to $V$ that determines $H_i$.
We define $\eta^j \in \mathcal{A}_Q(V^{\perp})$ as $\eta_i^j= \sum_i Q_i\a{\eta_i^j}$.
We define 
\begin{equation}\label{eq:distopenbooks}
\mathcal{G}(C^1,C^2):=\mathcal{G}(\eta^1,\eta^2).
\end{equation}
\end{definition}
\begin{definition}\label{d:corneredcone}
A cornered area-minimizing cone $C$ with spine $L$ (of dimension $m-2$) is one that satisfies 
\begin{equation}
\partial C=\sum_{j=0}^1\sum_{1 \leq i \leq N^j} Q_i^j \a{V_i^j}
\end{equation}
where $V_i^j$ are $m-1$ dimensional half-planes with $\partial\a{V_i^j}=(-1)^j\a{L}$ and $Q_i^j$ are positive integers with $\sum_{i}Q_i^0=\sum_iQ_i^1$. 
\end{definition}
\begin{definition}[Quadrant]
We define $H$ to be an $m$-\textit{quadrant} (or just quadrant if the dimension is easily understood) if $H \subset \RR^{m+n}$ is an orthonormal transformation of the set $\left\{(x,y) \in \RR^2: x\geq 0, y \geq 0 \right\} \times \RR^{m-2}$.
We define $\hat{H}$ to be the full $m$-dimensional plane containing $H$.
\end{definition}
\begin{definition}\label{d:corneredopenbook} A cornered open book with spine $L$ (of dimension $m-2$) is 
\begin{equation*}
C=\sum_{1 \leq i \leq N} Q_i\a{H_i}
\end{equation*}
where $H_i$ is a quadrant with $\partial \a{H_i}=\a{V_i^0}-\a{V_i^1}$ where $V_i^0$ and $V_i^1$ are orthogonal half-planes of dimension $m-1$ with $\partial \a{V_i^j}=(-1)^j\a{L}$.  We denote $Q=\sum_{1 \leq i \leq N} Q_i$. 
\end{definition}
\begin{remark}
Cornered open books as we consider will automatically have, if $C$ is centered at $0$, $\Theta(C,0)=Q/4$. This setting is appropriate for the desired applications. More general settings could be considered, such as not requiring $H_i$ to be quadrants and just be pieces of planes.
\end{remark}

The simplest cornered open book is a single quadrant with multiplicity. The main example for the construction of the example later is the one given by Figure \ref{img:nicecorneredopenbook}.

We define $\mathcal{C}$ as the set of cornered open books which have the right boundary data. 
\begin{definition}\label{d:corneredopenbookboundarydata}Let $\Gamma_i^j$, $M$ as in \ref{a:corneredboundarymonotonicity}. We define $\mathcal{C}:M \rightarrow \left\{ \textup{Cornered open books}\right\}$. For each $p \in M$ we will denote $\mathcal{C}_p$ the set of generalized cornered open books with the following conditions.

Given the cornered open book
\begin{equation*}
    C=\sum_{1 \leq i \leq N} Q'_i\a{H_i}
\end{equation*}
it belongs to $\mathcal{C}_p$
if

\begin{itemize}
\item $L=T_{p}(M)$.
\item 
There exists an index selection $k(,0): \left\{1,...,N\right\}  \rightarrow \left\{1,...,N^0\right\}$, $k(,1): \left\{1,...,N\right\}  \rightarrow \left\{1,...,N^1\right\}$.

\item 
\begin{equation*}
V_i^j=T_{p}\left(\Gamma_{k(i,j)}^j \right).
\end{equation*}
\item  
For every $j \in \left\{0,1 \right\}$, $1 \leq k \leq N^j$
\begin{equation*}
Q_k^j=\sum_{k(i,j)=k}Q'_i.
\end{equation*}
\end{itemize}
\end{definition}

\subsubsection{Tangent cones} For $x\in\RR^{m+n}$ and $r>0$, define $\iota_{x, r}(y):= \frac{y-x}{r}$. For any integral current $T$, we define the rescaled current $T$ at $x$ at scale $r$ as ${\iota_{x, r}}_{\sharp} T=:T_{x,r}$ and $T_r =: {\iota_{0,r}}_{\sharp} T$. The current $T_{x}$ is called a blow-up of $T$ at $x$, if there exists a sequence of radii $r_j\to0$ such that $T_{x,r_j}\to T_x$ in the weak topology. By  \cite{Theodora}, it follows from the monotonicity formula that if $T$ is area minimizing in $\Sigma$ and $\partial T = Q\a{\Gamma}$ for $\Gamma\in C^{1,\kappa}$, every convergent subsequence of $T_{p,r_j}$ converges to an oriented cone about $p$ for any $p \in \Gamma$ and any $r_j\downarrow 0$. An oriented cone about $p$ is understood as a current $C$ such that $C_{p,r} = C$ for any $r>0$. See Theorem \ref{t:monotonicityformula} for the precise statements of the monotonicity formula and first variation formula for an area-minimizing current $T$ in $\Sigma$ with $\partial T=Q\a{\Gamma}$ where $\Sigma$ and $\Gamma$ are $C^2$ submanifolds. In this paper, we will also need to consider a type boundaries which consist of smooth $(m-1)$-dimensional submanifolds meeting at a smooth $(m-2)$-dimensional smooth submanifold. In Theorem \ref{t:firstvariationcorneredboundary} we show the first variation formula to hold  at the $(m-2)$-dimensional smooth submanifold, and a monotonicity formula with error terms Corollary \ref{c:monotonerrorcorner}. This implies existence of tangent cones at corner points.

\part{The study of corner points}
\section{Monotonicity formula at corner points}

In this section, we show the validity of the first variation formula in this context and the monotononicity formula with error terms under appropriate assumptions.

\subsection {Smooth boundary setup}
We introduce the setup for the monotonicity formula used in \cite{ian2024uniqueness}.

\begin{assumption}\label{A:generalsingleboundary}
Let $m,n\geq 2$, $\bar{n}\geq 1$, $Q\geq 1$ be integers. Consider $\Sigma$ an $(m+\bar{n})$-manifold of class $C^2$ in $\RR^{m+n}$, $\Gamma$ a $C^2$ oriented $(m-1)$-submanifold of $\Sigma$ without boundary, and assume $0\in\Gamma$. Let $T$ be an integral $m$-dimensional area-minimizing current in $\Sigma\cap\bB_1$ with boundary $\partial T \res \bB_1=Q\a{\Gamma \cap \bB_1}$.
\end{assumption}

Given $T, \Gamma, \Sigma$ as in \ref{A:generalsingleboundary}.
We define
\begin{equation*}
\Theta_{i}(T,p,r):=\textup{exp}(C_0\mathbf{A}_{\Sigma}^2r^2)\frac{|\!|T|\!|(\bB_r(p))}{\omega_mr^m}
\end{equation*}
and 
\begin{equation*}
\Theta_{b}(T,p,r):=\textup{exp}(C_0\left(\mathbf{A}_{\Sigma}+\mathbf{A_{\Gamma}}\right)r)\frac{|\!|T|\!|(\bB_r(p))}{\omega_mr^m}
\end{equation*}
where $\mathbf{A}_{\Sigma}$ and $\mathbf{A_{\Gamma}}$ denote the $C^0$ norm of the second fundamental form.

\begin{theorem}[Monotonicity formula - Theorem 2.4 \cite{ian2024uniqueness}]\label{t:monotonicityformula}
Let $T, \Gamma, \Sigma$ be as in Assumption \ref{A:general}. 
\begin{itemize}
\item If $p \in \bB_1 \setminus \Gamma$ then $r \rightarrow \Theta_{i}(T,p,r)$ is monotone on the interval 

$(0,\min\left\{\dist(p,\Gamma),1-|p|\right\})$.
\item If $p \in \bB_1 \cap \Gamma$, then $r \rightarrow \Theta_{b}(T,p,r)$ is monotone on $(0,1-|p|)$.
\end{itemize}
Thus the density exists at every point. Moreover, the restrictions of the map $p \rightarrow \Theta(T,p)$ to $\bB_1 \cap \Gamma$ and $\bB_1 \setminus \Gamma$ are upper semi-continuous. 

Additionally, this implies that blowup limits are cones, both at the interior and at the boundary.

If $X \in C^1_{c}(\bB_1,\RR^n)$, then
\begin{equation}\label{eq:firstvariationclassicalboundary}
\delta T(X)=- \int_{\bB_1} X \cdot \overrightarrow{H}_{T}(x)d|\!|T|\!|(x)+Q \int_{\Gamma \cap \bB_1} X \cdot \overrightarrow {\eta}(x) d\HH^{m-1}(x)
\end{equation}
where $\overrightarrow{H}_{T}=\sum_{i=1}^m A_{\Sigma}(e_i,e_i)$ where $\overrightarrow{T}=\wedge_{i=1}^m e_i$ is an orthonormal basis of $\overrightarrow{T}$. The vector $\overrightarrow{\eta}$ is defined $\HH^{m-1}$ almost everywhere as:
\begin{itemize}
\item For every tangent cone $C$ at $p \in \Gamma$ which is an open book $C=\sum_{i=1}^N Q_i\a{H_i}$, 
\begin{equation}\label{eq:vectoropenbook}
\overrightarrow{\eta}=\sum_{i=1}^N \frac{Q_i \overrightarrow{\eta_i}}{Q}
\end{equation}
where $\overrightarrow{\eta_i}$ is the normal to $T_p(\Gamma)$ which determines the half plane $H_i$. The vector $\overrightarrow{\eta}$ is independent of  the open book $C$ at $p$, but the open book might depend on the blowup sequence.
\item For every tangent cone $C=(Q/2+Q^*)\a{H}-(Q^*)\a{-H}$ at $p \in \Gamma$ which is a two-sided flat cone \begin{equation}\label{eq:twosidedflat}
    \overrightarrow{\eta}=\eta_{H}
\end{equation}
where $\eta_H$ is the normal to $T_p(\Gamma)$ which determines the half plane $H$.
\end{itemize}
\end{theorem}
\begin{corollary}[Monotonicity formula with error terms]\label{c:monotonerror}
Under the hypothesis of the above theorem
\begin{align*}
    r^{-m}|\!|T|\!|(\bB_r(p))-s^{-m}|\!|T|\!|(\bB_s(p))= \int_{\bB_r(p) \setminus \bB_s(p)}\frac{|\left(x-p\right)^{\perp}|^2}{|x-p|^{m+2}}d|\!|T|\!|(x) \\
    +\int_{s}^{r} \rho ^{-m-1} \left[\int_{\bB_{\rho}(p)}(x-p)^{\perp} \cdot \overrightarrow{H}_{T}(x)d|\!|T|\!|(x)+ Q \int_{\Gamma \cap \bB_{\rho}(p)}(x-p) \cdot \overrightarrow{\eta}d\HH^{m-1}\right]d\rho
\end{align*}
\end{corollary}\subsection{Cornered boundary}
We introduce the notion of a good corner point, which is responsible for a mass bound at corner points, and allows us to obtain the first variation formula and monotonicity formula.
\begin{definition}[Good corner point]\label{d:goodcorner} Under Assumption \ref{a:corneredboundarymonotonicity}[(i-ii-iii)], a point $q \in M$ is a good corner point if there exists $r_q>0$ such that for every $p \in \bB_{r_q/2}(q) \cap M$, there exists a diffeomorphism $\Phi_p \in C^2(\bB_{r_q}(p),\RR^{m+n})$
such that
\begin{itemize}
\item $\Phi_p(p)=p$ and $D\Phi_p(p)=I$
\item $\Phi_p(\Gamma_i^j) \subset T_p(\Gamma_i^j)$ (abusing notation and considering the full $m-1$ dimensional plane) and $\Phi_p(M)=T_p(M)$.
\item $D\Phi_p$ maps normal vectors to $M$ into normal vectors to $T_p(M)$. $D\Phi_p$ maps normal vectors to $\Gamma_i^j$ into normal vectors to $T_p(\Gamma_i^j)$.
\item We have the following uniform $C^2$ bound \begin{equation*}
\sup_{p \in \bB_{r_q/2}(q)\cap M} \left\|   \Phi_p\right\|_{C^2(\bB_{r_q}(p))}<\infty.
\end{equation*}
\end{itemize}
\end{definition}

The following lemma is an appropriate modification of \cite[Lemma 4.25]{DDHM} and thus we omit its proof.
\begin{lemma} Under Assumption \ref{a:corneredboundarymonotonicity}, we define $d_p(x):=|\Phi_p(x)|$. There exists $r_0>0$ such that the following holds: 
\begin{itemize}
\item For every $p \in \bB_{1/2} \cap M$, $d_p$ is in $C^2(\bB_{r_0} \setminus \left\{0 \right\})$ and for $x \in \bB_{r_0/2}(p) \setminus \left\{0\right\}$ $D^jd(x)=D^j|x-p|+O(|x-p|^{2-j})$ for $j=0,1,2$ with uniform constants on $p$ in $\bB_{1/2}(0).$
\item $\nabla d_p$ is tangent to $M$ at points of $M \cap \bB_{r_0}(p)$ and tangent to $\Gamma_i^j \cap \bB_{r_0}(p)$ at points of $\Gamma_i^j \cap \bB_{r_0}(p)$.
\end{itemize}
\end{lemma}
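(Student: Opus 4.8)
The plan is to take $d_p$ to be the function built directly from the straightening diffeomorphism of Definition~\ref{d:goodcorner}, $d_p(x):=|\Phi_p(x)-p|$, and to verify the two bullet points by a first and second order Taylor expansion of $\Phi_p$ at $p$ together with the chain rule; the tangency of $\nabla d_p$ along $M$ and along the $\Gamma_i^j$ will then fall out of the single structural property that $D\Phi_p$ carries normal vectors to normal vectors. The whole argument runs parallel to \cite[Lemma 4.25]{DDHM}, the only additional bookkeeping being to keep all implied constants uniform in the corner point $p$.

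First I would fix $r_0$ and a master constant. Since every point of $M$ is a good corner point and $M\cap\overline{\bB_{1/2}}$ is compact, cover it by finitely many balls $\bB_{r_{q_k}/2}(q_k)$ with $q_k\in M$, take $r_0<\tfrac14\min_k r_{q_k}$ and let $C$ be the largest of the uniform $C^2$ bounds of the $\Phi_p$ associated with the $q_k$ in Definition~\ref{d:goodcorner}. Then for every $p\in\bB_{1/2}\cap M$ the map $\Phi_p$ is defined and $C^2$ on $\bB_{2r_0}(p)$, with $\Phi_p(p)=p$, $D\Phi_p(p)=I$ and $\|\Phi_p\|_{C^2}\le C$. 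Since $\Phi_p$ is injective and $\Phi_p(p)=p$, we have $\Phi_p(x)\ne p$ for $x\in\bB_{r_0}(p)\setminus\{p\}$; hence $d_p$, being the composition of the $C^2$ map $\Phi_p-p$ with $g(y):=|y|$ (smooth on $\RR^{m+n}\setminus\{0\}$), is $C^2$ on $\bB_{r_0}(p)\setminus\{p\}$.

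For the asymptotics, set $\Psi(x):=\Phi_p(x)-p$, so that $\Psi(p)=0$, $D\Psi(p)=I$, $\|\Psi\|_{C^2}\le C$, whence Taylor's theorem gives $\Psi(x)=(x-p)+O(|x-p|^2)$, $D\Psi(x)=I+O(|x-p|)$, $D^2\Psi(x)=O(1)$ with constants depending only on $C$. Combining this with the standard bounds $|D^jg(y)|\le C_j|y|^{1-j}$ ($j=1,2,3$) and the chain rule for $d_p=g\circ\Psi$ yields first $\tfrac{\Psi(x)}{|\Psi(x)|}=\tfrac{x-p}{|x-p|}+O(|x-p|)$, hence $\nabla d_p(x)=(D\Phi_p(x))^{\mathsf T}\tfrac{\Phi_p(x)-p}{|\Phi_p(x)-p|}=\nabla|x-p|+O(|x-p|^{2-1})$, and then, differentiating once more and using $D^2g(x-p)=O(|x-p|^{-1})$, $D^3g=O(|x-p|^{-2})$ to absorb the errors, $D^2d_p(x)=D^2|x-p|+O(|x-p|^{2-2})$. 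The $j=0$ case $d_p(x)=|x-p|+O(|x-p|^2)$ is immediate from $|\Psi(x)-(x-p)|=O(|x-p|^2)$. Uniformity in $p$ is automatic, every constant above having depended only on $C$, $m$, $n$.

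The tangency is the point of the construction. For $x\in M\cap\bB_{r_0}(p)$ and $v$ any vector normal to $M$ at $x$,
\begin{equation*}
\langle\nabla d_p(x),v\rangle=\nabla g(\Phi_p(x)-p)\cdot D\Phi_p(x)[v]=\frac{(\Phi_p(x)-p)\cdot D\Phi_p(x)[v]}{|\Phi_p(x)-p|}.
\end{equation*}
By Definition~\ref{d:goodcorner}, $D\Phi_p(x)[v]$ is normal to $T_p(M)$, while $\Phi_p(x)-p\in T_p(M)$ because $\Phi_p(x)\in\Phi_p(M)=T_p(M)$ and $p\in T_p(M)$; so the numerator vanishes and $\nabla d_p(x)$ is tangent to $M$ at $x$. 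The identical computation with $\Gamma_i^j$ in place of $M$ — using $\Phi_p(\Gamma_i^j)\subset T_p(\Gamma_i^j)$ and that $D\Phi_p$ sends vectors normal to $\Gamma_i^j$ to vectors normal to $T_p(\Gamma_i^j)$ — gives that $\nabla d_p$ is tangent to $\Gamma_i^j$ along $\Gamma_i^j\cap\bB_{r_0}(p)$. I do not anticipate a genuine obstacle here: the estimates are routine Taylor expansion and the tangency is a one-line pairing identity, so the only thing requiring care is the uniformity in $p$, which the covering argument above provides; this is exactly the situation of \cite[Lemma 4.25]{DDHM}, which is likewise recorded there without proof.
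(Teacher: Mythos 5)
Your proof is correct, and since the paper explicitly omits the argument, citing it as "an appropriate modification of \cite[Lemma 4.25]{DDHM}," there is no in-paper proof to compare against; your Taylor-expansion-plus-chain-rule approach is exactly the one used in that reference and is the natural argument. One small but worthwhile observation: you have silently (and correctly) repaired what is surely a typo in the statement, taking $d_p(x):=|\Phi_p(x)-p|$ rather than the literal $|\Phi_p(x)|$; with the literal definition one would have $d_p(p)=|p|\neq 0$ in general, which is incompatible with the claimed asymptotics $D^j d_p(x)=D^j|x-p|+O(|x-p|^{2-j})$, so the subtraction of $p$ is forced.

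The one place I would tighten the exposition is the step $\tfrac{\Psi(x)}{|\Psi(x)|}=\tfrac{x-p}{|x-p|}+O(|x-p|)$: since the denominators degenerate as $x\to p$, it is worth saying explicitly that after possibly shrinking $r_0$ one has $|\Psi(x)-(x-p)|\le C|x-p|^2\le\tfrac12|x-p|$, hence $|\Psi(x)|\ge\tfrac12|x-p|$, and that the line segment joining $\Psi(x)$ to $x-p$ therefore avoids the origin (needed when you invoke $|D^3g(y)|\lesssim|y|^{-2}$ to estimate $D^2g(\Psi(x))-D^2g(x-p)$). You clearly have this in mind, but as written the estimate "$D^2g(x-p)=O(|x-p|^{-1})$, $D^3g=O(|x-p|^{-2})$ to absorb the errors" compresses two distinct error terms (the change of base point in $D^2g$, and the perturbation $D\Psi=I+O(|x-p|)$ hitting $D^2g$) into one phrase, and spelling them out would make the $O(1)$ conclusion for $j=2$ transparent. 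The tangency argument is clean and is indeed the whole point of the carefully prescribed properties of $\Phi_p$ in Definition~\ref{d:goodcorner}.
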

We will also need sufficient conditions for $M$ to form a good corner.
\begin{lemma}\label{l:diffeo}
Let $N_i$ be $C^2$ surfaces in $\RR^{m+n}$ with a common boundary $0 \in M$. Assume that any pair of surfaces $N_i$ and $N_j$ their normal vectors to $M$ are orthogonal. Then there exists a neighborhood of $M$ such that the analogous properties of Definition \ref{d:goodcorner} hold.
\end{lemma}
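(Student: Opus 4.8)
The plan is to show that every point $p\in M$ is a good corner point in the sense of Definition~\ref{d:goodcorner}. Since the four requirements there concern a single $p$ (and a neighbourhood of it), I would fix an arbitrary $p\in M$ and, after translating, assume $p=0$; the diffeomorphisms $\Phi_p$ for the nearby points come from the same recipe, and the uniform $C^2$ bound in the last bullet of the definition is then checked at the very end by observing that every quantity entering the construction is controlled purely by the $C^2$-norms of $M$ and of the $N_i$ and by a uniform lower bound for the mutual angles along $M$ -- here equal to $\pi/2$ by hypothesis. A preliminary remark: since $\nu_i(q)\perp T_qN_j$ for $i\ne j$ and $q\in M$ (the conormal of $N_i$ is orthogonal to $T_qM$ and to $\nu_j(q)$), the surfaces $N_i$ meet only along $M$ in a neighbourhood of $M$.

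\emph{Step 1 (flatten $M$).} For $q\in M$ near $0$ let $\nu_i(q)\in N_qM$ denote the inward unit conormal of $N_i$ along $M$; the hypothesis says exactly that $\nu_1(q),\dots,\nu_k(q)$ are orthonormal. I would complete them to an (at least $C^1$) orthonormal frame $\nu_1,\dots,\nu_k,\mu_1,\dots,\mu_{n+2-k}$ of the normal bundle $NM$ near $0$, agreeing at $0$ with a fixed such basis, pick a $C^2$ graph parametrization $\gamma\colon T_0M\to M$ with $\gamma(0)=0$, $D\gamma(0)=\mathrm{id}$, and use the Fermi-type chart $\Phi^{(1)}$ defined near $0$ by
\[
(\Phi^{(1)})^{-1}(y,w_1,\dots,w_k,w')=\gamma(y)+\sum_{i=1}^{k}w_i\,\nu_i(\gamma(y))+\sum_{l}w'_l\,\mu_l(\gamma(y)).
\]
Identifying $\RR^{m+n}\cong T_0M\times\RR^k\times\RR^{n+2-k}$ through the chosen basis, one has $\Phi^{(1)}(0)=0$, $D\Phi^{(1)}(0)=I$, $\Phi^{(1)}(M)=\{w=0\}$, and -- precisely because the chart is built from an \emph{orthonormal} frame adapted to the conormals -- $D\Phi^{(1)}$ carries, at each $q\in M$, the Euclidean space $N_qM$ onto the $w$-directions and the Euclidean space $N_qN_i=\operatorname{span}\{\nu_j(q):j\ne i\}\oplus\operatorname{span}\{\mu_l(q)\}$ onto the coordinate subspace $\{(0,w):w_i=0\}$. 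In these coordinates each $N_i$ becomes a $C^2$ graph over the coordinate half-plane $V_i^{+}:=\{w_j=0\ (j\ne i),\ w'=0,\ w_i\ge0\}$ whose graph function vanishes to second order along $\{w_i=0\}$.

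\emph{Step 2 (flatten the $N_i$).} I would then post-compose with a map $\Phi^{(2)}(y,w)=(y,w-\Theta(y,w))$ fixing $\{w=0\}$, with $\Theta$ vanishing to second order in $w$, designed to carry every $\Phi^{(1)}(N_i)$ onto $V_i:=\{w_j=0\ (j\ne i),\ w'=0\}$ simultaneously. The obvious candidate subtracts, for each $i$, the graph function of $\Phi^{(1)}(N_i)$; this straightens $N_i$ exactly up to the contributions of the indices $i'\ne i$ evaluated along $N_i$, and this is where orthogonality is decisive: it forces the planes $T_0N_i$ to point in distinct coordinate directions and confines the straightening of $N_i$ to the $i$-th slab, so that along $N_i$ one has $w_{i'}=O(w_i^2)$ for $i'\ne i$ and the spurious terms are of fourth order in $w_i$. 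Hence a short iteration (each round leaves residual graph functions vanishing to strictly higher order, so the successive corrections are summable in $C^2$ and the composition converges), or equivalently a standard implicit--function/extension argument for the diffeomorphism prescribed on the union of the $k$ pairwise-transverse flat half-planes, yields $\Phi^{(2)}$ with $\Phi^{(2)}(\Phi^{(1)}(N_i))=V_i^{+}$ for all $i$. Because $\Theta=O(|w|^2)$, one checks $\Phi^{(2)}(0)=0$, $D\Phi^{(2)}(0)=I$, that $D\Phi^{(2)}$ fixes the $w$-directions along $\{w=0\}$, and -- using that $\Theta$ vanishes to second order along each $V_i$ -- that $D\Phi^{(2)}$ sends the coordinate normal subspace of $\Phi^{(1)}(N_i)$ to that of $V_i$.

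\emph{Conclusion, and the main obstacle.} Setting $\Phi_p:=L\circ\Phi^{(2)}\circ\Phi^{(1)}$, where $L$ is the inverse of the identification fixed in Step 1, the chain rule together with the normal-transport statements of the two steps yields $\Phi_p(0)=0$, $D\Phi_p(0)=I$, $\Phi_p(M)\subset T_0M$, $\Phi_p(N_i)\subset T_0N_i$, $D\Phi_p(N_qM)=(T_0M)^{\perp}$ and $D\Phi_p(N_xN_i)=(T_0N_i)^{\perp}$; the uniform bound over $p$ follows as in the first paragraph. The hard part is Step 2: manufacturing a \emph{single} diffeomorphism that flattens $M$ and all the $N_i$ at once while preserving \emph{all} the relevant Euclidean normal spaces exactly -- the orthogonality hypothesis is precisely what decouples the $N_i$ (making $N_qN_i$ a coordinate subspace of the Fermi chart and making the slab-wise straightenings noninterfering to leading order), and without it the straightenings of different $N_i$ would conflict already at first order. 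The only other subtlety is a regularity one -- the normal bundle of a merely $C^2$ surface is only $C^1$, so a literal Fermi chart is $C^1$ -- which is harmless here since only the uniform $C^2$-type bounds of Definition~\ref{d:goodcorner} are invoked afterwards, and which disappears entirely if the surfaces are taken of class $C^3$, as the applications allow.
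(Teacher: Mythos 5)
Your Step 1 / Step 2 decomposition (a conormal Fermi chart, then a simultaneous straightening of the $N_i$) is a genuinely different and more constructive route than the paper's. The paper does not go through Fermi coordinates at all: it defines the diffeomorphism directly on the lower-dimensional strata -- first a $C^2$ diffeomorphism $f_M$ of $M$ to $T_0M$, then $C^2$ extensions $f_{N_i}$ of $f_M$ to each $N_i$, then prescribes the normal derivatives along each $N_i$ -- and invokes Whitney extension once to produce $\Phi$ on a full ball. Orthogonality plays the same role in both arguments (it is what makes the prescribed 2-jets on the different $N_i$ compatible, resp.\ makes your straightenings non-interfering), so the mechanism is the same; the packaging differs. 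What the paper's version buys you is a cleaner regularity bookkeeping: Whitney extension takes the $C^2$ jets on $M\cup\bigcup N_i$ and produces a $C^2$ map directly, bypassing your Step 1 entirely.

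That is exactly where your version has a real wrinkle, which you do flag but dismiss too quickly. The Fermi chart $\Phi^{(1)}$ is built from the conormal frame $\nu_i\circ\gamma$, and for $C^2$ surfaces this frame is only $C^1$; the off-diagonal terms $\sum_i w_i\,\partial_y(\nu_i\circ\gamma)$ in $D(\Phi^{(1)})^{-1}$ are then only continuous, so $\Phi^{(1)}$ is $C^1$ and not $C^2$ -- but Definition~\ref{d:goodcorner} demands $\Phi_p\in C^2$ with a \emph{uniform} $C^2$ bound, and that bound is what the rest of the paper actually uses (for instance in the $O(|x-p|^{2-j})$ estimates on $d_p$ immediately following). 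Remarking that the problem ``disappears for $C^3$ surfaces'' does not prove the $C^2$ statement as written; the paper's Whitney-extension formulation avoids building a Fermi chart precisely so that this loss of one derivative never occurs. The iteration in your Step~2 is also more delicate than the one-line assertion suggests: the graph functions $g_i^j$ are only defined for $w_i\ge 0$ and must be extended past $\{w_i=0\}$ in a $C^2$ way before $\Phi^{(2)}$ is even well posed, and the $C^2$ convergence of the composed corrections on a small ball needs a genuine estimate in the $y$-variables, not just the observation that the $w_i$-order of vanishing doubles. Your own parenthetical fallback -- ``a standard implicit-function/extension argument for the diffeomorphism prescribed on the union of the flat half-planes'' -- is in substance the paper's Whitney-extension proof, and is the safer way to close Step~2. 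In short: correct idea, genuinely different and more explicit presentation, but the Fermi-chart step costs a derivative that the paper's softer argument does not, so as a proof of the $C^2$ lemma it has a gap.
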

\begin{proof}
We will construct a diffeomorphism $\Phi_p$ with the desired properties by using the Whitney extension theorem. 

Since the construction is similar for $p$ in a neighborhood of $M$ with a uniform $C^2$ norm, we assume without loss of generality that $p=0$.

We start defining $\Phi$ on $M$. We consider a $f_M:M \cap \bB_r \rightarrow T_0(M)$ which is a $C^2$ diffeomorphism with $f_M(0)=0$ and $Df_M=Id$.

We wish to extend the diffeomorphism $f_M$ to diffeomorphisms $f_{N_i}$ of $N_i$. We consider extensions of the diffeomorfisms $f_{N_i}:N_i \cap \bB_r \rightarrow T_0(N_i)$ which extend $f_M$ (possibly reducing $r$). The extensions will be $C^2$ and satisfy $Df_{N_i}=Id$. The extensions can be constructed for instance by using the Whitney extension theorem.

We wish to extend the maps $f_{N_i}$ to a map $\Phi$ with the desired properties which is defined on a ball. In order to construct the desired extension we use the Whitney extension theorem. The condition of the orthogonal intersection between $N_i$ and $N_j$ ensures that the first derivatives and second derivatives are compatible with being derivatives of a $C^2$ function in the ball.

In order to make $\Phi$ map normal vectors to $N_i$, to normal vectors to $T_0(N_i)$ it is enough to prescribe, as part of the data to use the Whitney extension theorem, the derivatives of $\Phi$ along normal vectors to $N_i$ appropriately. Since $N_i$ are $C^2$ manifolds, the appropriate normal derivatives can be taken $C^1$ and chosen to satisfy the compatibility conditions. 

Since the $C^2$ norm is bounded in a neighborhood then up to shrinking the radius we obtain that $\Phi$ is a diffeomorphism.
\end{proof}

\begin{theorem}\label{t:firstvariationcorneredboundary}Let $T,\Gamma_{i,j}, M, \Sigma$ be as in Assumption \ref{a:corneredboundarymonotonicity}. If $X \in C^1_{c}(\bB_1,\RR^{m+n})$, then
\begin{equation}\label{eq:firstvariationcorner}
\delta T(X)=- \int_{\bB_1} X \cdot \overrightarrow{H}_{T}(x)d|\!|T|\!|(x)+\sum_{i,j} Q_i^j \int_{\Gamma_i^j \cap \bB_1} X \cdot \overrightarrow {\eta_{i,j}}(x) d\HH^{m-1}(x)
\end{equation}
where $\overrightarrow{H}_{T}=\sum_{i=1}^m A_{\Sigma}(e_i,e_i)$ where $\overrightarrow{T}=\wedge_{i=1}^m e_i$ is an orthonormal basis of $\overrightarrow{T}$. The vectors $\overrightarrow{\eta_{i,j}}$ satisfy $\overrightarrow{\eta_{i,j}} \leq 1$ and are chosen according to equations \eqref{eq:vectoropenbook} and \eqref{eq:twosidedflat}.
\end{theorem}

\begin{corollary}[Monotonicity formula with error terms]\label{c:monotonerrorcorner}
Under the hypothesis of the above theorem for every $p \in M \cap \bB_1$ and $0<s<r<1-|p|$,
\begin{align}\label{eq:monotonerror}
    r^{-m}|\!|T|\!|(\bB_r(p))-s^{-m}|\!|T|\!|(\bB_s(p))= \int_{\bB_r(p) \setminus \bB_s(p)}\frac{|\left(x-p\right)^{\perp}|^2}{|x-p|^{m+2}}d|\!|T|\!|(x) \\
    +\int_{s}^{r} \rho ^{-m-1} \left[\int_{\bB_{\rho}(p)}(x-p)^{\perp} \cdot \overrightarrow{H}_{T}(x)d|\!|T|\!|(x)+ \sum_{i,j} Q_i^j \int_{\Gamma_i^j \cap \bB_{\rho}(p)}(x-p) \cdot \overrightarrow{\eta_{i,j}}d\HH^{m-1}\right]d\rho
\end{align}
As a consequence, for every $p \in M \cap \bB_1$ there exist tangent cones. This means every sequence of radii $r_i \rightarrow 0$ such that $T_{p,r_i} \rightarrow T_p$ in the weak topology, must have $T_p$ is an area-minimizing cone whose boundary is an union of half-planes with multiplicities.
\end{corollary}
\begin{proof}[Proof of Theorem \ref{t:firstvariationcorneredboundary} and Corollary \ref{c:monotonerrorcorner}]
We start by proving Theorem \ref{t:firstvariationcorneredboundary}. We wish to show that for a vector field $X \in C_{c}^1(\bB_1, \RR^{m+n})$ the identity \eqref{eq:firstvariationcorner} holds. 

Since we have the first variation formula at smooth boundaries with multiplicities (Theorem \ref{t:monotonicityformula}, equation \eqref{eq:firstvariationclassicalboundary}), equation \eqref{eq:firstvariationcorner} holds for any $X \in C_{c}^1(\bB_1, \RR^{m+n})$ such that $\textup{spt}(X) \cap M =\varnothing.$ We can reach that conclusion by using local versions of the first variation formula at interior points and smooth boundary points and patching them with a partition of unity.

Given $h>0$, we consider a bump function $\varphi_h$ supported in an $h$-neighborhood of $M$. We get the following
\begin{align*}
\delta T(X \varphi_h)+\delta T(X(1-\varphi_h))=\int_{\bB_1} X \cdot \nabla_{\overrightarrow{T}} \varphi_hd|\!|T|\!|(x) +\int_{\bB_1} \textup{div}_{\overrightarrow{T}}(X)\varphi_h d|\!|T|\!|(x) \\ - \int_{\bB_1}X(1-\varphi_h) \cdot \overrightarrow{H}_{T}(x) d|\!|T|\!|(x)+ \sum_{i,j}Q_i^j \int_{\Gamma_i^j \cap \bB_1} X \cdot \overrightarrow{\eta}_{i,j}(x)(1-\varphi_h)d\HH^{m-1}(x).
\end{align*}
In the former equation, we expanded the divergence of the vector field $X\varphi_h$ and used the first variation identity for the vector field $X(1-\varphi_h)$, for which equation \eqref{eq:firstvariationcorner} holds as $\textup{spt}(X(1-\varphi_h))\cap M=\varnothing.$

In order to conclude equation \eqref{eq:firstvariationcorner} for the vector field $X$, we take a limit of $h$ to zero. The second term is easily seen to converge to zero. The third and fourth converge to the desired quantities of the right hand side of equation \eqref{eq:firstvariationcorner}.

To conclude, it remains to control the first term involving $\nabla_{\overrightarrow{T}} \varphi_h$. It is enough to show that for every $\varepsilon>0$ 
\begin{equation}\label{eq:singularconer}
\lim_{h \rightarrow 0}\frac{1}{h} |\!|T|\!|(\left\{x \in \bB_{1-\varepsilon} :\dist(x,M)<h\right\})=0.
\end{equation}
This requires a control of the mass along the points on $M$.

We can use similar arguments to  \cite[Theorem 4.4]{DDHMAnnouncement} to show that there is a constant $C$ such that for every $p \in M \cap \bB_{1/2}$ the functions 
\begin{equation}\label{eq:weightedmonotoncorner}
s \rightarrow e^{Cs}\frac{|\!|T|\!|(\left\{d_p<s\right\})}{s^m}
\end{equation}
are monotone in some interval $[0,r_0]$.
This follows from using the first variation formula for the vector field $\nabla d_p$, which has the right regularity to use the first variation formula and maps the corner $M$ onto itself and $\Gamma_i^j$ onto themselves.

The monotonicity formula \eqref{eq:weightedmonotoncorner} gives us a uniform mass bound at balls centered at $M$. Thus we can split the sets
\begin{equation*}
\left\{x \in \bB_{1-\varepsilon} :\dist(x,M)<h\right\}
\end{equation*}
into $C\frac{1}{h^{m-2}}$ balls of size comparable with $h$, each of them, for small enough $h$, with mass bounded by $Ch^m$ by the monotonicity formula \eqref{eq:weightedmonotoncorner}.  This implies that for small enough $h$
\begin{equation*}
\frac{1}{h} |\!|T|\!|(\left\{x \in \bB_{1-\varepsilon} :\dist(x,M)<h\right\}) \leq Ch
\end{equation*}
which allows us to conclude \eqref{eq:singularconer}.

The monotonicity formula with error term follows from the first variation formula as in \cite[Theorem 3.2]{DDHM}. 

The existence of tangent cones follows from the usual argument, which we sketch below. If $p\in M$, $r_i \rightarrow 0$ and $T_{p,r_i} \rightarrow T_p$ in the weak topology. We must have   $\partial T_p=\sum_{i,j} Q_i^j\a{V_i^j}$
 where $V_i^j=T_p(\Gamma_i^j)$. For the current $T_p$, we obtain as a consequence of the monotonicity of \eqref{eq:weightedmonotoncorner} that $\Theta(T_p,r)=\lim_{r \rightarrow 0} \Theta(T,p,r)$. In the limit, for the current $T_p$, the contribution from the second fundamental form vanishes, and the boundary term disappears since each $\overrightarrow{\eta}_{i,j}$ becomes a constant vector orthogonal to $V_i^j$.
 
This implies that  
\begin{equation*}\int_{\bB_1(p) }\frac{|x^{\perp}|^2}{|x|^{m+2}}d|\!|T_p|\!|(x)=0
\end{equation*}
which shows that $T_p$ is a cone.
\end{proof}

\section{Classification of certain area-minimizing cornered cones with density $Q/4$}

In this section, we classify area-minimizing cones with density $Q/4$ and cornered boundary. In general, we also obtain a lower on the density of $Q/4$ which ensures a no-holes condition when applying Leon Simon's uniqueness of cylindrical tangent cones strategy \cite{simon1993cylindrical}.

We refer to Definitions \ref{d:corneredcone} and \ref{d:corneredopenbook} for cornered cones and cornered open books, respectively.

\begin{lemma}\label{l:densitybound}
Let $C$ be a cornered area-minimizing cone as in Definition \ref{d:corneredcone}. Assume that for every $i_0,i_1$, 
\begin{equation}\label{eq:perpendicular}
V_{i_0}^0 \perp V_{i_1}^1.
\end{equation}
Then $\Theta(C,0) \geq Q/4$ with equality if and only if $C$ is a cornered open book.
\end{lemma}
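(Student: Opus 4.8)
The plan is to combine the monotonicity formula at corner points (Corollary \ref{c:monotonerrorcorner}) with a calibration-type / Plateau-type comparison against the cornered open book. First I would record the structure of a cornered area minimizing cone $C$ under the perpendicularity hypothesis \eqref{eq:perpendicular}: the spine $L$ is an $(m-2)$-plane, and $C$ is invariant under dilations centered on $L$ and under translations along $L$. Hence it suffices to analyze the $2$-dimensional cross-section $C'$ obtained by slicing with the orthogonal $2$-plane $L^\perp$: $C'$ is an area minimizing $1$-current in the disk $\bB_1 \cap L^\perp$ whose boundary is $\sum_{j,i} Q_i^j \a{v_i^j}$, where $v_i^j \in \mathbb{S}^1$ are the unit vectors in which the half-planes $V_i^j$ meet $L^\perp$, and the perpendicularity assumption says every $v_{i_0}^0$ is orthogonal to every $v_{i_1}^1$. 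So all the positively oriented boundary points lie on one line through the origin and all the negatively oriented ones on the perpendicular line. The density $\Theta(C,0)$ equals (a constant multiple of) the total mass $\|C'\|(\bB_1)$ divided by the length of the unit segment, i.e. $\Theta(C,0) = \tfrac12 \mathcal{H}^1$-mass of $C'$ in the unit disk divided by $1$; concretely one shows $\Theta(C,0) = \frac{1}{2}\sum_k \ell_k$ where $\ell_k$ are the lengths of the segments making up $C'$.

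The lower bound then becomes a purely $1$-dimensional extremal problem: among integral $1$-currents in the disk whose boundary is a prescribed $0$-current supported on the two perpendicular axes with the given integer multiplicities (and total positive mass $Q$ on each axis, with signs summing to zero), minimize length. Each point of positive multiplicity must be joined, through the current, to points of negative multiplicity; since the two families of boundary points sit on perpendicular lines through $0$, any path from a positive boundary point $v$ to a negative boundary point $w$ has length at least $\dist(v, \{0\}\cup \text{axis of }w)$, and in fact, because $v \perp w$, the straight segment from $v$ to $0$ plus the segment from $0$ to $w$ realizes the $\ell^1$-type distance; the cheapest connection routes everything through the origin. Making this rigorous: decompose $C'$ into its connected pieces / use the constancy theorem on complementary components of $C'$ in $\bB_1\setminus(\text{two axes})$ to see that $\|C'\|(\bB_1) \geq \sum_{j,i} Q_i^j \cdot \mathrm{length}(0v_i^j) = \sum_{j,i} Q_i^j = 2Q$ would be the naive bound, but one must be careful: segments may be shared. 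The correct count is that the radial segment along the ray of $v_i^0$ must carry multiplicity at least $Q_i^0$ near $v_i^0$, contributing $\sum_i Q_i^0 = Q$ in the $j=0$ directions and $\sum_i Q_i^1 = Q$ in the $j=1$ directions — total $2Q$ — so $\Theta(C,0)\geq \tfrac12\cdot 2Q \cdot \tfrac12 = Q/2$? I need to pin down the normalization: the cornered open book over a single quadrant configuration in Figure \ref{img:nicecorneredopenbook} has $\Theta = Q/4$, so the cross-section of a single quadrant of multiplicity $1$ is a quarter-disk-boundary... no, it is the segment-pair; one quadrant contributes cross-sectional length (two unit radii but they coincide with boundary rays) and the constant $\omega_m$ normalization yields exactly $Q/4$. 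I would calibrate the constant by direct computation on the single quadrant and on Figure \ref{img:nicecorneredopenbook}, then the general bound $\Theta(C,0)\geq Q/4$ follows from the length lower bound $\|C'\|\geq$ (length of the open-book cross-section) $= \sum_i Q_i \cdot(\text{quarter-circumference normalization})$.

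For the rigidity (equality $\iff$ cornered open book): if $\Theta(C,0)=Q/4$ then the length-minimizing $1$-current $C'$ achieves the lower bound with equality, which forces every connecting path to be a concatenation of two radial segments meeting at $0$ with no wasted length and no cancellation; hence $C'$ is a union of radial segments $\a{0 v_i^0}$ and $\a{0 v_i^1}$ with integer multiplicities, and reassembling the $L$-invariant, dilation-invariant structure shows $C$ is a union of half-planes $V_i^0$ (with multiplicities adding to $Q$) glued to half-planes $V_i^1$ along $L$ — i.e. a cornered open book in the sense of the definition, with the $H_i$ being quadrants. Conversely the cornered open book obviously has $C' = $ the radial-segment configuration and its density is computed directly to be $Q/4$. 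The main obstacle I anticipate is the bookkeeping in the lower-bound step: controlling how multiplicities of $C'$ can be distributed and shared among segments in $L^\perp\setminus(\text{two axes})$ so that one genuinely gets the sharp constant rather than something smaller — this is where I would use the constancy theorem component-by-component together with the boundary multiplicity constraints, and where the perpendicularity hypothesis \eqref{eq:perpendicular} is essential (for non-perpendicular configurations the minimizer could be a Steiner-type tree not passing through $0$, changing the constant). A clean way to organize this is to observe that minimizing $\mathbf{M}$ in the class of integral currents with the given boundary is a linear programming / mass-flow problem on the "metric graph" consisting of the two axes joined at $0$, where the optimum is transparently the one routing all flow through $0$.
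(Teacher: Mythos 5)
The central gap is the dimension reduction. You assert that $C$ is ``invariant under dilations centered on $L$ and under translations along $L$,'' but an area minimizing cone with cornered boundary is a cone only with vertex at $0$; it need not split off $L$ as a factor. Obtaining the product structure $\RR^{m-2}\times C'$ is precisely the nontrivial content of the lemma, and it is where the perpendicularity hypothesis \eqref{eq:perpendicular} enters: it makes the monotonicity formula at corner points (Corollary \ref{c:monotonerrorcorner}) exact, since the boundary term vanishes when $\eta_{i,j}\perp V_i^j$. The paper uses this in two ways: for the inequality, by iterated blowups at points $p\in L\setminus\{0\}$ together with upper semicontinuity of density, reducing to a $2$-dimensional cone statement (Proposition \ref{p:classification2dcorners}); for the rigidity, by observing that $\Theta(C,p,r)\to Q/4$ as $r\to\infty$ for every $p\in L$, so equality holds in the (error-free) monotonicity at all such points, forcing $C$ to be a cone about every $p\in L$ and hence a genuine product. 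Your proposal skips all of this, taking the product structure as given.

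There are two further problems in the reduced $2$-dimensional problem as you set it up. First, $L^{\perp}$ is an $(n+2)$-plane, not a $2$-plane; the cross-section $C'$ is a $2$-dimensional area minimizing cone in $\RR^{n+2}$, and what you are really describing is its link, a geodesic $1$-network in $\mathbb{S}^{n+1}$. Second, and more seriously, the perpendicularity \eqref{eq:perpendicular} does not force all the $V_i^0$ to lie on a single line and all the $V_i^1$ on a perpendicular line: it only says each $V^0$-direction is orthogonal to each $V^1$-direction, and with $n\geq 2$ the $V_i^0$ can span several independent directions (the same for the $V_i^1$). Your ``mass-flow on the metric graph of two axes joined at $0$'' picture is therefore based on a false premise. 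The paper handles the $2$-dimensional case by a different route: interior regularity of $2$d area minimizing cones makes the link a union of geodesic arcs, so $\textup{spt}(C')$ lies in finitely many planes; the constancy theorem gives a decomposition into quadrants, which are classified into types by how they interact with the boundary orientation; the bound and rigidity then follow by induction on the number of quadrants. Your heuristic arrives at the correct constant $Q/4$ but neither the geometry of the configuration nor the argument matches the actual problem.
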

We will need the following technical proposition.
\begin{proposition}\label{p:classification2dcorners}
Let $C$ be a $2$ dimensional area-minimizing cornered cone as in Lemma \ref{l:densitybound}. Then $\Theta(C,0) \geq Q/4$ with equality if and only if $C$ is a cornered open book.
\end{proposition}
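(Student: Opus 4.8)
\medskip

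\noindent\textbf{Proof plan.} The plan is to pass to the link of the cone and recast the statement as an optimal-transport estimate on the round sphere, where the orthogonality hypothesis forces every relevant distance to equal $\pi/2$. Since $C$ is a $2$-dimensional cone with vertex $0$, it is the cone over the $1$-dimensional integral current $\Sigma_0:=\langle C,|x|,1\rangle$ (the slice of $C$ at radius $1$) in $\partial\bB_1\cong\mathbb{S}^{n+1}$, and, writing $\mathbf{M}$ for total mass, a direct computation gives $\Theta(C,0)=\|C\|(\bB_1)/\omega_2=\mathbf{M}(\Sigma_0)/(2\pi)$. Slicing $\partial C=\sum_{i,j}Q_i^j\a{V_i^j}$ at radius $1$ and using $\partial\a{V_i^j}=(-1)^j\a{0}$ yields (with the induced orientation) $\partial\Sigma_0=\mu^+-\mu^-$, where $\mu^+:=\sum_iQ_i^0\,\delta_{p_i^0}$ and $\mu^-:=\sum_iQ_i^1\,\delta_{p_i^1}$ with $p_i^j:=V_i^j\cap\mathbb{S}^{n+1}$; these are nonnegative measures of total mass $Q$, and the orthogonality $V_{i_0}^0\perp V_{i_1}^1$ becomes $d_{\mathbb{S}}(p_{i_0}^0,p_{i_1}^1)=\pi/2$ for all $i_0,i_1$, with $d_{\mathbb{S}}$ the round distance.

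For the lower bound I would test $\Sigma_0$ against differentials of $1$-Lipschitz functions: for any $f\colon\mathbb{S}^{n+1}\to\mathbb{R}$ with $\textup{Lip}(f)\le1$ one has $\mathbf{M}(\Sigma_0)\ge\Sigma_0(df)=(\partial\Sigma_0)(f)=\int f\,d\mu^+-\int f\,d\mu^-$, so by Kantorovich--Rubinstein duality $\mathbf{M}(\Sigma_0)\ge W_1(\mu^+,\mu^-)$, the $1$-Wasserstein distance of $(\mathbb{S}^{n+1},d_{\mathbb{S}})$. Since every point of $\textup{spt}\,\mu^+$ is at distance exactly $\pi/2$ from every point of $\textup{spt}\,\mu^-$, every coupling has the same cost, so $W_1(\mu^+,\mu^-)=\tfrac{\pi}{2}Q$ and $\Theta(C,0)=\mathbf{M}(\Sigma_0)/(2\pi)\ge Q/4$; note that only the cone property of $C$ enters here. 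The reverse inequality for open books is immediate: if $C$ is a cornered open book, then by orthogonality each $H_i$ is a quadrant, so $\|C\|(\bB_1)=\sum_iQ_i\cdot\tfrac{\pi}{4}$ and $\Theta(C,0)=\sum_iQ_i/4=Q/4$.

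For the equality case I would invoke minimality of $C$: its link $\Sigma_0$ is then a length-minimizing integral $1$-current in $\mathbb{S}^{n+1}$, hence by the structure theory of $1$-dimensional area-minimizing currents a finite sum $\Sigma_0=\sum_km_k\a{\sigma_k}$ with $m_k\in\mathbb{Z}_{>0}$ and $\sigma_k$ geodesic arcs of length $\le\pi$ (so each $\sigma_k$, oriented from $a_k$ to $b_k$, is minimizing between its endpoints, with $\mathbf{M}(\sigma_k)=d_{\mathbb{S}}(a_k,b_k)$), meeting pairwise only at endpoints, all endpoints lying in $\textup{spt}\,\partial\Sigma_0=\{p_i^j\}$. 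I would then test the equality $\mathbf{M}(\Sigma_0)=W_1(\mu^+,\mu^-)$ against the explicit optimal potential $f(x):=d_{\mathbb{S}}(x,\textup{spt}\,\mu^-)-\tfrac{\pi}{4}$, which is $1$-Lipschitz and equal to $+\tfrac{\pi}{4}$ on $\textup{spt}\,\mu^+$ and to $-\tfrac{\pi}{4}$ on $\textup{spt}\,\mu^-$; the chain
\[
\mathbf{M}(\Sigma_0)=\sum_km_k\,\mathbf{M}(\sigma_k)\ \ge\ \sum_km_k\bigl(f(b_k)-f(a_k)\bigr)=\Sigma_0(df)=\mathbf{M}(\Sigma_0)
\]
forces $f(b_k)-f(a_k)=\mathbf{M}(\sigma_k)$ for every $k$; since $f$ takes only the values $\pm\tfrac{\pi}{4}$ on $\{p_i^j\}$, this is possible only when $a_k\in\textup{spt}\,\mu^-$, $b_k\in\textup{spt}\,\mu^+$ and $\mathbf{M}(\sigma_k)=\pi/2$, i.e. each $\sigma_k$ is a quarter great circle joining some $p_{i'}^1$ to some $p_i^0$. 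Coning back, $C=\sum_km_k\a{H_k}$ with each $H_k$ a planar quadrant satisfying $\partial\a{H_k}=\a{V_i^0}-\a{V_{i'}^1}$; collecting the multiplicities at each $p_i^j$ against $Q_i^j$ exhibits $C$ as a cornered open book with the prescribed boundary.

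I expect the equality case to be the main difficulty, since the transport inequality by itself only controls the measure-theoretic tangent of $\Sigma_0$: one genuinely needs the structure theorem for $1$-dimensional minimizing currents to decompose $\Sigma_0$ into finitely many minimizing geodesic arcs (after which the equality localizes to each arc), together with routine bookkeeping of orientations and multiplicities to match $\partial C$. Should one wish to avoid quoting that structure theory, the same rigidity can be extracted by slicing $\Sigma_0$ along the level sets of $g:=d_{\mathbb{S}}(\cdot,\textup{spt}\,\mu^-)$: equality forces the oriented tangent of $\Sigma_0$ to equal $\nabla g$ with $|\nabla g|=1$ $\|\Sigma_0\|$-a.e., conservation of flux makes the slice mass constant and equal to $Q$ on $(0,\pi/2)$ and rules out mass in $\{g>\pi/2\}$, whence $\Sigma_0$ is a superposition of unit-speed gradient geodesics of $g$ issuing from $\textup{spt}\,\mu^-$, each of length $\pi/2$ and ending in $\textup{spt}\,\mu^+$.
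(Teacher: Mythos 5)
Your proof is correct and takes a genuinely different route from the paper's. The paper passes to the link of $C$ via interior regularity of $2$-dimensional area-minimizing cones (Almgren, De~Lellis--Spadaro), deduces that the link is a finite union of geodesic arcs lying in planes through the origin, applies the constancy theorem to decompose $C$ into quadrants, and then classifies the quadrants into four types (interior, orientation-preserving, orientation-reversing, same-orientation connector) and concludes by an induction on the number of quadrants, using the minimality of $\Theta(C,0)$ to rule out interior pieces and orientation-reversing cycles. Your route instead recasts the density as the mass of the link $\Sigma_0$, uses Kantorovich--Rubinstein duality to get the lower bound $\mathbf{M}(\Sigma_0)\ge W_1(\mu^+,\mu^-)=\tfrac{\pi}{2}Q$ directly from the hypothesis that all pairwise distances between $\textup{spt}\,\mu^+$ and $\textup{spt}\,\mu^-$ are exactly $\pi/2$, and extracts the rigidity from the equality chain against the explicit potential $f=d_{\mathbb{S}}(\cdot,\textup{spt}\,\mu^-)-\pi/4$. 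The transport argument is slicker for the inequality, and the equality analysis neatly forces each arc to be a quarter great circle joining the two spines. One caveat worth flagging: you invoke ``the structure theory of $1$-dimensional area-minimizing currents'' to get that $\Sigma_0$ is a finite sum of pairwise-disjoint minimizing geodesic arcs with all endpoints on $\{p_i^j\}$, but it is not automatic that the link of an area-minimizing cone is itself a length-minimizing current (it is certainly a geodesic net, but one should justify minimality, e.g.\ by a cone-comparison argument using $H_1(\mathbb{S}^{n+1})=0$). The cleaner route here, which is what the paper does, is to appeal directly to interior regularity of the $2$-dimensional cone $C$: the interior singular set of a $2$-d minimizer is discrete, hence (being a cone) at most $\{0\}$, so the link has no interior junctions, and its components are either full great circles or arcs from $\{p_i^j\}$ to $\{p_i^j\}$; full great circles and non-minimizing arcs are then killed by your equality chain, so the ``length $\le\pi$'' and minimizing property come out in the wash rather than being assumed. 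Your alternative slicing argument along $g$ is reasonable but has the same flavor of subtlety (one must worry about concentration of $\|\Sigma_0\|$ on the cut locus of $g$). With either of these fixes your approach goes through and is, in my view, a legitimate and more conceptual alternative to the paper's combinatorial induction.
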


\begin{proof}[Proof of Proposition \ref{p:classification2dcorners}]
Assume that $\Theta(C,0)$ is the minimum possible among all cones with the same boundary data. We will show that this is $Q/4$ and the equality case only holds when $C$ is a cornered open book.

By interior regularity for area-minimizing currents due to Almgren \cite{Alm} and De Lellis-Spadaro \cite{DS3,DS4,DS5} ,  $C$ being a $2$d area-minimizing cone, must be fully regular in the interior. 

The link of $C$ (i.e., its restriction to the sphere $\mathbb{S}^{n+1}$) must be stationary outside of $\mathbb{S}^{n+1} \cap \left(\bigcup_{i,j} V_i^j\right).$ This implies that $C$ in $\mathbb{S}^{n+1}$ outside of the boundary must consist of geodesic segments. The geodesic segments can either be a full geodesic or a geodesic joining two boundary points of $C$.

We must then have that 
\begin{equation*}
\textup{spt}(C) \subset \cup_{k} P_k
\end{equation*}
where $P_k$ are planes passing through $\left\{0 \right\}$.

By the constancy theorem applied to the planes $P_k$, $C$ can be decomposed as a sum of finitely many $Q_k \a{H_k}$ where $H_k$ are parts of planes and $Q_k>0$ are positive integers. If the orientations and multiplicities are the ones assigned by the constancy theorem we obtain
\begin{equation*}
C=\sum_k Q_k \a{H_k}, \; |\!|C|\!|(\bB_1)=\sum_k Q_k |H_k|(\bB_1).
\end{equation*}

We classify the pieces of planes we obtain in types - interior and 3 different types of planar pieces:
\begin{enumerate}
\item Interior - $H_k$ is a full plane: $\partial\a{H_k}=0$.
\item Piece of Type 1 - $H_k$ is a piece of a plane (an open subset of an $m$-dimensional plane) which respects the orientation: There exist $i_k^0$ and $i_k^1$ such that
$\partial\a{H_k}=\a{V_{i_k^1}^{0}}-\a{V_{i_k^1}^{1}}$.
\item Piece of Type 2 - $H_k$ is a piece of a plane (an open subset of an $m$-dimensional plane) which reverses the orientation: There exist $i_k^0$ and $i_k^1$ such that
$\partial\a{H_k}=-\a{V_{i_k^1}^{0}}+\a{V_{i_k^1}^{1}}$.
\item Piece of Type 3 - $H_k$ is a piece of a plane (an open subset of an $m$-dimensional plane) which connects two pieces of the same orientation: There exist $i_k^0,i_k^1$ and $j_k$ such that
$\partial \a{H_k}=\a{V_{i_k^0}^{j_k}}-\a{V_{i_k^1}^{j_k}}$.
\end{enumerate}
There can't be interior pieces with non-zero multiplicity. Removing such pieces keeps the boundary of $C$ but reduces $\Theta(C,0)$ and this would contradict the minimality of $\Theta(C,0)$.

We argue in what remains by induction. Let $K$ be the number of planar pieces $H_k$ in the decomposition above and $Q$ as usual is the total multiplicity of the boundary. Assume we have shown that for every cone with $K-1$ sheets in the decomposition and any multiplicity $Q$ we must have $\Theta(C,0) \geq Q/4$ and equality on the cornered open book case. Then the same holds for a cone with $K$ sheets. The base case is $K=1$, which is trivial since it is a single quarter plane.

If $C$ has a piece of type $1$ $H_K$, then removing the piece changes $Q$ to $Q-Q_k$ and $K$ to $K-1$. We can conclude this case by the inductive step, since $\Theta(H_K,0) \geq 1/4$ and the equality holds when $H_K$ is a quadrant.

If $C$ has a piece of type $3$, then we can remove it obtaining a cone $C'$ with total multiplicity $Q$ and $K-1$ sheets. This implies that $\Theta(C',0) \geq Q/4$ and thus $\Theta(C,0)>Q/4$.

If $C$ has a piece of type $2$  and no piece of type $3$, then there must be two pieces $H_1$ and $H_2$ with $\partial \a{H_1}+\partial\a{H_2}=0$, but this implies that $\a{H_1}+\a{H_2}$ is area-minimizing without boundary which gives us the same contradiction as in the interior case.\end{proof}

\begin{proof}[Proof of Lemma \ref{l:densitybound}]
In the context of higher multiplicity boundaries, the analogous to this lemma is Theorem 3.1 \cite{ian2024uniqueness}.

We wish to use monotonicity formula without error terms at points of $L$. If $T$ is any area-minimizing current with boundary $\partial T=\sum_{i,j} (-1)^jQ_i^j \a{V_i^j}$ then the monotonicity formula will have no error terms: indeed, if we take in the monotonicity formula with the error term \eqref{c:monotonerrorcorner} for the case $V_i^j=\Gamma_i^j$ then $\overline{n_{i,j}} \perp V_{i,j}$. Thus the boundary term vanishes. We denote for $p \in L$ \begin{equation*}
    \Theta(T,p)=\lim_{r \rightarrow 0} \frac{|\!|T|\!|(\bB_r)}{\omega_mr^m}.
\end{equation*} Since the monotonicity formula holds without error terms at such points, this limit exists and defines the density.

As usual, if $\frac{|\!|T|\!|(\bB_{r})}{\omega_mr^m}=\Theta(T,p)$ for all $r_0>r>0$ (for some $r_0$) and $p \in L$ then $T$ is a cone at $p$. We obtain
\begin{equation*}
\int_{\bB_1(p)}\frac{|x^{\perp}|^2}{|x|^{m+2}}d|\!|T|\!|=0,
\end{equation*}
which implies that $T$ is a cone. This argument was carried out already as part of the proof of Corollary \ref{c:monotonerrorcorner} to show existence of tangent cones, but we include it here for completeness.

Suppose that $\Theta(C,0)<Q/4$. Let $\Theta(C,0)=\Theta<Q/4$.
Since $C$ is a cone then $\Theta(C,p)=\Theta(C,rp)$ for every $p \in L$ such that $p \neq 0$ and $r>0$. Since the density is upper semi-continuous, then $\Theta(C,p) \leq \Theta$ for every $p \in L$. By taking iterative blowups at points of $L$ we obtain an area-minimizing cone $\hat{C}$ which is of the form 
\begin{equation*}
\hat{C}=\RR^{m-2} \times C' 
\end{equation*}
where $C'$ is a $2d$ area-minimizing cone with 
\begin{equation*}
\partial C'=\sum_{j=0}^1\sum_{1 \leq i \leq N^j} Q_i^j \a{\mathbf{p}_{L^{\perp}}(V_i^j)}
\end{equation*}We conclude by the inequality of Proposition \ref{p:classification2dcorners}. 

Now we wish to study the equality case. As a consequence of the following inequality for every $p \in L$ 
\begin{equation*}
\lim_{r \rightarrow \infty} \Theta(C,p,r)= \lim_{r \rightarrow \infty} \frac{ |\!|C|\!|(\bB_r(p))}{\omega_m r^m} \leq \lim_{r \rightarrow \infty}\frac{(r+|p|)^m}{r^m}\frac{|\!|C|\!|(\bB_{r+|p|})}{\omega_m (r+|p|)^m}=Q/4.
\end{equation*}
We must have that the equality in the monotonicity formula holds for every point. Then it must be a cone with respect to every point in $L$.

Thus
\begin{equation*}
C=\a{\RR^{m-2}} \times C'
\end{equation*}
where $C'$ is a $2$d area-minimizing cone in $\RR^{n+2}$ with cornered boundary 

\begin{equation*}
    \partial C'= \sum_{i,j} (-1)^jQ_i^j\a{\mathbf{p}_{L^{\perp}}(V_i^j)}.
\end{equation*}
We conclude from the equality case of Proposition \ref{p:classification2dcorners}.
\end{proof}

\section{Soft uniqueness of tangent cones at corner points}

In this section, we show uniqueness of tangent cones to hold at corner points with soft arguments. This will not be enough for the desired applications. The uniqueness of tangent cones will be made quantitative in the rest of the paper.

We will define a measure theoretic version of excess as in Section 2.1 \cite{ian2024uniqueness}, with respect to cones $C=\sum_{1 \leq i \leq N} \a{H_i}$ with $\partial \a{H_i}=\a{V_i^0}-\a{V_i^1}$ where $V_i^0$ and $V_i^1$ are halfplanes with $\partial V_i^j=(-1)^j\a{L}$ (and $L$ is a common $m-2$ dimensional plane).

The Wasserstein-2 distance between two measures $\mu^1$ and $\mu^2$ is defined as

\begin{equation*}
    W_2(\mu^1,\mu^2)^2 := \inf \left\{ \int d(x,y)^2 \, d\sigma(x,y) : (\pi_1)_{\#}\sigma = \mu^1 \: \text{and} \: (\pi_2)_{\#}\sigma = \mu^2\right\},
\end{equation*}

where $\sigma$ is a transport plan between $\mu^1$ and $\mu^2$, meaning that $\sigma$ is a measure on $\mathbb{R}^{m+n} \times \mathbb{R}^{m+n}$ such that $(\pi_1)_{\#} \sigma = \mu^1$ and $(\pi_2)_{\#} \sigma = \mu^2$.

Given two measures $\mu^1$ and $\mu^2$, we define the following distance between them:

\begin{equation*}
d(\mu^1, \mu^2) := \inf \left\{ 
W_2(\mu_1^1, \mu_1^2)^2 
+ \sum_{l=1}^2 \int \textup{dist}\left(x,\bigcup_{i,j}V_{i,j}\right)^2 \, d|\mu_2^l|  
\ \middle|\ \mu_1^l + \mu_2^l = \mu^l, l=1,2.
\right\}.
\end{equation*}

Let $\phi$ be a smooth, canonical bump function supported in $B_1(0)$, constantly equal to 1 in $B_{1/2}(0)$, and satisfying $\phi(x) \lesssim \operatorname{dist}(x, \partial B_1)^2$. We define $\phi_r := \phi(\cdot/r)$.

We define the stronger excess as
\begin{equation*}\label{e:strongexcess}
  \mathbb{E}(T,C,\bB_r) := r^{-(m+2)} d(\phi_r d|\!|T|\!|, \phi_r d|\!|C|\!|).
\end{equation*}

We are going to restrict ourselves to a narrower setup which will be more than enough to produce an example of an essential boundary singularity. 
\begin{assumption} \label{a:corneredboundary}
Suppose we are under the Assumptions of 
\ref{a:corneredboundarymonotonicity}.
Let $N_{\Gamma_i^j}(p)$ denote the vector in $T_p(\Gamma_i^j)$ which is normal $M$.
\begin{itemize}
\item We assume that for every $p \in M \cap \bB_1$, $\forall 1 \leq i_0 \leq N^0$ and $\forall 1 \leq i_1 \leq N^1$
\begin{equation*}
N_{\Gamma_{i_0}^0}(p) \perp N_{\Gamma_{i_1}^1}(p).
\end{equation*}
\item We also assume that there is a universal constant $\alpha_0$ such that for every $(i_0,j_0) \neq (i_1,j_1)$ and $1 \leq i_0 \leq N^{j_0}$, $1 \leq i_1 \leq  N^{j_1}$, $0 \leq j_0 \leq j_1$
\begin{equation*}
2^{1000}\alpha_0\leq \left \langle N_{\Gamma_{i_0}^{j_0}}(p), N_{\Gamma_{i_1}^{j_1}}(p)  \right \rangle.
\end{equation*}
\end{itemize}
\end{assumption}

The angle condition is a reasonable assumption since $\Gamma_i^j$ are assumed to be smooth and transversal. In practice this only will mean that we will not track the fine dependence of the angle in the Lipschitz approximations and decay, which is irrelevant for the applications we prove in this paper.
This setting is different than \cite{ian2024uniqueness}, since the tangent cone is automatically unique by soft arguments since the boundary data does not give any degree of freedom. The interesting question in this setting instead is the rate of decay which we do need for the application in mind. 

Assumption \ref{a:corneredboundary} provides a good setup for this problem because of the following lemma.

\begin{lemma}\label{l:uniquetangent}
Let $T$, $\Gamma_i^j$, $M$, $\Sigma$,  be as in Assumption \ref{a:corneredboundary}. Then $\Theta(T,p) \geq Q/4$, in the equality case there is a unique tangent cone at $T$ which is a cornered open book.
\end{lemma}
The importance of obtaining the lower density bound is that we will use Leon Simon's uniqueness of cylindrical tangent cones strategy \cite{simon1993cylindrical} and this provides us a no good density gaps condition. 
\begin{remark}A slightly more general version of Lemma \ref{l:uniquetangent} holds with no modification. Under Assumption \ref{a:corneredboundarymonotonicity}, if every tangent cone at $p$ is of the form
\begin{equation*}
    C=\sum_{i=1}^Q \a{H_i}
\end{equation*} where each $H_i$ is an open subset of a plane, then the tangent cone is unique.
\end{remark}

\begin{proof}[Proof of Lemma \ref{l:uniquetangent}]
We refer to Definition \ref{d:corneredopenbookboundarydata} of cornered open books with the right boundary data. 

Given $C \in \mathcal{C}_p$, it is easy to see that $\mathbb{E}(T,C,\bB_r)$ is continuous on $r$. We know that $\mathcal{C}_p$ is a finite set, since there are finitely many possibilities for choosing $Q$ quadrants with the right boundary data. Each of them corresponds to a function $l:\left \{1,...,Q \right\} \rightarrow \left\{1,...,N^0\right\} \times \left\{1,...,N^1\right\}$ which associates a quadrant $H_i$ with it's boundary $T_p(\Gamma_{l(i)_0}^0)$ with positive orientation and $T_p(\Gamma_{l(i)_1}^1)$ with negative orientation (where $l(i)_0$,$l(i)_1$ are the coordinates of $l(i)$). The function uniquely determines the cornered open book. This implies that $\mathcal{C}_p$ is a finite set and thus it must be disconnected in the Wasserstein distance.

We consider the function $f: (0,1) \rightarrow \RR^{\# \mathcal{C}_p}$ given by

\begin{equation*}
f(r):=(\mathbb{E}(T,C,\bB_r))_{C \in \mathcal{C}_p}.
\end{equation*}

We consider all the possible limits of the function $f(r)$ along a sequence of radius going to zero. This must be a connected subset of $\RR^{\# \mathcal{C}_p}$ because $f$ is a continuous function of $r$ and the limit set of a continuous function is connected.

We know that there are $\#\mathcal{C}_p$ possible values the limit of $f$ can take which is 

\begin{equation*}
(\mathbb{E}(C,C',\bB_1))_{C \in \mathcal{C}_p}.
\end{equation*}

This is a disconnected set of $\RR^{\# \mathcal{C}_p}$ and thus the only option is for it to have a single element and thus a single tangent cone.
\end{proof}
\section{Linear problem}
In this section, we introduce an appropriate linear model for the nonlinear problem with singular boundaries of the type considered in this paper. We refer to \cite{DS1} for the detailed setup for the linear problem in the interior and Section 8 of \cite{ian2024uniqueness} for the boundary setting.

Given $\Omega \subset \RR_+^m$ a Lipschitz domain there is a well defined continuous trace operator 
\begin{equation*}
    \circ|_{\partial \Omega}:W^{1,2}(\Omega,\mathcal{A}_Q(\RR^n)) \rightarrow  L^2(\partial \Omega, \mathcal{A}_Q(\RR^n)).
\end{equation*}

\begin{definition}
Let $\Omega$ be a Lipschitz domain. We say $u$ is Dir-minimizing in $\Omega$ if
\begin{equation*}
    u \in W^{1,2}(\overline{\Omega},\mathcal{A}_Q(\RR^n))
\end{equation*}
and for every $v \in  W^{1,2}(\overline{\Omega},\mathcal{A}_Q(\RR^n)): u \res \partial \Omega = v\res \partial \Omega$  \begin{equation*}
\int_ {\Omega} |Du|^2 \leq \int_{\Omega}|Dv|^2.
\end{equation*}
\end{definition}

We define the quarter balls 
\begin{equation*}
\bB_{r}^{1/4}:=\bB_r \cap  \left(\RR^+ \times \RR^+\times \RR^{m-2} \right)
\end{equation*}

In order to keep the notation consistent with the rest of this work, we define the quarter plane $H:= \RR^+ \times \RR^+ \times \RR^{m-2}$. We denote the boundary as $V^0=\left\{0\right\} \times \RR^+ \times \RR^{m-2}$ and $V^1:=\RR^+  \times \left\{0\right\} \times \RR^{m-2}$. We denote the corner as $L:=\left\{0\right\} \times \left\{0\right\} \times \RR^{m-2}.$

In this work it will be enough for us to consider the linear problem on the quarter space. More precisely we consider the following setup. 
\begin{assumption}[Linear problem on the quarter space]\label{a:linearproblemquarterspace}
Given a Dir-minimizing function $u \in W^{1,2}(\bB_1^{1/4},\mathcal{A}_Q(\RR^n))$. The boundary linear problem on the quarter space consists of functions $u$ such that $u \res \left(V^0 \cup V^1 \right)\cap \bB_1=Q\a{0}$.
\end{assumption}
We adapt the classical Almgren frequency function to this quarter-domain setting at points in the corner $L$.
\begin{definition}
For $x \in L \cap \bB_1$, $0<r<1-|x|$, we define the Dirichlet energy, $L^2$ spherical height and the frequency function:
\begin{equation*}
    D(x,r)=\int_{\bB_r^{1/4}(x)} |Du|^2, \; H(x,r)= \int_{\partial \bB_r^{1/4}(x)} |u|^2 \; \textup{and} \; I(x,r)= \frac{rD(x,r)}{H(x,r)}
\end{equation*}
\end{definition}

The arguments here follow those of Section 8 in \cite{ian2024uniqueness}, which studies the higher multiplicity boundary linear problem.
\begin{theorem} \label{t:monotonfrequency} The frequency function for a function $u$ satisfying Assumption \ref{a:linearproblemquarterspace} is monotone. For any $x \in L \cap \bB_1$ either there exists $\rho$ such that $u|_{B_{\rho}(x)}\equiv 0$ or $I(x,r)$ is an absolutely continous non decreasing positive function of $r$ on $(0,1-|x|)$.
\end{theorem}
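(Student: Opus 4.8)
The statement to prove is the monotonicity of the boundary frequency function $I(x,r)=rD(x,r)/H(x,r)$ for Dir-minimizing $Q$-valued maps on the quarter space vanishing on $V^0\cup V^1$, together with the dichotomy that either $u$ vanishes identically near $x$ or $I$ is absolutely continuous and nondecreasing. Since the paper explicitly flags that ``the arguments here are the same as in the section for the linear problem in \cite{ian2024uniqueness}'', the proof should be modeled on the classical Almgren frequency monotonicity argument (as in \cite{DS1}) adapted to the quarter-ball geometry, with the key new point being that the corner $L$ plays the role the flat boundary $\{x_1=0\}$ plays in the half-space case. The plan is: (i) establish the two first-variation (Rellich-type) identities that relate derivatives of $H$ and $D$ to boundary integrals; (ii) use the homogeneity of the domain $\bB_r^{1/4}(x)$ with respect to dilations centered at $x\in L$, together with the vanishing of $u$ on $V^0\cup V^1$, to kill the spurious boundary terms; (iii) compute $I'(r)$ and recognize it as (a multiple of) a Cauchy–Schwarz defect, hence nonnegative.

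\textbf{Key steps.} First I would record the outer-variation and inner-variation identities for a Dir-minimizing $Q$-valued function. The inner variation (domain-deformation), applied with a vector field that is radial from $x$ and tangent to the two faces $V^0,V^1$ and to $L$ — this is where I use that $x\in L$ so dilations $y\mapsto x+\lambda(y-x)$ preserve the quarter ball — yields the identity
\begin{equation*}
(m-2)D(x,r)=r\int_{\partial\bB_r^{1/4}(x)}|Du|^2-2r\int_{\partial\bB_r^{1/4}(x)}|\partial_\nu u|^2,
\end{equation*}
where $\partial_\nu u$ is the radial derivative. Here the boundary contributions along $V^0\cup V^1$ drop out because the deformation field is tangential there and $u\equiv Q\a{0}$ on those faces, so no boundary term survives — this is the exact analogue of the Neumann-type cancellation in the half-space case and is the crux of why the frequency is still monotone at corner points. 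Second, the outer variation (testing with $\varphi(|y-x|)u$) gives
\begin{equation*}
D(x,r)=\int_{\partial\bB_r^{1/4}(x)}\sum_i\langle u_i,\partial_\nu u_i\rangle,
\end{equation*}
again using that the trace of $u$ on $V^0\cup V^1$ vanishes so the portion of $\partial\bB_r^{1/4}(x)$ lying on those faces contributes nothing. Third, I would differentiate: one computes $H'(r)=\tfrac{m-1}{r}H(r)+2\int_{\partial\bB_r^{1/4}(x)}\sum_i\langle u_i,\partial_\nu u_i\rangle=\tfrac{m-1}{r}H(r)+2D(r)$ and, using the inner variation identity, $D'(r)=\tfrac{m-2}{r}D(r)+2\int_{\partial\bB_r^{1/4}(x)}|\partial_\nu u|^2$. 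Plugging these into $I(r)=rD(r)/H(r)$ and simplifying yields
\begin{equation*}
I'(r)=\frac{2r}{H(r)^2}\left(\Big(\int_{\partial\bB_r^{1/4}(x)}|\partial_\nu u|^2\Big)\Big(\int_{\partial\bB_r^{1/4}(x)}|u|^2\Big)-\Big(\int_{\partial\bB_r^{1/4}(x)}\sum_i\langle u_i,\partial_\nu u_i\rangle\Big)^2\right)\geq 0
\end{equation*}
by Cauchy–Schwarz. Finally, for the dichotomy: $H(x,r)>0$ for all $r$ unless $u\equiv 0$ on some ball (if $H(x,r_0)=0$ then $u$ vanishes on $\partial\bB_{r_0}^{1/4}(x)$ and, being Dir-minimizing with zero trace on that whole piece of boundary, $u\equiv 0$ on $\bB_{r_0}^{1/4}(x)$); and when $H>0$ the formulas above show $I$ is a ratio of absolutely continuous positive functions, hence absolutely continuous, and nonnegativity of $I'$ gives monotonicity; positivity of $I$ is immediate from $D,H>0$.

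\textbf{Main obstacle.} The routine Almgren computation is standard once the domain and boundary conditions are in place, so the real work — and the only genuinely new ingredient compared with \cite{DS1} — is justifying rigorously that on the quarter-ball one may choose inner-variation vector fields that are simultaneously radial from $x$, compactly supported in $\bB_r^{1/4}(x)$, and tangent to both faces $V^0,V^1$ and to the edge $L$, and that the resulting boundary terms on $V^0\cup V^1$ genuinely vanish for a $W^{1,2}$ $Q$-valued map with the prescribed trace. Because $x\in L$ the pure dilation field $y\mapsto y-x$ already has all three tangency properties, so this is clean; the only mild care needed is the trace/approximation argument showing the boundary integrals are legitimate (one approximates $u$ by Lipschitz $Q$-valued maps à la \cite{DS1} and passes to the limit). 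I would also note that exactly as in \cite{ian2024uniqueness} no curvature error terms appear here since the problem is genuinely linear (flat ambient, flat faces), so there is no need for the exponential weights used in the geometric monotonicity formula of Section~2.
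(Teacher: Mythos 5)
Your proposal is correct and follows essentially the same approach as the paper's, which (via the explicit reference to the linear-problem section of \cite{ian2024uniqueness} and to \cite{DS1}) is the standard Almgren frequency-monotonicity argument via inner and outer variations. You correctly identify the only genuinely new point relative to the half-space case: since $x\in L$, the radial dilation field is tangent to both faces $V^0, V^1$ and to $L$, so the inner variation preserves the boundary conditions and the boundary terms on $V^0\cup V^1$ drop out just as in \cite{DS1}.
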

Observe a minor change here with \cite{ian2024uniqueness} and \cite{DS1}. In this setting, we don't claim uniform convergence, since have not proved Hölder continuity of $u$ before this. We are not going to address the Hölder continuity for functions $u$ satisfying Assumption \ref{a:linearproblemquarterspace} since we won't need it.
\begin{theorem} 
 Let $f \in W^{1,2}(\bB^{1/4}_1,\mathcal{A}_Q)$ be Dir-minimizing with zero boundary data and $\textup{Dir}(f,B_{\rho})>0$ for every $\rho \leq 1$. Then for any sequence $\rho_k \rightarrow 0$ there exists a subsequence, not relabeled, such that $f_{0,\rho}$ coverges locally strongly in $W^{1,2}(H)$ to a Dir-minimizing function $g:H \rightarrow \mathcal{A}_Q(\RR^n)$ with the following properties:
 \begin{enumerate}
     \item $\textup{Dir}(g,\bB_1^{1/4})=1$ and $g| \Omega$ is Dir-Minimizing for any bounded $\Omega$.
     \item $g(x)=|x|^{\alpha}g\left(\frac{x}{|x|}\right)$, where $\alpha=I_{0,f}(0)$ is the frequency of $f$ at $0$.
 \end{enumerate}
\end{theorem}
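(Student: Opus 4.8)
The plan is to run the Almgren--De Lellis--Spadaro blow-up scheme (as in \cite{DS1} and the linear part of \cite{ian2024uniqueness}), now centered at the corner and driven by the boundary frequency of Theorem~\ref{t:monotonfrequency}. For $\rho\in(0,1)$ I would set
\[
f_{0,\rho}(x):=\bigl(\rho^{2-m}D(0,\rho)\bigr)^{-1/2}\,f(\rho x),
\]
so that $f_{0,\rho}\in W^{1,2}(\bB^{1/4}_{1/\rho},\mathcal A_Q(\RR^n))$ is Dir-minimizing, still solves the quarter-space problem of Assumption~\ref{a:linearproblemquarterspace} (boundary datum $Q\a 0$ on $(V^0\cup V^1)\cap\bB_{1/\rho}$), and satisfies $\textup{Dir}(f_{0,\rho},\bB^{1/4}_1)=1$; the $H(0,\rho)$-normalization is an equally good choice, the two differing by the factor $I(0,\rho)$. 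Two elementary facts drive everything: the frequency is scale invariant, $I_{f_{0,\rho}}(0,r)=I_f(0,\rho r)$, and by Theorem~\ref{t:monotonfrequency} together with the hypothesis $\textup{Dir}(f,B_\rho)>0$ the function $r\mapsto I_f(0,r)$ is positive, nondecreasing and bounded, so $I_f(0,\rho r)\to\alpha:=I_{0,f}(0)$ as $\rho\to0$ for every fixed $r$, and the frequencies $I_{f_{0,\rho}}(0,r)$ stay bounded on compact subsets of $(0,\infty)$.

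Next I would convert the frequency bound into uniform energy bounds on dilating balls. Using the standard identities $\tfrac{d}{dr}\log\!\bigl(r^{1-m}H(0,r)\bigr)=\tfrac{2}{r}I(0,r)$ (the boundary contributions along $V^0\cup V^1$ vanish since $u=Q\a 0$ there) and $D(0,r)=\tfrac1r H(0,r)I(0,r)$, and the two-sided bound $0<c\le I(0,s)\le I(0,1/2)$ for $s\le1/2$ (the lower bound coming from a trace--Poincaré inequality on the quarter ball for functions with vanishing Dirichlet datum on $V^0\cup V^1$), one gets a doubling estimate and hence $\textup{Dir}(f_{0,\rho},\bB^{1/4}_R)=D(0,R\rho)/D(0,\rho)\le C_R$ uniformly in $\rho$, for $R\rho\le1/2$. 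With these bounds I would invoke the compactness theorem for Dir-minimizers (the quarter-space analogue of the usual statement: bounded energy forces, along a subsequence, local strong $W^{1,2}$ convergence to a Dir-minimizer, with traces passing to the limit), apply it on $\bB^{1/4}_R$ for $R=1,2,3,\dots$, and take a diagonal subsequence. This produces $g\colon H\to\mathcal A_Q(\RR^n)$ which is Dir-minimizing on every bounded $\Omega\subset H$ (any such $\Omega$ lies in some $\bB^{1/4}_R$, and minimality there restricts to minimality on $\Omega$ by the usual competitor-extension argument), satisfies $g\res(V^0\cup V^1)=Q\a0$ by trace continuity, and has $\textup{Dir}(g,\bB^{1/4}_1)=1$ because strong convergence preserves the energy on $\bB_1^{1/4}$; in particular $g\not\equiv0$.

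It then remains to prove the homogeneity, and this is the step I expect to be the main obstacle. Strong $W^{1,2}_{\mathrm{loc}}$ convergence, together with the continuity in $r$ of $H(0,\cdot)$ and $D(0,\cdot)$ for $g$ (from the local energy estimates and the Euler--Lagrange system satisfied by $g$), gives $I_g(0,r)=\lim_\rho I_{f_{0,\rho}}(0,r)=\lim_\rho I_f(0,\rho r)=\alpha$ for every $r>0$, so the frequency of $g$ is constant. Feeding this into the equality case of the frequency monotonicity of Theorem~\ref{t:monotonfrequency}---whose proof exhibits $I'(r)$ as a nonnegative multiple of a Cauchy--Schwarz defect on $\partial\bB^{1/4}_r$ that vanishes precisely when $\partial_r g=\tfrac{\alpha}{r}g$ for a suitable measurable selection---one is forced to $g(x)=|x|^{\alpha}g(x/|x|)$. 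Making this last implication rigorous in the $\mathcal A_Q$-valued setting (handling selections in the equality case and integrating $\partial_r g=\tfrac{\alpha}{r}g$ to genuine $\alpha$-homogeneity) is the delicate point; a secondary technical issue is justifying $I_{f_{0,\rho}}(0,r)\to I_g(0,r)$, i.e.\ passing the spherical energies $H(0,r)$ and not merely the interior energies to the limit. Both are carried out exactly as in \cite{DS1} and the corresponding section of \cite{ian2024uniqueness}, which is why the arguments here are, as claimed, the same.
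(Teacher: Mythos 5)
Your proposal reproduces the standard Almgren--De~Lellis--Spadaro blow-up scheme (normalize by $\bigl(\rho^{2-m}D(0,\rho)\bigr)^{-1/2}$, use frequency monotonicity and scale invariance to get doubling and uniform $W^{1,2}$ bounds on dilating quarter balls, pass to a diagonal subsequence via the compactness theorem for Dir-minimizers, then use constancy of the limiting frequency together with the equality case of the monotonicity formula to extract $\alpha$-homogeneity), and this is exactly what the paper intends: it explicitly defers to the linear-problem section of \cite{ian2024uniqueness} and to \cite{DS1}, giving no independent argument. The two technical points you flag (passing spherical energies $H(0,r)$ to the limit, and integrating the Cauchy--Schwarz equality case into genuine homogeneity in the $\mathcal{A}_Q$-valued setting) are handled verbatim as in those references, so no gap remains.
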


The following two lemmas have analogous proofs to those in Section 8 of \cite{ian2024uniqueness} for the linear problem. We will use them to lower bound the frequency in Lemma \ref{l:frequencybound}.
\begin{lemma}{(Cylindrical Blowups)}\label{l:cylindrical}
Assume that $m \geq 3$. Let $g \in W^{1,2}(H,\mathcal{A}_Q(\RR^n))$ be an $\alpha$ homogeneous and Dir-Minimizing function with zero boundary value at $V^0 \cup V^1$, $\textup{Dir}(g,\bB_1^{1/4})>0$ and $\beta=I_{z,g}(0)$. Suppose $z=\frac{e_m}{2}$ ($z \in L$). Then, the tangent functions $h$ to $g$ at $z$ satisfy
\begin{equation*}
h(x_1,...,x_{m-1},x_m)=\hat{h}(x_1,...,x_{m-1})
\end{equation*} where $\hat{h} \in W^{1,2}(H \cap \RR^{m-1} \times \left\{0 \right\},\mathcal{A}_Q(\RR^n))$ is a Dir-Minimizing Q valued function with zero boundary values on $\RR^{m-1}_+$
\end{lemma}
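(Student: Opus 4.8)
\textbf{Proof proposal for Lemma \ref{l:cylindrical}.}

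The plan is to mimic the standard dimension-reduction / cylindrical-blowup argument for Dir-minimizers (as in \cite{DS1} and the interior part of \cite{ian2024uniqueness}), adapted to the quarter-space geometry, and to exploit the fact that the blowup point $z=\frac{e_m}{2}$ lies on the spine $L$ in the $x_m$-direction. First I would record the basic facts that come for free from the hypotheses: since $g$ is $\alpha$-homogeneous with respect to $0$, Dir-minimizing on every bounded domain of $H$, with zero trace on $V^0\cup V^1$, the frequency $I_{z,g}$ is well defined and monotone at the interior-of-$L$ point $z$ (this is Theorem \ref{t:monotonfrequency} applied after recentering; $z$ is a legitimate corner point because $L$ is a full $(m-2)$-plane and $z\in L$), so a tangent function $h$ at $z$ exists along a subsequence of rescalings $g_{z,\rho_k}$, converging locally strongly in $W^{1,2}$, and $h$ is again Dir-minimizing on bounded domains, has zero trace on $V^0\cup V^1$, and is $\beta$-homogeneous with respect to $z$ — equivalently, since the rescalings recenter at $z$, $h$ is $\beta$-homogeneous with respect to the origin after translating $z$ to $0$.

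The key additional structure is translation invariance along $e_m$. Here I would argue exactly as in the interior cylindrical-blowup lemma: the point $z$ sits on the line $\RR e_m\subset L$, and the cone $g$ is invariant under dilations centered at $0$; combining the homogeneity about $0$ with the homogeneity of the tangent $h$ about $z=\tfrac12 e_m$ forces $h$ to be invariant under translations in the $e_m$ direction. Concretely, one shows that for the blowup sequence $g_{z,\rho}(x)=\frac{g(z+\rho x)}{(\rho^{-m}\!\int_{\partial\bB^{1/4}_\rho(z)}|g|^2)^{1/2}}$ the translates $g_{z,\rho}(x+te_m)$ and $g_{z,\rho}(x)$ become arbitrarily close in $W^{1,2}_{loc}$ as $\rho\to0$ (because a bounded $e_m$-translation, after the $1/\rho$ rescaling, is an $O(\rho)$-translation of the fixed cone $g$, and $g\in W^{1,2}_{loc}$ so translations are $W^{1,2}$-continuous), whence the limit $h$ satisfies $h(x+te_m)=h(x)$ for all $t$. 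Therefore $h(x_1,\dots,x_{m-1},x_m)=\hat h(x_1,\dots,x_{m-1})$ for some $\hat h$ defined on $H\cap(\RR^{m-1}\times\{0\})$. Since $H=\RR^+\times\RR^+\times\RR^{m-2}$, removing the $x_m$-coordinate leaves $\RR^+\times\RR^+\times\RR^{m-3}$; the vanishing-trace condition on $V^0\cup V^1$ descends to the condition that $\hat h$ has zero boundary value on the relevant boundary faces, which is what is denoted $\RR^{m-1}_+$ in the statement (the positive half of the reduced boundary). Finally, the slicing/Fubini argument for Dir-minimality under the removal of an invariant direction — again identical to the interior case — gives that $\hat h$ is itself Dir-minimizing as a $Q$-valued map on the reduced quarter domain, and $\mathrm{Dir}(\hat h,\cdot)>0$ because $\mathrm{Dir}(h,\bB^{1/4}_1)>0$ and the energy is the product of the reduced energy with a length factor in $e_m$.

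The main obstacle I anticipate is \emph{not} the translation-invariance step (that is soft, given $g\in W^{1,2}_{loc}$ and homogeneity), but rather making sure the frequency machinery is actually available at the point $z\in L$ with all the monotonicity and almost-monotonicity estimates one needs to extract a homogeneous tangent function: the monotonicity of $I_{z,g}$ requires the good-corner structure near $z$, and one must check that the model quarter-space $H$ with spine $L=\{0\}\times\{0\}\times\RR^{m-2}$ is a good corner in the sense of Definition \ref{d:goodcorner} at every point of $L$ (it is, trivially, since everything is already flat, so $\Phi_p=\mathrm{id}$), and then invoke Theorem \ref{t:monotonfrequency} after recentering. A secondary technical point is the passage from $W^{1,2}$-convergence of the blowups to the Dir-minimality and homogeneity of the limit $h$, and then the Fubini-type argument descending Dir-minimality to $\hat h$; both are standard for $\mathcal{A}_Q$-valued maps but must be carried out with the quarter-space boundary conditions, and since the excerpt says these "have analogous proofs to those in \cite{ian2024uniqueness}", I would state them as such and only indicate the modifications forced by the two boundary faces rather than one.
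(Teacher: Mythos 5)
Your overall strategy is the same as the one the paper (which simply defers to the analogous interior argument in \cite{ian2024uniqueness} and \cite{DS1}) has in mind: extract a $\beta$-homogeneous tangent $h$ at $z\in L$ via frequency monotonicity and $W^{1,2}$-compactness, prove translation invariance of $h$ along $e_m$, and then descend Dir-minimality and the zero-trace condition to the reduced quarter domain by a Fubini/slicing argument. The remark that $H$ with spine $L$ is trivially a good corner (so Theorem~\ref{t:monotonfrequency} applies after recentering) and the reading of the notation $\RR^{m-1}_+$ are both correct.

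The one place where your proposal is genuinely off is the parenthetical explanation of translation invariance. You claim that $g_{z,\rho}(\cdot+te_m)$ and $g_{z,\rho}(\cdot)$ become $W^{1,2}_{\mathrm{loc}}$-close ``because a bounded $e_m$-translation, after the $1/\rho$ rescaling, is an $O(\rho)$-translation of $g$, and translations are $W^{1,2}$-continuous.'' That argument does not close: the numerator $g(z+\rho x + \rho t e_m) - g(z+\rho x)$ is indeed small, but so is the normalization $H(z,\rho)^{1/2}\sim \rho^{\beta+(m-1)/2}$, and $W^{1,2}$-continuity of translations gives you no rate that can beat this. What actually forces the invariance is that $te_m = 2tz$ points along the ray of homogeneity from $0$ through $z$, so that
\begin{equation*}
g_{z,\rho}(y + 2tz) \;=\; \frac{g\big((1+2t\rho)z + \rho y\big)}{H(z,\rho)^{1/2}} \;=\; (1+2t\rho)^{\alpha}\,\frac{H(z,\rho')^{1/2}}{H(z,\rho)^{1/2}}\, g_{z,\rho'}(y),
\qquad \rho' := \frac{\rho}{1+2t\rho},
\end{equation*}
by the $\alpha$-homogeneity of $g$ about the origin. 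Since $\rho'/\rho\to1$, $(1+2t\rho)^\alpha\to1$ and $H(z,\rho')/H(z,\rho)\to1$, passing to the limit along the blowup subsequence yields $h(y+2tz)=h(y)$. Your opening sentence of that paragraph (``combining the homogeneity about $0$ with the homogeneity of the tangent $h$ about $z$'') already identifies the right mechanism; the concrete justification should be replaced by the computation above rather than the continuity-of-translations heuristic.
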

\begin{lemma}\label{Lem:Invariance}Let $h$ be a $\alpha$ homogeneous Dir-minimizing function as in Assumption \ref{a:linearproblemquarterspace}. Suppose that $I(z)=\alpha$ for $z=e_m/2$. Then $h(x,s)=\hat{h}(x)$ for a Dir-minimizing function $\hat{h}$ on $H \cap \RR^{m-1} \times \left \{ 0\right\}$ with zero boundary data at $\left( V^0 \cup V^1 \right)\cap \left(\RR^{m-1} \times \left\{0\right\} \right)$.
\end{lemma}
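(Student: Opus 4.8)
The plan is to reduce Lemma \ref{Lem:Invariance} to Lemma \ref{l:cylindrical} by showing that the hypothesis $I(z)=\alpha$ forces $g$ (here playing the role of the function in the statement, renamed $h$) to be invariant in the $x_m$-direction. First I would recall the general principle, already in use for the interior and smooth-boundary linear problems in \cite{ian2024uniqueness} and \cite{DS1}: a Dir-minimizer is invariant along a direction $v$ precisely when the frequency at the origin equals the frequency at the point $z=v/2 \in L$. Indeed, for a homogeneous function the frequency is constant along rays, so $I_{0}(h)=\alpha$ by $\alpha$-homogeneity; on the other hand, by monotonicity of the frequency (Theorem \ref{t:monotonfrequency}) along segments from $0$ towards $z$ and then from $z$ outwards, the equality $I(z)=\alpha$ can hold only if the frequency is constant on the whole half-line $\{tz : t>0\}$. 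Constancy of the frequency on an interval is the rigidity case of the monotonicity formula, and its proof (exactly as in \cite[Section on the linear problem]{ian2024uniqueness}) yields that $h$ is homogeneous with respect to dilations centered at $z$, i.e. $h$ is homogeneous about two distinct points $0$ and $z$.

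Second, I would invoke the standard consequence that a function homogeneous about two distinct points is translation-invariant along the line through them: writing $z=e_m/2$, homogeneity about $0$ and about $z$ together imply $h(x+te_m)=h(x)$ for all $t$, hence $h(x_1,\dots,x_{m-1},x_m)=\hat h(x_1,\dots,x_{m-1})$ for some $\hat h$. This step needs only the elementary observation that the two homogeneity relations generate, via composition, the translations along $e_m$; the argument is identical to the one establishing the ``translation invariance along the spine'' in cylindrical blowup arguments, so I would not reproduce the computation. One must check that the translates stay in $W^{1,2}$ and remain Dir-minimizing, which is immediate since translation in the $x_m$-direction is a symmetry of the quarter space $H=\RR^+\times\RR^+\times\RR^{m-2}$ and of the boundary pieces $V^0, V^1$.

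Third, I would transfer the boundary condition: since $h\res (V^0\cup V^1)\cap\bB_1 = Q\a{0}$ (Assumption \ref{a:linearproblemquarterspace}) and $h$ does not depend on $x_m$, the trace of $\hat h$ on $(V^0\cup V^1)\cap(\RR^{m-1}\times\{0\})$ is $Q\a{0}$ as well; and minimality of $\hat h$ as a $Q$-valued Dir-minimizer on the quarter space $H\cap(\RR^{m-1}\times\{0\})$ follows from the minimality of $h$ by a slicing/Fubini argument in the $x_m$ variable, exactly as in Lemma \ref{l:cylindrical}. This gives the stated conclusion.

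The main obstacle I expect is the rigidity step: verifying carefully that equality of the frequency at $0$ and at $z=e_m/2$ (with $z$ in the corner spine $L$) forces constancy of $r\mapsto I(z,r)$ on all of $(0,\infty)$ rather than merely on a subinterval, and then extracting from that the homogeneity about $z$ in the cornered setting. This requires the error-term analysis in the monotonicity of the frequency at corner points, i.e. the same computation underlying Theorem \ref{t:monotonfrequency}; the geometry of the quarter space means one must keep track of the boundary integrals on both $V^0$ and $V^1$, but since the trace vanishes on both these contributions drop out and the argument runs parallel to \cite{ian2024uniqueness}. Everything else is soft.
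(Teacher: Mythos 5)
Your proposal is correct and matches the standard De Lellis--Spadaro argument, which is exactly what the paper relies on: the paper gives no proof of Lemma~\ref{Lem:Invariance}, stating only that it is analogous to the smooth-boundary version in \cite{ian2024uniqueness}. One clarification on the step you single out as the main obstacle: the constancy of $r\mapsto I(z,r)$ does not really come from ``error-term analysis'' (in the flat linear quarter-space setting the frequency monotonicity of Theorem~\ref{t:monotonfrequency} is exact, and the boundary terms vanish because the trace is $Q\a{0}$ on both $V^0$ and $V^1$); rather it comes from combining the monotone lower bound $I(z,r)\geq I(z,0^+)=\alpha$ with the observation that, by $\alpha$-homogeneity of $h$ at $0$, the blowdown of $h$ at $z$ coincides with the blowdown at $0$, namely $h$ itself, so that $\lim_{r\to\infty}I(z,r)=\alpha$ and therefore $I(z,\cdot)\equiv\alpha$. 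With this the rigidity case of the frequency monotonicity yields homogeneity about $z$, and the remaining steps of your proposal (homogeneity about two centers gives translation invariance along $e_m$, the trace and minimality transfer to the slice) go through as you describe.
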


\begin{lemma}\label{l:frequencybound} Let $u$ be as in Assumptions of the linear problem on the quarter space \ref{a:linearproblemquarterspace}. Assume $u$ is $\alpha$-homogeneous. Then $\alpha \geq 2$. Furthermore if $u$ is $2$-homogeneous then if $\theta$ denotes the angle on the quarter-plane (in the first two components)
\begin{equation*}
u(x)= \sum_{i=1}^k \a{v_i \cos(2\theta)|x|^2}.
\end{equation*}
\end{lemma}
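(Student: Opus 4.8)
The plan is to combine the lower bound on the frequency of homogeneous solutions on the quarter space with the cylindrical blowup lemmas (Lemma \ref{l:cylindrical} and Lemma \ref{Lem:Invariance}), reducing the $2$-homogeneous case to an explicit one-dimensional eigenvalue problem on an interval. First I would establish $\alpha\geq 2$ by an induction on the dimension $m$. For $m=2$ the domain $\bB_1^{1/4}$ is a plane sector of opening $\pi/2$, and a $Q$-valued Dir-minimizing map vanishing on both radial boundary edges $V^0,V^1$ can be decomposed (away from its zero set, using the selection results for Dir-minimizers as in \cite{DS1}) into classical harmonic pieces with zero boundary data on the two edges; the lowest frequency of such a harmonic function on a $\pi/2$-sector is $\pi/(\pi/2)=2$, realized by $\mathrm{Im}(z^2)=\rho^2\sin(2\theta)$, so $\alpha\geq 2$. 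For the inductive step, suppose $u$ is $\alpha$-homogeneous with $\alpha<2$; take a point $z\in L$ with $z\neq 0$, say $z=e_m/2$, and pass to a tangent function $h$ at $z$. By monotonicity of the frequency the frequency of $h$ at $0$ is $I_{z,u}(0)\leq\alpha<2$, and by Lemma \ref{Lem:Invariance} (or Lemma \ref{l:cylindrical}) $h$ splits off the $e_m$-direction, i.e.\ $h(x,s)=\hat h(x)$ for a Dir-minimizing $Q$-valued function $\hat h$ on the quarter space $H\cap(\RR^{m-1}\times\{0\})$ with zero boundary data and frequency $<2$, contradicting the inductive hypothesis in dimension $m-1$. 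Iterating down to $m=2$ gives the base-case contradiction, so $\alpha\geq 2$ in every dimension.

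\textbf{The $2$-homogeneous case.} Now assume $u$ is exactly $2$-homogeneous. Running the same cylindrical blowup argument at $z=e_m/2$: the frequency of $u$ at $z$ is at least $2$ by the bound just proved, and at most $2$ by monotonicity along $L$ (since $u$ is $2$-homogeneous at $0$), hence equals $2$; therefore Lemma \ref{Lem:Invariance} applies and $u$ is translation-invariant in the $e_m$-direction. Peeling off all $m-2$ directions of $L$ this way, we reduce to a $2$-homogeneous Dir-minimizing $Q$-valued function $w$ on the planar quarter disk, vanishing on the two edges $\theta=0$ and $\theta=\pi/2$. Writing $w(x)=|x|^2\,\varphi(\theta)$ with $\varphi:[0,\pi/2]\to\mathcal A_Q(\RR^n)$, Dir-minimality forces each sheet of $\varphi$ to be an eigenfunction of $-d^2/d\theta^2$ on $[0,\pi/2]$ with Dirichlet conditions and eigenvalue matching homogeneity $2$, i.e.\ $\varphi''=-4\varphi$ (the eigenvalue for a $2$-homogeneous function is $\alpha(\alpha+m-2)$ restricted to the plane, which for $m=2$, $\alpha=2$ is $4$); the only solution with $\varphi(0)=\varphi(\pi/2)=0$ is a multiple of $\sin(2\theta)$. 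Hence on the quarter plane each selection of $w$ is $v_i\sin(2\theta)|x|^2$ for a fixed vector $v_i\in\RR^n$. Finally, to match the statement's phrasing in terms of $\cos(2\theta)$ one simply notes that the quarter plane $H=\RR^+\times\RR^+\times\RR^{m-2}$ here is parametrized by $\theta\in[0,\pi/2]$ with the edges at $\theta=0,\pi/2$, and after the rotation $\theta\mapsto\theta-\pi/4$ (which realigns the sector) $\sin(2\theta)$ becomes $\cos(2\theta)$ up to sign; absorbing the sign and the constant into $v_i$ gives $u(x)=\sum_{i=1}^k\a{v_i\cos(2\theta)|x|^2}$.

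\textbf{Main obstacle.} The delicate point is the planar base case and the passage from "$Q$-valued Dir-minimizer with zero boundary data on two edges" to "each sheet is a classical eigenfunction": one must justify that a $2$-homogeneous $Q$-valued Dir-minimizer on the quarter disk decomposes into single-valued harmonic functions even though the domain is only a sector (not the full plane) and the trace is forced to be $Q\a{0}$ on the boundary. The cleanest route is to observe that reflecting $w$ oddly across each edge $\theta=0$ and $\theta=\pi/2$ extends it to a $2$-homogeneous $Q$-valued Dir-minimizer on the full plane $\RR^2$ with no boundary; by the interior structure theory for planar Dir-minimizers (the classical case of \cite{DS1}, where $2$-homogeneous minimizers on $\RR^2$ are classified as superpositions of $\mathrm{Re}/\mathrm{Im}$ of $c\,z^2$), the extension is a sum of $\a{v_iz^2}$ terms, and restricting back and matching the boundary condition pins down the $\sin(2\theta)$ form. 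One must check that the odd reflection genuinely produces a Dir-minimizer (variations supported near the edge are handled by the zero boundary data and a standard reflection/comparison argument), and that the homogeneity is preserved; both are routine but are the crux of the rigor. The reduction steps via Lemmas \ref{l:cylindrical} and \ref{Lem:Invariance} are then purely formal.
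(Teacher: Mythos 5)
Your argument follows the same high-level strategy as the paper's: use the cylindrical blowup lemmas to reduce the frequency lower bound to dimension $m=2$, analyze the planar case as an ODE on the arc $[0,\pi/2]$, and propagate the $2$-homogeneous classification back up. Your induction is just a reformulation of the paper's iterated blowups. One place where your writeup is actually more explicit than the paper's is the propagation step for general $m$: you observe that when the homogeneity at $0$ is exactly $2$ and the frequency is everywhere $\geq 2$, monotonicity pins $I(e_m/2)=2$, so Lemma \ref{Lem:Invariance} applies to $u$ itself (not just a blowup), and you peel off the $m-2$ directions of $L$ one by one before doing the ODE. The paper establishes the $\alpha\geq 2$ bound and the $m=2$ classification but passes over this last step silently.

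The genuinely different choice you make is in the planar base case, and it is the weakest link. You propose odd reflection of $w$ across both edges to obtain a $Q$-valued Dir-minimizer on all of $\RR^2$ and then invoke the interior classification. For classical single-valued harmonic functions the reflection principle is immediate, but for $\mathcal{A}_Q(\RR^n)$-valued Dir-minimizers it is not: the usual symmetrization step ("replace a competitor by its odd part; the Dirichlet energy drops") has no routine analogue, because $\mathcal{A}_Q$ carries no linear structure that lets you form $\frac{1}{2}(v(x,y)-v(x,-y))$ and compare energies. You would need to either cite a $Q$-valued reflection principle outright, or first reduce to single-valued sheets — and the latter is precisely what the paper's route accomplishes: interior regularity of planar Dir-minimizers plus homogeneity implies the singular set away from the origin and boundary is empty, the punctured sector is simply connected so a global single-valued selection exists, and only then does one reflect and classify sheet by sheet. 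Calling the reflection step "routine" undersells the issue; as written there is a gap there, though it can be repaired by adopting the paper's order of operations (decouple first, reflect second).

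Finally, your remark about $\sin(2\theta)$ versus $\cos(2\theta)$ is well taken: with $\theta$ measured from a boundary edge the Dirichlet conditions force $\sin(2\theta)$, and $\cos(2\theta)$ is the same function after shifting $\theta$ by $\pi/4$ to measure from the bisector of the quadrant. The statement's phrasing "the angle on the quarter-plane" leaves this convention implicit, and your reconciliation is the right one.
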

\begin{proof} Assume by contradiction there exists $u$ an $\alpha$ homogeneous Dir-minimizing function with $\alpha<2$. 

Let $z \in L$. By homogeneity, if $z \neq 0$ then for all $r>0$, $I(rz)=I(z)$. Thus by upper-semi-continuity  $\alpha \geq I(z)$. We take a blowup at $z$ which gives us an $I(z) \leq \alpha<2$ homogeneous Dir-minimizer on $H \cap \RR^{m-1}$. By iterating and applying Lemma \ref{l:cylindrical}, we obtain a $\beta$ homogeneous Dir-minimizing function $u$ on $\RR^+ \times \RR^+$ with zero boundary data on $\left \{0 \right\} \times \RR \cup \RR \times \left \{0 \right\}$.

By the interior regularity for Dir-minimizing functions $u$ \cite{DS1} in $2$d the function $u$ needs to be smooth outside of its boundary. Thus it must decouple as classical harmonic function that has zero boundary value along $\left \{0 \right\} \times \RR \cup \RR \times \left \{0 \right\}$. A classical harmonic function which vanishes on the boundary of the quarter $2$d plane must have frequency at least $2$. This is because it's restriction to $S^1$ must solve 
\begin{equation*}
f''(\theta)+\lambda^2 f(\theta)=0
\end{equation*}
with $f(0)=f(\pi/2)=0$. We can extend periodically, which makes it a classical homogeneous harmonic function in the plane. Since $f$ constant or $f$ linear are not options for $f$ to be non-trivial, the homogeneity for $f$ must be at least $2$.
Moreover we must have that $\lambda=k$ is an even integer and
\begin{equation*}
f(\theta)=\sum_{i=1}^Q \a{v_i \cos(k\theta)}. 
\end{equation*}
If $k \neq 2$ then $f$ has interior singularities where $\cos$ vanishes and thus $f$ must be $2$-homogeneous and in this case for $v_i \in \RR^{n}$
\begin{equation*}
f(\theta)=\sum_{i=1}^Q \a{v_i \cos(2\theta)}. 
\end{equation*}

When $u$ is $2$-homogeneous, an argument analogous to the one at the end of the proof of Lemma \ref{l:densitybound} shows that $u$ arises from a two-dimensional solution in a quarter space.
\end{proof}

We have an analogue of Theorem 8.19 \cite{ian2024uniqueness} in this setting. We omit the proof since it is analogous.
\begin{theorem}[Local Dir-minimizers] \label{t:localdirminimizers}
Let $u \in W^{1,2}(\bB_1^{1/4},\mathcal{A}_Q(\RR^n))$ have zero boundary value along $V_0 \cup V_1$.
Assume that $u$ is Dir-minimizing on every subdomain of $\bB_1^{1/4}$ with smooth boundary. Then $u$ is Dir-minimizing in $\bB_1^{1/4}$ with zero boundary value along $V_0 \cup V_1$.
\end{theorem}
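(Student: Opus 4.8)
The plan is to mimic the proof of the analogous interior statement (Theorem 8.17 of \cite{ian2024uniqueness}, itself modeled on \cite{DS1}), the only novelty being the presence of the two boundary faces $V^0, V^1$ carrying zero data and meeting along the corner $L$. The key point is that being ``Dir-minimizing on every smooth subdomain'' is a strong property that localizes well, and one only needs to rule out competitors that interact with the low-dimensional set $V^0 \cup V^1$, in particular with the even-lower-dimensional corner $L$.

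First I would argue that it suffices to show the minimality property on quarter balls $\bB_r^{1/4}(x)$ centered at points $x \in L \cap \bB_1^{1/4}$ and on half balls centered at relative-interior boundary points $x \in (V^0 \cup V^1) \setminus L$, since away from $V^0 \cup V^1$ interior regularity together with the smooth-subdomain hypothesis already gives Dir-minimality by a standard exhaustion argument, and any competitor differing from $u$ on a compactly contained region can be patched back through an intermediate smooth subdomain. Then, given a competitor $v$ with $v\res\partial\bB_1^{1/4} = u\res\partial\bB_1^{1/4}$, one constructs, for each small $\delta>0$, a smooth subdomain $\Omega_\delta \subset \bB_1^{1/4}$ that contains the region where $v$ differs from $u$, whose boundary is smooth and stays within $O(\delta)$ of $V^0 \cup V^1$ near the faces, so that one may compare $v$ with $u$ on $\Omega_\delta$; by hypothesis $\int_{\Omega_\delta}|Du|^2 \le \int_{\Omega_\delta}|Dv|^2$. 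The conclusion then follows by letting $\delta \to 0$, provided one controls the Dirichlet energy of both $u$ and $v$ in the thin collar $\{\dist(\cdot, V^0\cup V^1) < \delta\}$ and in particular in the still thinner neighborhood of the corner $L$.

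The technical heart of the argument is therefore the following: one needs a family of smooth domains $\Omega_\delta$ interpolating between a fixed smooth subdomain and the full quarter ball, with $|\bB_1^{1/4} \setminus \Omega_\delta| \to 0$, such that a competitor (built by gluing $v$ inside to $u$ on the collar, via a linear interpolation in a dyadic annulus as in \cite{DS1}) has controlled energy. The zero boundary data on $V^0 \cup V^1$ is exactly what makes this work: near those faces $u$ and $v$ both decay, so the interpolation costs little; one uses the trace being $Q\a{0}$ on $V^0\cup V^1$ together with a Poincaré-type inequality on thin slabs to bound the energy of the glued competitor by the energy of $v$ plus an error tending to $0$. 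The only genuinely new geometric feature compared with \cite{ian2024uniqueness} is smoothing the corner $L$ inside $\Omega_\delta$: one rounds $\partial\Omega_\delta$ near $L$ at scale $\delta$, and since $L$ has codimension $2$ in $H$ the volume of the rounded region is $O(\delta^2)$, a fortiori negligible, and the energy contribution there is controlled by the same zero-trace Poincaré estimate applied on the $\delta$-tube around $L$.

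The step I expect to be the main obstacle is making the competitor construction genuinely admissible for a \emph{smooth} subdomain while simultaneously respecting the $W^{1,2}$ gluing and the zero trace on the smoothed-out faces — i.e., checking that the interpolated map still has the correct trace on $\partial\Omega_\delta$ and that no energy concentrates in the dyadic gluing annulus near the corner. This is handled exactly as the corresponding estimate in \cite{DS1,ian2024uniqueness}, choosing the gluing annulus at a radius where the integrand is small (a mean-value/pigeonhole choice of scale), so no new idea is needed; it is only bookkeeping to carry the two faces and the corner along. Once that is in place, the passage $\delta\to 0$ is immediate and gives $\int|Du|^2\le\int|Dv|^2$ on all of $\bB_1^{1/4}$, which is the claim.
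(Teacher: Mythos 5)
The paper itself does not give a proof of this statement; it says only that the argument is analogous to Theorem 8.17 of \cite{ian2024uniqueness}, which in turn follows the scheme of \cite{DS1}. Your plan is exactly the expected adaptation of that scheme and I believe it is correct: exhaust the quarter ball by smooth subdomains $\Omega_\delta$; given a competitor $v$ for $u$ on $\bB_1^{1/4}$, produce an admissible competitor on $\Omega_\delta$ by interpolating between $v$ and $u$ on a thin shell at a pigeonhole-chosen radius; control the interpolation cost near the faces $V^0\cup V^1$ by the zero-trace Poincar\'e inequality on thin slabs (both $u$ and $v$ have trace $Q\a{0}$ there); and observe that the corner $L$ has codimension $2$ in $H$, so rounding $\partial\Omega_\delta$ at scale $\delta$ near $L$ contributes only a $\delta$-tube whose energy is likewise negligible. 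Passing $\delta\to 0$ then gives $\int|Du|^2\le\int|Dv|^2$ on the full quarter ball. This is the right argument, and the part you flag as the obstacle (the pigeonhole choice of gluing radius so no energy concentrates in the annulus) is handled exactly as in the references.

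The one thing I would tighten is the opening claim that it \emph{suffices} to prove minimality on small quarter balls centered on $L$ and half balls centered on $(V^0\cup V^1)\setminus L$. For multivalued functions, local Dir-minimality around every point does not by itself imply global Dir-minimality, so that reduction is not free. In fact your argument does not use it: the smooth-subdomain approximation you set up in the second and third paragraphs proves the global statement for $\bB_1^{1/4}$ directly, with no intermediate ``local to global'' step. I would simply drop the first paragraph and start with the competitor $v$ on $\bB_1^{1/4}$ and the family $\Omega_\delta$.
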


\begin{proposition}\label{p:heightdecay}
If $I(0)=\alpha$ then
\begin{equation*}
\int_{\bB_r^{1/4}}|u|^2 \leq\frac{1}{m+1}r^{m-1+2\alpha} \int_{\partial\bB_1^{1/4}}|u|^2.
\end{equation*}
In particular since $I(0) \geq 2$ then
\begin{equation*}
\limsup_{r \rightarrow 0}\frac{1}{r^{m+4}} \int_{\bB_r^{1/4}} |u|^2<\infty.
\end{equation*}
\end{proposition}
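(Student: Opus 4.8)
The plan is to run the usual frequency-function argument: turn the monotonicity of $I(0,\cdot)$ into differential inequalities for the spherical height $H(0,r)=\int_{\partial\bB_r^{1/4}}|u|^2$, and then integrate twice in the radial variable.

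First I would record the two first-variation identities for $u$ on the quarter ball that already underlie the proof of Theorem~\ref{t:monotonfrequency} (obtained as in \cite{DS1,ian2024uniqueness}, using that $u$ vanishes on $V^0\cup V^1$, so the flat faces contribute no boundary term): the outer-variation identity
\begin{equation*}
\int_{\partial\bB_r^{1/4}(0)}u\cdot\partial_\nu u\,d\HH^{m-1}=D(0,r),
\end{equation*}
where $u\cdot\partial_\nu u:=\sum_i\langle u_i,\partial_\nu u_i\rangle$, together with the radial-derivative formula
\begin{equation*}
H'(0,r)=\frac{m-1}{r}\,H(0,r)+2\int_{\partial\bB_r^{1/4}(0)}u\cdot\partial_\nu u\,d\HH^{m-1}.
\end{equation*}
Combining the two gives $H'(0,r)=\frac{m-1}{r}H(0,r)+2D(0,r)$, i.e.
\begin{equation*}
\frac{d}{dr}\log\frac{H(0,r)}{r^{m-1}}=\frac{2D(0,r)}{H(0,r)}=\frac{2\,I(0,r)}{r}.
\end{equation*}

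Next I would apply Theorem~\ref{t:monotonfrequency}: since $I(0,\cdot)$ is well defined with $I(0)=\alpha$, we are in the non-degenerate alternative, so $H(0,r)>0$ on $(0,1)$ and $r\mapsto I(0,r)$ is non-decreasing; hence $I(0,r)\ge\alpha$ for $r\in(0,1)$. Integrating the last identity over $[r,1]$ yields $H(0,r)\le H(0,1)\,r^{m-1+2\alpha}$, and integrating once more, using $\int_{\bB_r^{1/4}}|u|^2=\int_0^rH(0,s)\,ds$,
\begin{equation*}
\int_{\bB_r^{1/4}}|u|^2\le\frac{H(0,1)}{m+2\alpha}\,r^{m+2\alpha}.
\end{equation*}
Since $\alpha=I(0)\ge2$ by the homogeneity lemma above (applied to a tangent function of $u$ at $0$) — in particular $\alpha\ge\tfrac12$, so $m+2\alpha\ge m+1$ — and $r\le1$, the right-hand side is at most $\frac1{m+1}r^{m-1+2\alpha}H(0,1)=\frac1{m+1}r^{m-1+2\alpha}\int_{\partial\bB_1^{1/4}}|u|^2$, which is the asserted estimate. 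For the final claim, using $\alpha\ge2$ once more gives $\int_{\bB_r^{1/4}}|u|^2\le\frac{H(0,1)}{m+4}r^{m+4}$ for $r\le1$, hence $\limsup_{r\to0}r^{-(m+4)}\int_{\bB_r^{1/4}}|u|^2\le\frac1{m+4}\int_{\partial\bB_1^{1/4}}|u|^2<\infty$.

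The only step calling for genuine care is the first one, namely justifying the outer-variation and radial-derivative identities in the present quarter-space, multivalued, zero-boundary-data setting; but these are exactly the computations behind Theorem~\ref{t:monotonfrequency}, carried out in \cite{ian2024uniqueness} following \cite{DS1}, so I expect no real obstacle. Everything after that is elementary integration, with the mild point that passing from the sharp exponent $r^{m+2\alpha}$ to the stated $\frac1{m+1}r^{m-1+2\alpha}$ uses only $r\le1$ and $\alpha\ge\tfrac12$.
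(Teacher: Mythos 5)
Your argument is correct and is precisely the standard frequency-function computation that the paper invokes by reference (to Proposition 8.11 of \cite{ian2024uniqueness} and Chapter 3.4 of \cite{DS1}): the identity $\frac{d}{dr}\log\bigl(H(0,r)/r^{m-1}\bigr)=2I(0,r)/r$, the monotonicity $I(0,r)\ge\alpha$, two radial integrations, and the lower bound $\alpha\ge 2$ from the homogeneity lemma to pass to the stated (slightly lossy) constant $\tfrac{1}{m+1}$ and exponent $m-1+2\alpha$. The only step you flag as delicate — the outer-variation and radial-derivative identities on the quarter ball with the zero boundary data on $V^0\cup V^1$ — is indeed what underlies Theorem~\ref{t:monotonfrequency}, so your proof matches the paper's intent.
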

\begin{proof}
    The proof is analogous to Proposition 8.11 of \cite{ian2024uniqueness}. It follows from the estimates on the spherical height which are a consequence of the monotonicity of the frequency. See Chapter 3.4 of \cite{DS1}.
\end{proof}
\section{Lipschitz approximation}
In this section, we construct the Lipschitz approximation and conclude with some technical estimates regarding measure theoretic excess and its relationship to $L^2$ excess.

The general assumption for the Lipschitz approximation is the following:
\begin{assumption}\label{A:Lipschitzapproximation} Suppose that $T$, $\Gamma_i^j$, $\Sigma$, $M$
 are as in Assumption \ref{a:corneredboundary}, but now defined in the ball $\bB_8$. Let $C$ be the unique tangent cone at $0$. For a small constant $\varepsilon(Q,m,n,\overline{n})$ we assume
 \begin{equation*}
 \mathbb{E}(T,C,\bB_8)+ \mathbf{A_{\Gamma}}+\mathbf{A}_{\Sigma}^2 \leq \varepsilon \alpha(C)^2.
\end{equation*}
Moreover we assume that for every $p \in M \cap \bB_2$ and all $0<r<1/8$
\begin{equation*}
|\!|T|\!|(\bB_r(p)) \leq \left(Q/4 +1/16\right) \omega_m r^m.\end{equation*}
 \end{assumption}

We consider a parameter $\delta>0$ which will be taken small enough depending on $Q_i^j$, $m$, $n$, $\alpha_0$.
In order to construct the Lipschitz approximation we will need to define a suitable Whitney region. Let $L=\RR^{m-2} \times \left\{0\right\}$. Let $P_0$ be a cube with side length $\frac{2}{\sqrt{m-2}}$ centered at $0$.

We define the regions as in \cite{de2023fineIII}. The set $R:=\left\{p: \mathbf{p}_{L}(p) \in P_0 \: \textup{and} \: 0<|\mathbf{p}_{V^{\perp}}(p)| \leq 1 \right\}$. The cubes $\mathcal{G}_l$ are cubes of sidelength $\frac{2^{1-l}}{\sqrt{m-2}}$ obtained by subdividing $P_0$ into $2^{l(m-2)}$ cubes. The integer $l$, the generation of the cube, is denoted by $l(P)$. If $P \subset P^{\prime}$ and $\ell\left(P^{\prime}\right)=\ell(P)+1$, we then call $P^{\prime}$ the parent of $P$, and $P$ a child of $P^{\prime}$. When $\ell\left(P^{\prime}\right)>\ell(P)$, we say that $P^{\prime}$ is an ancestor of $P$ and $P$ is a descendant of $P^{\prime}$.

For every cube $P$ we denote by $y_P$ its center and $\mathbf{B}(P)$ the ball $\mathbf{B}_{2^{2-l(P)}}(y_P)$ (in $\RR^{m+n}$) and by $\mathbf{B}^h(P)$ the set $\mathbf{B}(P) \setminus B_{2^{-5-l(P)}}(L)$.

We define for $P \in \mathcal{G}_l$
\begin{equation*}
R(P):=\left\{p: \mathbf{p}_{L}(p) \in P \: \textup{and} \: 2^{-l-1} \leq |p_{L^{\perp}}(p)| \leq 2^{-l} \right\}.
\end{equation*}

We also define $\lambda P$ as the cube concentric with $P$ with side length $\lambda \frac{2^{1-l}}{\sqrt{m-2}}$ and
\begin{equation*}
\lambda R(P):=\left\{p: \mathbf{p}_{L}(p) \in \lambda P \: \textup{and} \: \lambda^{-1}2^{-l-1} \leq |p_{L^{\perp}}(p)| \leq \lambda 2^{-l} \right\}.
\end{equation*}
We also define $\lambda P_i:=\lambda R(P) \cap H_i$.
We notice that $\mathbf{p}_{\hat{H}_i}(\Gamma_{l(i)}^0)$ and $\mathbf{p}_{\hat{H}_i}(\Gamma_{l(i)}^1)$ delimit a domain in $\hat{H}_i$. When $\mathbf{A_{\Gamma}}$ is small, we must have that this domain must be close to $H_i$, a quarter of the plane $\hat{H}_i$.  We denote $D_i$ the described domain.

\begin{lemma}[Splitting]\label{l:splitting}
There exists a constant $\delta(Q,m,n,\alpha_0)$  with the following property. Assume that
\begin{equation*}
\int_{\bB_{4} \setminus B_{1/32}(L)}\dist(x,C)^2 d|\!|T|\!| + \mathbf{A_{\Gamma}} + \mathbf{A}_{\Sigma}^2 \leq \delta^2.
\end{equation*}
 There exist $T_1,T_2,...,T_N$ such that \begin{equation*}
    T \res \left(\bB_{3.5} \setminus B_{3/64}(L)\right) = \sum_{i=1}^N T_i \; \textup{and } \; \forall i \; :\partial T_i= Q_i'\a{\Gamma_{l(i)}^0}-Q_i'\a{\Gamma_{l(i)}^1}.
\end{equation*}
The multiplicity of $Q_i'$ is the one obtained by constancy over the projection on  $\left(\mathbf{p}_{H_i}\right)_{\#} T_i$.
Furthermore, if instead we assume 
\begin{equation*}
\mathbb{E}(T,C,\bB_8) + \mathbf{A_{\Gamma}}+\mathbf{A}_{\Sigma}^2<\delta^2
\end{equation*}
then $Q_i'=Q_i.$
\end{lemma}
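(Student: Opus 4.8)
\textbf{Proof plan for the Splitting Lemma \ref{l:splitting}.}

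The plan is to follow the standard splitting--by--constancy scheme of De Lellis--Spadaro and its boundary analogues in \cite{DDHM} and \cite{de2023fineIII}, adapted to the cornered setting. The key structural fact we exploit is that away from the spine $L$, the cone $C$ is a disjoint union of quadrants $H_i$ which are genuinely \emph{separated}: by Assumption \ref{a:corneredboundary} any two of the half-planes $V_i^j$ making up $\partial C$ meet $L$ with angles bounded away from $0$, so on the annular region $\bB_{3.5}\setminus B_{3/64}(L)$ the supports of distinct $H_i$ sit at mutually bounded distance. Consequently, under the smallness hypothesis $\int_{\bB_4\setminus B_{1/32}(L)}\dist(x,C)^2\,d\|T\| + \mathbf{A}_\Gamma + \mathbf{A}_\Sigma^2 \le \delta^2$, the current $T$ restricted to (a slightly smaller) annular region concentrates, up to small mass, in disjoint tubular neighborhoods $U_i$ of the $H_i$. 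This is the first step: a covering/mass--concentration argument using the monotonicity formula with error terms (Corollary \ref{c:monotonerrorcorner}) to control $\|T\|$ near $L$, exactly as in the proof of Corollary \ref{c:monotonerrorcorner}, and the excess bound to force the mass into the $U_i$.

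Second, I would set $T_i := T\res U_i$ on the region $\bB_{3.5}\setminus B_{3/64}(L)$ and verify that each $T_i$ is an integral current whose boundary lives on $\Gamma_{l(i)}^0\cup\Gamma_{l(i)}^1$. Since the $U_i$ are pairwise disjoint and cover $\mathrm{spt}(T)$ in the relevant region, $T\res(\bB_{3.5}\setminus B_{3/64}(L))=\sum_i T_i$; the only boundary of $T_i$ inside the open region comes from the portion of $\partial T=\sum_{i,j}Q_i^j\a{\Gamma_i^j}$ lying in $U_i$, which by the index assignment $l(\cdot)$ in Definition \ref{d:corneredopenbookboundarydata} is supported on $\Gamma_{l(i)}^0\cup\Gamma_{l(i)}^1$ and has no component on any other $\Gamma$-piece. (One must check $\partial T_i$ has no spurious interior boundary inside $U_i$; this follows because $\partial(\sum_i T_i)=\partial T$ in the region and the $U_i$ are disjoint, so $\partial T_i$ can only be supported in $\mathrm{spt}(\partial T)\cap U_i$.)

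Third, I would determine the multiplicity. Projecting via $\mathbf{p}_{H_i}$, the pushforward $(\mathbf{p}_{H_i})_\#T_i$ is an integral current in the (essentially planar) domain $D_i\subset\widehat H_i$ whose boundary, by the previous step, is $Q_i'(\a{\mathbf{p}_{H_i}\Gamma_{l(i)}^0}-\a{\mathbf{p}_{H_i}\Gamma_{l(i)}^1})$ for the integer $Q_i'$ read off by the constancy theorem on the connected region $D_i\setminus \partial D_i$. This is where the perpendicularity hypothesis $V_{i_0}^0\perp V_{i_1}^1$ is used crucially: it guarantees $D_i$ is a Lipschitz domain close to a genuine quarter--plane, so the projection is nondegenerate and the constancy theorem applies to give a single integer multiplicity $Q_i'$. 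Finally, under the stronger global hypothesis $\mathbb{E}(T,C,\bB_8)+\mathbf{A}_\Gamma+\mathbf{A}_\Sigma^2<\delta^2$, the cone $C$ itself is the tangent cone with the prescribed multiplicities $Q_i'=Q_i$ from Assumption \ref{a:corneredboundary}; matching the mass of $T_i$ near $L$ with that of $Q_i\a{H_i}$ via the monotonicity formula (the density at points of $L$ being pinned to $Q/4$ by the Proposition preceding Lemma \ref{l:uniquetangent}) forces $Q_i'=Q_i$.

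The main obstacle I anticipate is the first step near the spine: establishing that the mass of $T$ genuinely separates into the disjoint tubes $U_i$ \emph{uniformly up to the scale $3/64$ of $L$}, rather than only away from a larger neighborhood of $L$. The difficulty is that the excess hypothesis is an integrated (Wasserstein--type) bound, so a priori $T$ could have a thin bridge of mass connecting two quadrants across $L$; ruling this out requires combining the density lower bound $\Theta(T,p)\ge Q/4$ at $p\in M$ with the monotonicity formula to show that no such bridge can carry positive mass without violating the excess smallness at some intermediate scale. This is precisely the role of the weighted monotonicity \eqref{eq:weightedmonotoncorner} and the angle bound $2^{1000}\alpha_0\le\langle N_{\Gamma_{i_0}^{j_0}},N_{\Gamma_{i_1}^{j_1}}\rangle$ in Assumption \ref{a:corneredboundary}, which keeps the quadrants quantitatively apart; I would carry out the argument by a contradiction/compactness scheme, rescaling a hypothetical bridge and extracting a nontrivial area--minimizing cone supported strictly between two quadrants of $C$, contradicting the density classification in Lemma \ref{l:densitybound}.
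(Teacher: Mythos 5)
Your structural premise is off in a way that drives the whole argument in the wrong direction, and as a result you miss the key input that the paper actually uses.

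You assert that on the annulus $\bB_{3.5}\setminus B_{3/64}(L)$ ``the supports of distinct $H_i$ sit at mutually bounded distance.'' This is false for a cornered open book: different quadrants $H_i$ can, and in the relevant examples do, share a full classical-boundary half-plane $V_l^j$. In Figure~\ref{img:nicecorneredopenbook} the two quadrants $H_1,H_2$ both have $\a{\Gamma_1^0}$ as part of their boundary and meet along it. Under Assumption~\ref{a:corneredboundary} the angle bound $2^{1000}\alpha_0\le\langle N_{\Gamma_{i_0}^{j_0}},N_{\Gamma_{i_1}^{j_1}}\rangle$ controls the \emph{angle} at which sheets meet along $M$ and along the $\Gamma_l^j$, not a positive distance between them. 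Consequently, the place where the splitting is genuinely delicate is not the spine $L$ (which is excised from the region by $B_{3/64}(L)$, so a ``thin bridge across $L$'' cannot even occur in the region under consideration) but the classical boundary strata $\Gamma_l^j$, where several sheets $T_i$ all attach and have to be separated from each other. Your proposed remedy --- density bound $\Theta\ge Q/4$ at $p\in M$ plus a rescaling/compactness argument using Lemma~\ref{l:densitybound} --- addresses the wrong locus: it concerns densities at the spine, which is irrelevant here.

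The paper's mechanism for the hard part is different and essential: cover $\mathbf{\Gamma}\cap(\bB_{3.5}\setminus B_{3/64}(L))$ by Besicovitch balls of radius $\sim 2^{-12}\alpha_0$ (so each ball meets exactly one boundary piece), verify via the bounded-overlap property that the local $L^2$ excess in each ball is small, and then apply the one-sided boundary decomposition theorem (Theorem~\ref{T:decomposition-1sided}, i.e.\ Theorems~5.13 and 5.15 of \cite{ian2024uniqueness}) in each ball to split $T$ into the sheets $T_i$ attaching to a common $\Gamma_l^j$. The angle parameter $\alpha_0$ enters precisely to guarantee $\alpha(C')\gtrsim\alpha_0$ so that the smallness hypothesis of that decomposition theorem is met with uniform constants; it is not a separation distance. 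Away from $\mathbf{\Gamma}$ the height bound already gives disjoint tubes $U_{i,0}$ and you glue these to the boundary-ball decompositions. Your third step (constancy over the projection on $D_i$) and fourth step ($Q_i'=Q_i$ under the stronger $\mathbb{E}$ hypothesis) are reasonable and in the spirit of what the paper does (the paper cites Lemma 6.3 of \cite{ian2024uniqueness} and Lemma 8.5 of \cite{de2023fineIII} for the latter), but without the decomposition along $\mathbf{\Gamma}$ you never legitimately define the $T_i$ in the first place, so the proof as outlined does not close.
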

\begin{proof}

We will show that if $\delta$ is small enough there exist disjoint open sets $U_i \subset \bB_{3.5} \setminus B_{3/64}(L)$ that satisfy
\begin{equation*}
\partial{U_i} \cap \partial{U_j} \subset  \mathbf{\Gamma} \cap  \left(\bB_3 \setminus B_{1/32}(L)\right)
\end{equation*}
and 
\begin{equation*}
\textup{spt}(T) \cap \bB_{3.5} \setminus B_{3/64}(L)\subset \cup_{1 \leq i \leq N} U_i.
\end{equation*}

We start by taking a cover of $\mathbf{\Gamma}\cap \bB_{3.5} \setminus B_{3/64}(L)$ by balls $\bB_{2^{-12}\alpha_0}(p_k)$, which we assume to be a Besicovitch covering. By the choice of the radii, if $\mathbf{A_{\Gamma}}$ is small enough no ball will intersect two different boundary pieces. 

We consider
\begin{equation*}
 A:= \left(\textup{spt}(T) \cap \left(\bB_{3.5} \setminus B_{3/64}(V) \right)\right) \setminus B_{2^{-20}\alpha_0}(\Gamma).
\end{equation*}

We must have assuming that $\mathbf{A_{\Gamma}}$ is small enough, by the height bound that
\begin{equation*}
\sup_{x \in A} \dist(x,C) \leq C \delta^2.
\end{equation*}

If $\delta$ is small enough with respect to $\alpha_0$ and $m,n$ then $A$ is split into disjoint neighborhoods $U_{i,0}$ of $H_i$. The neighborhoods $U_{i,0}$ serve to decompose $\textup{spt}(T)$ away from $\mathbf{\Gamma}$.

By the bounded overlap property, we know that 
\begin{equation}
\sum_l \int_{\bB_{2^{-12}\alpha_0}(p_k)} \dist(x,C)^2 d|\!|T|\!| \leq C(\alpha_0) \int_{\bB_4 \setminus B_{1/32}(L)} \dist(x,C)^2 d|\!|T|\!| \leq \delta^2.
\end{equation}

At the boundary $\mathbf{\Gamma}$ in $\bB_3 \setminus B_{3/64}(V)$, we use the following Theorem which we state combining two results:
 \begin{theorem}[Decomposition: Theorem 5.13 \cite{ian2024uniqueness} + Theorem 5.15 \cite{ian2024uniqueness}] \label{T:decomposition-1sided}
Let $T$ and $\Gamma$ be under \ref{A:generalsingleboundary}. Assume that $C$ is a non-flat open book whose spine is $T_0(\Gamma)$ There exists $\varepsilon>0$ such that if 
\begin{equation*}
\max\left\{\mathbf{E}(T,C,\bB_1(0)), \varepsilon^{-1}\mathbf{A}_{\Gamma},\varepsilon^{-1}\mathbf{A}_{\Sigma}^2\right\}\leq \varepsilon \alpha(C)^2.
\end{equation*}
There there exist $T_1,T_2,..,T_N$ area-minimizing currents and $Q_i$ integers on $\bB_{1/64} \cap \Sigma$ with
\begin{itemize}
\item $\partial T_i=Q_i \a{\Gamma}$ 
\item \begin{equation}
T\res\bB_{1/64}(0)=\sum_{i=1}^N T_i
\end{equation}
\item The supports of $T_i$ only intersect at $\Gamma$.
\end{itemize}
\end{theorem}

In our setting, we will apply it to a modification of $C$ the starting cornered open book. All cornered open books with the right boundary data $C'$ (recall Definition \ref{d:corneredopenbookboundarydata})  under Assumption \ref{a:corneredboundary} satisfy $\alpha(C') \gtrsim \alpha_0$.
We need to translate and adjust the cone $C$ so that it passes through the center where we apply this result, but that only introduces errors that are controlled by $\mathbf{A_{\Gamma}}$ and keeps $\alpha(C) \gtrsim \alpha_0.$

If $\delta$ is small enough then we will be able to apply the theorem above and get such decomposition at scale $\bB_{2^{-18}}(p_k)$.  This means that $T \res \bB_{2^{-18}}(p_k)=\sum_{1 \leq i \leq N}T_i$ and we can choose $U_{i,l}$ disjoint neighborhoods of $\bB_{2^{-18}}(p_k)$ each of which containing $\left(H_i \cup \textup{spt}(T_i)\right) \cap \bB_{2^{-18}}(p_k).$

We define 
\begin{equation*}
    U_i=U_{i,0} \bigcup_{l} U_{i,l}
\end{equation*}
which satisfy the desired properties.

The last two remarks follow are a consequence of the construction of the strong Lipschitz approximation in the interior and of the definition of the excess. See Proposition 6.3 \cite{ian2024uniqueness} and Lemma 8.5 \cite{de2023fineIII}.
\end{proof}

\begin{lemma}[Lipschitz Approximation]\label{l:lipschitzapprox}
Let $T$ and $C$ as in the above Lemma \ref{l:splitting}. Let \begin{equation*}
E_i:= \int_{\bB_3 \setminus B_{1/32}(L)}\dist(p,H_i)^2 d|\!|T_i|\!|(p).
\end{equation*}
Let $\Omega_i:=D_i \cap \bB_2 \setminus B_{1/16}(L)$ and $\mathbf{\Omega}_i:=\bB_3 \cap \mathbf{p}_{\hat{H_i}}^{-1}(\Omega_i).$ 
\begin{enumerate}
    \item We have $\left(p_{\hat{H_i}}\right)_{\#}(T_i)=Q_i' \a{D_i}$.
    \item 
Then there exist Lipschitz multivalued functions $u_i: \Omega_i \rightarrow A_{Q'_i}(\hat{H}_i^{\perp})$ and a closed set $K_i \subset \Omega_i$ such that $\textup{gr}(u_i) \subseteq \Sigma$, $T_i \res \mathbf{p}_{H_i}^{-1}(K_i)=\mathbf{G}_{u_i}\res \mathbf{p}_{H_i}^{-1}(K_i)$ and the following estimates hold
\begin{equation*}
u_i \res \left(\mathbf{p}_{\hat{H_i}}(\Gamma_{l(i)}^j) \cap \bB_2 \setminus B_{1/16}(L)\right)=Q_i' \a{\mathbf{p}_{\widehat{H}_i^{\perp}}(\Gamma_{l(i)}^j)} \: j=0,1.
\end{equation*}
\begin{equation*}
\left\|u_i\right\|_{\infty}^2+\left\|Du_i\right\|_{L^2}^2 \leq CE_i +C\mathbf{A_{\Gamma}}+C\mathbf{A}_{\Sigma}^2
\end{equation*}
\begin{equation*}
\textup{Lip}(u_i) \leq C\left(E_i+\mathbf{A_{\Gamma}}+\mathbf{A}_{\Sigma}^2\right)^{\gamma}
\end{equation*}
\begin{equation*}
|\Omega_i \setminus K_i| + |\!|T|\!|(\mathbf{\Omega_i} \setminus p_{H_i}^{-1}(K_i))\leq C\left(E_i+\mathbf{A_{\Gamma}}+\mathbf{A}_{\Sigma}^2\right)^{1+\gamma}.
\end{equation*}
\item If additionally $\mathbb{E}(T,C,\bB_8)<\delta^2$ then $Q_i=Q_i'$ for all $i$.
\end{enumerate}
\end{lemma}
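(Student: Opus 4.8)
The plan is to build each $u_i$ by patching together local Lipschitz approximations over a Whitney-type decomposition of $\Omega_i=D_i\cap\bB_2\setminus B_{1/16}(L)$ adapted to $\dist\big(\cdot,\mathbf{p}_{\hat H_i}(\Gamma^0_{l(i)})\cup\mathbf{p}_{\hat H_i}(\Gamma^1_{l(i)})\big)$. Since $\Omega_i$ stays away from $B_{1/16}(L)$ and the two boundary pieces $\Gamma^0_{l(i)},\Gamma^1_{l(i)}$ meet $M$ perpendicularly and are uniformly transversal, near any point of $\mathbf{p}_{\hat H_i}(\Gamma^j_{l(i)})$ the other boundary piece is far, so each collar around $\mathbf{p}_{\hat H_i}(\Gamma^j_{l(i)})$ is a genuine one-sided boundary problem for $T_i$ with $\partial T_i=(-1)^jQ_i'\a{\Gamma^j_{l(i)}}$, whose excess is controlled by $E_i+\mathbf{A}_{\Gamma}+\mathbf{A}_{\Sigma}^2\lesssim\delta^2\ll\alpha(C)^2\simeq\alpha_0^2$. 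Thus Theorem \ref{T:decomposition-1sided} and the one-sided boundary Lipschitz approximation of \cite{ian2024uniqueness} (Lemma 6.3) apply in these collars with exactly $Q_i'$ sheets, while in the bulk of $D_i$ (where $T_i$ is a cycle $C^2$-close to $Q_i'\a{\hat H_i}$) the interior strong Lipschitz approximation of De Lellis--Spadaro, in the form used in \cite{de2023fineIII} (Lemma 8.5), applies.

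First I would prove item (1): by Lemma \ref{l:splitting}, $(\mathbf{p}_{\hat H_i})_{\#}T_i$ is an integer-multiplicity current in $\hat H_i$ with boundary $Q_i'\big(\a{\mathbf{p}_{\hat H_i}(\Gamma^0_{l(i)})}-\a{\mathbf{p}_{\hat H_i}(\Gamma^1_{l(i)})}\big)$, i.e. it bounds the region $D_i$; the constancy theorem together with the $L^2$-height bound (which forces $\mathrm{spt}(T_i)$ to lie in a thin neighborhood of $H_i$, ruling out extra components or higher multiplicity) yields $(\mathbf{p}_{\hat H_i})_{\#}T_i=Q_i'\a{D_i}$.

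Next, for item (2), on each Whitney cube at distance comparable to its side-length from $\partial D_i$ I apply the interior approximation, and on the two collars I apply the one-sided boundary approximation, which moreover attains the boundary value $Q_i'\a{\mathbf{p}_{\widehat H_i^{\perp}}(\Gamma^j_{l(i)})}$. On overlaps the local approximations both coincide with $T_i$ off bad sets of small measure, so by the $\mathcal{A}_{Q}$-valued interpolation/gluing lemma (as in \cite{DS1}, used identically in \cite{ian2024uniqueness}) they can be matched into a single $Q_i'$-valued Lipschitz $u_i$ on $\Omega_i$ with $\mathrm{gr}(u_i)\subset\Sigma$ and $T_i\res\mathbf{p}_{H_i}^{-1}(K_i)=\mathbf{G}_{u_i}\res\mathbf{p}_{H_i}^{-1}(K_i)$, $K_i$ the union of the local good sets. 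Summing the local estimates with bounded overlap (the local excesses sum to $CE_i$ modulo the curvature terms) gives the $L^\infty$ and $L^2$ bounds by $CE_i+C\mathbf{A}_{\Gamma}+C\mathbf{A}_{\Sigma}^2$; the global Lipschitz bound $\mathrm{Lip}(u_i)\le C(E_i+\mathbf{A}_{\Gamma}+\mathbf{A}_{\Sigma}^2)^\gamma$ follows from the local power bounds via the maximal-function covering argument of the interior theory; the superlinear bad-set estimate follows from summing the local superlinear estimates; and the boundary datum is inherited from the collar approximations since the interior pieces never reach $\partial D_i$. Item (3) is then immediate: if $\mathbb{E}(T,C,\bB_8)<\delta^2$, the last assertion of Lemma \ref{l:splitting} gives that the splitting respects the multiplicities of $C$, hence $Q_i'=Q_i$.

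The hard part will be the gluing of the local multivalued graphs into one Lipschitz function over the full (slightly curved) domain $D_i$: one must track the matching of sheets across overlapping Whitney cubes and promote the local power Lipschitz bounds to a single global bound, which is exactly where the interior covering/maximal-function machinery and the $\mathcal{A}_{Q}$ interpolation lemma enter. None of this is genuinely new over \cite{ian2024uniqueness} and \cite{de2023fineIII}; the only care needed is checking that the uniform transversality and the separation from $L$ make each collar a bona fide one-sided problem with the right number of sheets, after which the argument is bookkeeping.
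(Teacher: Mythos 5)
Your proposal is correct and follows essentially the same route as the paper's (very terse) proof: patch together interior strong Lipschitz approximations from \cite{de2023fineIII} in the bulk of $D_i$ with one-sided boundary Lipschitz approximations in collars of $\mathbf{p}_{\hat H_i}(\Gamma^j_{l(i)})$, exploiting that away from $B_{1/16}(L)$ the two boundary pieces are uniformly separated so each collar is a bona fide one-sided problem for $T_i$. Two small remarks: the paper routes the boundary approximation through Proposition 4.1 of \cite{ianreinaldo2024regularity} rather than Lemma 6.3 of \cite{ian2024uniqueness}, and explicitly flags the need to tilt $\hat H_i$ to a nearby plane $\hat H_{i,p}$ containing $T_p(\Gamma^j_{l})$ and lying in $T_p(\Sigma)$ before applying that result — a point you could make explicit; and your invocation of Theorem \ref{T:decomposition-1sided} in the collars is superfluous, since the decomposition $T=\sum_i T_i$ has already been carried out once and for all by Lemma \ref{l:splitting}, so only the boundary Lipschitz approximation (not a fresh decomposition) is needed there.
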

\begin{proof}
The proof is standard so we will just give some basic considerations. The new element in this context is that we do take the boundary data. The proof follows as in Proposition 8.6 \cite{de2023fineIII}, but we need to incorporate the boundary and for that we use Proposition 5.1 \cite{ianreinaldo2024regularity}.The Lipschitz approximation gets defined in balls which overlap and then patched together. Given a point $p \in \mathbf{\Gamma} \cap \bB_{3.5} \setminus B_{3/64}(L)$ in order to use Proposition 5.1 \cite{ianreinaldo2024regularity} we need to slightly tilt the plane to a new plane $\hat{H}_{i,p}$ so that for the appropriate $l,j$,  $T_p(\Gamma_l^j)\subset \hat{H}_{i,p} \subset T_p(\Sigma).$
\end{proof}
The analogous statement holds locally over Whitney regions associated to a cube $P$. We define
\begin{equation*}
\mathbf{E}(P):=2^{(m+2)\ell(P)}\int_{\bB^{h}(P)} \dist(x,C)^2d|\!|T|\!|.
\end{equation*}

\begin{lemma}[Local Splitting]\label{l:localseparation}
There exists a constant $\delta$ such that if 
\begin{equation*}
\mathbf{E}(P)+ \mathbf{A_{\Gamma}} + \mathbf{A}_{\Sigma}^2 \leq \delta^2.
\end{equation*}
Then there exist  $T_{1,P},...,T_{N,P}$ such that \begin{equation*}
    T \res \left(\bB_{3 \cdot 2^{-\ell(P)}}(y_p) \setminus B_{2^{-4-\ell(P)}}(L)\right) = \sum_{i=1}^N T_i \; \textup{and } \; \forall i \; :\partial T_i= Q_i'\a{\Gamma_1^0}-Q_i'\a{\Gamma_i^1}.
\end{equation*}
The multiplicity of $Q_i'$ is the one obtained by constancy over the projection on  $\left(\mathbf{p}_{H_i}\right)_{\#} T_i$.
Furthermore, if we assume 
\begin{equation*}
\mathbb{E}(T,C,\bB_8) + \mathbf{A_{\Gamma}}+\mathbf{A}_{\Sigma}^2<\delta^2
\end{equation*}
then $Q_i'=Q_i$ for every $P$ with $\ell(P) \sim 1$.
\end{lemma}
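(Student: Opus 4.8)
\textbf{Proof proposal for Lemma \ref{l:localseparation} (Local Splitting).}

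The plan is to reduce the statement to a rescaled version of Lemma \ref{l:splitting} (the global Splitting lemma) applied on the Whitney region associated to $P$. First I would set $\ell=\ell(P)$ and rescale the current $T$ by the factor $2^{\ell}$ centered at $y_P$, so that the ball $\bB_{3\cdot 2^{-\ell}}(y_P)\setminus B_{2^{-4-\ell}}(L)$ becomes comparable with the fixed region $\bB_{3.5}\setminus B_{3/64}(L)$ used in Lemma \ref{l:splitting}. Under this rescaling the second fundamental forms $\mathbf{A}_\Gamma$, $\mathbf{A}_\Sigma$ only decrease (they scale by $2^{-\ell}$), and the quantity $\int_{\bB^h(P)}\dist(x,C)^2 d|T|$ rescales precisely into the flat integral $\int_{\bB_4\setminus B_{1/32}(L)}\dist(x,C')^2 d|T'|$ up to the constant $2^{(m+2)\ell}$ built into the definition of $\mathbf{E}(P)$ — this is exactly why $\mathbf{E}(P)$ is the natural scale-invariant excess here. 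Hence the hypothesis $\mathbf{E}(P)+\mathbf{A}_\Gamma+\mathbf{A}_\Sigma^2\le\delta$ translates into the flat-excess smallness hypothesis of Lemma \ref{l:splitting} for the rescaled data, and the conclusion (existence of $T_{1,P},\dots,T_{N,P}$ with the stated boundary decomposition, multiplicities $Q_i'$ obtained by constancy on the projections) follows by applying Lemma \ref{l:splitting} to $T'$ and rescaling back.

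For the splitting itself I would follow the same architecture as in the proof of Lemma \ref{l:splitting}: take a Besicovitch cover of $\mathbf{\Gamma}$ inside the (rescaled) region by balls of radius $\sim 2^{-12}\alpha_0$ small enough that no ball meets two distinct boundary pieces $\Gamma_l^j$; away from $\mathbf{\Gamma}$ use the interior height bound to conclude that $\textup{spt}(T')$ lies in a small neighbourhood of $\textup{spt}(C)$, so it separates into disjoint neighbourhoods $U_{i,0}$ of the quadrants $H_i$; near $\mathbf{\Gamma}$ invoke Theorem \ref{T:decomposition-1sided} (the one-sided decomposition of \cite{ian2024uniqueness}) at each ball of the cover — legitimate because the relevant opening $\alpha(C')\gtrsim\alpha_0$ is bounded below by the transversality Assumption \ref{a:corneredboundary}, and the tilt of the reference plane to pass through the cover centre only costs an error controlled by $\mathbf{A}_\Gamma$. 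Gluing $U_i=U_{i,0}\cup\bigcup_l U_{i,l}$ gives disjoint open sets whose common boundaries lie in $\mathbf{\Gamma}$ and which cover $\textup{spt}(T')$; setting $T_{i,P}:=T'\res U_i$ (then rescaled back) and reading off $Q_i'$ by the constancy theorem on $(\mathbf{p}_{H_i})_\#T_{i,P}$ yields the claim.

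The last assertion — that $Q_i'=Q_i$ whenever $\mathbb{E}(T,C,\bB_8)+\mathbf{A}_\Gamma+\mathbf{A}_\Sigma^2<\delta$ and $\ell(P)\sim 1$ — is the only place where the \emph{global} smallness of the stronger excess $\mathbb{E}$ (rather than just the local $\mathbf{E}(P)$) is used: one compares the multiplicities produced by the local decomposition with those fixed by the tangent cone $C$, exactly as in the final remark of Lemma \ref{l:splitting}, using that for cubes of generation $\sim 1$ the ball $\bB^h(P)$ is a fixed fraction of $\bB_8$ so smallness of $\mathbb{E}(T,C,\bB_8)$ forces the nearby sheets to carry the multiplicities $Q_i$ dictated by $C$; this is the content of Lemma 8.5 of \cite{de2023fineIII} combined with Lemma 6.3 of \cite{ian2024uniqueness}. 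I expect the main obstacle to be purely bookkeeping: checking that the radii $3\cdot 2^{-\ell(P)}$ and $2^{-4-\ell(P)}$ rescale into a region that genuinely contains the region needed by Lemma \ref{l:splitting} with room to spare for the Besicovitch cover and the decomposition balls, and that the constant $\delta$ can be chosen uniformly in $\ell(P)$ — which it can, since after rescaling every estimate is at a fixed scale and the only $\ell$-dependence has been absorbed into $\mathbf{E}(P)$ and into the (decreasing) curvature terms.
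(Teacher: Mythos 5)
Your proposal is correct, and since the paper itself offers no proof of Lemma \ref{l:localseparation} (it is left implicit as a rescaling corollary of Lemma \ref{l:splitting}), your reconstruction is precisely the intended argument. The key checks you perform all go through: rescaling by $2^{\ell(P)}$ centered at $y_P\in L$ sends $\bB^h(P)=\bB_{2^{2-\ell(P)}}(y_P)\setminus B_{2^{-5-\ell(P)}}(L)$ onto $\bB_4\setminus B_{1/32}(L)$, and the normalization $2^{(m+2)\ell(P)}$ in the definition of $\mathbf{E}(P)$ is exactly the Jacobian factor, so $\mathbf{E}(P)$ becomes the flat-excess integral in the hypothesis of Lemma \ref{l:splitting}; the cone $C=\RR^{m-2}\times C'$ with spine $L$ is invariant under dilations centered at $y_P\in L$; the curvature terms $\mathbf{A}_\Gamma,\mathbf{A}_\Sigma$ only decrease under the rescaling; and the conclusion region $\bB_{3\cdot 2^{-\ell(P)}}(y_P)\setminus B_{2^{-4-\ell(P)}}(L)$ rescales into $\bB_3\setminus B_{1/16}(L)\subset \bB_{3.5}\setminus B_{3/64}(L)$ with margin to spare. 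Your treatment of the last claim ($Q_i'=Q_i$ for $\ell(P)\sim 1$) is also right: the stronger excess $\mathbb{E}$ is scale-invariant by construction, so when the generation is bounded the rescaled ball $\bB_{8\cdot 2^{-\ell(P)}}(y_P)$ occupies a fixed fraction of $\bB_8$ and smallness of $\mathbb{E}(T,C,\bB_8)$ passes to it up to a controlled constant, after which the final assertion of Lemma \ref{l:splitting} identifies the multiplicities, exactly via Lemma 6.3 of \cite{ian2024uniqueness} and Lemma 8.5 of \cite{de2023fineIII} as you say.
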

\begin{definition}\label{d:deltazero}
Let $\delta_0=\delta(Q,m,n,\overline{n})$ be so that Lemma \ref{l:lipschitzapprox} and \ref{l:localseparation} hold.
\end{definition}
\begin{definition} Let $P \in \mathcal{G}$. We say that 
\begin{enumerate}
\item A cube $P$ is a \textit{classical boundary cube} if $\mathbf{B}^h(P') \cap M=\varnothing$
for every ancestor $L'$ of $L$  (including $L$) and 
\begin{equation}\label{eq:densitycondition}
|\!|T|\!|(\mathbf{B}(L)) \leq (Q/4+1/4)2^{m(2-l(P))}.
\end{equation}
\item A cube $P$ is a \textit{corner stopping cube} if $\mathbf{B}^h(P) \cap M \neq \varnothing$ or $|\!|T|\!|(\mathbf{B}(P)) >(Q/4+1/4)2^{m(2-l(P))}$ and it's parent is a classical boundary cube.
\item A cube $P$ is an \textit{outer cube} if it is of classical boundary type and $\mathbf{E}(P')\leq \delta_0^2$ (with $\delta_0$ as in \ref{d:deltazero}) for every ancestor $P'$ of $P$ (including $P$).  We denote $\mathcal{G}^o$ the outer cubes.
\item A cube $P$ is a \textit{inner cube} if it is a classical boundary cube, but it an outer cube.
\end{enumerate}
It is convenient to define the following regions
\begin{itemize}
    \item The \textit{corner region region} denoted $R^{corner}$ which is the union of $R(P)$ over all cubes which are not classical boundary cubes.
    \item The \textit{outer region} denoted $R^o$, is the union of $R(P)$ over all outer cubes. 
    \item The \textit{inner region} denoted $R^i$, is the union of $R(P)$ over all classical boundary cubes which are not outer cubes.
    \end{itemize}
\end{definition}
\begin{definition}
Let $T, \Gamma_i^j, \Sigma, M, C$ be as in Assumption \ref{A:Lipschitzapproximation}. Fix $P \in \mathcal{G}^o$. We denote
\begin{equation*}\overline{\mathbf{E}}(P):=\max \left\{\mathbf{E}\left(P^{\prime}\right): P^{\prime} \in \mathcal{G}^o \: \textup{with} \: R(P) \cap R\left(P^{\prime}\right) \neq \emptyset\right\}
\end{equation*}
\end{definition}
We use Lipschitz approximation locally and then patch the pieces as in \cite{de2023fineIII} to get a coherent outer approximation.
\begin{proposition}[Coherent outer approximation] Let $T, \Gamma, \Sigma, C$ as in Assumption \ref{A:Lipschitzapproximation}. We define
\begin{equation*}
R_i^o:= \cup_{L \in \mathcal{G}^o}L_i \equiv H_i \cap \cup_{L \in \mathcal{G}^o} R(L).
\end{equation*}
Then, there are Lipschitz multivalued maps $u_i: R_i^o \rightarrow A_{Q_i}(\widehat{H}_i^{\perp})$ and closed subsets $\overline{K_i}(P) \subset P_i$ satisfying the following properties:
\begin{enumerate}
\item \begin{equation*}
u_i \res \left(\mathbf{p}_{\hat{H_i}}(\Gamma_{l(i)}^j) \cap R_i^o\right)=Q_i \a{\mathbf{p}_{\widehat{H}_i^{\perp}}(\Gamma_{l(i)}^j)} \: j=0,1.
\end{equation*}
\item 
$gr(u_i) \subset \Sigma$ and $T_{P,i} \res \mathbf{p}_{H_i}^{-1}(\overline{K}_i(P))=\mathbf{G}_{u_i} \res \mathbf{p}_{H_i}^{-1}(\overline{K}_i(P))$ for every $L \in \mathcal{G}^o$, where $T_{P,i}$ is defined as in the proposition above.
\item \begin{equation*}
2^{2l(P)}\left\|u_i\right\|_{L^\infty(L_i)}^2+2^{ml(P)}\left\|Du_i\right\|_{L^2(L_i)}^2 \lesssim \overline{\mathbf{E}}(L)+ 2^{l(P)}\mathbf{A_{\Gamma}}+2^{-2l(P)}\mathbf{A}_{\Sigma}^2
\end{equation*}
\begin{equation*}
\left\|Du_i \right\|_{L^{\infty}(L_i)} \lesssim \left(\overline{\mathbf{E}}(L)+\mathbf{A}_{\Sigma}^2\right)^{\gamma}
\end{equation*}
\begin{equation*}
|L_i \setminus K_i(L)| + |\!|T_{L,i}|\!|(\mathbf{p}_{H_i}^{-1}(L_i \setminus K_i(L))\lesssim 2^{-ml(P)}\left(\overline{\mathbf{E}}(L)+2^{-2l(P)}\mathbf{A}_{\Sigma}^2\right)^{1+\gamma}
\end{equation*}
\end{enumerate}
\end{proposition}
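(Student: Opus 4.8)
The plan is to follow the scheme of \cite[Sections 7--9]{de2023fineIII}, patching together the local Lipschitz approximations provided by Lemma \ref{l:lipschitzapprox} (in its form over Whitney regions) into a single coherent family $u_i$ on the outer region. First I would fix $P \in \mathcal{G}^o$ and invoke Lemma \ref{l:localseparation} to obtain the decomposition $T \res (\bB_{3 \cdot 2^{-\ell(P)}}(y_P) \setminus B_{2^{-4-\ell(P)}}(L)) = \sum_i T_{P,i}$ with $\partial T_{P,i} = Q_i\a{\Gamma_{l(i)}^0} - Q_i\a{\Gamma_{l(i)}^1}$ (the multiplicities are the correct $Q_i$ under the global smallness hypothesis $\mathbb{E}(T,C,\bB_8) < \delta^2$, which holds by Assumption \ref{A:Lipschitzapproximation}; the smallness of $\mathbf{E}(P)$ for outer cubes is guaranteed since $P$ and all its ancestors satisfy $\mathbf{E}(P') \le \delta^2$). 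The crucial point is that on the overlap $R(P) \cap R(P')$ of two neighboring outer cubes, both $T_{P,i}$ and $T_{P',i}$ represent the same restriction of $T$, so by the uniqueness of the constancy-theorem decomposition and the fact that the neighborhoods $U_i$ separating the sheets are stable under small perturbations, the local decompositions agree after relabeling the sheets. This is exactly the coherence needed to glue.

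Next I would run the Lipschitz approximation of Lemma \ref{l:lipschitzapprox} on each $T_{P,i}$, rescaled to the Whitney region $R(P)$: working at scale $2^{-\ell(P)}$, the rescaled excess is $\mathbf{E}(P)$ and the rescaled curvature terms are $2^{\ell(P)}\mathbf{A}_\Gamma$ and $2^{-2\ell(P)}\mathbf{A}_\Sigma^2$, which after substituting into the estimates of Lemma \ref{l:lipschitzapprox} yield precisely the three displayed bounds in the proposition (with $\overline{\mathbf{E}}(P)$ replacing $\mathbf{E}(P)$ once we pass to neighbors, to absorb the gluing). The boundary identity in item (1) is inherited directly from the corresponding identity $u_i \res (\mathbf{p}_{\hat H_i}(\Gamma_{l(i)}^j) \cap \bB_2 \setminus B_{1/16}(L)) = Q_i'\a{\mathbf{p}_{\widehat H_i^\perp}(\Gamma_{l(i)}^j)}$ in Lemma \ref{l:lipschitzapprox}, rescaled; item (2) is the graphicality statement $T_{P,i}\res \mathbf{p}_{H_i}^{-1}(\overline K_i(P)) = \mathbf{G}_{u_i}\res \mathbf{p}_{H_i}^{-1}(\overline K_i(P))$ together with $\mathrm{gr}(u_i) \subset \Sigma$, both carried over verbatim from the local statement.

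The patching step is where the argument requires care and is the main obstacle: on overlapping Whitney regions the local approximations $u_{i}^{(P)}$ and $u_{i}^{(P')}$ need not coincide on the nose, and one must modify them on a transition layer so that the resulting global $u_i$ is still Lipschitz with the claimed estimates, still graphical over a large closed set, and still takes the prescribed boundary values on $\mathbf{p}_{\hat H_i}(\Gamma_{l(i)}^j)$. I would handle this exactly as in \cite[Section 7]{de2023fineIII}: use that comparable-generation cubes have comparable excess (the bounded-overlap / doubling property built into the definition of $\overline{\mathbf{E}}(P)$), interpolate between $u_i^{(P)}$ and $u_i^{(P')}$ using a partition of unity subordinate to the Whitney decomposition while staying inside $\Sigma$ via the nearest-point projection, and control the error by the height bound so that the interpolation region can be absorbed into the bad set $L_i \setminus K_i(L)$. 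The one new feature relative to the interior theory is the presence of the boundary pieces $\Gamma_{l(i)}^j$ inside the outer region; since outer cubes by definition stay away from $M$ (the corner) by a definite amount, near $\Gamma_{l(i)}^j$ we are in the smooth single-boundary regime and the boundary-adapted Lipschitz approximation of Proposition 4.1 of \cite{ianreinaldo2024regularity} applies, with the plane tilted so that $T_p(\Gamma_{l(i)}^j) \subset \hat H_{i,p} \subset T_p(\Sigma)$; the tilting error is controlled by $\mathbf{A}_\Gamma$, hence harmless. With the coherent family in hand, the three estimates follow by summing the local estimates of Lemma \ref{l:lipschitzapprox} over the finitely many cubes meeting a given $R(P)$, which is what $\overline{\mathbf{E}}(P)$ is designed to package.
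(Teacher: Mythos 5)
Your proposal correctly identifies the two supporting lemmas (Lemma \ref{l:localseparation} for the decomposition, Lemma \ref{l:lipschitzapprox} for the local approximation), the need for coherence of the decomposition across overlapping Whitney regions, the role of $\overline{\mathbf{E}}(P)$ in packaging neighboring-cube excess, and the boundary treatment via tilting as in Proposition 4.1 of \cite{ianreinaldo2024regularity}. So the scaffolding is right.

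The gluing step, however, is where you diverge from the paper and where your proposal has a concrete gap. You propose to interpolate $u_i^{(P)}$ and $u_i^{(P')}$ with a partition of unity on the transition layers and then absorb the interpolation error into the bad set. But $u_i^{(P)}$, $u_i^{(P')}$ take values in $A_{Q_i}(\widehat H_i^\perp)$, which has no linear structure, so a convex combination $\varphi\, u_i^{(P)} + (1-\varphi)\, u_i^{(P')}$ is not defined; one would have to produce a replacement (e.g.\ a retraction-based construction) and then re-verify Lipschitz control, containment of the graph in $\Sigma$, and graphicality over the good set, none of which comes for free. The paper avoids all of this by following Proposition 8.19 of \cite{de2023fineIII} verbatim: it sets the coherent coincidence set $\overline{K}_i(P)$ to be the intersection of the local coincidence sets $K_i^{(P')}$ over all $P'$ with $R(P')\cap R(P)\neq\varnothing$, notes that on these intersections the local approximations automatically agree (they are each equal to the slices of the \emph{same} current $T_{P,i}=T_{P',i}$ there, once the sheet decompositions are identified as you correctly argue), so the map is unambiguously defined on $\cup_P \overline{K}_i(P)$, and then obtains $u_i$ by multivalued Lipschitz extension from that union. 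This directly delivers item (2) on the coherent coincidence set and the estimates in item (3) from the local ones and the bounded overlap, with no interpolation at all. If you want to keep your route, you would need to replace the partition-of-unity interpolation by an $A_Q$-valued gluing device and redo the graphicality and $\Sigma$-containment arguments; the intersection-of-coincidence-sets device is the standard and cleaner solution, and is what the paper uses.
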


\begin{proof}
The proof is the same as Proposition 8.19 in \cite{de2023fineIII}. The coincidence set is taken to be $\overline{K}_{i}(P):=\cap_{P': R(P') \cap R(P) \neq \varnothing}K_i(P')$. The Lipschitz function  $u_i$ is obtained by extending the function from $\cup_{P \in R^o}\overline{K}_i$.
\end{proof}

In the following lemma we consider the definitions of the different type of cubes as above, but we do not require the global Assumption \ref{A:Lipschitzapproximation}. We still consider the definition of types of cubes as above with respect to the cone $C$ and their associated Lipschitz approximation. 
\begin{lemma}\label{l:reverseexcessbound}Let $T, \Gamma, \Sigma$ be as in Asumption \ref{a:corneredboundary} in $\bB_4$. Assume that every cube that intersects $\bB_{2^{-10}}$ and their ancestors are outer cubes whose multiplicities of their local Lipschitz approximations agree with those of $C$. 
\begin{equation*}
\mathbb{E}(T,C,\bB_1) \leq  C(Q,m,n,\overline{n},\delta_0) \left(\mathbf{E}(T,C,\bB_4) + \mathbf{A}_{\Sigma}^2+\mathbf{A_{\Gamma}}^2\right) 
\end{equation*}
\end{lemma}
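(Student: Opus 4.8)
\textbf{Proof plan for Lemma \ref{l:reverseexcessbound}.}

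The plan is to prove the reverse excess inequality by the standard ``comparison of a graphical competitor with $T$'' argument, localized to the outer region, exactly as in the interior theory of De Lellis--Spadaro and its boundary adaptation in \cite{de2023fineIII} and \cite{ianreinaldo2024regularity}, but keeping track of the corner structure. First I would observe that the hypothesis — every cube meeting $\bB_{2^{-10}}$ and all its ancestors are outer cubes with Lipschitz-approximation multiplicities matching those of $C$ — means that on a definite neighborhood of $\bB_1$ the current decomposes, by Lemma \ref{l:splitting} and Lemma \ref{l:localseparation}, as $T = \sum_{i=1}^N T_i$ with $\partial T_i = Q_i(\a{\Gamma^0_{l(i)}} - \a{\Gamma^1_{l(i)}})$, and each $T_i$ is, away from a neighborhood of $L$ controlled by the stopping conditions, the graph of a $Q_i$-valued Lipschitz function $u_i$ over the domain $D_i \subset \widehat H_i$ with the correct boundary values on $\mathbf{p}_{\widehat H_i}(\Gamma^j_{l(i)})$, and with the $L^\infty$, energy, Lipschitz and excess-set estimates from Lemma \ref{l:lipschitzapprox} / the coherent outer approximation, all controlled by $\mathbf{E}(T,C,\bB_4)+\mathbf{A}_\Sigma^2+\mathbf{A}_\Gamma^2$.

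The main estimate then splits into two pieces. On the region where $T$ coincides with the graph $\mathbf{G}_{u_i}$ — i.e.\ over the coincidence sets $K_i$ — the contribution to $\mathbb{E}(T,C,\bB_1)$ is estimated by the pointwise bound $\dist(x,C)^2 \lesssim |u_i(\mathbf{p}_{\widehat H_i}(x))|^2 + (\text{tilting error of }\widehat H_i)$ together with the height/energy bounds $\|u_i\|_\infty^2 + \|Du_i\|_{L^2}^2 \lesssim \mathbf{E}(T,C,\bB_4) + \mathbf{A}_\Gamma + \mathbf{A}_\Sigma^2$; since $\mathbb{E}$ is built from the distance function and the Wasserstein term, one uses the natural transport plan sending mass of $\phi_r d|T|$ along vertical fibers onto $\phi_r d|C|$, for which the transport cost on the coincidence set is exactly controlled by $\int |u_i|^2$ plus lower-order tilting terms. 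On the complement $\Omega_i \setminus K_i$ (and the region near $L$ cut off by the Whitney construction), one does not have the graphical structure, but there the \emph{measure} of the bad set and the mass $\|T\|$ over it are superlinearly small in the excess by the last estimate of Lemma \ref{l:lipschitzapprox}, i.e.\ $\lesssim (\text{excess})^{1+\gamma}$; combined with the uniform mass bound coming from the density hypothesis built into ``outer cube'' and the crude height bound $\dist(x,C) \lesssim 1$, this contributes a term $\lesssim (\mathbf{E}(T,C,\bB_4)+\mathbf{A}_\Sigma^2+\mathbf{A}_\Gamma^2)^{1+\gamma}$, which is absorbed. The $\RR^{m-2}$-translation invariance of the cornered cone and the layering of the Whitney regions $R(P)$ over $P_0$ let one sum these local contributions over the finitely many generations reaching down to scale $\sim 1$ with a geometric series, yielding the global constant $C(Q,m,n,\overline n,\tau,\delta)$.

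The Wasserstein part of $\mathbb{E}$ requires a word of care: near $L$ the cornered cone $C$ and $T$ both carry mass of order (scale)$^m$ in a (scale)$^{m}$-neighborhood of $L$ by the monotonicity at corner points (Corollary \ref{c:monotonerrorcorner}), so the contribution of the innermost shell of Whitney regions to both $\phi_r d|T|$ and $\phi_r d|C|$ is comparable and can be matched by a transport plan supported in a bounded neighborhood of $L$ at cost $\lesssim (\text{scale})^{2}\cdot(\text{mass}) \lesssim (\text{scale})^{m+2}$, which after the $r^{-(m+2)}$ normalization is a harmless $O(1)\cdot(\text{cutoff scale})^{0}$ — but one must choose the cutoff scale $2^{-4-\ell(P)}$ around $L$ small enough (depending on $\delta$) that this term too is dominated by the excess; this is where the parameters $\tau,\delta$ enter the final constant. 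The step I expect to be the main obstacle is precisely this bookkeeping of the Wasserstein distance across the different regions — outer, inner, and corner — since unlike the pure distance-squared excess it is not additively local, so one has to produce a single global transport plan by gluing the vertical-fiber plans over the graphical pieces to a crude plan over the non-graphical and near-$L$ pieces, and verify that the gluing does not create cross-terms larger than $(\mathbf{E}(T,C,\bB_4)+\mathbf{A}_\Sigma^2+\mathbf{A}_\Gamma^2)$; this is handled exactly as the analogous passage in \cite[Section 8]{de2023fineIII}, with the cornered open book playing the role of the open book there.
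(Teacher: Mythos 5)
The paper states Lemma \ref{l:reverseexcessbound} without giving a proof (it is quoted as the cornered analogue of Lemma 6.3 of \cite{ian2024uniqueness} and Lemma 8.5 of \cite{de2023fineIII}), so there is nothing to compare against line by line. Your overall plan---decompose $T=\sum T_i$ by Lemma \ref{l:splitting}, transport mass along vertical fibers over the coincidence sets $K_i$ at cost $\lesssim\sum_i\int |u_i|^2$, and scrap the non-graphical sets $\Omega_i\setminus K_i$ with a cost superlinear in the excess---is the correct skeleton and matches the intended argument.

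There is, however, a genuine gap in your treatment of the Wasserstein cost near $L$, and it is not just a ``word of care.'' You estimate the transport cost of the innermost shell crudely as $(\text{scale})^2\cdot(\text{mass})$, and you correctly observe this produces an $O(1)$ contribution after the $r^{-(m+2)}$ normalization with $r=1$. You then propose to absorb it by ``choosing the cutoff scale $2^{-4-\ell(P)}$ around $L$ small enough depending on $\delta$,'' but this cannot work: outer cubes satisfy $\mathbf{E}(P)\leq\delta^2$, which gives only an \emph{upper} bound, not a lower bound, on $\mathbf{E}(T,C,\bB_4)$, so a fixed geometric cutoff always produces a fixed positive term that cannot be dominated by an arbitrarily small excess. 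Moreover the side condition $\mathbf{A}_\Gamma^2+\mathbf{A}_\Sigma^2\ll\mathbf{E}$ is not available here either, so there is nothing against which to absorb it. The correct observation is that the hypothesis---\emph{every} cube meeting $\bB_{2^{-10}}$ and \emph{all} its ancestors are outer---means there is no cutoff at all: the coherent outer Lipschitz approximation is defined on the Whitney regions $R(P)$ of every generation $\ell$, so the graphical transport extends all the way to $L$. Over the $\ell$-th shell the vertical transport cost is $\lesssim\sum_i\int_{R(P)\cap H_i}|u_i|^2 + 2^{-\ell m}(2^{-\ell}\mathbf{A}_\Gamma+2^{-2\ell}\mathbf{A}_\Sigma^2)$, and the scrap cost of $\Omega_i\setminus K_i$ at generation $\ell$ is weighted by $\dist(\cdot,\cup V_{i,j})^2\lesssim 2^{-2\ell}$ and multiplied by a measure $\lesssim 2^{-m\ell}(\overline{\mathbf{E}}(P)+\mathbf{A}_\Sigma^2)^{1+\gamma}$, both of which sum geometrically in $\ell$. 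No crude near-$L$ transport plan, and no fixed cutoff scale, should appear in the argument. Incidentally, the parenthetical ``mass of order $(\text{scale})^m$ in a $(\text{scale})^m$-neighborhood of $L$'' is also off: the monotonicity formula at corner points gives mass $\sim(\text{scale})^m$ in a ball of radius $\sim(\text{scale})$, hence mass $\sim(\text{scale})^2$ in a $(\text{scale})$-tubular neighborhood of the $(m-2)$-dimensional spine.
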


\section{Proof of main theorems at corner points}
In this section, we will show a decomposition theorem, excess decay and Hölder Continuity of the normal map.
We will define what we mean by a regular point at $M$.
\begin{definition}
Let $T, \Gamma_i^j, \Sigma$, $Q_i^j$ be as in Assumption \ref{a:corneredboundary}. A point $p \in M$ is \textit{regular} if
\begin{equation*}
T \res \bB_{\rho}(p)=\sum_{i=1}^N Q_i \a{\Xi_i}
\end{equation*}
where $\sum_{i=1}^N Q_i=Q$ and 
\begin{itemize}
\item $\Xi_i$ is an area-minimizing surface in $\bB_{\rho}(p)$ with
\begin{equation*}
\partial \Xi_i= \a{\Gamma_{l(i)}^0}-\a{\Gamma_{l(i)}^1}.
\end{equation*}
Moreover, $\Xi_i$ in $\bB_{\rho}(p)$ is smooth outside of $M$ and attaches smoothly to both $\Gamma_{l(i)}^0$ and $\Gamma_{l(i)}^1$. 

\item For $i_1 \neq i_2$ we have that
\begin{equation*}
    \textup{spt}(\Xi_{i_1}) \cap \textup{spt}(\Xi_{i_2}) \subset \left(\bigcup_{j \in \left\{0,1 \right\}}\Gamma_{l(i_1)}^j \cap \Gamma_{l(i_2)}^j  \cap \bB_{\rho}(p)\right)
\end{equation*}
where if $\Gamma_{l(i_1)}^j= \Gamma_{l(i_2)}^j$ then
$\Gamma_{l(i_1)}^j \cap \Gamma_{l(i_2)}^j= \Gamma_{l(i_1)}^j=\Gamma_{l(i_2)}^j$, otherwise $\Gamma_{l(i_1)}^j= \Gamma_{l(i_2)}^j=M$.
\end{itemize}
\end{definition}
\begin{theorem}\label{t:decomposition}
Let $T, \Gamma_i^j, \Sigma$, $Q_i^j$ be as in Assumption \ref{a:corneredboundary}. Assume that $\Theta(T,0)=Q/4$. By Lemma \ref{l:uniquetangent} there is a unique tangent cone, which is a cornered open book $C \in \mathcal{C}_0$. Then there is a neighborhood $\bB_{\rho}$ of $0$ such that $T \res \bB_{\rho}=\sum_{1 \leq i \leq N} T_i$, each $T_i$ with corresponding flat tangent cone $Q_i\a{H_i}$ (each $H_i$ a single quadrant from the cornered open book), with  \begin{equation*}
       \partial T_i=Q'_i\a{\Gamma_{k(i,0)}^0}
       -Q'_i\a{\Gamma_{k(i,1)}^1}.
    \end{equation*}  
In the case $N^0=1$, $N^1=N$, $Q_1^0=N$, $Q_1^1=Q_2^1=...=1$, $0$ must be a regular point for $T$.
\end{theorem}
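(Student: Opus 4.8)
\textbf{Proof plan for Theorem \ref{t:decomposition}.} The plan is to run a decomposition-and-iteration scheme modeled on the one-sided decomposition Theorem \ref{T:decomposition-1sided}, but carried out at corner points using the excess decay machinery of Part~1. First I would normalize: since $\Theta(T,0)=Q/4$, Lemma \ref{l:uniquetangent} gives a unique tangent cone $C\in\mathcal{C}_0$, which is a cornered open book $C=\sum_{1\le i\le N}Q_i\a{H_i}$ with each $H_i$ a single quadrant. Rescaling $T$ to $T_{0,r}$ and using the monotonicity formula (Corollary \ref{c:monotonerrorcorner}) together with upper semicontinuity of the density, I would arrange that for $r$ small the hypotheses of Assumption \ref{A:Lipschitzapproximation} hold on $\bB_8$ after rescaling — in particular the stronger excess $\mathbb{E}(T,C,\bB_8)$ and the ambient/boundary curvatures $\mathbf{A}_\Sigma,\mathbf{A}_\Gamma$ are as small as we like, and the density bound $\|T\|(\bB_\rho(p))\le(Q/4+1/16)\omega_m\rho^m$ holds for $p\in M\cap\bB_2$. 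This last point is where the equality $\Theta(T,0)=Q/4$ is essential: it forces, via Lemma \ref{l:densitybound} and monotonicity, that all nearby corner points also have density exactly $Q/4$, so no other corner cone can appear in a blowup.

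\textbf{Separation away from the spine.} Next I would apply the Splitting Lemma \ref{l:splitting} (and its local version Lemma \ref{l:localseparation} over Whitney regions): for $\delta$ small, $T\res(\bB_{3.5}\setminus B_{3/64}(L))$ decomposes as $\sum_{i=1}^N T_i$ with $\partial T_i=Q_i'\a{\Gamma_{k(i,0)}^0}-Q_i'\a{\Gamma_{k(i,1)}^1}$, and the final clause of Lemma \ref{l:splitting} (the one invoking $\mathbb{E}(T,C,\bB_8)<\delta^2$) gives $Q_i'=Q_i$. The decomposition produced over the outer region is coherent by the Coherent Outer Approximation proposition, so the pieces $T_i$ are globally defined area minimizing currents on $\bB_\rho\setminus B_\sigma(L)$ for small $\sigma$, each close to $Q_i\a{H_i}$. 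The content of the theorem is then to extend this decomposition across the spine $M$. For this I would use the reverse excess estimate (Lemma \ref{l:reverseexcessbound}) in tandem with the excess decay asserted in Theorem \ref{r:excessdecay}/Result~\ref{r:excessdecay}: iterating the splitting at dyadic scales $\bB_{2^{-k}}$, the excess $\mathbf{E}(P)$ over the cubes abutting $L$ decays geometrically, which upgrades to decay of $\mathbb{E}(T,C,\bB_{2^{-k}})$, and hence the Lipschitz approximations $u_i$ over the Whitney regions $R(P)$ glue to multivalued maps that are Hölder continuous (with power rate) up to $M$. Combined with the fact that $T_i$ has flat tangent $Q_i\a{H_i}$ at $0$ and the decay rate is uniform, a standard telescoping argument shows each $T_i$ extends across $M$ with boundary $Q_i\a{\Gamma_{k(i,0)}^0}-Q_i\a{\Gamma_{k(i,1)}^1}$ on all of $\bB_\rho$.

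\textbf{The multiplicity one case.} For the last assertion, suppose $N^0=1$, $N^1=N$, $Q_1^0=N$, $Q_1^1=\dots=Q_N^1=1$. Then by the constancy-theorem count each $Q_i=1$, so each piece $T_i$ has multiplicity $1$ and attaches to a single classical boundary sheet $\Gamma_1^0$ on one side and to $\Gamma_i^1$ on the other. A multiplicity-one area minimizing current with smooth transversal boundary sheets meeting perpendicularly at a smooth spine is, by the multivalued Hölder estimate at smooth boundary points from \cite{ian2024uniqueness} applied along $\Gamma_1^0$ and $\Gamma_i^1$ away from $M$, and by the corner excess decay of Theorem \ref{r:excessdecay} along $M$, a classical minimal surface: the Lipschitz approximation $u_i$ is single-valued, hence $T_i=\a{\Xi_i}$ with $\Xi_i$ a $C^{1,\gamma}$ (then smooth, by Allard-type boundary regularity and Schauder) minimal surface with $\partial\Xi_i=\a{\Gamma_1^0}-\a{\Gamma_i^1}$, smooth away from $M$ and attaching smoothly to the two sheets. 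The intersection statement $\mathrm{spt}(\Xi_{i_1})\cap\mathrm{spt}(\Xi_{i_2})\subset\bigcup_j\Gamma_{k(i_1,j)}^j\cap\Gamma_{k(i_2,j)}^j$ follows because the $u_i$ are separated on the outer region by the Splitting Lemma and their only possible common boundary is $\Gamma_1^0$ (shared, $j=0$) and $M$ (for $j=1$, since $\Gamma_{i_1}^1\ne\Gamma_{i_2}^1$ meet only at $M$).

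\textbf{Main obstacle.} The hard part will be establishing the decay across the spine — i.e., that the excess $\mathbf{E}(P)$ over cubes touching $L$ genuinely decays rather than merely staying bounded. In the interior/one-sided theory this is the heart of the matter; here one must combine Leon Simon's cylindrical tangent cone uniqueness scheme \cite{simon1993cylindrical} (which the paper has set up precisely for this, using the density bounds at $M$) with the Lipschitz-approximation/harmonic-competitor comparison, being careful that the corner geometry (perpendicularity of the $j=0$ and $j=1$ normals, and the uniform angle bound $2^{1000}\alpha_0\le\langle N_{\Gamma_{i_0}^{j_0}},N_{\Gamma_{i_1}^{j_1}}\rangle$ from Assumption \ref{a:corneredboundary}) enters to kill the boundary error terms in the monotonicity formula. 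The second delicate point is verifying the density bound $\|T\|(\bB_\rho(p))\le(Q/4+1/16)\omega_m\rho^m$ for all $p\in M$ near $0$ — this requires knowing $\Theta(T,p)=Q/4$ for such $p$, which comes from upper semicontinuity of the density plus the sharp lower bound $\Theta\ge Q/4$ of Lemma \ref{l:densitybound}, but must be upgraded from a density statement to a mass-ratio statement at a definite scale, which is exactly what the stronger excess smallness buys.
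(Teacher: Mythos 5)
Your proposal takes essentially the same approach as the paper's proof: after shrinking so that the corner excess-decay machinery applies (which, as you note, is where $\Theta(T,0)=Q/4$ and Lemma \ref{l:densitybound} are used to get the density bound at nearby corner points), one invokes the power-rate convergence to the unique tangent cone to verify the smallness hypothesis of the Splitting Lemma over a Whitney decomposition adapted to $M$, patches the local splittings into a global decomposition $T\res\bB_{r_0}=\sum T_i$, and for the multiplicity-$1$ case applies Allard's interior and boundary regularity to the separated multiplicity-$1$ pieces. The paper is simply more direct at two points where you are a bit vague: it applies the rate-of-convergence Lemma \ref{l:convergencetangentcone} as a black box rather than re-deriving the power decay from the reverse-excess/excess-decay iteration, and it replaces your ``standard telescoping argument'' with the explicit observation that the overlapping Whitney regions $2R(P)$ cover $\bB_{r_0}\setminus M$ so that the local separations from Lemma \ref{l:splitting} agree on overlaps and glue (your appeal to the Coherent Outer Approximation proposition is misplaced here — that concerns the Lipschitz approximations, not the current pieces, though the patching mechanism is analogous).
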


\begin{lemma}[Excess decay with small second fundamental forms]\label{l:excessdecay}
Let $T, \Gamma_i^j, \Sigma$, $Q_i^j$ be as in Assumption \ref{a:corneredboundary}.
There $\varepsilon$ and $\rho<1/2$ such that if $\mathbb{E}(T,C,\bB_1)<\varepsilon$,
\begin{equation*}
\mathbf{A}_{\Sigma}^2+\mathbf{A_{\Gamma}}
<\varepsilon \mathbb{E}(T,C,\bB_1)
\end{equation*}
for $C$ a cornered open book then
\begin{equation*}
\mathbb{E}(T,C,\bB_{\rho}) \leq \frac{1}{2}
\mathbb{E}(T,C,\bB_1).
\end{equation*}
\end{lemma}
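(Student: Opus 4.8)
\textbf{Proof proposal for Lemma \ref{l:excessdecay}.}

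The plan is to run the standard blow-up/compactness contradiction argument adapted to the cornered boundary setting. Suppose the statement fails. Then there exist sequences $T_k$, cornered open books $C_k$, with $\mathbb{E}_k := \mathbb{E}(T_k,C_k,\bB_1) \to 0$ and $\mathbf{A}_{\Sigma,k}^2 + \mathbf{A}_{\Gamma,k} \leq \varepsilon_k \mathbb{E}_k$ with $\varepsilon_k \to 0$, yet $\mathbb{E}(T_k,C_k,\bB_\rho) > \tfrac12 \mathbb{E}_k$ for every $\rho<1/2$ (here one must be slightly careful: the failure is for all $\rho$ simultaneously, so one fixes a $\rho$ to be chosen below and derives the contradiction for that $\rho$). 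Since $\mathcal{C}_0$-type data is essentially rigid and $\alpha(C_k) \gtrsim \alpha_0$ uniformly, one may pass to a subsequence so that $C_k \to C$ a fixed cornered open book. Normalize by $\mathbb{E}_k$: apply the Lipschitz approximation of Lemma \ref{l:lipschitzapprox} (in its local/coherent form, on each quadrant $H_i$) to write $T_k \res \mathbf{p}_{H_i}^{-1}(K_{i,k}) = \mathbf{G}_{u_{i,k}}$, and set $w_{i,k} := u_{i,k}/\mathbb{E}_k^{1/2}$. The estimates in Lemma \ref{l:lipschitzapprox} give uniform $W^{1,2}$ bounds on the $w_{i,k}$ together with smallness of the non-coincidence sets, so along a further subsequence $w_{i,k} \rightharpoonup w_i$ in $W^{1,2}(\Omega_i)$ for each quadrant.

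The heart of the argument is to show that the limit $w = (w_i)$ is Dir-minimizing on $\bB_1^{1/4}$ in the sense of the linear problem on the quarter space (Assumption \ref{a:linearproblemquarterspace}), with zero boundary data along $V^0 \cup V^1$; this boundary condition is forced because $u_{i,k}$ takes the prescribed boundary values on $\mathbf{p}_{\hat H_i}(\Gamma_{l(i)}^j)$ and the second fundamental form of $\Gamma$ is $o(\mathbb{E}_k^{1/2})$, so in the blow-up the boundary pieces straighten to $V^0, V^1$ and the trace vanishes. The minimality of $w$ follows the by-now classical route: strong $W^{1,2}$ convergence of the $w_{i,k}$ (proved by a competitor/energy-comparison argument, using that $T_k$ is area minimizing, combined with the almost-monotonicity of Corollary \ref{c:monotonerrorcorner} to control mass in the thin neighborhood $B_{1/32}(L)$ where the Lipschitz approximation is cut off), plus Theorem \ref{t:localdirminimizers} to upgrade local Dir-minimality on smooth subdomains to global Dir-minimality with the corner boundary condition. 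Once $w$ is Dir-minimizing with zero boundary data on the quarter space, the frequency lower bound (the lemma preceding Theorem \ref{t:localdirminimizers}, giving $\alpha \geq 2$) and the height decay Proposition \ref{p:heightdecay} yield
\begin{equation*}
\int_{\bB_\rho^{1/4}} |w|^2 \leq C \rho^{m+4} \int_{\bB_1^{1/4}} |w|^2 \leq C \rho^{m+4},
\end{equation*}
and more precisely the decay of the excess of the graph of $w$ at rate $\rho^{m+4}$, which after translating back through $\mathbb{E}(T_k,C_k,\bB_\rho) \approx \rho^{-(m+2)} \int \dist(\cdot, C_k)^2 \,d|T_k| / \mathbb{E}_k \to \rho^{-(m+2)} \int_{\bB_\rho^{1/4}} |w|^2$ contradicts $\mathbb{E}(T_k,C_k,\bB_\rho) > \tfrac12 \mathbb{E}_k$ as soon as $\rho$ is chosen small enough that $C\rho^{2} < \tfrac12$.

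The main obstacle I expect is the passage to the limit near the spine $L$: the Lipschitz approximation is only defined away from a thin neighborhood $B_{\cdot}(L)$ of the corner, so one must argue that no excess or energy concentrates there in the blow-up. This is exactly where the density bound $\Theta(T,p) \geq Q/4$ at corner points (the Proposition after Assumption \ref{a:corneredboundary}, resp. Lemma \ref{l:densitybound}) and the monotonicity formula with controlled error terms (Corollary \ref{c:monotonerrorcorner}, using that every point of $M$ is a good corner point) are essential: they force $\|T_k\|(B_h(L)) \lesssim h^m$ uniformly, hence the contribution of the thin region to the rescaled excess is $O(h^2) \to 0$, so the blow-up limit genuinely solves the quarter-space linear problem with no hidden defect measure along $L$. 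A secondary technical point is verifying that the minimality-preserving competitor construction respects the corner boundary data; this is handled exactly as in the smooth-boundary case of \cite{ian2024uniqueness} combined with Theorem \ref{t:localdirminimizers}, and the uniform transversality/angle bounds of Assumption \ref{a:corneredboundary} ensure all constants are uniform in $k$.
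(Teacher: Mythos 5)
Your overall strategy --- compactness/contradiction, Lipschitz approximation, normalization by $\mathbb{E}_k^{1/2}$, passage to a Dir-minimizing blow-up on the quarter-space, then the frequency bound $\alpha \geq 2$ plus Proposition \ref{p:heightdecay} to get a fast decay rate --- is the same as the paper's. However, there is a genuine gap in the step you correctly flag as ``the main obstacle'': you try to handle non-concentration near the spine $L$ by appealing only to the crude mass bound $\|T_k\|(B_h(L)) \lesssim h^m$ from almost-monotonicity. This estimate is scale-invariant but it is \emph{not} proportional to the excess $\mathbb{E}_k$, so after dividing by $\mathbb{E}_k$ it is useless. Concretely, $\dist(q,C_k) \lesssim \sigma r$ on $\bB_r \cap B_{\sigma r}(L)$ together with your mass bound gives
\begin{equation*}
\frac{1}{r^{m+2}} \int_{\bB_r \cap B_{\sigma r}(L)} \dist(q,C_k)^2\,d|T_k| \lesssim \sigma^4,
\end{equation*}
and dividing by $\mathbb{E}_k \to 0$ produces $\sigma^4/\mathbb{E}_k \to \infty$ --- the thin-region contribution is not controlled at all relative to the excess. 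What is actually needed (and what the paper uses) is Simon's non-concentration estimate, Theorem \ref{t:nonconcentration}, which bounds the thin-region contribution by $C_\delta \sigma^{4-\delta}\big(\mathbf{E}(T,C,\bB_4) + \mathbf{A}_{\Gamma} + \mathbf{A}_{\Sigma}^2\big)$; the proportionality to the excess is exactly what survives normalization by $\mathbb{E}_k$ and then vanishes as $\sigma \to 0$. That estimate is a substantial first-variation argument (Section 7, Lemma \ref{l:simonnonconcentration}) and cannot be replaced by the mass bound. A secondary omission: you equate $\mathbb{E}$ with the raw $L^2$ quantity $\mathbf{E}$ at the end; since $\mathbb{E}$ is a Wasserstein-type excess, the paper invokes the reverse excess Lemma \ref{l:reverseexcessbound} to pass from the $L^2$ excess bound in $\bB_r$ back to the statement about $\mathbb{E}$. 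Your sketch would need to make that conversion explicit (and cite the assumptions under which the local Lipschitz approximations recover the correct multiplicities).
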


\begin{lemma}[Rate of convergence to the  the tangent cone]\label{l:convergencetangentcone}
Let $T, \Gamma_i^j, \Sigma$, $Q_i^j$ be as in Assumption \ref{a:corneredboundary}. There is $\varepsilon>0$, $\alpha>0$ and $\kappa>0$ such that if 
\begin{equation*}
    \max\left\{\mathbb{E}(T,C_p,\bB_4(p)),\kappa^{-1}\mathbf{A_{\Gamma}},\kappa^{-1}\mathbf{A}_{\Sigma}^2\right\}<\varepsilon
\end{equation*} then, for any $q \in \bB_1(p) \cap M$ and $r<1/4$
\begin{equation*}
     \max\left\{\mathbb{E}(T,C_q,\bB_r),\kappa^{-1}\mathbf{A_{\Gamma}}r,\kappa^{-1}\mathbf{A}_{\Sigma}^2r^2\right\} \leq  Cr^{\alpha} \max\left\{\mathbb{E}(T,C_p,\bB_4),\kappa^{-1}\mathbf{A_{\Gamma}},\kappa^{-1}\mathbf{A}_{\Sigma}^2\right\},
\end{equation*}
where $C_q$ denotes the unique tangent cone to $T$ at $q$.
\end{lemma}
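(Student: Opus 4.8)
The plan is to iterate the one-scale improvement of Lemma~\ref{l:excessdecay}, applied not to $\mathbb{E}$ alone but to the \emph{combined} quantity
\[
\Psi(q,r):=\max\left\{\mathbb{E}(T,C_q,\bB_r(q)),\ \kappa^{-1}\mathbf{A}_{\Gamma} r,\ \kappa^{-1}\mathbf{A}_{\Sigma}^2 r^2\right\},
\]
where $C_q$ is the unique tangent cone of $T$ at $q$ (unique by Lemma~\ref{l:uniquetangent}). This is the right object because at each step the two curvature terms $\mathbf{A}_{\Gamma} r$ and $\mathbf{A}_{\Sigma}^2 r^2$ already decay on their own, linearly resp.\ quadratically, while the genuine excess decays only by Lemma~\ref{l:excessdecay}. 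One first checks that $C_q$ is well defined and varies continuously for $q\in\bB_1(p)\cap M$: by the splitting Lemma~\ref{l:splitting} the combinatorial data of $C_q$ (the index selection and the multiplicities $Q_i'$) is locally constant once the strong excess is small, so $C_q$ is pinned down by the tangent planes $T_q(\Gamma_i^j)$ and $T_q(M)$, which move with the $C^1$-regularity of $\mathbf{\Gamma}$ and $M$. Then I would propagate the smallness hypothesis from $\bB_4(p)$ to every ball $\bB_2(q)$: shifting the center by at most $1$ inside $\bB_4(p)$ and replacing $C_p$ by the translated and tilted $C_q$ costs only a multiple of $\mathbf{A}_{\Gamma}+\mathbf{A}_{\Sigma}^2$ in the $L^2$-excess, and the reverse estimate Lemma~\ref{l:reverseexcessbound} upgrades this to a bound on the strong excess, so $\Psi(q,2)\le C_0\,\Psi(p,4)$. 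Together with the decay obtained below this forces $\Theta(T,q)=Q/4$, so the density upper bound of Assumption~\ref{A:Lipschitzapproximation} holds at every such $q$.

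The core step is the one-scale decay: there are $\rho_0\in(0,1/4)$ and $\theta\in(0,1)$ with $\Psi(q,\rho_0 r)\le\theta\,\Psi(q,r)$ for all $q\in\bB_1(p)\cap M$ and $r\le 2$, once $\kappa$ is chosen small relative to the $\varepsilon$ of Lemma~\ref{l:excessdecay}. I argue by a dichotomy at scale $r$. If $\mathbb{E}(T,C_q,\bB_r(q))\ge\kappa^{-1}(\mathbf{A}_{\Gamma} r+\mathbf{A}_{\Sigma}^2 r^2)$, then after rescaling to unit radius the hypothesis $r^2\mathbf{A}_{\Sigma}^2+r\mathbf{A}_{\Gamma}<\varepsilon\,\mathbb{E}$ of Lemma~\ref{l:excessdecay} holds (taking $\kappa<\varepsilon/2$), so $\mathbb{E}(T,C_q,\bB_{\rho_0 r})\le\tfrac12\mathbb{E}(T,C_q,\bB_r)$; since the curvature parts of $\Psi(q,\rho_0 r)$ are at most $\rho_0\Psi(q,r)$ we get $\Psi(q,\rho_0 r)\le\tfrac12\Psi(q,r)$. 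If instead $\mathbb{E}(T,C_q,\bB_r(q))<\kappa^{-1}(\mathbf{A}_{\Gamma} r+\mathbf{A}_{\Sigma}^2 r^2)$ the strong excess is already as small as a curvature term, and the point is to show it does not jump back up at the smaller scale. Here I would use the splitting Lemma~\ref{l:splitting} to write $T=\sum_i T_i$ with $\partial T_i=Q_i'\a{\Gamma_{l(i)}^0}-Q_i'\a{\Gamma_{l(i)}^1}$, run the Lipschitz approximation of Lemma~\ref{l:lipschitzapprox}, and compare each sheet with a Dir-minimizing competitor taking the induced boundary data; the linear analysis of the quarter space---in particular the height decay of Proposition~\ref{p:heightdecay}, where the boundary frequency at the corner is $\ge 2$---shows that the competitor contributes $O(\rho_0^2)$ of its original size at scale $\rho_0 r$, while the approximation errors are controlled powers of $\mathbf{E}(P)+\mathbf{A}_{\Gamma}+\mathbf{A}_{\Sigma}^2$. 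This yields $\mathbb{E}(T,C_q,\bB_{\rho_0 r})\le C\rho_0^2\,\mathbb{E}(T,C_q,\bB_r)+C(\mathbf{A}_{\Gamma}\rho_0 r+\mathbf{A}_{\Sigma}^2(\rho_0 r)^2)$, and in this case both terms on the right are $\le C\rho_0\,\Psi(q,r)$, so a small choice of $\rho_0$ closes the case.

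With the one-scale decay in hand, iterating along $r=2\rho_0^k$ gives $\Psi(q,2\rho_0^k)\le\theta^k\Psi(q,2)\le\theta^k C_0\Psi(p,4)$; for an arbitrary $r<1/4$ one picks $k$ with $2\rho_0^{k+1}<r\le2\rho_0^k$ and interpolates using the crude bound $\Psi(q,r)\le C\rho_0^{-(m+2)}\Psi(q,2\rho_0^k)$, obtaining $\Psi(q,r)\le Cr^\alpha\Psi(p,4)$ with $\alpha=\log\theta/\log\rho_0>0$, which is the claimed estimate. The continuity of $p\mapsto C_p$---in fact $C^{0,\alpha'}$-continuity---then follows by telescoping: $C_q$ is the limit of the blow-ups $T_{q,r}$, the decay of $\Psi(q,\cdot)$ makes this a Cauchy family with geometric rate, and comparing the blow-ups of $T$ at $p$ and at a nearby $q\in M$ at scale $r\simeq|p-q|$ bounds the flat distance between $C_p$ and $C_q$ by a power of $|p-q|$.

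I expect the main obstacle to be the curvature-dominant case of the one-scale decay, namely showing the strong excess cannot return to a size larger than a multiple of $\mathbf{A}_{\Gamma} r+\mathbf{A}_{\Sigma}^2 r^2$ once it has dropped there: this is exactly where one needs the flat cone to be the correct second-order model of the $C^2$ boundary configuration, and where, for sheets with $Q_i'>1$, one must lean on the higher-multiplicity boundary regularity of \cite{ianreinaldo2024regularity} and \cite{ian2024uniqueness}. For all the applications in this paper every $Q_i'$ equals $1$, and on each sheet the argument reduces, after reflection, to the classical two-sided Allard theory.
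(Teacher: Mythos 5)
Your proposal is correct and follows the same route as the paper, which for this lemma simply refers to two results of \cite{ian2024uniqueness}: Theorem 5.8 (to remove the ``small second fundamental form'' hypothesis of the excess decay Lemma~\ref{l:excessdecay} via exactly the dichotomy you describe between the excess-dominant and curvature-dominant regimes) and Corollary 5.12 (for the error incurred in moving the center from $p$ to $q$ and tilting the cornered open book). Your reconstruction --- tracking the combined quantity $\Psi$, iterating the one-scale improvement, and deducing Hölder continuity of $p\mapsto C_p$ by telescoping --- is precisely what lies behind those citations, so nothing is genuinely different; you have just unpacked the references.

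One small point worth being explicit about: in the curvature-dominant case, your bound $\mathbb{E}(T,C_q,\bB_{\rho_0 r})\le C\rho_0^2\,\mathbb{E}(T,C_q,\bB_r)+C(\mathbf{A}_{\Gamma}\rho_0 r+\mathbf{A}_{\Sigma}^2(\rho_0 r)^2)$ is a strengthening of Lemma~\ref{l:excessdecay} (it has explicit curvature error terms and a quantified $\rho_0^2$ rate rather than a fixed factor $1/2$), and it does not follow formally from the statement of Lemma~\ref{l:excessdecay} as given. Proving it requires re-running the blow-up and comparison-with-a-Dir-minimizing-competitor argument without the hypothesis $\mathbf{A}_{\Gamma}+\mathbf{A}_{\Sigma}^2<\varepsilon\mathbb{E}$, using the frequency bound $\alpha\ge 2$ at the corner (so that the competitor with zero corner data decays quadratically) plus the linear dependence of the competitor's boundary data on $\mathbf{A}_{\Gamma}$. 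You identify this as ``the main obstacle,'' and indeed it is the content of the cited Theorem 5.8; your outline of how it is carried out is the right one, so this is not a gap in the approach, only the place where the remaining work lives.
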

\begin{corollary}[Hölder estimate at the corner]\label{c:holdercorner}
Let $T, \Gamma_i^j, \Sigma$, $Q_i^j$ be as in Assumption \ref{a:corneredboundary}.
Then the function $p \rightarrow C_p$ which assigns at points in $M \cap \bB_1$ the tangent cone of $T$ at $p$ is locally Hölder continuous in the Hausdorff distance. This is, for every $\varepsilon>0$ and every $p,q \in M \cap \bB_{1-\varepsilon}$
\begin{equation*}
\textup{dist}_{H}(C_p,C_q) \leq C|p-q|^{\alpha}.
\end{equation*}
\end{corollary}
\begin{remark}
The Hölder continuity of $C_p$ as a function of $p$ in $\bB_1 \cap M$ serves to locally determine the cornered open book when there are multiple admissible cornered open books. This is interesting outside of the multiplicity $1$ case in which there is a unique cornered open book which takes the right boundary data.
\end{remark}
\begin{proof}[Proof of Lemma \ref{l:convergencetangentcone} and Corollary \ref{c:holdercorner}]
If $q$ is required to be $p$ then we need to extend the small second fundamental forms case to the general case as in Theorem 5.8 \cite{ian2024uniqueness}. 

The error of replacing $p$ to $q$ is the error of changing the cornered open book, which we did in detail in Corollary 5.12 \cite{ian2024uniqueness}.

The proof of Corollary \ref{c:holdercorner} holds analogously to Corollary 5.12 \cite{ian2024uniqueness}.
\end{proof}

Now we can prove Theorem \ref{t:decomposition}.
\begin{proof}[Proof of Theorem \ref{t:decomposition}]
We assume $p=0$. Since $C_p$ is the tangent cone, there is a radius $r>0$ such that $\bB_{r}$ is in the hypothesis of the above Lemma \ref{l:convergencetangentcone}. This is 
\begin{equation*}
    \max\left\{\mathbb{E}(T,C_0,\bB_4),\kappa^{-1}\mathbf{A_{\Gamma}},\kappa^{-1}\mathbf{A}_{\Sigma}^2\right\}< \varepsilon
\end{equation*}

Moreover, possibly shrinking the radius, we assume that $M$ is parameterized by bi-lipschitz map whose bi-lipschitz constant is close to $1$. That is, since $L=T_p(M)$, there exists $\psi: L \cap \bB_r \rightarrow \RR^{m+n}$ such that 
\begin{equation*}
M \cap \bB_{2r} = \psi(L \cap \bB_{2r}) \cap \bB_{2r}. 
\end{equation*}

We need to construct a cubical decomposition. We use a variation of the notation introduced for the Lipschitz approximation at the beginning of Section 6.

We define $\mathcal{G}^M_l$ to consist of ''cubes'' $P=\varphi(\hat{P})$, where $\hat{P}$ is a dyadic cube of size $\frac{1}{\sqrt{m-2}}2^{-l}$ in $L$. We define $\lambda P$ as $\psi(\lambda\hat{P})$.

We recall the notation
\begin{equation*}
2R(P):=\left\{p: \mathbf{p}_{M}(p) \in P \: \textup{and} \: 2^{-l-2} \leq \dist(x,M) \leq 2^{-l+1} \right\}.
\end{equation*}
Let $y_P$ be the center of the cube (i.e. $\psi(y_{\hat{P}})$ where $y_{\hat{P}}$ is the center of the original cube).

We notice that there exists a universal constant $\lambda$ such that given $P \in \mathcal{G}_l$ then region is contained in some ball
\begin{equation*}
2R(P) \subset \bB_{\lambda 2^{-l}}( y_P).
\end{equation*}

Thus if $\lambda2^{-\ell(P)}  \leq r/4  $  then $\bB_{\lambda 2^{-l}}(y_P) \subset \bB_{r/4}(y_P) \subset \bB_r$ which implies 
\begin{equation}\label{e:excessbound} \max \left\{\frac{1}{2^{-l(m+2)}}\int_{\bB_{4\lambda 2^{-l}}(y_p)} \dist(x,C_{p_l})^2 d|T|, \kappa^{-1}2^{-l+1}\lambda \mathbf{A_{\Gamma}},\kappa^{-1}\lambda^2 2^{-2l+2}\mathbf{A}_{\Sigma}\right\}\leq C(\lambda,m,n)2^{-l\alpha}  \varepsilon.
\end{equation}

By the separation Lemma \ref{l:splitting}, we obtain that in the region $2R(P)$ the current $T$ decomposes as $\sum_{i=1}^N T_i$, whose supports only intersect at the boundary $\Gamma_i^j$. The regions $2R(P)$ for varying cubes $P$ are chosen to have overlap and additionally they cover $\bB_{r_0}$ for some $r_0<r$. This implies that in $\bB_{r_0}$ the current $T$ can be decomposed as $T=\sum_{i=1}^N T_i$ whose supports only intersect at $\cup_{i,j}\Gamma_i^j \cup M$.

To conclude, we must show the regularity of $T_i$ when the multiplicities are $1$. We can cover $2R(P)$ with balls of size comparable to $2^{-l}$ some centered at the boundary, some at the interior, having some overlap. If $\varepsilon$ is small enough, we can apply Allard's interior regularity \cite{All} for the interior balls and Allard's boundary regularity \cite{AllB} for the balls centered at the boundary (here we are using that the number of balls is a dimensional constant).
\end{proof}

Assume that $N^0=1$, $N^1=N$, $Q_1^0=N$, $Q_1^1=Q_2^1=...=1$.
\begin{definition}\label{d:multivaluednormalmap}
Given $p \in \Gamma_1^0$ let $C_p=\sum_{l=1}^N Q_i\a{H_i}$ be the tangent cone at $p$. We denote $N\Gamma_1^0$ the normal bundle of $\Gamma_1^0$. We define the normal map $\eta:\Gamma_1^0 \rightarrow \mathcal{A}_{Q_1^0}(N\Gamma_1^0)$ as
\begin{equation}
    \eta(p)=\sum_{i=1}^N Q_i \a{\mathbf{p}_{T_p(\Gamma)^{\perp}}(H_i)}.
\end{equation}

We define it analogously at points of $M$ (where it is required to take the value of the unique cornered open book).
\end{definition}

\begin{theorem}\label{t:cornerHöldernormal}
Let $T, \Gamma_i^j,\Sigma,M$ be as Assumption \ref{a:corneredboundary}. Assume that $N^0=1$, $N^1=N$, $Q_1^0=N$, $Q_1^1=Q_2^1=...=1$. Let $p \in \bB_r$. Then there exists a radius $r>0$ such that $\eta$ is Hölder in $\bB_r(p) \cap \Gamma_1^0$ up to the boundary $M$.
\end{theorem}
\begin{proof}
We give a sketch of the argument, which is a modification of the proof of Corollary 5.12 of \cite{ian2024uniqueness}.
By the Theorem \ref{t:decomposition}, $p$ is a regular point and thus it is equivalent to show it for the case that $N^0=1$, $N^1=1$, $Q_1^0=Q_1^1=1$ (i.e., two boundary pieces of multiplicity $1$).

We assume that we are on a neighborhood such that we have the setup of Theorem \ref{t:decomposition}.

Let $H_q$ be the tangent cone at $q$ which is a single quarter plane of multiplicity $1$. By Lemma \ref{l:convergencetangentcone} by restricting to a suitable neighborhood of $p$ we have
\begin{equation*}
\hat{\mathbb{E}}(T,H_q,\bB_{r}(q)) \leq C r^{\alpha}.
\end{equation*}

We use the same cubical decomposition as before. For $x \in 2R(P) \cap \Gamma_0^0$ we have that if we translate and tilt $H_q$ to a new quadrant $H_q^x$ such that $\partial \a{H_q^x}=\a{T_x(\Gamma_0^0)}$ and $H_q^x \subset T_x(\Sigma)$ then 
\begin{equation*}
\mathbb{E}(T,H_q^x,\bB_{2^{-l-4}})+\mathbf{A_{\Gamma}}2^{-l}+\mathbf{A}_{\Sigma}^22^{-2l} \leq C \hat{\mathbb{E}}(T,H_q,\bB_{\lambda2^{-l}}(y_p))\leq C \lambda^{\alpha}2^{-l\alpha}\varepsilon.
\end{equation*}
As a consequence of Allard Boundary regularity \cite{AllB}  we have excess decay and thus if $H_x$ is the tangent cone at $x$
\begin{equation}\label{eq:Hölderatcorner}
\angle (H_x, H_q^x) \leq C \lambda^{\alpha} 2^{-l\alpha}\varepsilon.
\end{equation}
Moreover, our control of the excess we have also implies, up to changing the exponent $\alpha$, the Hölder estimate
\begin{equation}\label{eq:Hölderawayfromcorner}
\left\|\eta \right\|_{C^{\alpha}(2R(P))} \leq C \varepsilon.
\end{equation}

By patching together \eqref{eq:Hölderatcorner} and \eqref{eq:Hölderawayfromcorner} we obtain the desired conclusion.
\end{proof}
\begin{corollary}\label{c:wholeboundaryHölder}
Let $T$,$\Gamma_i^j$, $\Sigma$, $M$ be as in Theorem \ref{t:cornerHöldernormal}. Then $\eta$ is Hölder continuous in $\Gamma_0^0 \cup M$.
\end{corollary}
\begin{proof} By the Theorem \ref{t:cornerHöldernormal}, we have the Hölder continuity locally at $M$. This fact, coupled with the Hölder continuity at a single boundary piece Corollary 5.12 \cite{ian2024uniqueness} implies the Hölder continuity estimate.
\end{proof}

\section{Simon's non concentration estimates}
In this section, we prove Simon's non-concentration estimates - the key technical machinery we require to prove the excess decay theorem.

We can then show 
\begin{lemma}\label{l:simonnonconcentration}
Let $T, \Gamma_i^j,\Sigma,M$ be as Assumption \ref{A:Lipschitzapproximation} for $\varepsilon$ small enough. Let $r=\frac{1}{3\sqrt{m-2}}$
\begin{equation}
\int_{B_r} \frac{|q^{\perp}|^2}{|q|^{m+2}}d|\!|T|\!|(q) \leq C\mathbf{E}(T,C,\bB_4) + C\mathbf{A}_{\Sigma}^2 +C \mathbf{A_{\Gamma}}
\end{equation}
where $q^{\perp}$ is the projection of $q$ into the tangent plane  determined by $\overrightarrow{T}_{q}$.
\end{lemma}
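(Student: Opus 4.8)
The plan is to derive Lemma \ref{l:simonnonconcentration} from the monotonicity formula with error terms, Corollary \ref{c:monotonerrorcorner}, applied at the center $0 \in M$. Recall that the left-hand side of that identity, $r^{-m}\|T\|(\bB_r) - s^{-m}\|T\|(\bB_s)$, controls exactly the non-concentration integral $\int_{\bB_r \setminus \bB_s} \frac{|q^\perp|^2}{|q|^{m+2}}\,d\|T\|$ (with the caveat that here $q^\perp$ must be read as the projection onto the normal space of $\overrightarrow{T}_q$, which is the standard content of the monotonicity identity). So the first step is to bound $r^{-m}\|T\|(\bB_r)$ from above and $s^{-m}\|T\|(\bB_s)$ from below, uniformly as $s \to 0$, and then let $s \to 0$.

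First I would fix $r = \frac{1}{3\sqrt{m-2}}$, which is comparable to the scale at which the Whitney cube $P_0$ lives, and observe that since $0 \in M$ with tangent cone $C$ a cornered open book of density $Q/4$, the monotone quantity $\Theta_b(T,0,s)$ (the corrected density ratio from Corollary \ref{c:monotonerrorcorner}/the weighted version \eqref{eq:weightedmonotoncorner}) satisfies $\lim_{s\to 0}\Theta_b(T,0,s) = \Theta(T,0) = Q/4$. Hence $s^{-m}\|T\|(\bB_s) \geq \frac{Q}{4}\omega_m - C(\mathbf{A}_\Sigma^2 + \mathbf{A}_\Gamma)$ for all small $s$, absorbing the exponential weight's deviation from $1$. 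For the upper bound at scale $r$, I would compare $r^{-m}\|T\|(\bB_r)$ with $r^{-(m+2)}d(\phi_r d|T|, \phi_r d|C|) = \mathbb{E}(T,C,\bB_r)$ together with the mass ratio of the cone $|C|(\bB_r)/(\omega_m r^m) = Q/4$: the Wasserstein-type excess $\mathbb{E}$ controls the defect in mass of $T$ relative to $C$ at scale $r$, so $r^{-m}\|T\|(\bB_r) \leq \frac{Q}{4}\omega_m + C\,\mathbb{E}(T,C,\bB_r) + C(\mathbf{A}_\Sigma^2+\mathbf{A}_\Gamma)$. Finally, by the quasi-monotonicity of the excess (a consequence of Lemma \ref{l:reverseexcessbound} and the definition of $\mathbb{E}$ via the monotone density), $\mathbb{E}(T,C,\bB_r) \leq C(Q,m,n,\bar n)\big(\mathbf{E}(T,C,\bB_4) + \mathbf{A}_\Sigma^2 + \mathbf{A}_\Gamma^2\big)$ — or directly $\mathbb{E}(T,C,\bB_r) \le C\,\mathbb{E}(T,C,\bB_4)$ — which converts the bound into the form with $E(T,C,\bB_4)$ stated in the lemma. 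Subtracting the two estimates and letting $s \to 0$ in Corollary \ref{c:monotonerrorcorner} yields
\[
\int_{\bB_r}\frac{|q^\perp|^2}{|q|^{m+2}}\,d\|T\|(q) \;\le\; \frac{Q}{4}\omega_m + C\,\mathbb{E}(T,C,\bB_4) + C(\mathbf{A}_\Sigma^2+\mathbf{A}_\Gamma) - \Big(\frac{Q}{4}\omega_m - C(\mathbf{A}_\Sigma^2+\mathbf{A}_\Gamma)\Big)\Big/?,
\]
where the two leading $\frac{Q}{4}\omega_m$ terms cancel, leaving the right-hand side $C\,E(T,C,\bB_4) + C\mathbf{A}_\Sigma^2 + C\mathbf{A}_\Gamma$.

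The main obstacle I anticipate is controlling the error terms in Corollary \ref{c:monotonerrorcorner} — namely $\int_{s}^{r}\rho^{-m-1}\big[\int_{\bB_\rho}(x)^\perp\cdot\overrightarrow{H}_T\,d\|T\| + \sum_{i,j}Q_i^j\int_{\Gamma_i^j\cap\bB_\rho}(x)\cdot\overrightarrow{\eta_i}\,d\mathcal H^{m-1}\big]\,d\rho$ — and showing these are bounded by $C\mathbf{A}_\Sigma^2 + C\mathbf{A}_\Gamma$ uniformly as $s\to 0$. The mean-curvature term is handled by $|\overrightarrow{H}_T| \leq C\mathbf{A}_\Sigma$ and the mass bound $\|T\|(\bB_\rho) \leq C\rho^m$ from the monotonicity, giving an integrand of size $\mathbf{A}_\Sigma$ that integrates to $C\mathbf{A}_\Sigma r$. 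The boundary term is subtler: one uses that at a good corner point the vector $\overrightarrow{\eta_i}$ is (to first order) perpendicular to $\Gamma_i^j$, so $(x)\cdot\overrightarrow{\eta_i}$ vanishes to order $\mathbf{A}_\Gamma |x|^2$ on $\Gamma_i^j\cap\bB_\rho$ after a Taylor expansion of $\Gamma_i^j$ around $0$ — this is the step where the perpendicularity hypotheses in Assumption \ref{a:corneredboundary} and the good corner structure enter — and since $\mathcal H^{m-1}(\Gamma_i^j\cap\bB_\rho) \le C\rho^{m-1}$, the integrand is $O(\mathbf{A}_\Gamma)$ and again integrates to $O(\mathbf{A}_\Gamma r)$. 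Assembling these estimates, passing $s\to 0$, and bounding the $\liminf$ of $s^{-m}\|T\|(\bB_s)$ below by $\Theta(T,0)\omega_m$ minus the accumulated error completes the argument.
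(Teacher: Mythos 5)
Your overall skeleton — start from the monotonicity formula with error terms, reduce the lemma to bounding the mass defect $r^{-m}\|T\|(\bB_r)-\tfrac{Q}{4}\omega_m$, and estimate the mean-curvature and boundary error terms — is the correct starting point, and your treatment of the error terms (in particular the observation that $q\cdot\overrightarrow{\eta_{i,j}}=\mathbf{p}_{T_q(\Gamma_i^j)^\perp}(q)\cdot\overrightarrow{\eta_{i,j}}$ vanishes to order $\mathbf{A}_\Gamma|q|^2$ on $\Gamma_i^j$) matches what the paper does. However, the step at the heart of the argument is a genuine gap: you assert, without justification, that the Wasserstein-type excess $\mathbb{E}(T,C,\bB_r)$ controls the mass defect $r^{-m}\|T\|(\bB_r)-\tfrac{Q}{4}\omega_m$. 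This is \emph{not} a soft consequence of the definition of $\mathbb{E}$. Indeed, look at the definition of $d(\mu^1,\mu^2)$: the un-transported remainder $\mu_2^l$ is penalized only by $\int \dist(x,\cup_{i,j}V_{i,j})^2\,d|\mu_2^l|$, and since the spine $L\subset V_{i,j}$, mass of $T$ that concentrates near $L$ contributes almost nothing to $\mathbb{E}$. So \emph{a priori} the excess can be tiny while the mass defect is comparatively large; ruling this out is precisely what a ``non-concentration'' estimate is for, and cannot be assumed.

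The paper closes this gap the way Simon and De Lellis--De Philippis--Minter--Skorobogatova do: the cut-off mass defect $\int \chi^2(|q|)\,d\|T\|(q)-\sum_iQ_i\int_{H_i}\chi^2(|q|)\,d|H_i|$ is bounded by testing the first variation formula (Theorem \ref{t:firstvariationcorneredboundary}) with the cylindrical radial vector field $X(q)=\chi(|q|)\,\mathbf{p}_{L^\perp}(q)$, and then controlling the resulting terms using the Lipschitz approximation and the energy estimate $\|Du_i\|_{L^2}^2\lesssim E_i+\mathbf{A}_\Gamma+\mathbf{A}_\Sigma^2$ from Lemma \ref{l:lipschitzapprox}, together with Theorem 11.2 of \cite{de2023fineIII} for the interior part. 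This first-variation argument (plus the graphicality of $T$ over $C$ on the coincidence set) is what converts the mass defect into a quantity that really is quadratic in the excess. Your proposal skips this entirely. To repair it, you would need to replace the unjustified claim by that Pohozaev-type first variation argument: expand $\mathrm{div}_{\overrightarrow{T}}X$ for $X=\chi(|q|)\mathbf{p}_{L^\perp}(q)$, observe that for the cone the corresponding identity is an exact balance, subtract, and bound the discrepancy on the Whitney regions via the coherent Lipschitz approximation — with the extra corner boundary integral handled by the same Taylor expansion estimate you already use for the monotonicity-formula boundary term.
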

\begin{proof}
The proof is done in the same way as in Theorem 7.1 \cite{ian2024uniqueness}, in that proof we explain how to take care of the boundary terms which are the same that we have here. The error terms appear when using the monotonicity formula with errors Corollary  \ref{c:monotonerrorcorner} and the more general first variation formula from Theorem \ref{t:firstvariationcorneredboundary}. In order to take care of the interior terms we use Theorem 11.2
\cite{de2023fineIII}. 

This involves a careful application of the first variation formula of \ref{t:monotonicityformula} and the monotonicity formula \eqref{c:monotonerrorcorner}. 
For any $\chi$, monotone, non-increasing, such that $\mathbbm{1}_{[0,r]} \leq \chi \leq \mathbbm{1}_{[0,2r)}$. We also define $\Gamma(t):=-\int_t^{\infty} \frac{d}{ds}
(\chi(s)^2)s^mds$.

We bound

\begin{align}
\label{eq:nonconcentration1}\int_{\bB_r} \frac{|q^{\perp}|^2}{|q|^{m+2}}d|\!|T|\!|(q) \leq C\left [ \int \chi^2(|q|)d|\!|T|\!|(q)- \sum_i Q_i\int_{H_i} \chi^2(|q|)d|H_i| \right ]\\\label{eq:nonconcentration2}+ C \int \frac{|\Gamma(|q|)q^{\perp}\cdot\overrightarrow{H}_T|}{|q|^m}d|\!|T|\!|(q)\\\label{eq:nonconcentration3}+C\sum_{i,j}Q_i^j\left[\int_0^{4} \chi(|\rho|)^2 \rho^{-1} \int_{\Gamma_i^j \cap B_{\rho}}(q \cdot \overrightarrow{\eta_{i,j}})d\HH^{m-1}d\rho\right] \\\label{eq:nonconcentration4}+C\sum_{i,j}Q_i^j\left[\int_0^{4} \chi(|\rho|)^2 \rho^{m-1}\int_0^{\rho} t^{-m-1}\int_{\Gamma_{i,j} \cap B_t}(q \cdot \overrightarrow{\eta_{i,j}}) \, d\HH^{m-1}dt d\rho  \right] \end{align}
The term \eqref{eq:nonconcentration2} is dealt analogously to \cite{de2023fineIII}.
The terms \eqref{eq:nonconcentration3}, \eqref{eq:nonconcentration4} are dealt by observing that 
\begin{equation*}
\int_{\Gamma_i^j \cap B_{\rho}}|q\cdot \overrightarrow{\eta_{i,j}}| d\HH^{m-1} = \int_{\Gamma_i^j \cap B_{\rho}}|\mathbf{p}_{T_{q}(\Gamma_i^j)}^{\perp}(q) \cdot \overrightarrow{\eta_{i,j}}| d\HH^{m-1} \lesssim \rho^{m-1} \sup_{q \in \Gamma \cap B_{\rho}} |\mathbf{p}_{T_{q}(\Gamma_i^j)}^{\perp}(q)| \lesssim \rho^{m+1}\mathbf{A_{\Gamma}}.
\end{equation*}

In order to bound the first term we can proceed again as in \cite{de2023fineIII}, but when we use the first variation formula a boundary term gets introduced. 

The vector field that we have to test with is the vector field 
\begin{equation*}
X(q)=\chi(|q|) p_{L}^{\perp}(q).
\end{equation*}
The first variation formula gives us 
\begin{equation*}
\int \textup{div}_{\overrightarrow{T}}(X)=\mathbf{O}(\mathbf{A}_{\Sigma}^2+\mathbf{A_{\Gamma}}).
\end{equation*}

The remaining estimates follow as in \cite{de2023fineIII}, using quantities under control relating the Lipschitz approximation.
\end{proof}
\begin{lemma}
For every $\kappa>0$
\begin{equation}
    \int_{\bB_1}\frac{\dist(q,C)^2}{|q|^{m+2-\kappa}}d|\!|T|\!| \leq C_{\kappa} \int_{\bB_1} \frac{|q^{\perp}|^2}{|q|^{m+2}}d|\!|T|\!|+C_{\kappa}(E(T,C,\bB_4)+\mathbf{A}_{\Sigma}^2+\mathbf{A_{\Gamma}}).
\end{equation}
\end{lemma}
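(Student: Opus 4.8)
The plan is to reduce the estimate, by a dyadic decomposition, to a telescoped bound for the stronger excess $\mathbb{E}(T,C,\cdot)$ at the scales $2^{-j}$, and then to run Leon Simon's iteration \cite{simon1993cylindrical} in the form used for the second non-concentration inequality of \cite{DDHM} (see also \cite{de2023fineIII}), feeding in the corner boundary terms through Corollary \ref{c:monotonerrorcorner} and Lemma \ref{l:simonnonconcentration}. First I would record the comparison
\[ \int_{\bB_{r/2}}\dist(q,C)^2\,d|T|\ \le\ r^{m+2}\,\mathbb{E}(T,C,\bB_r), \]
which is immediate from the definition of the distance $d(\cdot,\cdot)$: since $\bigcup_{i,j}V_i^j=\partial C\subseteq\operatorname{spt}(C)$ and $\phi_r\equiv 1$ on $\bB_{r/2}$, in the optimal decomposition one bounds $\dist(\cdot,C)$ by the Wasserstein transport cost on the part of $\phi_r\,d|T|$ coupled to $\phi_r\,d|C|$ (whose target lies in $\operatorname{spt}(C)$) and by the distance to $\bigcup_{i,j}V_i^j$ on the remainder. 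Writing $A_j:=\bB_{2^{-j}}\setminus\bB_{2^{-j-1}}$, on which $|q|^{-(m+2-\kappa)}\le C_m 2^{j(m+2-\kappa)}$, this yields
\[ \int_{\bB_1}\frac{\dist(q,C)^2}{|q|^{m+2-\kappa}}\,d|T|\ \le\ C_m\sum_{j\ge 0}2^{-j\kappa}\,\mathbb{E}(T,C,\bB_{2^{-j+1}}), \]
so it is enough to dominate this series by $C_\kappa$ times the right-hand side of the lemma.

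Next I would establish the basic one-step inequality: there exist $\rho_0\in(0,1/2)$ and $\theta\in(0,1)$, independent of the second fundamental forms, such that for all sufficiently small $r$
\[ \mathbb{E}(T,C,\bB_{\rho_0 r})\ \le\ \theta\,\mathbb{E}(T,C,\bB_{r})\ +\ C\!\int_{\bB_{2r}\setminus\bB_{\rho_0 r/2}}\!\frac{|q^\perp|^2}{|q|^{m+2}}\,d|T|\ +\ C\big(\mathbf{A}_\Sigma^2 r^2+\mathbf{A}_\Gamma r\big). \]
This is the excess-decay Lemma \ref{l:excessdecay} in a form that does not require $\mathbf{A}_\Sigma^2+\mathbf{A}_\Gamma$ to be small relative to $\mathbb{E}(T,C,\bB_1)$: one re-runs its harmonic-approximation proof (Lipschitz approximation of Lemma \ref{l:lipschitzapprox}, comparison with a $\ge 2$-homogeneous Dir-minimizer of the corner linear problem, whose $L^2$-height decays by Proposition \ref{p:heightdecay}) but retains, rather than absorbs, the error terms coming from Theorem \ref{t:firstvariationcorneredboundary} and Corollary \ref{c:monotonerrorcorner}; by the computation already carried out in Lemma \ref{l:simonnonconcentration}, those error terms are exactly controlled by the displayed $|q^\perp|^2/|q|^{m+2}$ integral together with $\mathbf{A}_\Sigma^2 r^2+\mathbf{A}_\Gamma r$. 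The smallness of $\mathbb{E}(T,C,\bB_r)$ needed to run the harmonic approximation is provided by the standing Assumption \ref{A:Lipschitzapproximation}.

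Iterating this inequality over the dyadic scales gives, for every $j$,
\[ \mathbb{E}(T,C,\bB_{2^{-j}})\ \le\ C\theta^{\,j}\,\mathbb{E}(T,C,\bB_1)\ +\ C\sum_{k=0}^{j}\theta^{\,j-k}\!\int_{\bB_{2^{-k+1}}\setminus\bB_{2^{-k-1}}}\!\frac{|q^\perp|^2}{|q|^{m+2}}\,d|T|\ +\ C\big(\mathbf{A}_\Sigma^2+\mathbf{A}_\Gamma\big). \]
Multiplying by $2^{-j\kappa}$ and summing in $j$: the geometric term gives $C_\kappa\,\mathbb{E}(T,C,\bB_1)\le C_\kappa\big(E(T,C,\bB_4)+\mathbf{A}_\Sigma^2+\mathbf{A}_\Gamma\big)$ by the reverse excess bound (Lemma \ref{l:reverseexcessbound}); the $\mathbf{A}$-term contributes $C_\kappa(\mathbf{A}_\Sigma^2+\mathbf{A}_\Gamma)$; and the double series is summed by Fubini, $\sum_{j}2^{-j\kappa}\sum_{k\le j}\theta^{\,j-k}(\cdots)\le C_\kappa\sum_{k}(\cdots)\le C_\kappa\int_{\bB_1}\frac{|q^\perp|^2}{|q|^{m+2}}\,d|T|$, the bounded overlap of the annuli costing only a dimensional constant. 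Together with the first paragraph this proves the lemma.

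I expect the only real difficulty to be in the second paragraph: proving the one-step inequality uniformly in $\mathbf{A}_\Sigma$ and $\mathbf{A}_\Gamma$, i.e.\ carefully tracking the corner boundary error terms produced by Theorem \ref{t:firstvariationcorneredboundary} and Corollary \ref{c:monotonerrorcorner} through the harmonic approximation so that they reassemble into the $|q^\perp|^2/|q|^{m+2}$ integral (plus the harmless $\mathbf{A}$-terms) and not into an uncontrolled remainder — which is exactly the point at which the fixed-cone comparison and Simon's device must be combined. The dyadic reduction and the geometric summations are routine.
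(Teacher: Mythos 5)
Your proposed route is circular and, separately, much harder than the paper's actual argument.

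The paper proves this lemma directly by testing the first variation formula of Theorem \ref{t:firstvariationcorneredboundary} with the explicit vector field
\begin{equation*}
X(q)=\dist^2(q,C)\left(\max(r,|q|)^{-m-2-\kappa}-1\right)_{+}q,
\end{equation*}
expanding $\operatorname{div}_{\overrightarrow{T}}X$ and estimating the new boundary contribution $\sum_{i,j}Q_i^j\int_{\Gamma_i^j}(X\cdot\overrightarrow{\eta_{i,j}})\,d\HH^{m-1}$ by $C_\kappa\mathbf{A}_\Gamma$, exactly as in Lemma \ref{l:simonnonconcentration} (which supplies the $\int|q^\perp|^2/|q|^{m+2}$ term when the divergence is expanded). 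This is a self-contained, ``soft'' Hardt--Simon weighted estimate relying only on the first variation and the monotonicity formula with error terms; it does not invoke any excess decay.

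Your plan, by contrast, feeds the statement through a dyadic iteration of (a modified) Lemma \ref{l:excessdecay}. But look at the logical order of the paper: this lemma sits in the section on Simon's non-concentration estimates, before Theorem \ref{t:nonconcentration}, which is itself used in the proof of the excess decay Lemma \ref{l:excessdecay} (Section 8). So the excess decay — and in particular the harmonic approximation step you propose to ``re-run'' — already presupposes the present lemma. Using it here is circular. You correctly flag the second paragraph as ``the only real difficulty,'' but that difficulty is not a technical bookkeeping issue: the one-step decay inequality uniform in $\mathbf{A}_\Sigma,\mathbf{A}_\Gamma$, with error controlled by $\int|q^\perp|^2/|q|^{m+2}\,d|T|$, is not something you get for free from the harmonic approximation; the non-concentration estimates are precisely the inputs that make the harmonic approximation work without loss near the spine. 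In Simon's scheme the order of proof is fixed — non-concentration first, excess decay second — and your proposal inverts it. The fix is simply to abandon the iteration and prove the weighted $L^2$ bound directly from the first variation, as the paper does.
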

\begin{proof}
This follows again exactly as in Lemma 7.3 \cite{ian2024uniqueness}. For completeness, we again include  the boundary term estimates, reducing the rest of the proof to an adaptation of the proof Lemma 11.6 in \cite{de2023fineIII}.

This lemma follows from a first variation argument on the vector field 

\begin{equation*}
X(q):=\dist^2(q,C) \left(\max(r,|q|)^{-m-2-\kappa}-1\right)_{+}q
\end{equation*}
where $g_{+}(q):=\max(g(q),0).$

The only required modification is the control of the boundary term. 
\begin{equation*}
\int \textup{div}_{\overrightarrow{T}}(X)d|\!|T|\!|= \sum_{i,j} Q_i^j\int_{\Gamma_i^j} (X \cdot \overrightarrow{\eta_{i,j}}) d\HH^{m-1} - \int(X \cdot H_{T})d|\!|T|\!|.
\end{equation*}
We estimate as in the boundary error term in the proof of Lemma \ref{l:simonnonconcentration}
\begin{equation*}
    \int_{\Gamma_i^j} (X \cdot \eta) d\HH^{m-1} \leq \int_{\Gamma_i^j \cap \bB_1} |q|^{-m-\kappa}|\mathbf{p}_{T_{q}(\Gamma_i^j)^{\perp}}(q)\cdot \eta_{i,j}| d\HH^{m-1} \leq C_{\kappa} \mathbf{A_{\Gamma}}.
\end{equation*}
\end{proof}

\begin{theorem}\label{t:nonconcentration}
Let $T, \Gamma, \Sigma, C$ be as in Assumption \ref{A:Lipschitzapproximation}. For every $\overline{\alpha}>0$
\begin{enumerate}
    \item For every cube $L$ in the cubical decomposition from the previous chapter
\begin{equation*}
\mathbf{E}(P) \lesssim_{\overline{\alpha}} \ell(P)^{-\overline{\alpha}} \left(\mathbf{E}(T,C,\bB_4)+\mathbf{A_{\Gamma}}+\mathbf{A}_{\Sigma}^2\right)
\end{equation*}
\item 
    For every $\sigma$ and $r \leq \frac{1}{6\sqrt{m-2}}$:
\begin{equation*}
  \frac{1}{r^{m+2}}  \int_{\bB_r \cap B_{\sigma r}(L)}\dist(q,C)^2 d|\!|T|\!|  \leq C_{\delta}\sigma^{4-\overline{\alpha}}  \left(\mathbf{E}(T,C,\bB_4)+\mathbf{A_{\Gamma}}+\mathbf{A}_{\Sigma}^2\right).
\end{equation*}
\item Moreover, 
 if $\bB_r \setminus B_{\sigma r}(L) \subset R^{out}$ then 
\begin{align*}
\frac{1}{r^{m+2}}\int_{\bB_r} \dist(q,C)^2 d|\!T|\!| \leq C(\delta,Q,m,n,\overline{n})\left( \mathbf{E}(T,C,\bB_4) + \mathbf{A_{\Gamma}}+\mathbf{A}_{\Sigma}^2 \right) \sigma^{4-\overline{\alpha}} \\+ \frac{1}{r^{m+2}}\sum_{i}\int_{\bB_r \cap H_i \setminus B_{\sigma r}(L)} |u_i|^2 \\+C(Q,m,n,\overline{n})\left( \mathbf{E}(T,C,\bB_4)+r\mathbf{A_{\Gamma}}+r^2\mathbf{A}_{\Sigma}^2 \right)\left(\mathbf{E}(T,C,\bB_4)\sigma^{-m-2}+r\mathbf{A_{\Gamma}}+r^2\mathbf{A}_{\Sigma}^2\right)^{\gamma}
\end{align*}
\end{enumerate}
\end{theorem}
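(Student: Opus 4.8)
All three statements are the cornered-boundary versions of Simon's non-concentration estimates, and the plan is to run the scheme of Theorem 7.1 in \cite{ian2024uniqueness} and of the fine-structure estimates in \cite{de2023fineIII}, now fed with the two weighted inequalities just established (Lemma \ref{l:simonnonconcentration} and the lemma following it), the coherent Lipschitz approximation over the outer region, and the reverse excess bound Lemma \ref{l:reverseexcessbound}. The one genuinely new ingredient is supplied by the linear analysis: the quarter-space problem has frequency $\geq 2$ at spine points, with $2$-homogeneous minimizers of the explicit form $\sum_i\a{v_i\cos(2\theta)|x|^2}$, and it is this quadratic (rather than linear) vanishing at the spine that is responsible for the exponent $4$ appearing in (2) and (3). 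Throughout, $(\cdots)$ abbreviates $\mathbf{E}(T,C,\bB_4)+\mathbf{A}_\Gamma+\mathbf{A}_\Sigma^2$.

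\emph{Part (1)} is the technical heart. I would prove $\mathbf{E}(P)\lesssim_\delta\ell(P)^{-\delta}(\cdots)$ by induction on the generation $\ell(P)$. Assuming the bound on every ancestor of $P$, all of them are outer cubes (after possibly shrinking $\varepsilon$), so on the tower above $P$ the current coincides, off a set of controlled mass, with the graph of the coherent Lipschitz map $u_i$, whose Dirichlet energy is comparable with $\overline{\mathbf{E}}$. Comparing $u_i$ over a Whitney region with the Dir-minimizer carrying its boundary data, and invoking the frequency lower bound $\geq 2$, gives a decay of Dirichlet energy from $P$ to its children; the errors --- the set where the Lipschitz approximation fails to be the current, the inner and corner cubes that may appear, and the tube about $L$ --- are absorbed by the weighted inequalities of Lemma \ref{l:simonnonconcentration} and of the subsequent lemma (with $\kappa$ small), together with the density bound of Assumption \ref{A:Lipschitzapproximation} for the corner cubes; Lemma \ref{l:reverseexcessbound} is used to pass from the raw quantity $\mathbf{E}$ to the strong excess $\mathbb{E}$ at comparable scales so that the comparison may be applied. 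The obstacle I expect here is that the per-scale gain of the comparison is not bounded below uniformly --- it degenerates precisely where the excess may concentrate --- so no geometric iteration is available, and one must instead run Simon's summability argument, which only yields the logarithmic rate $\ell(P)^{-\delta}$.

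\emph{Part (2)} follows from (1). Rescaling so that $r\simeq 1$, cover $\bB_r\cap B_{\sigma r}(L)$ by the Whitney regions meeting it; each such cube has $\ell(P)\gtrsim\log_2(1/\sigma)$, and since $L$ is $(m-2)$-dimensional there are at most $C2^{\ell(m-2)}$ cubes of generation $\ell$ inside $\bB_r$, so
\[
\sum_{P}\int_{\mathbf{B}^h(P)}\dist(q,C)^2\,d|T|\ \lesssim\ \sum_{\ell\gtrsim\log_2(1/\sigma)}2^{\ell(m-2)}\,2^{-(m+2)\ell}\,\ell^{-\delta}\,(\cdots)\ \lesssim\ \sigma^{4}\bigl(\log(1/\sigma)\bigr)^{-\delta}(\cdots)\ \leq\ \sigma^{4-\delta}(\cdots),
\]
the geometric factor $2^{(m-2)\ell}\cdot 2^{-(m+2)\ell}=2^{-4\ell}$ being the source of the power $\sigma^4$; the corner and inner cubes meeting the tube contribute no worse, by the density bound and the height estimate which rule out concentration of $\dist(q,C)^2$ there.

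\emph{Part (3)} is an assembly. On $R^{out}$ the coherent outer approximation gives $T_i\res\mathbf{p}_{H_i}^{-1}(\overline{K}_i)=\mathbf{G}_{u_i}\res\mathbf{p}_{H_i}^{-1}(\overline{K}_i)$, so on the coincidence set $\dist(q,C)^2\leq C\,|u_i(\mathbf{p}_{H_i}(q))|^2$ up to errors governed by $\mathbf{A}_\Gamma+\mathbf{A}_\Sigma^2$ (these arise from $\textup{gr}(u_i)\subset\Sigma$ and from $D_i$ differing from $H_i$); over the complement $\mathbf{p}_{H_i}^{-1}(\Omega_i\setminus\overline{K}_i)$ the mass bound for $T_i$ times the $L^\infty$ height bound produces the last summand; and the portion of $\bB_r$ lying in $B_{\sigma r}(L)$, which is not in $R^{out}$, is estimated by part (2). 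Summing over $i$ and over the dyadic annuli and collecting terms gives the stated inequality.
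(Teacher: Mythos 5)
Your scaling heuristic is correct: the frequency lower bound $\geq 2$ for the linear problem on the quarter-space is what forces quadratic vanishing of the sheets at the spine and hence the power $4$ in (2) and (3), and your geometric summation in (2) and the outer/inner split in (3) are the right way to assemble the theorem once (1) is known. That part of the plan matches the paper's intent.

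For Part (1), however, you have imported the wrong engine. The paper's proof delegates to Theorem 7.4 of \cite{ian2024uniqueness}, the cornered-boundary counterpart of Theorem 11.1 in \cite{de2023fineIII}, whose argument derives the per-cube bound on $\mathbf{E}(P)$ directly from the two weighted first-variation estimates immediately preceding the theorem --- Lemma \ref{l:simonnonconcentration} and the weighted Hardy-type bound on $\int \dist(q,C)^2\,|q|^{-m-2+\kappa}\,d|T|$ --- after translating them to the corner point $p\in L$ nearest the cube and restricting to $\mathbf{B}^h(P)$, where $|q-p|\sim 2^{-\ell(P)}$. There is no induction on the generation and no Dirichlet-energy comparison; the Dir-minimizing comparison and linearization you invoke belong to the proof of the excess-decay Lemma \ref{l:excessdecay} in the next section, which is a logically separate part of Simon's method and plays no role in establishing the non-concentration estimates.

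The substitution is not harmless, because your iteration only sees cubes covered by the coherent Lipschitz approximation, while Part (1) asserts the bound for \emph{every} cube of the decomposition, including the corner stopping cubes and the inner cubes, where the approximation is unavailable. Your way out --- that ``the corner and inner cubes \ldots contribute no worse, by the density bound and the height estimate which rule out concentration'' --- is circular: the density bound in Assumption \ref{A:Lipschitzapproximation} controls mass, not the concentration of $\int\dist(q,C)^2\,d|T|$, and the ``height estimate'' you appeal to is precisely the conclusion you are trying to prove. The global weighted first-variation inequalities are what make Part (1) hold uniformly over all cubes with no reference to the Lipschitz approximation or any iteration; treating them as error absorbers for an unnecessary Dirichlet comparison inverts the logic of the argument.
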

\begin{proof}
The proof is the same as Theorem 7.4 \cite{ian2024uniqueness}. 
\end{proof}

\section{Proof of excess decay}
In this section we prove the excess decay lemma, introduced in Section 7, from which we deduce the Hölder continuity of the multivalued normal and the regularity at multiplicity $1$ corner points.
\begin{assumption}[Blowup Sequence]\label{a:blowupsequence}
A sequence of currents $T_k$ is said to be a blowup sequence if they satisfy Assumption \ref{A:general}, $T_{0}(L)=\RR^{m-2}\times \left\{0 \right\}$, $C_k=\sum_{i=1}^N Q_i\a{H_{i,k}}$ cornered open books (with the same number of sheets and labeling) and:
\begin{equation*}
\mathbb{E}(T_k,C_k,\bB_8)\leq \frac{1}{k} \; \textup{and} \; \mathbf{A_{\Gamma}}^2+\mathbf{A}_{\Sigma}^2\leq \frac{1}{k}\mathbb{E}(T_k,C_k,\bB_8).
\end{equation*}
\end{assumption}
We wish to take a Dir-minimizing blowup out of a blowup sequence of currents. Up to a subsequence, $H_{i,k} \rightarrow H_i$. If $C:=\sum Q_i \a{H_i}$ with $\partial\a{H_i}=\a{V_{l(i)}^0}-\a{V_{l(i)}^1}$ then $C_k \rightarrow C$. We also have $T_k \rightarrow C$, $\Sigma_k \rightarrow \RR^{m+n}$, $\Gamma_{i,k}^j \rightarrow V_i^j$. The cornered boundaries satisfy $M_k \rightarrow L$.

If $k$ is large enough, $T_k$ satisfy the Assumption \ref{A:Lipschitzapproximation}. The excess bound is obviously satisfied so it only remains to check the mass bound. This is a consequence of the monotonicity formula and the fact that $T_k  \rightarrow C$ with $C$ an open book of multiplicity $Q$.

We take the Lipschitz approximations $u_i^k$ whose domains of definition are the outer region which are subdomains of $\left(\bB_1 \setminus B_{1/k}(L)\right)\cap D_{i,k} $. We need them to be defined on $\bB_1 \setminus B_{1/k}(L) \cap H_i $. This is easy to achieve by applying a rotation we define for $i,k$ a rotation $\sigma_{i,k}$ on $\RR^{m+n}$ that takes $H_{i,k}$ to $H_i$, while fixing $\RR^{m-1}\times \left\{0\right\}$. Clearly, $\sigma_{i,k}\rightarrow_{k} \textup{Id}$. We define
\begin{equation*}
\bar{u}_i^k:=\frac{\mathbf{p}_{H_i^{\perp}} \circ \sigma_{i,k} \circ u_i^k\circ\sigma_{i,k}^{-1}}{\sqrt{\mathbb{E}(T_k,C_k,\bB_8)}}.
\end{equation*}

Up to a subsequence $\overline{u}_i^k$ converges weakly to a function $\overline{u}_i$. We will be able to estimate the functions $\overline{u}_i$ at the scale $r=\frac{1}{12\sqrt{m-1}}$ since we have the non-concentration estimate at scale $\frac{1}{6\sqrt{m-1}}$. The functions $\overline{u}_i$ will be the Dir-minimizing blowup of the blowup sequence. 

\begin{lemma}[Dir-minimizing blowup]
Let $r=\frac{1}{12\sqrt{m-1}}$. We list properties of the sequence $\overline{u}_i^k$ and their weak limit $\overline{u}_i$:
\begin{itemize}
\item The functions $\overline{u}_i^k$ have equibounded Dirichlet energy.
\item $\overline{u}_i^k$ converge strongly in $W^{1,2}(\bB_r^+,A_{Q_i}(\RR^n))$ (up to a subsequence) to a function $\overline{u}_i$ in $W^{1,2}(\bB_r^+,A_{Q_i}(\RR^n))$.
\item  The functions $\overline{u}_i$ are Dir-minimizing on $\bB_r^+$ with zero boundary value along $V_{l(i)}^0 \cup V_{l(i)}^1$.
\end{itemize}
\end{lemma}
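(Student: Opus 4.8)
The plan is to run the De Lellis--Spadaro blowup scheme (cf.\ \cite{DS1}, \cite{ian2024uniqueness}, \cite{de2023fineIII}) in this boundary--corner setting, with the non-concentration estimates of Theorem~\ref{t:nonconcentration} supplying the a priori bound that forbids Dirichlet energy from escaping to the spine $L$. For the energy bound: since $T_k\to C$ with $C$ an open book of total multiplicity $Q$, the monotonicity formula gives the mass bound of Assumption~\ref{A:Lipschitzapproximation} for $k$ large, so Lemma~\ref{l:lipschitzapprox} and the coherent outer approximation produce the maps $u_i^k$ on the outer region and control $\|Du_i^k\|_{L^2}^2$ by the relevant Whitney-cube excesses $\mathbf{E}(P)$ plus second-fundamental-form errors that the normalization makes $o\big(\mathbb{E}(T_k,C_k,\bB_8)\big)$, using $\mathbf{A}_\Gamma^2+\mathbf{A}_\Sigma^2\le\tfrac1k\mathbb{E}(T_k,C_k,\bB_8)$ from Assumption~\ref{a:blowupsequence}. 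Summing over the outer cubes and bounding the total excess contribution by $C\,\mathbb{E}(T_k,C_k,\bB_8)$ through parts (1)--(2) of Theorem~\ref{t:nonconcentration} yields both $\sup_k\sum_i\int|D\overline u_i^k|^2<\infty$ and the uniform non-concentration $\lim_{\sigma\to0}\sup_k\sum_i\int_{B_\sigma(L)}|D\overline u_i^k|^2=0$.

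\emph{Weak limit and boundary data.} Conjugating by the rotations $\sigma_{i,k}\to\mathrm{Id}$ aligns $D_{i,k}$ with the quadrant $H_i$, and a diagonal argument over the exhaustion $\big(\bB_1\setminus B_{1/k}(L)\big)\cap D_{i,k}$ extracts a subsequence along which $\overline u_i^k\rightharpoonup\overline u_i$ weakly in $W^{1,2}_{loc}$; the uniform non-concentration upgrades this to $\overline u_i\in W^{1,2}(\bB_r^+,A_{Q_i}(\RR^n))$, carrying no Dirichlet energy on $L$. The zero boundary value along $V_{l(i)}^0\cup V_{l(i)}^1$ is inherited from the boundary identity $u_i^k\res\big(\mathbf{p}_{\widehat{H}_i}(\Gamma_{l(i)}^j)\cap\,\cdot\,\big)=Q_i\a{\mathbf{p}_{\widehat{H}_i^\perp}(\Gamma_{l(i)}^j)}$ of Lemma~\ref{l:lipschitzapprox}: after the plane tilt, subtracting the $O(\mathbf{A}_\Gamma)$ constant, normalizing, and using continuity of the trace operator, the traces converge to $Q_i\a{0}$.

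\emph{Almost-minimality, hence strong convergence and Dir-minimality.} The crux is the inequality: for every $w_i\in W^{1,2}(\bB_r^+,A_{Q_i}(\RR^n))$ sharing the trace of $\overline u_i$ on $V_{l(i)}^0\cup V_{l(i)}^1$ and equal to $\overline u_i$ near $\partial\bB_r^+$,
\begin{equation*}
\sum_i\int_{\bB_r^+}|Du_i^k|^2\le\sum_i\int_{\bB_r^+}|Dw_i^k|^2+o\big(\mathbb{E}(T_k,C_k,\bB_8)\big),
\end{equation*}
where $w_i^k$ is $\sqrt{\mathbb{E}(T_k,C_k,\bB_8)}\,w_i$ conjugated back by $\sigma_{i,k}$. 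Granting it, $w_i=\overline u_i$ gives (after dividing by $\mathbb{E}(T_k,C_k,\bB_8)$) $\limsup_k\sum_i\int|D\overline u_i^k|^2\le\sum_i\int|D\overline u_i|^2$, which with lower semicontinuity of the Dirichlet energy forces $\int|D\overline u_i^k|^2\to\int|D\overline u_i|^2$, hence strong $W^{1,2}(\bB_r^+)$ convergence; a general $w_i$ then gives, after dividing and $k\to\infty$, $\sum_i\int|D\overline u_i|^2\le\sum_i\int|Dw_i|^2$, i.e.\ each $\overline u_i$ is Dir-minimizing with the prescribed boundary data. To prove the inequality one undoes the normalization and, for a fixed small $\sigma$, builds a competitor current $\widetilde T_k$ for $T_k$ in $\bB_\rho$ ($\rho\sim r$): on $\{\dist(\cdot,L)>\sigma\}$ the $i$-th sheet is replaced by the graph $\mathbf{G}_{w_i^k}$ over $D_{i,k}$ inside $\Sigma_k$; the boundary datum is kept exactly $Q_i\a{\Gamma_{l(i),k}^j}$ by tilting the ambient plane and invoking the boundary Lipschitz approximation of \cite{ianreinaldo2024regularity}; the discrepancy between the $i$-th piece of $T_k$ and its graph is corrected over a set of superlinearly small measure $o\big(\mathbb{E}(T_k,C_k,\bB_8)\big)$ (Lemma~\ref{l:lipschitzapprox}); and $B_\sigma(L)$ is filled in using part (3) of Theorem~\ref{t:nonconcentration}, whose super-quadratic rate $\sigma^{4-\delta}$ renders that error $o\big(\mathbb{E}(T_k,C_k,\bB_8)\big)$ after $\sigma\to0$. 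The area Taylor expansion $\|\mathbf{G}_v\|(\mathbf{p}^{-1}(A))=|A|+\tfrac12\int_A|Dv|^2+O(\int_A|Dv|^4)$ together with $\mathrm{Lip}(u_i^k)\to0$ then converts $\|\widetilde T_k\|(\bB_\rho)-\|T_k\|(\bB_\rho)$ into $\tfrac12\sum_i\big(\int|Dw_i^k|^2-\int|Du_i^k|^2\big)+o\big(\mathbb{E}(T_k,C_k,\bB_8)\big)$, and minimality of $T_k$ closes the argument.

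The main obstacle is precisely this competitor construction at the corner. Away from $L$ it is the standard patching of a Lipschitz graph into an area minimizing current; near the faces $\Gamma_{l(i),k}^j$ one must tilt the ambient plane and use the boundary Lipschitz approximation of \cite{ianreinaldo2024regularity} so that $\widetilde T_k$ still carries the boundary $\sum_{i,j}Q_i^j\a{\Gamma_i^j}$; and near the spine $L$, where the Lipschitz approximation does not even exist, the mass error must be absorbed entirely by Theorem~\ref{t:nonconcentration}, which is exactly why its decay rate is super-quadratic. Making every one of these errors genuinely $o\big(\mathbb{E}(T_k,C_k,\bB_8)\big)$, uniformly in the thin corner neighborhood and for all $Q_i$ sheets simultaneously, is where the real work lies; the energy bound and the weak-to-strong upgrade are then routine, following \cite{DS1} and \cite{ian2024uniqueness}.
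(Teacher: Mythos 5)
Your proposal is correct and follows essentially the same route the paper takes: the paper's proof simply points to Lemma~9.3 of \cite{ian2024uniqueness} and notes that the only new ingredient is that the normalized boundary traces vanish because $\mathbf{A}_\Gamma/\mathbb{E}(T_k,C_k,\bB_8)\to0$, which is exactly the step you isolate and verify via Assumption~\ref{a:blowupsequence} (together with the standard energy bound from the coherent outer approximation, the non-concentration estimate of Theorem~\ref{t:nonconcentration}, and the competitor construction giving almost-minimality and hence strong convergence and Dir-minimality).
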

\begin{proof}
The proof is the same as Lemma 9.3 of \cite{ian2024uniqueness}. The only new aspect is that the zero boundary data holds through the classical boundary parts, which follows from the fact that $\frac{\mathbf{A_{\Gamma}}}{\mathbb{E}(T_k,C_k,\bB_8)} \rightarrow 0.$
\end{proof} 

\begin{proof}[Proof of Lemma \ref{l:excessdecay}]
We rescale the ball $\bB_1$ to $\bB_8$ and get a sequence of currents as in Assumption \ref{a:blowupsequence}. 

We prove the lemma by contradiction. We claim it suffices to show that
\begin{equation}\label{eq:excessdecayfinal}
\limsup_{r \rightarrow 0} \limsup_{k \rightarrow \infty}\frac{\mathbb{E}(T_k,C_k,\bB_r) }{\mathbb{E}(T_k,C_k,\bB_8)}=0.
\end{equation}
If the lemma was false there exists $\theta$ and a sequence of radii $r_{k_l} \rightarrow 0$  such that
\begin{equation*}
\frac{\mathbb{E}(T_{k_l},C_{k_l},\bB_{r_{k_l}}) }{\mathbb{E}(T_{k_l},C_{k_l},\bB_8)}>\theta.
\end{equation*}
This would contradict equation \eqref{eq:excessdecayfinal} and conclude the proof of the lemma.

It remains to prove \eqref{eq:excessdecayfinal}.

We have that 
\begin{equation*}
\mathbf{E}(T_k,C_k,\bB_4) \leq C \mathbb{E}(T_k,C_k,\bB_8). 
\end{equation*}

By the non-concentration estimate for every $\sigma$ if $r \leq \frac{1}{6\sqrt{m-2}}$ then 
\begin{align*}
\frac{1}{r^{m+2}}\int_{\bB_r} \dist(q,C_k)^2 d|T_k| &\leq C(\delta,Q,m,n,\overline{n})\left(\mathbf{E}(T_k,C_k,\bB_4) + \mathbf{A_{\Gamma}}+\mathbf{A}_{\Sigma}^2 \right) \sigma^{4-\delta} \\&+ \frac{1}{r^{m+2}}\sum_{i}\int_{\bB_r \cap H_i \setminus B_{\sigma r}(L)} |u_{i,k}|^2 \\+C(Q,m,n,\overline{n})\left( \mathbf{E}(T_k,C_k,\bB_4)+r\mathbf{A_{\Gamma}}+r^2\mathbf{A}_{\Sigma}^2 \right)&\left(\mathbf{E}(T_k,C_k,\bB_4)\sigma^{-m-2}+r\mathbf{A_{\Gamma}}+r^2\mathbf{A}_{\Sigma}^2\right)^{\gamma}
\end{align*}
We divide by $\mathbb{E}(T_k,C_k,\bB_8)$. We take $\limsup k \rightarrow 0$. Then, as $\sigma \rightarrow 0$, the last term vanishes since it is superlinear with respect to the excess and the first term will disappear because we took $\sigma$ to zero. 
Therefore, for $r \leq \frac{1}{12 \sqrt{m-2}}$
\begin{equation*}
\limsup_{k \rightarrow \infty} \frac{\mathbf{E}(T_k,C_k,\bB_r)}{\mathbb{E}(T_k,C_k,\bB_8)}\leq \frac{1}{r^{m+2}}\sum_{i} \int_{\bB_r}|\overline{u}_i|^2.
\end{equation*}
We conclude that 
\begin{equation*}
\lim_{r \rightarrow 0} \limsup_{k \rightarrow 0}\frac{\mathbf{E}(T_k,C_k,\bB_r)}{\mathbb{E}(T_k,C_k,\bB_8)}=0.
\end{equation*}
Thus, by the reverse excess Lemma \ref{l:reverseexcessbound}, we conclude
\begin{equation*}
\lim_{r \rightarrow 0} \limsup_{k \rightarrow 0}\frac{\mathbb{E}(T_k,C_k,\bB_r)}{\mathbb{E}(T_k,C_k,\bB_8)}=0
\end{equation*}
which gives a contradiction and proves the excess decay lemma.
\end{proof}
\part{Construction of the example}
\section{Classification of certain cornered tangent cones under a convexity assumption}
In this section, we will classify some cornered tangent cones which are necessary to provide our example. The tangent cones we classify are those we will obtain in corners under a convex barrier hypothesis similar to Assumption \ref{convexbarrier}.

\begin{theorem}\label{thm:corneredconvex} Let $C$ satisfy the following hypothesis in $\RR^{3+n}$:
\begin{enumerate}
    \item We consider the half-planes $V^0=\RR e_3 \oplus \RR^+ e_2$, $V_i^1= \RR e_3 \oplus \RR^+ (-1)^i e_1$ which we orient so that $\partial \a{V^0}=-\partial\a{V_i^1}$.   
    \item Assume that $C$ is an area-minimizing cone such that $\partial C=2\a{V^0}+\sum_{i=1}^2 \a{V_i^1}$.
    \item Assume that the support of the cone satisfies
    \begin{equation}\label{e:convexbarriercorner}
\textup{spt}(C) \subset  W:=\left\{p=(x_1,x_2,x_3,x_4,y)\in (\RR)^4 \times \RR^{n-1} :|x_2^{-}| + |y| \leq C_0 x_{4} \right\}
\end{equation}
where $x_2^{-}= -\max\left\{0,x_2 \right\}$ and $C_0$ is a positive constant.
\end{enumerate}
Then the cone $C$ is a cornered open book of the form shown Figure \ref{img:nicecorneredopenbook}:
\begin{equation*}
C=\a{\RR^{+}e_1\oplus\RR^{+}e_2 \oplus \RR e_3}+\a{\RR^{+}(-e_1) \oplus \RR^{+}e_2 \oplus \RR e_3}
\end{equation*}
when the orientations of the quarter spaces match appropriately the orientation of the boundary data.
\end{theorem}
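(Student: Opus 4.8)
The plan is to reduce the statement, by a blow–up along the spine $L=\RR e_3$, to a genuinely two–dimensional cone, to read off the possibilities from the link, and to use the barrier $W$ to discard all but one configuration. First I would record that Lemma~\ref{l:densitybound} applies: the normals to $L$ inside the boundary faces are $e_2$ for $V^0$ and $\pm e_1$ for $V_i^1$, and $e_2\perp\pm e_1$, so $\Theta(C,0)\ge Q/4$ with equality if and only if $C$ is a cornered open book. Moreover, among cornered open books the boundary datum $2\a{V^0}+\a{V_1^1}+\a{V_2^1}$ together with the perpendicularity leaves only one possibility: each quadrant must lie in the unique $3$–plane $\mathrm{span}(e_1,e_2,e_3)$ containing $V^0$ and the $V_i^1$, and must be one of the two $\{x_2\ge 0\}$ quadrants — which is exactly the cone in the statement, and one checks directly that it lies in $W$ (on $\{x_4=0\}$ the constraint is $x_2\ge 0$). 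Hence the whole content of the theorem is the upper bound $\Theta(C,0)\le Q/4$.

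For this, the key is that $W$ does not involve the coordinate $x_3$, so it is invariant under dilations centred at any point of $L$, while $\partial C$ is translation invariant along $L$. Therefore a blow–up of $C$ at a point $p\in L\setminus\{0\}$ produces an area–minimizing cone $C_p$ that is invariant along $\RR e_3$, carries the same boundary, still satisfies $\mathrm{spt}(C_p)\subset W$, and has $\Theta(C_p,0)=\Theta(C,p)\le\Theta(C,0)$. Writing $C_p=\RR e_3\times C'$, the factor $C'$ is a $2$–dimensional area–minimizing cone in $\RR^{n+2}$ with boundary the three rays $2\a{\RR^+e_2}+\a{\RR^+(-e_1)}+\a{\RR^+e_1}$ and $\mathrm{spt}(C')\subset W'=\{|x_2^-|+|y|\le C_0x_4\}$. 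Its link $\Lambda'=C'\res\partial\bB_1$ is a stationary $1$–varifold in $S^{n+1}$ away from $\{e_2,e_1,-e_1\}$, hence a finite union of great–circle arcs joining these points; by the constancy theorem $C'=\sum_kQ_k\a{H_k}$ is a sum of planar sectors contained in $W'$. Now the barrier does the work: a full plane through the origin cannot lie in the half–space $\{x_4\ge0\}\supset W'$ unless it lies in $\{x_4=0\}$, where $W'$ is only a half–plane, so there are no interior pieces; and a sector joining $\RR^+e_2$ to $\RR^+(\pm e_1)$ must lie in $\mathrm{span}(e_1,e_2)$ and is either the short quarter–sector or its complement, the latter passing through $-e_2$ where $|x_2^-|=1>0=C_0x_4$, hence excluded. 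The only remaining possibility is a ``type $3$'' sector joining $\RR^+e_1$ to $\RR^+(-e_1)$, which by the argument in the proof of Proposition~\ref{p:classification2dcorners} would force $\Theta(C',0)>Q/4$; combined with the fact established in the last step that the density along $L$ cannot exceed $Q/4$, this is impossible. Thus $C'$ is the flat cornered open book of the two $\{x_2\ge 0\}$ quadrants of $\mathrm{span}(e_1,e_2)$, and $\Theta(C',0)=\tfrac14+\tfrac14=Q/4$; in particular $\Theta(C,p)=Q/4$ for every $p\in L\setminus\{0\}$.

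To close the loop I would invoke the regularity theory at corner points: by the previous step every $p\in L\setminus\{0\}$ has a cornered–open–book tangent cone of density $Q/4$, so by Theorem~\ref{t:decomposition} it is a regular corner point and $C$ splits near $L\setminus\{0\}$ as $\a{\Xi_1}+\a{\Xi_2}$, each $\Xi_i$ an area–minimizing sector cone of multiplicity one with $\partial\Xi_i=\a{V^0}-\a{V_i^1}$ and $\mathrm{spt}(\Xi_i)\subset W$. Running the same two–dimensional argument for $\Xi_i$ — where the link is now a \emph{single} arc from $e_2$ to $(-1)^i e_1$, forced by $W'$ to be the short quarter–arc in $\mathrm{span}(e_1,e_2)$, with no room for a type $3$ sector — shows that each $\Xi_i$ is a flat quadrant; hence $C=\a{\Xi_1}+\a{\Xi_2}$ is the claimed cone and $\Theta(C,0)=Q/4$. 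The step I expect to be the main obstacle is precisely the upper bound $\Theta(C,0)\le Q/4$: since $C$ is a cone, a direct competitor comparison against the open book fails (the ``collar cost'' of truncating a cone at radius $R$ is of the same order $R^m$ as the mass involved), so the bound must be extracted indirectly from the $x_3$–invariance of $W$ and from the regularity theory along $L$, and making rigorous the propagation of regularity from $L\setminus\{0\}$ to the vertex — equivalently, the translation invariance of $C$ along $L$ — is the delicate point of the whole argument.
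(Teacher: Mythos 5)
Your reduction to showing $\Theta(C,0)\leq Q/4$ is sound, as is the identification of the unique candidate cornered open book. The difficulty is that the rest of the argument never actually establishes that bound, and the place where it fails is earlier than you flag. When you classify the $2$-dimensional factor $C'$ of a blowup at $p\in L\setminus\{0\}$, you correctly exclude interior planes and the long arcs from $e_2$ to $\pm e_1$ using the barrier $W'$, but then you exclude the remaining possibility --- a half-plane $\RR e_1\oplus\RR^+ v$ with $v_4>0$, which \emph{is} compatible with $W'$ --- by appealing to Proposition~\ref{p:classification2dcorners} (type $3$ forces $\Theta(C',0)>Q/4$) ``combined with the fact established in the last step that the density along $L$ cannot exceed $Q/4$.'' But the only thing established at that point is $\Theta(C,p)\leq\Theta(C,0)$, and $\Theta(C,0)\leq Q/4$ is precisely what you are trying to prove. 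So the exclusion of type~$3$ sectors is circular, and without it you do not even get $\Theta(C,p)=Q/4$ at $p\in L\setminus\{0\}$, let alone the decomposition along $L\setminus\{0\}$ that the final step relies on. Even granting the $2$d classification, the remaining step has a second gap: Theorem~\ref{t:decomposition} gives a \emph{local} decomposition near each $p\in L\setminus\{0\}$, the pieces $\Xi_i$ are not a priori cones, and ``running the same two-dimensional argument for $\Xi_i$'' has no meaning unless $\Xi_i$ is itself a global cone with the barrier property; going from regularity near $L\setminus\{0\}$ to $\Theta(C,0)\leq Q/4$ is exactly the cylindrical-invariance statement you identify at the end as the obstacle, and nothing in the proposal overcomes it.

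The paper takes a completely different route that sidesteps all of this. It never tries to bound $\Theta(C,0)$ by a blowup/regularity propagation argument. Instead it shows directly that $\textup{spt}(C)$ is a union of finitely many $3$-planes through $V^1=\RR e_1\oplus\RR e_3$, via Allard's winding Lemma~\ref{l:allardwinding}: first with $\pi_0=\RR e_4\oplus\RR e_{4+k}$ (where $V^0\cup V^1\subset\pi_0^{\perp}$, so the lemma applies verbatim and kills the winding in the $(x_4,x_{4+k})$ directions), and then with $\pi_0=\RR e_2\oplus\RR e_4$, where $V^0\not\subset\pi_0^\perp$ and a boundary term appears; the crucial point is that the barrier $W$ confines the measure $\mu$ of~\eqref{e:allardmeasure} to a proper arc $\{|x_2^-|\leq C_0 x_4\}$ of $\mathbb{S}^1$, which together with $\mu=c\,\HH^1$ forces $\mu\equiv 0$, and a direct first-variation computation with the vector field $Y(z)=\alpha(|z|)\beta(|x|)x^\perp$ shows the boundary contribution vanishes. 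With the angle $\sigma$ around $V^1$ constant $|C|$-a.e., the constancy theorem writes $C=\sum_\pm Q^\pm\a{H_0^\pm}+\sum_i Q_i\a{H_i}$ for half-planes $H_i$ off $V^1$, the absence of cancelling pairs (else an area-minimizing current without boundary whose support lies in a proper half-space), and the three boundary-multiplicity equations give $Q^+=Q^-=1$ and $\sum Q_i=0$ with all $Q_i$ of the same sign, hence no extra sheets. This is a genuinely different decomposition of the cone (around $V^1$ rather than around $L$), and it is what makes the convex barrier do all the work without any density bound at the vertex.
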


We depict the boundary of the cone we consider in the following picture: 

\begin{figure}[H] 
    \centering 
    \includegraphics[width=0.5\linewidth]{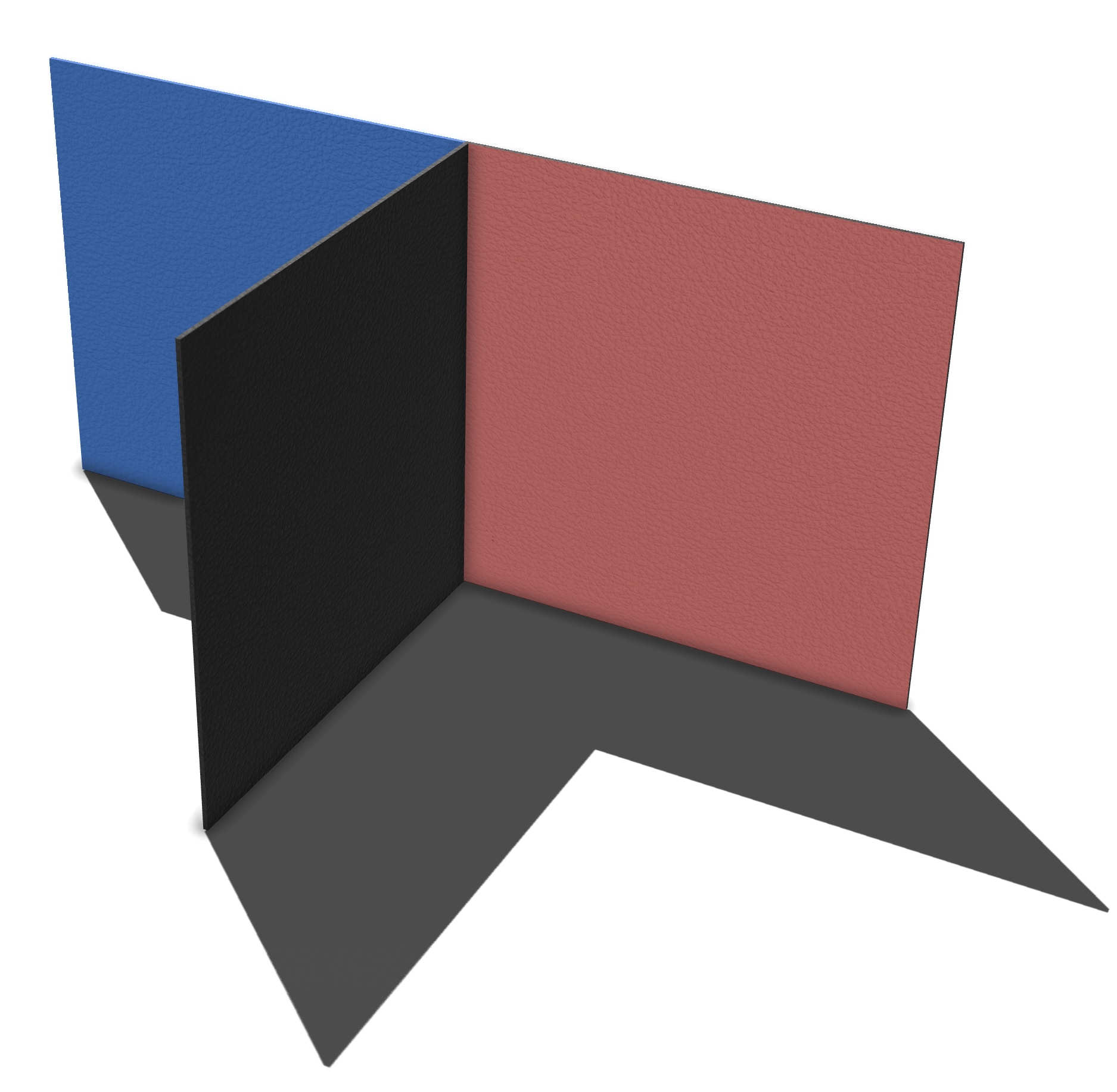}
    \caption{\small Boundary of the Cone}
    \vspace{-10pt}
    \label{img:boundaryofcone}
\end{figure}
The half-plane $V^0$ is depicted in black, the half-planes $V_i^1$ are depicted in red and blue respectively. The theorem says that a cone $C$ with the boundary as in Figure \ref{img:boundaryofcone} is a cornered open book of the type depicted in Figure \ref{img:nicecorneredopenbook}.

We remind of the following theorem originally due to Allard in \cite{AllB} which we expanded on in Section 4.3 \cite{ian2024uniqueness} and applied it to a higher multiplicity setting.

\begin{theorem}\label{t:allardconvexvarifoldslemma} Fix a $2$-dimensional plane $\pi_0 \subset \RR^{m+n}$ and assume that:
\begin{enumerate}
    \item $\mathcal{V}$ is an $m$-dimensional varifold which is stationary on $\RR^{m+n} \setminus \pi_0^{\perp}$.
    \item $\Theta(\mathcal{V},x) \geq c_0 >0$ for some positive numbers $c_0$ and for $|\!|
    \mathcal{V}|\!|$-almost every $x \in \textup{spt}(V) \setminus \pi_0^{\perp}$.
    \item $\mathcal{V}$ is a cone (i.e. $\left(\lambda_{0,r}\right)_{\#}\mathcal{V}=\mathcal{V}$).
    \item The orthogonal projection of $\textup{spt}(\mathcal{V})$ onto $\pi_0$ is contained on a half-space). This means there is a vector $e \in \pi_0 \cap \mathbb{S}^{m+n-1}$ such that 
    \begin{equation*}\label{e:convexbarriervarifold}
        \mathbf{p}_{\pi_0}(\textup{spt}(\mathcal{V}))\subset \{x:0 \leq \langle x,e \rangle \}.
    \end{equation*}
\end{enumerate}
\begin{itemize}
\item Then $\exists$ finitely many $v_1,..., v_k \in \pi_0 \cap \mathbb{S}^{m+n-1}$ such that 
\begin{equation*}
 \left( \bB_1 \cap \textup{spt}(
 \mathcal{V}) \right) \setminus B_{\rho}(\pi_0^{\perp})  \subseteq \bigcup_{i=1}^{k(\rho)} \pi_0^ {\perp} + \RR^+ v_i
\end{equation*}
for every $\rho>0$. 
\item We denote $(x,y)=z$, $x^{\perp}=(-x_2,x_1)$ if $x=\mathbf{p}_{\pi_0}(z)$.  For $\mathcal{V}$ a.e. $(z,\pi)$ where $z \in \RR^{m+n} \setminus{\pi_0^{\perp}} $ we have $\mathbf{p}_{\pi}(x^{\perp})=0.$
\end{itemize}
\end{theorem}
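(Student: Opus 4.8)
The plan is to prove this cone version of Allard's one‑sided boundary lemma in two stages: first I would extract the orthogonality $\mathbf p_\pi(x^\perp)=0$ (the second bullet) from the stationarity equation tested against the rotational Killing fields of the plane $\pi_0$, using hypothesis~(4) to discard the unwanted alternative; then I would deduce the confinement of $\textup{spt}(\mathcal V)$ to finitely many half‑hyperplanes (the first bullet) by a foliation argument together with a mass count. Write $\pi_0=\textup{span}(e,e')$ and, for $z\in\RR^{m+n}$, set $x=\mathbf p_{\pi_0}(z)$ and let $x^\perp$ be the rotation of $x$ by $\pi/2$ inside $\pi_0$; thus $R\colon z\mapsto x^\perp$ is the Killing field generating rotations of $\pi_0$, it vanishes on $\pi_0^\perp$, and by (4) one has $\langle z,e\rangle=\langle x,e\rangle\geq0$ on $\textup{spt}(\mathcal V)$.

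For the second bullet I would test $\delta\mathcal V=0$ (which holds on $\RR^{m+n}\setminus\pi_0^\perp$, and — because $R$ vanishes on $\pi_0^\perp$ — extends to test fields vanishing there once one has a non‑concentration estimate for $\mathcal V$ near $\pi_0^\perp$) against $X=\phi\,R$. Since $R$ is Killing, $\textup{div}_\pi(R)=0$ for every plane $\pi$, so $0=\delta\mathcal V(X)=\int\langle\nabla\phi,\mathbf p_\pi(x^\perp)\rangle\,d\mathcal V$ for all admissible $\phi$. Taking $\phi=\phi(|x|^2)$, whence $\nabla\phi=2\phi'(|x|^2)\,x$, gives $\langle x,\mathbf p_\pi(x^\perp)\rangle=0$ $\mathcal V$‑a.e.; taking $\phi=\langle z,c\rangle\,\psi(|x|^2)$ with $c\in\pi_0^\perp$ (suitably truncated) and using the previous identity to cancel one term gives $\langle c,\mathbf p_\pi(x^\perp)\rangle=0$ for all such $c$, i.e.\ $\mathbf p_\pi(x^\perp)\in\pi_0$. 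Combined with $\langle x,\mathbf p_\pi(x^\perp)\rangle=0$ this forces $\mathbf p_\pi(x^\perp)=\lambda\,x^\perp$ for a scalar $\lambda$, and since $\mathbf p_\pi$ is an orthogonal projection, $\lambda|x|^2=\langle x^\perp,\mathbf p_\pi(x^\perp)\rangle=|\mathbf p_\pi(x^\perp)|^2=\lambda^2|x|^2$, so $\lambda\in\{0,1\}$ wherever $x\neq0$. On the set $\{\lambda=1\}$ the tangent plane contains $x^\perp$, so there $\mathcal V$ is locally invariant under rotations of $\pi_0$ and its $\pi_0$‑projection contains entire circles about the origin, contradicting (4); hence $\lambda=0$ $\mathcal V$‑a.e.\ off $\pi_0^\perp$, which is exactly the second bullet.

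For the first bullet, from $\mathbf p_\pi(x^\perp)=0$ one gets $\pi\subseteq\{x^\perp\}^\perp=\pi_0^\perp\oplus\RR x$, so every tangent vector of $\mathcal V$ projects into $\RR x$ under $\mathbf p_{\pi_0}$; consequently the direction $x/|x|\in\pi_0\cap\mathbb{S}^{m+n-1}$ cannot rotate as one moves along $\textup{spt}(\mathcal V)$, i.e.\ it is locally constant on $\textup{spt}(\mathcal V)\setminus\pi_0^\perp$, and on each connected component it equals a fixed $v$ with $\langle v,e\rangle\geq0$ (because $\langle x,e\rangle\geq0$); hence that component lies in the half‑hyperplane $\pi_0^\perp+\RR^+v$. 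Since $\mathcal V$ is a cone, the part of $\mathcal V$ carried by each component is a non‑trivial stationary cone of density $\geq c_0$ at the origin, hence of mass $\geq c_0\,\omega_m$ in $\bB_1$; as $\|\mathcal V\|(\bB_1)<\infty$, only finitely many components occur, giving the half‑hyperplanes $\pi_0^\perp+\RR^+v_i$ of the statement (for a cone the $v_i$ do not in fact depend on $\rho$), and the last displayed assertion of the theorem is just the $\lambda=0$ conclusion rewritten.

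The genuinely delicate point — and the main obstacle — is the behaviour of $\mathcal V$ on the excluded plane $\pi_0^\perp$, on which stationarity is not assumed and near which $\mathcal V$ may even carry $m$‑dimensional mass, since $\dim\pi_0^\perp=m+n-2\geq m$. One has to establish a non‑concentration estimate of the form $\tfrac1h\,\|\mathcal V\|\big(\{0<\dist(\cdot,\pi_0^\perp)<h\}\cap\bB_{1/2}\big)\to0$ as $h\to0$ — by covering a tubular neighbourhood of $\pi_0^\perp$ by $\sim h^{-(m+n-2)}$ balls of radius $\sim h$, each of mass $\lesssim h^m$ by the monotonicity formula, exactly as in the proof of Corollary~\ref{c:monotonerrorcorner} — so that $X=\phi\,R$ (which vanishes only to first order on $\pi_0^\perp$) is a legitimate competitor in the first variation and the rotation‑invariance argument on $\{\lambda=1\}$ genuinely contradicts the one‑sided hypothesis. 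Granting this estimate, all the remaining computations are elementary and local.
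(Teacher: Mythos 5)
The Killing‑field observation $\operatorname{div}_\pi(x^\perp)=0$ for every $2$‑plane $\pi$ is correct and is the same starting point as the paper, but the way you then extract pointwise information from it is flawed, and that flaw is the heart of the matter. Testing stationarity with $X=\phi(|x|^2)\,x^\perp$ yields only the integral identity
\begin{equation*}
\int 2\phi'(|x|^2)\,\langle x,\mathbf p_\pi(x^\perp)\rangle\,d\mathcal V(z,\pi)=0 ,
\end{equation*}
i.e.\ that the signed measure $\langle x,\mathbf p_\pi(x^\perp)\rangle\,d\mathcal V$ pushes forward to zero under $z\mapsto|x|$; this is much weaker than $\langle x,\mathbf p_\pi(x^\perp)\rangle=0$ $\mathcal V$‑a.e. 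A concrete illustration: with $\pi_0=\operatorname{span}(e_1,e_2)$ and $\mathcal V=\a{\pi}$ for $\pi=\operatorname{span}\{\tfrac1{\sqrt2}(e_1+e_3),\,e_2\}$, one computes $\langle x,\mathbf p_\pi(x^\perp)\rangle=\tfrac{ab}{2\sqrt2}\not\equiv 0$ while the $\phi(|x|^2)$ identity holds trivially by the $a\mapsto-a$ symmetry (this example does not satisfy hypothesis~(4), but your derivation of $\lambda\in\{0,1\}$ is asserted \emph{before} any use of~(4), so it should apply to it — and fails). The same objection applies to the second test field $\phi=\langle z,c\rangle\,\psi(|x|^2)$. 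The crucial feature you are missing is \emph{positivity}. The paper's proof (via Lemma~\ref{l:allardwinding}) tests with $X=\alpha(|z|)\beta(|x|)\psi(x)\,x^\perp$ where $\psi$ depends on the \emph{angle} $\theta=\arg x$ in $\pi_0$; the angular gradient $\nabla\psi\propto x^\perp/|x|^2$ then pairs with the $x^\perp$ factor in $X$ to produce the square $|\mathbf p_\pi(x^\perp)|^2$, giving a genuine positive Radon measure $\mu$ on $S^1\subset\pi_0$. Stationarity plus the cone condition force $\mu=c\,\mathcal H^1$ with $c\geq0$; hypothesis~(4) confines $\operatorname{spt}(\mu)$ to a half‑circle, so $c=0$; and positivity of the integrand then gives $\mathbf p_\pi(x^\perp)=0$ $\mathcal V$‑a.e. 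There is no way to reach a pointwise a.e.\ conclusion from your non‑square integral identities alone.

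A secondary but real problem is your proposed ``non‑concentration'' estimate near $\pi_0^\perp$. You cite the ball‑covering argument ``exactly as in the proof of Corollary~\ref{c:monotonerrorcorner}'', but there the excluded set $M$ has dimension $m-2$, giving $\sim h^{-(m-2)}$ balls of radius $h$, each of mass $\lesssim h^m$, hence $h^{-1}\cdot h^{2}\to 0$. Here $\pi_0^\perp$ has dimension $m+n-2$ (which, as you correctly note, is $\geq m$), so the covering needs $\sim h^{-(m+n-2)}$ balls, and the same count gives $h^{-1}\cdot h^{2-n}=h^{1-n}$, which does \emph{not} vanish for $n\geq1$. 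The cut‑off near $\pi_0^\perp$ in the paper's vector field is controlled by a different mechanism (exploiting $\hat x\perp x^\perp$ and the cone structure on the $\beta'$ error term, as in the cited Lemma~4.12 of \cite{ian2024uniqueness}), not by crude mass bounds. Finally, the passage in the first bullet from $\pi\subseteq\pi_0^\perp\oplus\RR x$ to local constancy of $x/|x|$ on $\operatorname{spt}(\mathcal V)$ is an ODE‑style assertion that is not justified for a general rectifiable varifold; the paper's version obtains the finiteness bound $k(\rho)\leq 2^m\Theta(\mathcal V,0)/(\rho^m c_0)$ quantitatively from $\mathbf p_\pi(x^\perp)=0$ and the density lower bound, rather than from a heuristic transport argument.
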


Assume $\pi_0= \{(x_1,x_2,0,...,0) \in \RR^{m+n} \}$ and we denote $z=(x,y) \in \pi_0 \times \pi_0^{\perp}$. The following lemma implies Theorem \ref{t:allardconvexvarifoldslemma}:
\begin{lemma}\label{l:allardwinding}Assume $\mathcal{V}$ is a varifold which is stationary on $\RR^{m+n} \setminus \pi_0^{\perp}$ and is a cone (conditions (1) and (3) of Theorem \ref{t:allardconvexvarifoldslemma}). Consider the following Radon measure $\mu$ on $\mathbb{S}^1:=\{x \in \pi_0: |x|=1\}$
\begin{equation} \label{e:allardmeasure}
\int \varphi d\mu =\int_{\bB_1 \setminus \pi_0^{\perp}} \varphi \left( \frac{x}{|x|}\right) |x|^{-2} |\mathbf{p}_{\pi}(x^{\perp})|^2d\mathcal{V}(z,\pi)
\end{equation}
    where $(x,y)=z$, $x^{\perp}=(-x_2,x_1)$. Then $\mu=c \HH^1$ for some $c \in \RR$. 
    
    In addition, if $\mu=0$ and we have conditions (2) and (4) in Theorem \ref{t:allardconvexvarifoldslemma} then:
    \vspace*{-0.2cm}
\begin{enumerate}
\item $\mathbf{p}_{\pi}(x^{\perp})=0$ for $\mathcal{V}$ a.e. $(z,\pi)$ where $z \in \RR^{m+n} \setminus{\pi_0^{\perp}}$.
\item The set $\mathbf{p}_{\pi_0}(\textup{spt}(\mathcal{V}) \setminus B_{\rho}(\pi_0^{\perp})) \cap S^{1}$ is finite and it has at most $\frac{2^m \Theta(\mathcal{V},0)}{\rho^m c_0}$ points.
\end{enumerate}  
\end{lemma}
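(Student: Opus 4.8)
The plan is to test the first variation of $\mathcal V$ with the rotational vector field of the plane $\pi_0$ and to observe that, because $\mathcal V$ is a cone, the tangential divergence of that field is \emph{exactly} the integrand defining $\mu$. Write $z=(x,y)\in\pi_0\times\pi_0^\perp$, let $\theta$ be the angular coordinate of $x$ (so $\nabla\theta=x^\perp|x|^{-2}$), let $\hat x:=x|x|^{-1}$, and let $J$ be the rotation by $\pi/2$ in $\pi_0$, extended by $0$ on $\pi_0^\perp$, so that $D(x^\perp)=J\circ\mathbf p_{\pi_0}$. For $f\in C^1(\mathbb S^1)$ and a cut-off $\chi=\chi(|z|)$ with $\chi\equiv1$ on $\bB_{1/2}$ and $\textup{spt}\,\chi\subset\bB_1$, I would use $X:=f(\theta)\,\chi(|z|)\,x^\perp$. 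Since $J$ is antisymmetric, $\textup{div}_\pi(x^\perp)=\sum_i\langle J\mathbf p_{\pi_0}e_i,e_i\rangle=0$ for every $m$-plane $\pi$, so $\textup{div}_\pi X=\langle\nabla(f(\theta)\chi),\mathbf p_\pi(x^\perp)\rangle$. Expanding $\nabla(f(\theta)\chi)=\chi f'(\theta)\,x^\perp|x|^{-2}+f(\theta)\chi'(|z|)\,z|z|^{-1}$ and using that for $\mathcal V$-a.e.\ $(z,\pi)$ the tangent plane of a cone contains the radial direction, so $\mathbf p_\pi z=z$, the second summand pairs with $\langle z,\mathbf p_\pi(x^\perp)\rangle=\langle z,x^\perp\rangle=0$ and drops, leaving
\begin{equation*}
\textup{div}_\pi X=\chi(|z|)\,f'(\theta)\,|x|^{-2}\,|\mathbf p_\pi(x^\perp)|^2\qquad\text{for }\mathcal V\text{-a.e.\ }(z,\pi)\text{ with }z\notin\pi_0^\perp .
\end{equation*}

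Next I would deduce $\mu=c\,\HH^1$. Since $|\mathbf p_\pi(x^\perp)|\le|x^\perp|=|x|$, the integrand of $\mu$ is bounded by $1$, so $\mu$ is a finite measure; by homogeneity of $\mathcal V$, integrating over $\bB_{1/2}$ instead of $\bB_1$ changes $\mu$ only by a dimensional factor, so I compute it over $\bB_{1/2}$, where $\chi\equiv1$. After discarding $\mathcal V\llcorner\pi_0^\perp$ (which affects neither $\mu$ nor the hypotheses) I may assume $\|\mathcal V\|(\pi_0^\perp)=0$. Multiplying $X$ by a cut-off $\zeta_\varepsilon(|x|)$, equal to $0$ on $\{|x|<\varepsilon\}$ and $1$ on $\{|x|>2\varepsilon\}$, makes it admissible for the stationarity identity on $\RR^{m+n}\setminus\pi_0^\perp$; the resulting error $-\int\langle\nabla_\pi\zeta_\varepsilon,X\rangle\,d\mathcal V$ has integrand bounded by $C|\zeta_\varepsilon'|\,|\mathbf p_\pi(x^\perp)|\le C|\zeta_\varepsilon'|\,|x|\le C'$ on $\{\varepsilon\le|x|\le2\varepsilon\}$, hence is at most $C'\|\mathcal V\|(\{|x|\le2\varepsilon\}\cap\bB_1)\to0$ as $\varepsilon\to0$. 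Passing to the limit in the divergence identity gives $\int_{\mathbb S^1}f'(\theta)\,d\mu=0$ for every $f\in C^1(\mathbb S^1)$, so $\mu$ has vanishing distributional derivative on $\mathbb S^1$, whence $\mu=c\,\HH^1$ for some $c\ge0$.

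For the case $\mu=0$: taking $\varphi\equiv1$ forces $\mathbf p_\pi(x^\perp)=0$ for $\mathcal V$-a.e.\ $(z,\pi)$ with $z\notin\pi_0^\perp$, and since this is invariant under the dilations fixing $\mathcal V$ it holds $\mathcal V$-a.e.\ on $\RR^{m+n}\setminus\pi_0^\perp$, which is conclusion (1). For conclusion (2), hypothesis (2) of Theorem \ref{t:allardconvexvarifoldslemma} gives rectifiability of $\mathcal V$ (Allard), and then $\mathbf p_\pi(x^\perp)=0$ says precisely that the tangential gradient of $\theta$ along $\textup{spt}\,\mathcal V$ vanishes $\HH^m$-a.e., so $\theta$ is locally constant there; hence $\textup{spt}\,\mathcal V\setminus\pi_0^\perp$ is contained in a union of half-spaces $B_v:=\pi_0^\perp\oplus\RR^+v$, $v\in\mathbb S^1\cap\pi_0$, pairwise disjoint off $\pi_0^\perp$, and hypothesis (4) confines the occurring $v$ to a closed half-circle. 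To bound, for fixed $\rho>0$, the number $k(\rho)$ of these half-spaces meeting $\textup{spt}\,\mathcal V\setminus B_\rho(\pi_0^\perp)$, I would note that each such $B_v$ contains a point $z_v\in\textup{spt}\,\mathcal V$ with $|z_v|=1$ and $\dist(z_v,\pi_0^\perp)\ge\rho$; monotonicity in $\bB_{\rho/2}(z_v)$ (a ball disjoint from $\pi_0^\perp$) together with the density lower bound $c_0$ gives $\|\mathcal V\|(\bB_{\rho/2}(z_v))\ge c_0\omega_m(\rho/2)^m$, and a covering argument—using the half-circle constraint from (4) to organize the $B_v$'s so that these mass contributions are essentially disjoint inside $\bB_2$—yields $k(\rho)\le 2^m\Theta(\mathcal V,0)\,(c_0\rho^m)^{-1}$, since $\|\mathcal V\|(\bB_1)=\Theta(\mathcal V,0)\,\omega_m$.

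I expect the main obstacle to be the divergence computation of the first step: choosing the right test field and verifying that, modulo the cone identity $\mathbf p_\pi z=z$, its weighted tangential divergence is precisely the $\mu$-integrand—this is the whole point, after which the distributional identity $\mu'=0$, the vanishing of $\mathbf p_\pi(x^\perp)$, and the ``locally constant angle'' decomposition into half-spaces are routine. The two delicate technical points are the non-compactness of $\textup{spt}\,X$ near $\pi_0^\perp$, dealt with by the cut-off estimate above once $\mathcal V\llcorner\pi_0^\perp$ is removed, and the final packing bound for $k(\rho)$, where hypothesis (4) is used to keep the mass contributions of nearly parallel half-spaces from overlapping.
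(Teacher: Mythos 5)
Your approach is essentially the one used in the reference the paper cites for this lemma (\cite{ian2024uniqueness}), and also the one the paper itself deploys later in the sub-lemma inside the proof of Theorem \ref{thm:corneredconvex}: test the first variation with a rotational vector field built from $x^{\perp}$, use antisymmetry of the rotation to kill $\textup{div}_\pi(x^{\perp})$, drop the radial term via the cone identity $\mathbf p_\pi z = z$, and conclude $\mu' = 0$ distributionally after a cut-off near $\pi_0^{\perp}$. (The paper writes the angular factor as a primitive $\psi$ of $\varphi$ rather than $f(\theta)$, but this is the same test field.) Two small remarks.

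First, the step $\mathbf p_\pi z = z$ for $\mathcal V$-a.e.\ $(z,\pi)$ is not automatic for an arbitrary varifold cone that is stationary only off $\pi_0^{\perp}$: for a general (non-rectifiable) varifold the Grassmannian component $\pi$ need not bear any geometric relation to $z$, and the usual derivation of $\mathbf p_\pi^{\perp} z = 0$ from the monotonicity formula at $0$ is not directly available here because the radial test field is not supported away from $\pi_0^{\perp}$. In the setting in which the lemma is actually applied $\mathcal V$ is the varifold of an area-minimizing cone, for which the tangent-contains-radial-direction property is clear; but as stated it deserves either a hypothesis of rectifiability with the conical tangent property, or a short justification via a cut-off argument analogous to the one you already run for $\zeta_\varepsilon$.

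Second, your packing bound is off by a factor of $2^m$: with balls of radius $\rho/2$ you get $k(\rho) \leq 2^{2m}\Theta(\mathcal V,0)(c_0\rho^m)^{-1}$. To recover the stated constant, use the monotonicity up to radius $\rho$ (the open ball $\bB_\rho(z_v)$ is still disjoint from $\pi_0^{\perp}$ since $\dist(z_v,\pi_0^{\perp}) \geq \rho$), and observe that the sets $\bB_\rho(z_v)\cap B_v$ for distinct $v$ are pairwise disjoint and contained in $\bB_2$, giving $k(\rho)\, c_0 \omega_m \rho^m \leq \|\mathcal V\|(\bB_2) = 2^m \Theta(\mathcal V,0)\omega_m$. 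Note also that hypothesis (4) is not actually needed for this counting step; the disjointness of the half-spaces off $\pi_0^{\perp}$ is what does the work.
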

The theorem above is a direct consequence of this lemma and we reproved it in \cite{ian2024uniqueness}.
\begin{proof}[Proof of Theorem \ref{thm:corneredconvex}]
We denote $V^1:=V_0^1 \cup V_1^1= \RR e_1 \oplus \RR e_3.$
We can apply $n-1$ times the above Lemma \ref{l:allardwinding}, each time taking $\pi_0$ as $\RR e_4 \oplus \RR e_{4+k}$  , for every $1 \leq k \leq n-1$.

We define the vector fields $v_k:= -x_4 e_{4+k}+x_{4+k}e_4.$
Thus we get that for $|\!|C|\!|$- almost every $p \in \textup{spt}(C) \cap \left\{x_4>0 \right\}$ we have
\begin{equation*}
    \mathbf{p}_{\overrightarrow{C}}(v_k)=0 \; \forall 1 \leq k \leq n-1.
\end{equation*}

We will show in Lemma \ref{l:allardwindingsingular} that a variant of Lemma \ref{l:allardwinding} holds for $\pi_0=\RR e_2 \oplus \RR e_4.$. This implies $v_0:=x_2e_4-x_4e_2$ then 
\begin{equation}\label{eq:appsingularallardwinding}
\mathbf{p}_{\overrightarrow{C}}(v_0)=0 \;\textup{for}\; |\!|C|\!|-a.e.
\end{equation}

This allows us to conclude that if we define $\sigma:\RR^{3+n} \setminus V^1 \rightarrow {V^1}^{\perp}$ as
\begin{equation}
\sigma(x):=\frac{\mathbf{p}_{{V^1}^{\perp}}(x)}{|\mathbf{p}_{{V^1}^{\perp}}(x)|}
\end{equation}
then $|\!|C|\!|$ almost everywhere $D_{C}(\sigma) \equiv 0.$

We conclude assuming Lemma \ref{l:allardwindingsingular} and thus its consequence, that $|\!|C|\!|$ almost everywhere $D_{C}(\sigma) \equiv 0.$ By interior regularity theory due to Almgren \cite{Alm} and  De Lellis - Spadaro \cite{DS3,DS4,DS5}, $C$ is regular in the interior up to a set of codimension $2$. If $q$ is an interior regular point for $C$ and $P_q:=\sigma^{-1}(\sigma(q))$ is independent of the spine $L$, and moreover by constancy 
\begin{equation*}
|C|(P_q \cap \bB_1) \geq \frac{\omega_{m}}{4}.
\end{equation*}

Given such a plane $P_q$, it must part of it must lie in $\left \{x_4 \geq 0\right\}$ since $C$ assigns mass to it. There are two options: either $P_q \subset \left\{x_4=0 \right\}$ holds or it doesn't. 

We define $H_0= \RR e_1 \oplus \RR^{+} e_2 \oplus \RR e_3$. We split $H_0$ into $H_0^+=H_0 \cap \left\{x_1 \geq 0\right\}$ and $H_0^-=H_0 \cap \left\{x_1 \leq 0\right\}$.

\begin{itemize}
\item If $|P_q|(\left\{x_4=0\right\})=0$ then 
 we consider $H_p:= P_q \cap \left \{x_4 \geq 0\right\}$
must be a half-plane with boundary $\a{\RR e_1 \oplus \RR e_3}$. We have that $|C|(P_q \setminus H_p)=0$ and $|C|(H_p \cap \bB_1) \geq \frac{\omega_{m}}{2}$.
\item If $P_q \subset \left \{x_4=0\right\}$ (which is equivalent to $|P_q|(\left\{ x_4>0 \right\})=0$ since $V^1 \subset P_q)$, we must have that $q \in \RR e_1 \oplus \RR^{+}e_2 \oplus \RR e_3$.
In this case we must have by constancy that the multiplicity is constantly nonzero either in $H_0^+$ or in $H_0^-$ and thus $|C|(P_q \cap \bB_1) \geq \frac{\omega_{m}}{4}.$  We orient $\a{H_0^{\pm}}$ such that $\partial \a{H_0^+}=\a{V^0}+\a{V_2^1}$ and $\partial \a{H_0^{-}}=\a{V^{-}}+\a{V_1^1}$.

\end{itemize}

Assume that there are at least $N$ distinct planes $P_i$ containing a regular point. Then
\begin{equation*}
|C|(\bB_1) \geq \sum_{i=1}^N |C|(P_i \cap \bB_1) \geq \frac{N \omega_m}{4}
\end{equation*}
this implies there are only finitely many such planes.

We must have that for $Q_i$ integers
\begin{equation*}
C \res \left\{x_4>0\right\}=\sum_{i=1}^N Q_i \a{H_k}.
\end{equation*}
Since $\textup{spt}(C)\cap \left\{x_4=0\right\} \subset H_0$, by constancy we obtain for $Q^{\pm}$ integers
\begin{equation*}
C= \sum_{\pm} Q^{\pm} \a{H_0^{\pm}} +\sum_{i=1}^N Q_i \a{H_k}.
\end{equation*}

Up to reversing the orientations of $\a{H_i}$ we can assume that $Q_i>0$ for every $1 \leq i \leq N$. Given that $\textup{spt}(\partial\a{H_i}) \subset V^1$, we will show that $\partial\a{H_i}=\partial\a{H_j}$ for all $1 \leq i<j \leq N$.
Assume that instead for some $1 \leq i < j \leq N$, we have $\partial \a{H_i} +\partial \a{H_j}=0.$

By taking $S=\a{H_i}+\a{H_j}$, we can write $C$ as $C=S+T$ where $|\!|C|\!|=|\!|S|\!|+|\!|T|\!|$. This implies that $S$ is area minimizing without boundary. The only option for this to hold in this setting given that the planes have a codimension $1$ spine is if $H_ \cup H_j$ is a full plane. This is impossible since both of them are contained in $\left\{x_4 \geq 0\right\}.$

In order to take the boundary data the following three equations must be satisfied
\begin{enumerate}
    \item $Q^{+}+Q^{-}=2$
    \item $Q^{+}+\sum_{i=1}^N Q_i=1$
    \item $Q^{-}+\sum_{i=1}^N Q_i=1$.
    \end{enumerate}
It follows that $Q^{+}=Q^{-}=1$ and $\sum Q_i=0$. Since all $Q_i$ have the same sign, they must be zero.

\textbf{Allard's lemma for the plane $\RR e_2 \oplus \RR e_4$}

In order to conclude we must show that we can apply a variation of Lemma \ref{l:allardwinding} to the present case with the plane $\pi_0=\langle e_2,e_{4} \rangle$. We will show that an appropriate version of the lemma holds in the hypothesis that we have. Inside the proofs of these lemmas we will adopt the notation $z=(x,y) \in \pi_0$ and $x^{\perp}=(-x_4,x_2).$

\begin{lemma}\label{l:allardwindingsingular} We consider $\pi_0=\langle e_2,e_4 \rangle$ and $\mathcal{V}$ the varifold induced by $C$. Then the measure $\mu$ defined by \eqref{e:allardmeasure} satisfies $\mu=c \HH^1$, which implies in the present context $\mu \equiv 0$. Thus, $\mathbf{p}_{\pi}(x^{\perp})=0$ for $\mathcal{V}$ a.e. $(z,\pi)$ where $z \in   \RR^{m+n} \setminus \left(\pi_0^{\perp} \cup \textup{spt}(\partial C) \right)$. 
\end{lemma}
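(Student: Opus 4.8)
The plan is to verify that the hypotheses of Lemma~\ref{l:allardwinding} hold for the varifold $\mathcal{V}$ induced by $C$ and the plane $\pi_0 = \langle e_2, e_4\rangle$, and then to upgrade the conclusion $\mu = c\,\mathcal{H}^1$ to $\mu \equiv 0$ using the convex barrier hypothesis \eqref{e:convexbarriercorner}. First I would check conditions (1) and (3) of Theorem~\ref{t:allardconvexvarifoldslemma}: $C$ is an area minimizing cone, hence stationary in $\RR^{3+n}\setminus \mathrm{spt}(\partial C)$, and $\mathrm{spt}(\partial C) = V^0 \cup V_1^1 \cup V_2^1$; since $V^0 = \RR e_3 \oplus \RR^+ e_2$ and $V_i^1 = \RR e_3 \oplus \RR^+(-1)^i e_1$, all of these are contained in $\RR e_1 \oplus \RR e_2 \oplus \RR e_3 = \pi_0^\perp$ (recall $\pi_0^\perp = \langle e_1, e_3, e_5, \dots, e_{n+3}\rangle$, which indeed contains the $x_2$-axis through $V^0$). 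Wait — $V^0$ has an $e_2$ component, so $V^0 \not\subset \pi_0^\perp$ in general. The cleaner statement is that $\mathrm{spt}(\partial C)$ meets $\pi_0$ only along a lower-dimensional set, so the stationarity "on $\RR^{m+n}\setminus \pi_0^\perp$" in the weak/a.e.\ sense used in Lemma~\ref{l:allardwinding} still applies after excising $\mathrm{spt}(\partial C)$; this is exactly why the lemma statement here writes $z \in \RR^{m+n}\setminus(\pi_0^\perp \cup \mathrm{spt}(\partial C))$. So the first step is to observe that the proof of Lemma~\ref{l:allardwinding} goes through verbatim with $\RR^{m+n}\setminus \pi_0^\perp$ replaced by $\RR^{m+n}\setminus(\pi_0^\perp \cup \mathrm{spt}(\partial C))$, because the test vector fields used there (rotations in $\pi_0$) can be cut off near the extra boundary piece at no cost in the limit, the boundary term being controlled since $\mathrm{spt}(\partial C)$ is a finite union of half-planes through the origin with the relevant normal directions.

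Next I would run the computation that gives $\mu = c\,\mathcal{H}^1$: testing the first variation of $C$ against the tangential rotation vector field $X = \varphi(x/|x|)\,|x|^{-1} \Theta(x)\, x^\perp$ (suitably cut off), homogeneity of the cone forces the distributional derivative of $\mu$ on $\mathbb{S}^1$ to vanish, whence $\mu$ is rotation-invariant and equals $c\,\mathcal{H}^1$ for some $c \ge 0$. This is identical to the argument reproduced in \cite{ian2024uniqueness}. The step where the present setting differs is showing $c = 0$: for that I would invoke the convex barrier \eqref{e:convexbarriercorner}, which says $\mathrm{spt}(C) \subset W = \{|x_2^-| + |y| \le C_0 x_4\}$. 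In the $\pi_0 = \langle e_2, e_4\rangle$ coordinates this means the projection $\mathbf{p}_{\pi_0}(\mathrm{spt}(C))$ lies in the half-plane $\{x_4 \ge 0\}$ — indeed $\{x_4 \ge |x_2^-| \ge 0\}$. Thus condition (4) of Theorem~\ref{t:allardconvexvarifoldslemma} holds with $e = e_4$. Now if $c > 0$, then $\mu = c\,\mathcal{H}^1$ has full support on all of $\mathbb{S}^1 \subset \pi_0$, so by the structure part of Lemma~\ref{l:allardwinding} (or directly from \eqref{e:allardmeasure}) the projection $\mathbf{p}_{\pi_0}(\mathrm{spt}(\mathcal{V})\setminus B_\rho(\pi_0^\perp))$ would have to ``wind around'' and in particular cannot be confined to a half-plane of $\pi_0$ — contradicting the barrier. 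More precisely: the $\mathcal{H}^1$-absolute continuity of $\mu$ forces $\mathbf{p}_{\pi_0}(x^\perp)$ to be nonzero on a positive-measure set of directions $x/|x|$ covering all of $\mathbb{S}^1$, which is incompatible with $\mathbf{p}_{\pi_0}(\mathrm{spt}\,C) \subset \{x_4 \ge 0\}$ because a cone whose $\pi_0$-projection is trapped in a half-plane must have $\mathbf{p}_{\pi_0}(x^\perp) = 0$ a.e.\ wherever the projection hits the boundary line $\{x_4 = 0\}$ and the tangential excursions in the open half-plane integrate against a rotation-invariant measure to give $0$ only if $c = 0$. Hence $c = 0$, i.e.\ $\mu \equiv 0$.

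Finally, from $\mu \equiv 0$ and formula \eqref{e:allardmeasure} — whose integrand is a nonnegative function times $|\mathbf{p}_\pi(x^\perp)|^2$ — I conclude $\mathbf{p}_\pi(x^\perp) = 0$ for $\mathcal{V}$-a.e.\ $(z,\pi)$ with $z \notin \pi_0^\perp \cup \mathrm{spt}(\partial C)$, which is exactly the asserted conclusion of the lemma. The main obstacle I anticipate is the bookkeeping in the first paragraph: carefully justifying that the Allard winding argument tolerates the extra excised set $\mathrm{spt}(\partial C)$ (a half-plane meeting $\pi_0$) rather than just $\pi_0^\perp$, i.e.\ that the boundary contributions of $C$ along $V^0, V_1^1, V_2^1$ do not pollute the first-variation identity used to derive rotation invariance of $\mu$. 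This should follow because those half-planes pass through the origin and one can arrange the cutoff functions to vanish on a shrinking neighborhood of them while keeping the error terms $o(1)$, using that their traces against the rotational vector field $x^\perp$ vanish (each $V_i^j$ either contains the relevant rotation orbit or is orthogonal to it), but it requires the same care as the argument already carried out in Section~4.3 of \cite{ian2024uniqueness} for the single-boundary case.
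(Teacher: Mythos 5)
Your outline correctly identifies what needs to be done (verify a modified winding-measure argument, then use the barrier to force $c=0$), and the step from $\mu=c\,\HH^1$ plus the barrier to $\mu\equiv 0$ is fine (the support of $\mu$ is confined to the arc $\{|x_2^-|\le C_0 x_4\}\cap\mathbb S^1$, which is proper, so $c=0$). However, there is a genuine gap in the central step: you claim the boundary contributions from $V^0,V_1^1,V_2^1$ ``do not pollute the first-variation identity'' because ``their traces against the rotational vector field $x^\perp$ vanish (each $V_i^j$ either contains the relevant rotation orbit or is orthogonal to it).'' This is false for $V^0$. On $V_i^1$ one has $x_2=x_4=0$, so $x^\perp=0$ and those pieces really do drop out. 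But on $V^0=\RR e_3\oplus\RR^+e_2$, a point has $x=(x_2,0)$ with $x_2>0$, hence $x^\perp=x_2\,e_4\neq 0$. That $e_4\perp T_pV^0$ does not kill the boundary term in the first variation — quite the opposite: the boundary term is $\int_{V^0}X\cdot\eta^0$, and $\eta^0$ is itself a \emph{normal} vector to $V^0$, so $x^\perp\cdot\eta^0$ is generically nonzero. You cannot simply cut off a shrinking neighborhood of $V^0$ and declare the error $o(1)$; the boundary integral over $V^0$ survives the limit.

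The paper's proof handles this directly. Testing with $X(z)=\alpha(|z|)\beta(|x|)\psi(x)x^\perp$ (with $\psi$ a primitive of the zero-average test function $\varphi$) produces, after sending the cutoff parameters to their limits,
\begin{equation*}
\int\varphi\,d\mu \;=\; \psi(\theta_{V^0})\cdot 2\int_{V^0\cap\bB_1}\bigl(x^\perp\cdot\eta^0\bigr)\,d\HH^2,
\end{equation*}
where $\theta_{V^0}$ is the fixed angle at which $V^0$ projects into $\pi_0$. The left-hand side does not depend on the choice of primitive $\psi$ (which is free only up to an additive constant), while the right-hand side shifts by that constant times the coefficient $2\int_{V^0\cap\bB_1}(x^\perp\cdot\eta^0)$; hence that coefficient must be zero. (Equivalently, one checks this directly by testing with $Y=\alpha(|z|)\beta(|x|)x^\perp$.) Only after this does one conclude $\int\varphi\,d\mu=0$ for all mean-zero $\varphi$, hence $\mu=c\,\HH^1$. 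So the needed argument is not ``cut off the extra boundary piece,'' but rather ``show the surviving $V^0$-boundary term has a vanishing coefficient.'' Without that step your proof does not close.
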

\begin{proof}
 To prove that $\mu=c\HH^1$ we must show that if $\varphi \in C(S^1)$ and has zero average, i.e. $\int_{\mathbb{S}^1} \varphi d\HH^1=0$, then 
 \begin{equation*}
     \int \varphi d\mu=0.
 \end{equation*}

We consider $\varphi$ as a $2\pi$ periodic function on $\RR$ and we take $\psi(t):=\int_0^t \varphi(\tau)d\tau$ a primitive of $\varphi$ on $\RR$.

We consider a vector field of the form 
\begin{equation*}
X(z)=\alpha(|z|)\beta(|x|)\psi(x)x^{\perp}.
\end{equation*}
Where $\alpha, \beta \in C^{\infty}(\RR)$ are defined with the following properties:
$\alpha \equiv 1$ for $t \leq 1-\delta$, $\alpha \equiv 0$ for $t \geq 1 - \delta/2$, and it is decreasing.
$\beta \equiv 1$ for $t \geq \delta$, $\beta \equiv 0$ for $t<\delta/2$, and it is increasing with $\|{\beta}'\|_{C^0} \lesssim \delta^{-1}$.

We will have that 

\begin{equation*}
\int \textup{div}_{\pi}X(z)d\mathcal{V}(z,\pi)=2\int_{V^0} \int X \cdot \eta^0 d\HH^{2}.
\end{equation*}

We take $\delta \rightarrow 0$ which implies by the arguments in Lemma 4.12 \cite{ian2024uniqueness} that
\begin{equation*}
\int \varphi d\mu= \lim_{\delta \rightarrow 0} \int \textup{div}_{\pi}Xd\mathcal{V}(z,\pi)= \psi(0,1) 2\int_{V^0 \cap B_1(0)} \left(x^{\perp}\cdot \eta^0 \right)d\HH^{2}.
\end{equation*}
Since the left hand side of the equality is independent of the choice of $\psi$, the primitive of $\varphi$, then the right hand side must be independent of the choice $\psi$. This implies that the right hand side equals zero. 

Alternatively, the vanishing of the right-hand side can be seen
directly by testing with the vector field
\begin{equation*}
Y(z)=\alpha(|z|)\beta(|x|)x^{\perp}
\end{equation*}
with $\alpha$ and $\beta$ as above and taking the limit $\delta \rightarrow 0$.

Since 
\begin{equation*}
    \textup{spt}(\mu) \subset \left\{|x_2^{-}| \leq C x_4 \right\}
\end{equation*}
then $\mu \equiv 0$ and thus $\mathbf{p}_{\pi}(x^{\perp})=0$ for $\mathcal{V}$ a.e. $(z,\pi) \in \RR^{m+n} \setminus \pi_0^{\perp}.$
\end{proof}
Lemma \ref{l:allardwindingsingular} implies \ref{eq:appsingularallardwinding}, which in turn implies $D_{C}(\sigma)\equiv 0$ $|\!|C|\!|$-almost everywhere, allowing us to conclude.
\end{proof}
\section{Construction}
In this section, we construct a general family of examples of area-minimizing currents with essential one-sided boundary singularities. We start the section by providing an example of a specific boundary which is real analytic to which the theorems will apply, which allows us to conclude Theorem \ref{t:exampleintro} from Theorem \ref{T:generalexample}. 

\subsection{Concrete example in $\RR^5$}
The lower part of the boundary $\Gamma^0$ is defined to be the $2$-dimensional lower hemisphere
\begin{equation*}
\Gamma^0:=\left\{(x_1,x_2,x_3,0,0) \in \mathbb{S}^4: x_3<0 \right\}.
\end{equation*}

The upper part of the boundary $\Gamma^1$, is described by a two-valued map 
$\Phi^1:\mathbb{S}^1 \times [0,1] \rightarrow \mathcal{A}_2(\mathbb{S}^4) \subset \mathcal{A}_2(\RR^{5})$. 

We consider $\mathbb{S}^1 \subset \mathbb{C}$ so that the complex multiplication and thus the $2$-valued square root makes sense.

Let

\begin{equation*}
\Phi^1(z,t):=\sum_{w^2=z} \a{\left(\sqrt{1-t^2-t^4}z,t^2,tw)\right)}.
\end{equation*}

We define 
\begin{equation*}
\Gamma^1:=\left(\Phi^{1}\right)_{\#}(\mathbb{S}^1 \times [0,1]).
\end{equation*}

Let $M:=\left\{(z,0,0) \in \mathbb{C} \times \RR \times \mathbb{C}: |z|=1 \right\}$.
We orient $\Gamma^0$, $\Gamma^1$ and $M$ so that $2\partial\a{\Gamma^0}=2\a{M}=-\partial \a{\Gamma^1}.$

\begin{theorem}\label{T:exampledetail} Let $T$ be a $3$-dimensional area-minimizing current such that $\partial T=2\a{\Gamma^0}+\a{\Gamma^1}$ with $\Gamma^0$ and $\Gamma^1$, being the $2$d integral currents defined as above. Then $T$ has an essential boundary one-sided singularity at some interior point of the boundary surface $\Gamma^0$.
\end{theorem}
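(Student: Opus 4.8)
The plan is to produce a continuous two-valued normal map on the hemisphere $\Gamma^0$, to identify its restriction to the circle $M=\partial\Gamma^0$ with the two-valued complex square root $z\mapsto\{\pm\sqrt z\}$, and then to combine the topological obstruction of Proposition \ref{p:essentialsingularity} with the transfer argument sketched in the introduction. Throughout we use that $\mathbb S^4=\partial B_1$ is a uniformly convex barrier containing $\Gamma^0\cup\Gamma^1$ (so Assumption \ref{convexbarrier} holds and $\mathrm{spt}(T)\subset\overline B_1$, $\mathrm{spt}(T)\cap\mathbb S^4\subset\Gamma^0\cup\Gamma^1$), which makes the results of \cite{ian2024uniqueness}, \cite{ianreinaldo2024regularity} and the cone classifications of Theorems \ref{t:allardconvexvarifoldslemma} and \ref{thm:corneredconvex} available.

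\emph{Density along $\Gamma^0$ and the normal map away from $M$.} First I would check that $\Theta(T,p)=1=Q/2$ for every $p\in\Gamma^0\setminus M$. Any tangent cone there is area minimizing with boundary $2\a{T_p\Gamma^0}$; since $\Gamma^0$ is totally geodesic in $\mathbb S^4$, $T_p\Gamma^0$ is a great $2$-plane and the inner conormal of the barrier is orthogonal to it, so the cone is supported in a half-space bounded by a hyperplane through $T_p\Gamma^0$. Theorem \ref{t:allardconvexvarifoldslemma} then forces it to be an open book over $T_p\Gamma^0$ with all pages on one side, and an orientation count (a pair of cancelling pages would be a nontrivial area minimizing cycle contained in a half-space, which is impossible) leaves only $2\a H$ or $\a{H_1}+\a{H_2}$, both of density $1$. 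Thus every point of $\Gamma^0\setminus M$ is a density-$Q/2$ one-sided point, the blow-up of $T$ is unique by \cite{ian2024uniqueness}, and the two-valued normal map $\eta\colon\Gamma^0\setminus M\to\mathcal A_2(N\Gamma^0)$ is well defined and locally Hölder continuous.

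\emph{Corner analysis at $M$.} Next I would treat points $p\in M$. From the explicit formulas one checks that near $M$ the current $\a{\Gamma^1}$ is the sum of the two sheets of a real analytic Möbius strip with core $M$, that $\Gamma^0\perp\Gamma^1$ along $M$ (so by Lemma \ref{l:diffeo} every point of $M$ is a good corner), and --- this step requires care --- that the rescalings of $T$ at $p$ are supported in a wedge of the form \eqref{e:convexbarriercorner}. Granting this, Theorem \ref{thm:corneredconvex} identifies every tangent cone of $T$ at $p$ with the cornered open book $\a{H_1}+\a{H_2}$ of Figure \ref{img:nicecorneredopenbook}; in particular it is unique (cf.\ Lemma \ref{l:uniquetangent}), $\Theta(T,p)=Q/4$, and the two quadrants meet only along the $\Gamma^0$-face. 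Because the pages are separated away from that face, the splitting Lemma \ref{l:splitting} applied in $\bB_\rho\setminus B_\varepsilon(M)$ for every $\varepsilon>0$ --- and letting $\varepsilon\to0$, the spine $M$ being $\HH^{m-1}$-null --- yields a decomposition $T\res\bB_\rho=T_1+T_2$ with $\partial T_i=\a{\Gamma^0}-\a{\Gamma_i^1}$ and $\mathrm{spt}(T_1)\cap\mathrm{spt}(T_2)\subset\Gamma^0$, where $\Gamma_i^1$ is one of the two smooth half-strips of $\Gamma^1$. Each $T_i$ is a multiplicity-one area minimizing current whose boundary is two smooth surfaces meeting perpendicularly along $M$, i.e.\ the case $N^0=N^1=1$ of Assumption \ref{a:corneredboundary} --- for which the transversality requirement among sheets of equal orientation is vacuous --- with tangent cone the single quadrant $\a{H_i}$; Theorem \ref{t:decomposition} then makes every point of $M$ regular for $T_i$, so $T=\a{\Sigma_1}+\a{\Sigma_2}$ near $M$ with $\Sigma_i$ attaching smoothly to $\Gamma^0$ and $\Gamma_i^1$. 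Feeding $T_1,T_2$ into Theorem \ref{t:cornerholdernormal} and Corollary \ref{c:wholeboundaryholder} and patching with the interior Hölder estimate of \cite{ian2024uniqueness} on $\Gamma^0\setminus M$, the map $\eta$ extends Hölder-continuously to all of $\Gamma^0\cup M$.

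\emph{Boundary values, topology, and transfer to $T$.} Since $\Sigma_i$ attaches to $\Gamma^0$ along $M$ and has its remaining boundary along $\Gamma_i^1$, its one-sided conormal at $p=(z,0,0)\in M$ points in the direction of $T_p\Gamma_i^1$, which by the parametrisation $\Phi^1$ is $(0,0,w)$ with $w^2=z$; identifying the $\langle e_4,e_5\rangle$-part of the fibre of $N\Gamma^0$ over $M$ with $\mathbb C$, this gives $\eta(p)=\sum_{w^2=z}\a w$ up to a positive factor, i.e.\ $\eta|_M$ is the two-valued square root, which has nontrivial monodromy and no continuous single-valued branch over $M\cong\mathbb S^1$. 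Since $\Gamma^0$ is a disk with $\partial\Gamma^0=M$ and $\eta$ is continuous on $\Gamma^0\cup M$, Proposition \ref{p:essentialsingularity} produces an interior point $p_*\in\Gamma^0\setminus M$ at which $\eta$ has no neighbourhood decomposition $\eta=\a{\eta_1}+\a{\eta_2}$ into continuous single-valued maps. Finally, $\Theta(T,p_*)=1=Q/2$ by the first step; if $p_*$ were removable for $T$, say $T\res\bB_\rho(p_*)=\sum_i Q_i\a{\Sigma_i'}$ with $\Sigma_i'$ smooth minimal and $\partial\Sigma_i'=\Gamma^0\cap\bB_\rho(p_*)$, then the one-sided normal derivatives of the $\Sigma_i'$ along $\Gamma^0$ would express $\eta$ near $p_*$ as $\sum_i Q_i\a{\eta_i}$ with $\eta_i$ continuous single-valued, contradicting the previous sentence. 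Hence $p_*$ is an essential one-sided boundary singularity of $T$, lying in the interior of $\Gamma^0$.

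The main obstacle is the corner analysis at $M$: confining the blow-ups of $T$ at points of $M$ to the wedge \eqref{e:convexbarriercorner} so that Theorem \ref{thm:corneredconvex} applies, and then producing the decomposition $T=T_1+T_2$ near $M$ even though the two half-strips of $\Gamma^1$ are tangent (not transversal) along $M$ --- it is precisely this decomposition that reduces everything to the multiplicity-one perpendicular corner of Theorem \ref{t:decomposition} and lets $\eta$ be carried Hölder-continuously up to $M$. The density count, the square-root identification on $M$, and the transfer to $T$ are comparatively soft.
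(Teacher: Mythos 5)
Your proposal follows essentially the same route as the paper: density and tangent-cone classification via the convex barrier $\mathbb{S}^4$, the cornered-cone classification of Theorem \ref{thm:corneredconvex}, regularity and H\"older continuity of the multivalued normal map up to $M$ via Theorem \ref{t:decomposition} and Corollary \ref{c:wholeboundaryholder}, and finally the topological obstruction of Proposition \ref{p:essentialsingularity} with the transfer argument --- which is exactly how the paper deduces Theorem \ref{T:exampledetail} as a special case of Theorem \ref{T:generalexample} through Theorem \ref{t:densityexample} and the topological-hypothesis theorem. The only departure is organizational: you first split $T=T_1+T_2$ via Lemma \ref{l:splitting} and an $\varepsilon\to 0$ limit before applying the multiplicity-one case of Theorem \ref{t:decomposition} to each piece, whereas the paper applies Theorem \ref{t:decomposition} directly (the decomposition being already part of its conclusion); also note that the two half-strips of the M\"obius band approach $M$ from opposite sides (anti-parallel conormals), not tangentially.
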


By localizing to a small ball centered at an essential singular point, we obtain the following Theorem:
\begin{theorem}\label{T:example}
There exist a $3$-dimensional area-minimizing current $T$ in $\bB_1 \subset \mathbb{R}^5$ and a real analytic surface  $\Gamma$ such that $\partial T = 2\a{\Gamma}$, 
$\Theta(T,p) = 1$ for every $p \in \Gamma \cap \bB_1 $, and $0$ is an essential one-sided boundary singularity.
\end{theorem}

\subsection{A class of general examples}
\begin{assumption}[Density hypothesis]\label{a:densityhypothesis}
We say that two $m \geq 3$ dimensional integral currents satisfy $\Gamma^0$ and $\Gamma^1$  with $\partial \a{\Gamma^0}=\a{M}$, $\partial \a{\Gamma^1}=-2\a{M}$ and $\textup{spt}(\Gamma^0)\cap\textup{spt}(\Gamma^1)=M$ satisfy the density hypothesis if:
\begin{itemize}
\item There is a $C^2$ open set $\Omega$ which has uniformly convex boundary and $M \subset \partial \Omega$ is a smooth closed $m-2$ dimensional submanifold.
\item $\textup{spt}(\Gamma^0) \subset \partial \Omega$ is a $C^2$ surface with boundary $M$. 
\item $\textup{spt}(\Gamma^1)$ is a smooth surface in a tubular neighborhood $U$ of $M$.
\item $\textup{spt}(\Gamma^0)$ and $\textup{spt}(\Gamma^1)$ only intersect at $M$ and they have orthogonal normals along $M$. 
\end{itemize}
an area-minimizing current $T$ with $\partial T=2\a{\Gamma^0}+\a{\Gamma^1}$ is said to satisfy the density hypothesis if $\Gamma^0, \Gamma^1$ satisfy the density hypothesis.
\end{assumption}
The convexity implies that $\textup{spt}(T) \subset \overline{\Omega}$. The convexity allows us to say that if $p \in \Gamma^0 \setminus M$ then $\Theta(T,p)=1$, if $p \in U \cap \textup{spt}(\Gamma^1) \setminus M$ then $\Theta(T,p)=1/2$ and if $p \in M$ then $\Theta(T,p)=1/2.$ This allows us to use the developed regularity theory. 

\begin{assumption}[Additional Topological hypothesis]\label{a:topologicalhypothesis}
 We say that $\Gamma^0$ and $\Gamma^1$, two $2$ dimensional integral currents, satisfy the additional topological hypothesis if it satisfies the density hypothesis \ref{a:densityhypothesis} for $m=3$ and
\begin{itemize}
\item $\Gamma^0$ is diffeomorphic to a disk with boundary $M$ (which is a closed simple curve).
\item  
In a tubular neighborhood $U$ of $M$, the surface $\textup{spt}(\Gamma^1)$ is non-orientable: $U \cap \textup{spt}(\Gamma^1)$ is a $C^2$ surface diffeomorphic to a Möbius strip twisting once around $M$, and its orientation reverses across $M$.
\end{itemize}
A $3$ dimensional area-minimizing current $T$ with $\partial T=2\a{\Gamma^0}+\a{\Gamma^1}$ is said to satisfy the additional topological hypothesis if $\Gamma^0, \Gamma^1$ satisfy the additional topological hypothesis.
\end{assumption}
We show the following theorem, which provides a large family of examples of essential one-sided boundary singularities, by splitting it into two parts.
\begin{theorem}\label{T:generalexample}
Let $\Omega$ be a $C^2$ uniformly convex open set. 
\begin{itemize}
\item Let $M \subset \partial \Omega$ be a $C^2$ simple closed curve.
\item Let $\Gamma^0$ be a $C^2$ surface with $\partial \a{\Gamma^0}=M$ diffeomorphic to a $2$ dimensional disk.
\item Let $\Gamma^1 \subset \partial \Omega$ be an integral $2$ dimensional current. We assume that $M \subset \textup{spt}(\Gamma^1)$ and in a neighborhood $U$ of $M$, $U \cap \textup{spt}(\Gamma^1)$ is a $C^2$ surface diffeomorphic to a Möbius strip which twists once around $M$. Indeed, $U \cap \textup{spt}(\Gamma^1) \setminus M$ will be connected. We assign multiplicity $1$ to $\textup{spt}(\Gamma^1)$ in $U$. We suppose that $\Gamma^1$ is oriented such that $\partial \a{\Gamma^1}=-2\a{M}$. The surface $U \cap\Gamma^1$ is a smooth surface whose orientation switches across $M$. 
\item Further assume $\Gamma^0$ and $\Gamma^1$ meet perpendicularly at $M$ and $\Gamma^0 \cap \Gamma^1=M.$
\end{itemize}
Let $T$ be an area-minimizing current with $\partial T=2\a{\Gamma^0}+\a{\Gamma^1}$. Then $T$ is regular in a neighborhood of $M$ and has an essential boundary singularity at some point of $\Gamma^0 \setminus M$. 
\end{theorem}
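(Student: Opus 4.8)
The plan is to establish the two assertions of the statement separately: first that $T$ is regular in a neighborhood of $M$, and then that $T$ has an essential one-sided boundary singularity at some point of $\Gamma^0\setminus M$; uniform convexity of $\Omega$ will be used throughout in the form $\textup{spt}(T)\subset\overline\Omega$, which serves as a barrier.

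\textbf{Regularity near $M$.} In a tubular neighborhood of $M$ the support of $\Gamma^1$ is a Möbius strip whose core is $M$; cutting it along $M$ gives two smoothly embedded sheets $\Gamma_1^1,\Gamma_2^1$, each of multiplicity $1$ with $\partial\a{\Gamma_i^1}=-\a{M}$, while $\Gamma^0$ is a single sheet of multiplicity $2$. Since $\Gamma^0\perp\Gamma^1$ along $M$ by hypothesis and $\Gamma_1^1,\Gamma_2^1$ meet $M$ transversally (from the two sides of the embedded strip), $T$ near $M$ realizes the cornered-boundary situation of Assumption~\ref{a:corneredboundary} with $N^0=1$, $N^1=2$, $Q_1^0=2$, $Q_1^1=Q_2^1=1$; Lemma~\ref{l:diffeo} supplies the good-corner diffeomorphisms, so Assumption~\ref{a:corneredboundarymonotonicity} holds. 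To invoke the multiplicity-$1$ part of Theorem~\ref{t:decomposition} I must check $\Theta(T,p)=Q/4=\tfrac12$ for every $p\in M$, which by Lemma~\ref{l:densitybound} is equivalent to the tangent cone at $p$ being a cornered open book. Fixing $p\in M$, any blow-up cone $C$ of $T$ at $p$ satisfies $\partial C=2\a{T_p\Gamma^0}+\a{T_p\Gamma_1^1}+\a{T_p\Gamma_2^1}$ with the half-planes arranged exactly as in Theorem~\ref{thm:corneredconvex}, and $\textup{spt}(C)$ lies in the half-space bounded by $T_p\partial\Omega$, with $\partial C$ contained in that hyperplane — precisely the barrier input used in the proof of Theorem~\ref{thm:corneredconvex}. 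Hence $C$ is the cornered open book of Figure~\ref{img:nicecorneredopenbook}, so $\Theta(T,p)=\tfrac12$; Lemma~\ref{l:uniquetangent} then gives a unique tangent cone, and Theorem~\ref{t:decomposition} yields $\rho>0$ with $T\res\bB_\rho(p)=\a{\Sigma_1}+\a{\Sigma_2}$, $\partial\a{\Sigma_i}=\a{\Gamma^0}-\a{\Gamma_i^1}$, each $\Sigma_i$ smooth away from $M$ and attaching smoothly to $\Gamma^0$ and $\Gamma_i^1$. Covering $M$ by finitely many such balls proves regularity of $T$ in a neighborhood of $M$.

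\textbf{The two-valued normal map and a topological branch point.} For $q\in\Gamma^0\setminus M$ the convex barrier traps $T$ on one side of $T_q\partial\Omega$ with boundary $2\a{T_q\Gamma^0}$ there, so the tangent cone at $q$ (unique by the one-sided uniqueness theory of \cite{ian2024uniqueness} and Lemma~\ref{l:uniquetangent}) is the flat plane $2\a{H}$ and $\Theta(T,q)=1=Q/2$; thus the two-valued normal map $\eta\colon\Gamma^0\to\mathcal A_2(N\Gamma^0)$ of Definition~\ref{d:multivaluednormalmap} is defined on all of $\Gamma^0$ — including at possible essential singularities, since it records only first-order data — and by Corollary~\ref{c:wholeboundaryholder} it is Hölder continuous on $\Gamma^0\cup M$. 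Along $M$, the decomposition from the first part identifies $\eta$ with the two-valued section of conormals $\a{\nu_{\Sigma_1}}+\a{\nu_{\Sigma_2}}$; since $\textup{spt}(\Gamma^1)$ is a Möbius strip near $M$, going once around the circle $M$ interchanges the two sheets, so $\eta|_M$ has nontrivial $\mathbb Z/2$ monodromy and is not decomposable into two continuous single-valued sections. As $\Gamma^0$ is a disk, hence simply connected, a continuous two-valued map on $\Gamma^0$ that were locally decomposable everywhere would split globally as $\a{f_1}+\a{f_2}$ and so have decomposable (trivial-monodromy) boundary trace; this contradicts the previous sentence, so (Proposition~\ref{p:essentialsingularity}) there is a point $p$ at which $\eta$ is not locally decomposable, necessarily with $\eta(p)=2\a{v}$. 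Moreover $p\notin M$, since the first part exhibits an explicit local decomposition of $\eta$ along and near $M$.

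\textbf{The point $p$ is an essential singularity.} We have $\Theta(T,p)=1=Q/2$ and a flat tangent cone at $p$, so Definition~\ref{d:essenstial} applies. If $p$ were not essential, there would be $\rho>0$ with $T\res\bB_\rho(p)=\sum_i Q_i\a{\Sigma_i}$, $\sum_i Q_i=2$, the $\Sigma_i$ smooth minimal surfaces with boundary $\Gamma^0\cap\bB_\rho(p)$; each $\Sigma_i$ then attaches smoothly to $\Gamma^0$ with a continuous conormal field $\nu_{\Sigma_i}$, the tangent cone of $T$ at every $q\in\Gamma^0\cap\bB_\rho(p)$ equals $\sum_i Q_i\a{T_q\Sigma_i}$, and hence $\eta=\sum_i Q_i\a{\nu_{\Sigma_i}}$ on $\Gamma^0\cap\bB_\rho(p)$ — a locally decomposable two-valued map near $p$ (whether the partition of $2$ is $1+1$ or $2$). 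This contradicts the choice of $p$, so $p$ is an essential one-sided boundary singularity lying in $\Gamma^0\setminus M$.

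\textbf{Main obstacle.} The substance of the argument is the first part: carrying the corner machinery of Part~I — the cornered monotonicity formula (Corollary~\ref{c:monotonerrorcorner}), the density gap $\Theta\ge Q/4$ with rigidity (Lemma~\ref{l:densitybound}), soft uniqueness of cornered tangent cones (Lemma~\ref{l:uniquetangent}), excess decay (Lemma~\ref{l:excessdecay}), and the convex-barrier classification (Theorem~\ref{thm:corneredconvex}) — all the way to the multiplicity-$1$ decomposition and regularity statement, which is where both $\Gamma^0\perp\Gamma^1$ along $M$ and the convexity of $\Omega$ are indispensable. By contrast, once regularity near $M$ is available and $\eta$ is known to be continuous up to and along $M$ with Möbius monodromy, producing the essential singularity is soft and purely topological.
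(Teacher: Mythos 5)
Your proof follows essentially the same route as the paper's, which splits Theorem~\ref{T:generalexample} into Theorem~\ref{t:densityexample} (density and regularity near $M$ via the cornered-boundary machinery and the convex barrier) and a second topological theorem invoking Proposition~\ref{p:essentialsingularity}. Your structure — verify that $M$ is a good corner, classify the cornered tangent cone along $M$ via Theorem~\ref{thm:corneredconvex} to get $\Theta(T,p)=Q/4=\tfrac12$, apply Theorem~\ref{t:decomposition} for regularity, pass to the Hölder-continuous two-valued normal map via Corollary~\ref{c:wholeboundaryholder}, and then run the Möbius monodromy argument through Proposition~\ref{p:essentialsingularity} — mirrors the paper step by step, and your final deduction that an essential singularity of $\eta$ forces one for $T$ is a more explicit version of the paper's closing contrapositive.

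One misstatement worth correcting: you assert that for $q\in\Gamma^0\setminus M$ the tangent cone ``is the flat plane $2\a{H}$.'' What the convex barrier argument (Lemma~\ref{l:convexbarrierlocal} together with the classification of convex-barrier cones, Theorem 4.8 of \cite{ian2024uniqueness}) actually gives is that the tangent cone is an open book $\a{H_1}+\a{H_2}$ of total multiplicity $Q=2$, so $\Theta(T,q)=Q/2=1$; it is flat only at potential branch points. This does not affect the argument — all you need is $\Theta(T,q)=1$ together with uniqueness of the tangent cone so that $\eta(q)\in\mathcal{A}_2(N_q\Gamma^0)$ is well defined — but it should not be stated as an identity of cones. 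Similarly, uniqueness of the tangent cone at classical boundary points comes from \cite{ian2024uniqueness} rather than from Lemma~\ref{l:uniquetangent}, which is specific to corner points.

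Incidentally, your computation $\Theta(T,p)=1/2$ for $p\in M$ is the correct one; the paper's proof text of Theorem~\ref{t:densityexample} writes $\Theta(T,p)=1$ there, which contradicts the theorem's own statement and is evidently a typo.
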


The first part is the control on the density and the regularity theory:

\begin{theorem}\label{t:densityexample}
Let $T, \Omega, \Gamma^0, \Gamma^1$ be as in the density hypothesis Assumption \ref{a:densityhypothesis}. Then
\begin{enumerate}
\item $\forall p \in \Gamma^0 \setminus M, \; \Theta(T,p)=1$.
\item $\forall p \in \Gamma^1 \cap U \setminus M, \; \Theta(T,p)=1/2$.
\item $\forall p \in M, \; \Theta(T,p)=1/2$ and thus $T$ is regular at $M$.
\end{enumerate}

The tangent cone is unique at all the above points, and the multivalued normal map at $\Gamma^0$  is Hölder continuous up to the boundary $M$ (where it agrees with the cornered open book). Moreover, $T$ is regular in a neighborhood of $M$. \end{theorem}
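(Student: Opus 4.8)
The plan is to treat the three types of boundary points separately and then read off the uniqueness, regularity and Hölder statements from the corner theory.

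For a point $p\in\Gamma^0\setminus M$ I would argue as follows: uniform convexity of $\partial\Omega$ together with $\textup{spt}(\Gamma^0)\subset\partial\Omega$ gives, by the maximum principle, $\textup{spt}(T)\subset\overline\Omega$, so any tangent cone $C$ of $T$ at $p$ is supported in the tangent half-space $\{x:\langle x-p,\nu_\Omega(p)\rangle\le 0\}$ and has boundary $2\a{T_p(\Gamma^0)}$; the density estimate for boundary points on a convex barrier (\cite{ian2024uniqueness}, cf. \cite{DDHM}) forces $\Theta(T,p)=1$, and the uniqueness of the blow-up at density $Q/2$ boundary points (Theorem 10.3 of \cite{ian2024uniqueness}) gives a unique tangent cone; the theory of \cite{ian2024uniqueness} (Corollary 5.12 there) also defines the two-valued normal map $\eta$ on $\Gamma^0\setminus M$ and yields its local Hölder continuity. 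For $p\in\Gamma^1\cap U\setminus M$ the boundary near $p$ is a $C^2$ surface of multiplicity $1$, so the same half-space argument, or directly Allard's boundary regularity theorem \cite{AllB}, gives $\Theta(T,p)=1/2$, a unique tangent cone, and regularity of $T$ at $p$. This settles items (1), (2) and the uniqueness statement away from $M$.

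For $p\in M$ the first step is to record the corner structure of $\partial T$ near $M$: in the tubular neighborhood $U$, $\textup{spt}(\Gamma^1)$ splits into two $C^2$ sheets $\Gamma^1_1,\Gamma^1_2$ of multiplicity $1$ with $\partial\a{\Gamma^1_i}=-\a{M}$ (for the concrete example these are the two local sheets of the two-valued graph, meeting along $M$ at angle $\pi$), while $\Gamma^0$ carries multiplicity $2$ with $\partial\a{\Gamma^0}=\a{M}$, so we are in the setting of Assumption \ref{a:corneredboundarygeneral} with $N^0=1$, $N^1=2$, $Q^0_1=2$, $Q^1_1=Q^1_2=1$, $Q=2$. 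The perpendicular meeting of $\Gamma^0$ and $\Gamma^1$ along $M$ is the orthogonality of Assumption \ref{a:corneredboundary}, and, using the smoothness of $M,\Gamma^0,\Gamma^1$ and a straightening diffeomorphism built as in Lemma \ref{l:diffeo}, every point of $M$ is a good corner point, so Assumption \ref{a:corneredboundarymonotonicity} holds and the monotonicity formula is available along $M$. Lemma \ref{l:densitybound} (equivalently the proposition following Assumption \ref{a:corneredboundary}) gives $\Theta(T,p)\ge Q/4=\tfrac12$. For the reverse inequality I would take a tangent cone $C$ of $T$ at $p$: it is a cornered area minimizing cone with spine $T_p(M)$ and boundary $2\a{T_p(\Gamma^0)}+\a{T_p(\Gamma^1_1)}+\a{T_p(\Gamma^1_2)}$, and since $\textup{spt}(T)\subset\overline\Omega$ while $M$, $\Gamma^0$ and (near $M$) $\Gamma^1$ all lie on the uniformly convex $\partial\Omega$, $C$ is contained in the tangent half-space of $\Omega$ at $p$, which after an orthogonal change of coordinates is the containment \eqref{e:convexbarriercorner} of Theorem \ref{thm:corneredconvex}; for $m>3$ one first runs iterated blow-ups at points of the spine, as in the proof of Lemma \ref{l:densitybound}, to write $C=\RR^{m-2}\times C'$ and applies the statement to the two-dimensional cross-section $C'$. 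Hence $C$ is the cornered open book, $\Theta(C,0)=Q/4$, so $\Theta(T,p)=\tfrac12$, which proves the density part of item (3). With that, every tangent cone of $T$ at a point of $M$ is a cornered open book in $\mathcal{C}_p$, so Lemma \ref{l:uniquetangent} yields a unique tangent cone there (completing the uniqueness claim), and Lemma \ref{l:convergencetangentcone} gives continuous dependence of the cone on the base point and fixes the multiplicities. Since $\Theta(T,\cdot)\equiv Q/4$ on $M$ with $N^0=1$, $N^1=2$, $Q^0_1=2$, $Q^1_1=Q^1_2=1$, Theorem \ref{t:decomposition} applies and provides, for each $p\in M$, a radius $\rho>0$ with $T\res\bB_\rho(p)=\a{\Xi_1}+\a{\Xi_2}$, the $\Xi_i$ minimal surfaces meeting only along $M$, smooth away from $M$, attaching smoothly to $\Gamma^0$ and $\Gamma^1_i$; in particular $p$ is regular, and compactness of $M$ upgrades this to regularity of $T$ in a full neighborhood of $M$. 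The Hölder continuity of $\eta$ on $\Gamma^0$ up to $M$ then follows by combining Theorem \ref{t:cornerholdernormal} (locally at points of $M$) with Corollary 5.12 of \cite{ian2024uniqueness} (at points of $\Gamma^0\setminus M$) exactly as in Corollary \ref{c:wholeboundaryholder}, the value on $M$ being the normal map induced by the unique cornered open book.

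The main obstacle is the upper density bound at $M$. One must check that the density hypothesis genuinely realizes the corner setup of Assumption \ref{a:corneredboundary} — the good corner property, the correct assignment of sheets, orientations and multiplicities, and the perpendicularity — handling with care the possible angle-$\pi$ degeneracy between $\Gamma^1_1$ and $\Gamma^1_2$, which is precisely the configuration Theorem \ref{thm:corneredconvex} is designed to cover; and, crucially, one has to convert the soft inclusion $\textup{spt}(T)\subset\overline\Omega$ into the quantitative half-space/wedge containment \eqref{e:convexbarriercorner} required to run the cone classification, together with the reduction from general $m$ to the two-dimensional cross-section. Once $\Theta(T,\cdot)\equiv Q/4$ on $M$ is in hand, the decomposition, the regularity near $M$, and the Hölder continuity of $\eta$ are consequences of the machinery already developed here and in \cite{ian2024uniqueness}.
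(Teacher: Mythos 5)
Your proposal follows essentially the same route as the paper: at points of $\Gamma^0\setminus M$ and $\Gamma^1\cap U\setminus M$ the wedge containment from the convex barrier (Lemma \ref{l:convexbarrierlocal}) together with the cone classification of \cite{ian2024uniqueness} and Allard pins down the densities and gives unique tangent cones; at $M$ one verifies the cornered-boundary hypotheses, uses the same wedge inclusion to place the blow-up in the hypothesis of Theorem \ref{thm:corneredconvex}, and then invokes Lemma \ref{l:uniquetangent}, Theorem \ref{t:decomposition} and Corollary \ref{c:wholeboundaryholder}. The only place you deviate is in emphasizing the verification of the good-corner property and the reduction to the two-dimensional cross-section, which the paper treats as implicit in its setup; this is a faithful reconstruction of the argument.
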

\begin{remark}
The density $\Theta(T,p)$ is defined differently at interior points $p$, at classical boundary points $p \in \Gamma^0 \setminus M$ or $p \in \Gamma^1 \cap U \setminus M$ and at the corner points $M$. It is upper semi-continuous in each of these pieces. 
\end{remark}
We start with a quick discussion of why we consider the convex barrier property. The convex barrier property that we considered in \cite{ian2024uniqueness} and was considered in \cite{delellis2021allardtype} is the following:
\begin{assumption}[Convex Barrier] \label{convexbarrier2} Let $\Omega \subset \RR^{m+n}$ be a domain such that $\partial \Omega$ is a $C^2$ uniformly convex submanifold of $\RR^{m+n}$. We say that $\sum_{i=1}^N Q_i \a{\Gamma_i}$ has a convex barrier if $Q_i$ are positive integers and $\Gamma_i \subset \partial \Omega$ are disjoint $C^2$ closed oriented submanifolds of $\partial \Omega$. In this setting we consider $T$ an area-minimizing current with $\partial T=\sum_i Q_i \a{\Gamma_i}.$ 
\end{assumption}
In order to get to the local information we define the wedge regions
\begin{definition}
Given an $(m-1)$-dimensional plane $V \subset \mathbb{R}^{m+n}$ we denote by $\mathbf{p}_V$ the orthogonal projection onto $V$. Given additionally a unit vector $\nu$ normal to $V$ and an angle $\vartheta \in\left(0, \frac{\pi}{2}\right)$ we then define the wedge with spine $V$, axis $\nu$ and opening angle $\vartheta$ as the set
$$
W(V, \nu, \vartheta):=\left\{y:\left|y-\mathbf{p}_V(y)-(y \cdot \nu) \nu\right| \leq(\tan \vartheta) y \cdot \nu\right\} .
$$
\end{definition}

Lemma 2.3 of \cite{delellis2021allardtype}, which we cite below, gives us the desired local geometrical information.
\begin{lemma}\label{l:convexbarrierlocal}Let $T$, $\sum_{i=1}^N Q_i\a{\Gamma_i}$, $\Omega$ be as in \ref{convexbarrier}. Then there is a $0<\vartheta<\frac{\pi}{2}$ (which is independent of the point in $\cup_{i=1}^N\Gamma_i$) such that the convex hull of $\cup_{i=1}^N\Gamma$ satisfies
$$
\textup{ConvexHull}(\cup_{i=1}^N\Gamma_i) \subset \bigcap_{q \in \Gamma}\left(q+W\left(T_q \Gamma, \nu(q), \vartheta\right)\right)
$$
\end{lemma}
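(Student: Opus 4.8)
The plan is to fix an arbitrary $q\in\Gamma$ and to take $\nu(q)$ to be the inner unit normal of $\partial\Omega$ at $q$; this is an admissible axis for a wedge with spine $T_q\Gamma$ because $T_q\Gamma\subset T_q(\partial\Omega)$ is orthogonal to $\nu(q)$. Writing $V:=T_q\Gamma$, $\nu:=\nu(q)$, and letting $\Pi:=\mathbf{p}_{(V\oplus\RR\nu)^{\perp}}$ denote the orthogonal projection onto the $n$-dimensional space of directions orthogonal to both $V$ and $\nu$, one checks directly from the definition that $W(V,\nu,\vartheta)=\{y:|\Pi(y)|\le(\tan\vartheta)(y\cdot\nu)\}$, which is a convex cone since both $\Pi$ and $y\mapsto y\cdot\nu$ are linear. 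Hence $q+W(V,\nu,\vartheta)$ is convex, so it suffices to show that every point $q'\in\Gamma$ lies in it; intersecting the resulting inclusions over $q\in\Gamma$ then yields the statement for $\textup{ConvexHull}(\cup_i\Gamma_i)$.

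The two quantitative ingredients to establish, both uniform in $q$, are as follows. First, since $\partial\Omega$ is $C^2$ and uniformly convex and $\overline\Omega$ is bounded, there is a constant $c_1>0$ with $(q'-q)\cdot\nu(q)\ge c_1|q'-q|^2$ for all $q,q'\in\partial\Omega$: for $q'$ near $q$ this follows by writing $\partial\Omega$ as a graph over $T_q(\partial\Omega)$ with vanishing gradient at $q$ and Hessian bounded below by the uniform convexity constant, and for $q'$ at distance bounded below it follows from strict convexity together with the compactness of $\partial\Omega\times\partial\Omega$. Second, since each $\Gamma_i$ is a compact $C^2$ submanifold, there is a constant $C_2$ with $\dist(q',q+T_q\Gamma)\le C_2|q'-q|^2$ for all $q,q'\in\Gamma$: for $q,q'$ on the same component this is the second order contact of $\Gamma_i$ with its affine tangent plane, made uniform via a uniform graph radius and $C^2$ bound; for $q,q'$ on distinct components (which are disjoint compact sets) $|q'-q|$ is bounded below, so the inequality is automatic. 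Combining these with the elementary bound $|\Pi(q'-q)|\le|\mathbf{p}_{V^{\perp}}(q'-q)|=\dist(q',q+T_q\Gamma)$ gives $|\Pi(q'-q)|\le(C_2/c_1)\,(q'-q)\cdot\nu(q)$, so $q'\in q+W(V,\nu,\vartheta)$ with $\vartheta:=\arctan(C_2/c_1)<\tfrac{\pi}{2}$, a value independent of $q$ and $q'$.

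The only genuine difficulty is to make the constants $c_1$ and $C_2$ uniform over all $q\in\Gamma$ and all pairs $q,q'$, which is precisely where the uniform convexity of $\partial\Omega$ (rather than mere convexity) and the compactness of $\Gamma$ are used; granting these, the conclusion is immediate from the convexity of the wedge observed above. Since the lemma is used here only as cited from \cite{delellis2021allardtype}, it would also suffice to invoke it directly; the argument above is recorded for completeness.
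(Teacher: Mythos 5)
Your proof is correct. Note that the paper does not actually prove this lemma---it is cited as Lemma 2.3 of \cite{delellis2021allardtype}---so your argument is a self-contained derivation rather than a reproduction of a proof appearing in this paper. The strategy is the expected one and it is complete: you observe that for $V=T_q\Gamma$ and $\nu=\nu(q)$ the inner normal to $\partial\Omega$, one has $W(V,\nu,\vartheta)=\{y:|\Pi(y)|\le(\tan\vartheta)(y\cdot\nu)\}$ with $\Pi=\mathbf{p}_{(V\oplus\RR\nu)^{\perp}}$, which is a convex cone, so it suffices to show $\cup_i\Gamma_i\subset q+W(V,\nu,\vartheta)$ pointwise. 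The two quadratic estimates---$(q'-q)\cdot\nu(q)\ge c_1|q'-q|^2$ from uniform convexity of $\partial\Omega$, and $\dist(q',q+T_q\Gamma)\le C_2|q'-q|^2$ from the $C^2$ graph representation of $\Gamma$---are both correctly made uniform via compactness, with the right case distinction between nearby points, distant points on one component, and distinct components; and the chain $|\Pi(q'-q)|\le|\mathbf{p}_{V^{\perp}}(q'-q)|=\dist(q',q+T_q\Gamma)$ correctly closes the argument with $\vartheta=\arctan(C_2/c_1)\in(0,\pi/2)$ independent of $q$.
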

\begin{proof}[Proof of Theorem \ref{t:densityexample}]
At points of $\Gamma^0 \setminus M$ or $\Gamma^1 \cap U \setminus M$, we apply a version of Lemma 2.3 of \cite{delellis2021allardtype}, which we just cited above Lemma \ref{l:convexbarrierlocal} obtaining: 
At a point $p$ of $\Gamma^0 \setminus M$ the tangent cone $C_p$ must satisfy
\begin{equation*}
\partial C_p=2\a{T_p(\Gamma^0)}
\end{equation*}
and 
\begin{equation*}
C_p \subset \left\{W(T_p \Gamma^0, \nu(p),\vartheta)\right\}.
\end{equation*}
At a point $p$ of $\Gamma^1 \cap U \setminus M$ the tangent cone must satisfy 
\begin{equation*}
\partial C_p=\a{T_p(\Gamma^1)}
\end{equation*}
and 
\begin{equation*}
C_p \subset \left\{W(T_p \Gamma^0, \nu(p),\vartheta)\right\}.
\end{equation*}

Thus for every point $p \in \Gamma^0 \setminus M$ or $p \in \Gamma^1 \cap U \setminus M$ we must have that the tangent cone must be of the convex barrier type. These are the convex barrier cones that we classified in Theorem 4.8 \cite{ian2024uniqueness} for arbitrary multiplicity, and Allard classified them for multiplicity $1$. In the first cases it is an open book and thus $\Theta(T,p)=1$, in the second case it is a single half-plane and thus $\Theta(T,p)=1/2$.

Let $p \in M$, $v_0$, $v_1$ be the normal vectors to $\Gamma^0$ and $\Gamma^1$ respectively. At $M$ the arguments from Lemma 2.3 \cite{delellis2021allardtype}, imply that 
\begin{equation*}
C_p \subset \left\{y:|y-\mathbf{p}_{T_p(M)}(y)-(y \cdot v_0)^{+}v_0 - (y\cdot \nu(p))|  \leq \tan (\theta)(y \cdot \nu(p))\right\}.
\end{equation*}
This implies that $C_p$ is of the type of cones we classified in Theorem \ref{thm:corneredconvex} and thus $C_p$ is a cornered open book. Thus $C_p$ is as in Figure \ref{img:nicecorneredopenbook} and $\Theta(T,p)=1.$
The regularity at $M$ follows from \ref{t:decomposition} and the Hölder continuity of the normal follows from Corollary \ref{c:wholeboundaryHölder}.
\end{proof}

We now turn to the second part: the conclusion.
\begin{theorem}Let $T, \Gamma^0, \Gamma^1$ satisfy the topological Assumption \ref{a:topologicalhypothesis}. Then $T$ has an essential boundary singularity at some point of $\Gamma^0$.
\end{theorem}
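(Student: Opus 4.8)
The plan is to combine the regularity and Hölder-continuity information from Theorem~\ref{t:densityexample} with a topological obstruction argument, exactly in the spirit of the sketch for the linear problem. First I would invoke Theorem~\ref{t:densityexample}: since $T,\Gamma^0,\Gamma^1$ satisfy the topological hypothesis~\ref{a:topologicalhypothesis}, they in particular satisfy the density hypothesis~\ref{a:densityhypothesis}, so $\Theta(T,p)=1$ for every $p\in\Gamma^0\setminus M$, the tangent cone is unique at every such point and along $M$, and the multivalued normal map $\eta:\Gamma^0\to\mathcal{A}_2(N\Gamma^0)$ of Definition~\ref{d:multivaluednormalmap} is Hölder continuous on all of $\Gamma^0\cup M$, with $\eta$ on $M$ given by the (unique) cornered open book, which by Theorem~\ref{thm:corneredconvex} is the one of Figure~\ref{img:nicecorneredopenbook}. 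In particular $T$ is regular in a neighborhood of $M$, which settles the first half of the conclusion.

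Next I would extract the boundary values of $\eta$ along $M$ and identify them with the tangent data of $\Gamma^1$. Since $T$ is regular near $M$ and decomposes there as $\a{\Sigma_1}+\a{\Sigma_2}$ with $\partial\a{\Sigma_i}=\a{\Gamma^0}-\a{\Gamma^1_i}$ (the two local sheets of the Möbius strip), the value of $\eta$ at a point $p\in M$ is the pair of unit normals to $T_p(\Gamma^0)$ determined by $T_p(\Sigma_1)$ and $T_p(\Sigma_2)$, which are precisely $\mathbf{p}_{T_p(\Gamma^0)^\perp}$ of the two tangent half-planes of $\Gamma^1$ at $p$. Because $\Gamma^0$ and $\Gamma^1$ meet perpendicularly along $M$, this is a genuine two-valued section that, as $p$ traverses the simple closed curve $M$, traces out the tangent directions of the Möbius strip $\mathrm{spt}(\Gamma^1)$; non-orientability of that strip means exactly that this two-valued section does \emph{not} decompose into two single-valued continuous sections along $M$. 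This is the topological input.

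Then I would run the obstruction argument of Proposition~\ref{p:essentialsingularity} (the abstract statement behind the linear sketch): $\eta$ is a continuous two-valued map on $\Gamma^0$, which is diffeomorphic to a closed disk, whose restriction to the boundary $M$ is a non-splittable two-valued section; hence $\eta$ cannot be continuously split into two single-valued maps on all of $\Gamma^0$, so there is a point $p_0$ in the interior $\Gamma^0\setminus M$ at which $\eta$ fails to locally decompose — an ``essential singularity'' for $\eta$. Finally I would transfer this to $T$: if $p_0$ were \emph{not} an essential one-sided boundary singularity for $T$, then by Definition~\ref{d:essenstial} there is $\rho>0$ with $T\res\bB_\rho(p_0)=\sum Q_i\a{\Sigma_i}$, $\Sigma_i$ smooth minimal with $\partial\a{\Sigma_i}=\a{\Gamma^0\cap\bB_\rho(p_0)}$ (two sheets, since $\Theta=1=Q/2$ with $Q=2$), and the induced normal map $\eta\res\bB_\rho(p_0)$ would then be $\a{\nu_{\Sigma_1}}+\a{\nu_{\Sigma_2}}$, a splitting into two continuous single-valued maps near $p_0$ — contradicting that $p_0$ is an essential singularity for $\eta$. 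Therefore $p_0$ is an essential one-sided boundary singularity for $T$, lying in $\Gamma^0\setminus M$.

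\textbf{Main obstacle.} The delicate point is the precise identification, along $M$, of the boundary values of $\eta$ (defined via tangent cones to $T$) with the tangent planes of $\Gamma^1$, together with the verification that non-orientability of $\mathrm{spt}(\Gamma^1)$ really obstructs a continuous single-valued splitting of $\eta|_M$ — i.e.\ making the topological statement of Proposition~\ref{p:essentialsingularity} bite in this geometric setting. This rests on the Hölder continuity of $\eta$ up to $M$ from Corollary~\ref{c:wholeboundaryholder} (so that the boundary trace of $\eta$ is well-defined and genuinely equals the cornered-open-book data) and on the regularity near $M$ from Theorem~\ref{t:decomposition} (so that near $M$ the two ``sheets'' of $\eta$ are honestly the normals of the two smooth pieces $\Sigma_i$ attached to the two halves of the Möbius strip); once these are in hand the monodromy of the two-valued section around $M$ is exactly the monodromy of the Möbius band, which is nontrivial.
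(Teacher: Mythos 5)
Your argument follows the paper's proof essentially step for step: parameterize $\Gamma^0$ by the closed disk, use the Hölder continuity of the multivalued normal map $\eta$ up to $M$ (from Corollary~\ref{c:wholeboundaryholder} / Theorem~\ref{t:densityexample}), identify $\eta|_M$ with the non-orientable two-valued tangent data of the Möbius strip, invoke Proposition~\ref{p:essentialsingularity} to produce an essential singularity of $\eta$ at an interior point, and then observe that a removable boundary singularity of $T$ would yield a local continuous single-valued splitting of $\eta$, a contradiction. You spell out the final transfer step a bit more explicitly than the paper does (and your parenthetical ``two sheets'' is not quite forced by $\Theta=Q/2$ — the decomposition in Definition~\ref{d:essenstial} could be a single sheet with multiplicity $2$ — but this does not affect the contradiction); aside from that the reasoning matches.
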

\begin{proof}[Proof of \ref{T:exampledetail}]
Since $\Gamma^0$ is diffeomorphic to a disk with boundary $M$ then there exist $\varphi: \left \{z \in \RR^2: |z| \leq 1\right\} \rightarrow \RR^{m+n}$ which is a smooth parameterization of $\Gamma^0$ and $\varphi(\mathbb{S}^1)=M.$

Then we can define the normal as a function of the disk as 
\begin{equation*}
\eta \circ \varphi: \left\{z \in \RR^2: |z| \leq 1\right\} \rightarrow \mathcal{A}_2(N\Gamma^0). 
\end{equation*}
We must have that at $M$, $\eta(p)=\a{v_1^p}+\a{-v_1^p}$ where $v_1^p$ is one of the normals of $\Gamma^1$ at $M$. Since $\Gamma^1$ is a Möbius strip then the graph of $\eta \circ \varphi \res \mathbb{S}^1$ induces a single continuous connected curve.

Since the normal bundle of $\Gamma^0$ is trivial, up to a diffeomorphism it agrees with the normal bundle of 
\begin{equation*}
    \left\{(x,y) \in \RR^2 \times \RR^n: |x|\leq 1, y=0 \right\} \subset \RR^{2+n}.
\end{equation*}After changing the image space by a diffeomorphism we conclude the proof with a classical elementary observation, which we collect in the following proposition:
\begin{proposition}\label{p:essentialsingularity}Let $f:\left\{z \in \CC: |z| \leq 1\right\} \rightarrow A_2(\RR^n)$ be a continuous multivalued function such that $f \res S^1$ is a continuous connected curve of multiplicity $1$. Then $f$ has an essential singularity.
\end{proposition}
\begin{proof}
This is a fairly classical consequence of the theory of covers in algebraic topology and that the boundary loop is non trivial.

For the reader’s convenience, we sketch the argument. Suppose, for contradiction, that $f$ has no interior essential singularities. Then, at every point $p$, $f$ admits a continuous local selection into two single-valued branches $f_1$ and $f_2$. This remains true at boundary points by continuity, since there are no double points at the boundary. By compactness, we can cover the closed disk with finitely many open sets over which such selections exist. If a continuous selection exists for two overlapping open sets $U$ and $V$ then they must agree on $U \cap V$, and thus we can extend the selection to  $U \cup V$. Iterating this, we obtain a global continuous selection on the whole disk. But this leads to a contradiction: the trace of $f$ on the boundary is a connected loop of multiplicity 1, which cannot be written as the disjoint union of two continuous branches. Therefore, $f$ must have at least one interior essential singularity.
\end{proof}

The above proposition implies that $f$ must have an essential singularity, which in turn forces $T$ to have an essential boundary singularity at some interior point of $\Gamma^0$. Otherwise, $f$ would have no interior essential singularities, which contradicts the proposition. 
\end{proof}

\section{Essential singularities for the linear problem}In the introduction, we sketched the validity of a general result for Dir-minimizing functions. We point their validity in this section without giving a proof, since the main interest of this work is the area-minimizing setting and the proof of the result for the linear problem is analogous to those for the nonlinear case.

\begin{theorem}Let $\Omega$ be a compact domain in $\RR^3$ which is $C^2$-diffeomorphic to a cylinder. Thus $\partial \Omega$ consists of three two dimensional surfaces, $D_0, D_1, C$, where  $D_0$ and $D_1$ are disks and $C$ is a cylinder which connects $\partial D_0$ with $\partial D_1$. Let $u \in W^{1,2}(\overline{\Omega},A_2(\RR^n))$. 
Assume that:
\begin{itemize}
\item $C$ meets $D_0$ orthogonally.
\item The function $u \res C$ is $C^2$ in a neighborhood of $\partial D_0$, and it is constantly equal to $2\a{0}$ on $D_0.$
\item Assume that the normal derivative of $u \res C$ to $\partial D_0$ is a continuous connected curve of multiplicity $1$.
\end{itemize}
Then $u$ has an essential boundary singularity at an interior point of the $2$-disk $D_0$.
\end{theorem}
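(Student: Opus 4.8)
The plan is to repeat, in the linear category, the proof of Theorem~\ref{T:generalexample}, replacing every geometric ingredient by its counterpart for Dir-minimizing multivalued functions; as throughout this section we take $u$ to be Dir-minimizing. First I would construct the two-valued normal-derivative map on the relative interior of $D_0$. Since $u=2\a{0}$ on $D_0$ and $D_0\setminus\partial D_0$ consists of classical smooth boundary points, the linear analogue of Theorem~10.3 of \cite{ian2024uniqueness} produces a unique blowup at every $q\in D_0\setminus\partial D_0$, hence a well-defined map $\eta\colon D_0\setminus\partial D_0\to\mathcal{A}_2(\RR^n)$ assigning to $q$ the two first-order coefficients of $u$ in the direction $\nu_0$ normal to $D_0$; by the theory of \cite{ian2024uniqueness}, $\eta$ is locally Hölder continuous there.

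The core of the argument is to extend $\eta$ continuously up to the corner $\partial D_0$ — the linear analogue of Theorems~\ref{t:decomposition} and~\ref{t:cornerholdernormal}. Fix $p\in\partial D_0$. Since $C$ meets $D_0$ orthogonally, near $p$ the domain $\Omega$ is $C^2$-diffeomorphic to a quarter-space with spine $L=\partial D_0$, one face being $D_0$ (where $u=2\a{0}$) and the other being $C$, where $u\res C=\a{g_1}+\a{g_2}$ with $g_1,g_2\in C^2$ and $g_1(p)=g_2(p)=0$ (by continuity of $u$ at $p$). The multiplicity-$1$ hypothesis on the normal derivative of $u\res C$ to $\partial D_0$ means exactly that the two values of that derivative are distinct at every point of $\partial D_0$; hence the branches $g_1,g_2$ are separated near $p$, and the blowup of $u$ at $p$ is a multivalued linear function with two distinct branches — the linear analogue of a cornered open book with multiplicity-$1$ sheets. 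I would then run the linear versions of the monotonicity of the corner frequency (Theorem~\ref{t:monotonfrequency}), the classification of homogeneous corner solutions, and the separation/decomposition argument (Lemma~\ref{l:splitting}) to conclude that $p$ is a \emph{regular} corner point: there is $\rho>0$ with $u\res B_\rho(p)=\a{h_1}+\a{h_2}$, where $h_1,h_2$ are classical harmonic functions, $h_i=0$ on $D_0$, $h_i=g_i$ on $C$, and each $h_i$ extends $C^1$-regularly up to $\overline{D_0}$ near $p$ by classical elliptic boundary regularity (after odd reflection across $D_0$, the compatibility $g_i=0$ on $\partial D_0$ suffices). Patching this with the interior estimate of Corollary~5.12 of \cite{ian2024uniqueness}, as in Corollary~\ref{c:wholeboundaryholder}, then gives a Hölder continuous extension $\eta\colon\overline{D_0}\to\mathcal{A}_2(\RR^n)$, with $\eta(p)=\a{\partial_{\nu_0}h_1(p)}+\a{\partial_{\nu_0}h_2(p)}$ on $\partial D_0$.

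It remains to read off the topology. Along $\partial D_0$ the local sheet labelling of the decoupling $u=\a{h_1}+\a{h_2}$ agrees with the labelling of the two branches of $u\res C$ (the $h_i$ restrict on $C$ to the $g_i$), so $\eta\res\partial D_0$ has the same monodromy as the normal derivative of $u\res C$ to $\partial D_0$; in particular it does not split into two continuous single-valued maps. Composing with a diffeomorphism $D_0\cong\{z\in\CC:|z|\le 1\}$ carrying $\partial D_0$ onto $S^1$, we obtain a continuous map $\{|z|\le 1\}\to\mathcal{A}_2(\RR^n)$ whose restriction to $S^1$ is a continuous connected curve of multiplicity $1$, so Proposition~\ref{p:essentialsingularity} furnishes a point $p_0$ in the relative interior of $D_0$ at which $\eta$ has an essential singularity. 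Finally $p_0$ is an essential boundary singularity for $u$: if $u\res B_\rho(p_0)=\a{u_1}+\a{u_2}$ with $u_1,u_2$ classical harmonic, then $\a{\partial_{\nu_0}u_1}+\a{\partial_{\nu_0}u_2}$ would be a continuous two-valued selection of $\eta$ near $p_0$, contradicting the essential singularity of $\eta$ there.

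The main obstacle is the second paragraph: developing the linear corner regularity theory up to $\partial D_0$, and especially handling the \emph{mixed} boundary data at the corner (identically $2\a{0}$ along $D_0$, only $C^2$ and vanishing solely on $L$ along $C$). One has to verify that subtracting a two-valued $C^2$ extension of $u\res C$ genuinely reduces matters to the zero-data quarter-space problem of Assumption~\ref{a:linearproblemquarterspace} without spoiling Dir-minimality or frequency monotonicity — the same error-term bookkeeping that, in the first part of this paper, accompanies the passage from cornered cones to perturbed cornered boundaries — and that the decoupling at corner points, together with its classical regularity, holds uniformly along $\partial D_0$, so that the resulting extension of $\eta$ is truly continuous there.
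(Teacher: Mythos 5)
Your proposal is correct and follows the strategy the paper intends: the paper gives no written proof of this theorem, stating only that it is ``analogous to those for the nonlinear,'' and your outline is exactly the linearization of the proof of Theorem~\ref{T:generalexample} (unique normal-derivative blowup on the open disk via Theorem~10.3 of \cite{ian2024uniqueness}, H\"older extension of the two-valued normal map up to the corner via the linear analogues of Theorems~\ref{t:decomposition} and \ref{t:cornerholdernormal}, then Proposition~\ref{p:essentialsingularity}), including the correct addition of the implicit Dir-minimizing hypothesis, without which the statement is not meaningful. You also correctly isolate the genuine technical content that the paper elides: handling the mixed boundary data at the corner — identically $2\a{0}$ along $D_0$, merely $C^2$ and vanishing only on $\partial D_0$ along $C$ — which must be reduced to the zero-data quarter-space model of Assumption~\ref{a:linearproblemquarterspace} by subtracting a two-valued $C^2$ extension of $u\res C$, in parallel with the way the nonlinear cornered-boundary theory of Part~1 perturbs away from the flat cone.
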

\begin{corollary}\label{c:singularlinearproblem}There exists $u: W^{1,2}(\RR^2 \times \RR^+) \rightarrow \mathcal{A}_2(\RR^2)$ which is $I>1$ homogeneous Dir-minimizer.
\end{corollary}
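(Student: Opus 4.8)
\emph{Plan and Step 1 (extracting a homogeneous minimizer).} I would obtain the function as a blow-up of the Dir-minimizer provided by the preceding theorem. Take a cylinder $\overline\Omega\subset\RR^3$ with faces $D_0,D_1,C$ as in that theorem, $C$ meeting $D_0$ orthogonally, and prescribe on $\partial\Omega$ the boundary value $2\a{0}$ on $D_0$ together with a $C^2$ two-valued map on $C$ whose normal derivative along $\partial D_0$ is a connected multiplicity-$1$ curve (this is built exactly as the Möbius-type data of the construction above, with $n=2$); let $u\in W^{1,2}(\overline\Omega,\mathcal A_2(\RR^2))$ be the Dir-minimizer with that boundary value. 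By the preceding theorem $u$ has an essential one-sided singularity (the linear analogue of Definition \ref{d:essenstial}) at an interior point $p$ of the disc $D_0$. Since $p\in\mathrm{int}(D_0)\subset\partial\Omega$ and $C\perp D_0$, a $C^2$ diffeomorphism straightening $\partial\Omega$ near $p$ turns a neighbourhood into the half-space $\RR^2\times\RR^+$, with $u$ vanishing on $\RR^2\times\{0\}$. Because $u$ is Dir-minimizing and singular at $p$ we have $\mathrm{Dir}(u,\bB_r(p))>0$ for small $r$, so by the one-sided version of Theorem \ref{t:monotonfrequency} (proved in \cite{ian2024uniqueness}, where the one-sided blow-up at a boundary point is also shown to be unique) the frequency $I:=I(p)=\lim_{r\to0}\tfrac{rD(p,r)}{H(p,r)}$ exists in $(0,\infty)$, and the rescalings $u_{p,\lambda}(x):=\lambda^{-I}u(p+\lambda x)$ converge, along a subsequence, strongly in $W^{1,2}_{\mathrm{loc}}$ to a Dir-minimizing $g\colon\RR^2\times\RR^+\to\mathcal A_2(\RR^2)$ which is $I$-homogeneous, equals $2\a{0}$ on $\RR^2\times\{0\}$, and (after normalising) has unit Dirichlet energy on $\bB_1\cap(\RR^2\times\RR^+)$. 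This $g$ is the candidate.

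\emph{Step 2 ($g$ is singular).} The key point is that the essential singularity is inherited by $g$, i.e.\ $g$ is not of the form $\a{g_1}+\a{g_2}$ with $g_1,g_2\colon\RR^2\times\RR^+\to\RR^2$ harmonic; I would prove this through the multivalued normal map. By the construction underlying the preceding theorem, $p$ is a point at which the normal-derivative map $\eta$ — which, by the linear analogue of Corollary \ref{c:wholeboundaryholder}, is continuous on $D_0$ — has a topologically non-trivial, multiplicity-$1$ singularity, and hence is non-decomposable in every neighbourhood of $p$. Its blow-up $\eta_\infty$ at $p$ is $(I-1)$-homogeneous, and since the monodromy of a continuous two-valued map around a loop is stable under $C^0$-convergence on annuli, $\eta_\infty$ is again non-decomposable. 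On the other hand $\eta_\infty$ is the boundary normal derivative of $g$: the normal derivatives of the $u_{p,\lambda}$ are the rescalings $\lambda^{1-I}\eta(p+\lambda\,\cdot)$, which converge to $\partial_{x_3}g|_{x_3=0}$ on the (dense) boundary-regular set. A decomposition $g=\a{g_1}+\a{g_2}$ would force the decomposition $\a{\partial_{x_3}g_1|_{x_3=0}}+\a{\partial_{x_3}g_2|_{x_3=0}}$ of $\eta_\infty$, a contradiction; in particular $g$ is not a double sheet $2\a{h}$.

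\emph{Step 3 ($I>1$).} Write $g=|x|^{I}\varphi$ on the cone, with $\varphi\colon\mathbb S^{2}_+\to\mathcal A_2(\RR^2)$ and $\varphi\equiv 2\a{0}$ on $\partial\mathbb S^{2}_+$. Homogeneity together with the definition of the frequency gives the identity $\int_{\mathbb S^{2}_+}|\nabla_{\mathbb S}\varphi|^2=I(I+1)\int_{\mathbb S^{2}_+}|\varphi|^2$. Since $\varphi$ vanishes on $\partial\mathbb S^{2}_+$, the first-eigenvalue inequality gives $\int_{\mathbb S^{2}_+}|\nabla_{\mathbb S}\varphi|^2\ge\lambda_1(\mathbb S^{2}_+)\int_{\mathbb S^{2}_+}|\varphi|^2=2\int_{\mathbb S^{2}_+}|\varphi|^2$, hence $I(I+1)\ge 2$, i.e.\ $I\ge 1$. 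Equality would force $\varphi$ to be a first Dirichlet eigenfunction of $\mathbb S^{2}_+$ on each sheet, i.e.\ $g=\a{v_1x_3}+\a{v_2x_3}$ for some $v_1,v_2\in\RR^2$, which is decomposable and contradicts Step 2. Therefore $I>1$, and $g$ is the required $I>1$-homogeneous singular Dir-minimizer on $\RR^2\times\RR^+$.

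\emph{Main obstacle.} The only non-formal step is Step 2 — that the topological obstruction producing the singularity survives the blow-up, in particular that $g$ cannot degenerate to a double sheet $2\a{h}$ (the very regime in which the one-sided decomposition theorem of \cite{ianreinaldo2024regularity} fails). This is exactly where one uses the continuity of $\eta$ up to relative interior points and the scale-stability of its monodromy; Steps 1 and 3 use only soft properties of the one-sided linear problem already available from \cite{ian2024uniqueness} and an elementary hemisphere-eigenvalue computation. The whole argument is the linear mirror of the area-minimizing proof of the previous section, which is why only the statement is recorded here.
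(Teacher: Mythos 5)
The overall plan (blow up $u$ at the essential boundary singularity $p$ produced by the preceding theorem, and show the frequency $I(p)$ exceeds $1$) matches the paper. Step~1 and the eigenvalue computation in Step~3 are fine. The gap is in Step~2, and it is a genuine one.

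Your Step~2 rests on the assertion that ``the monodromy of a continuous two-valued map around a loop is stable under $C^0$-convergence on annuli,'' and hence that the blow-up $\eta_\infty$ of the normal map inherits the non-trivial monodromy of $\eta$ near $p$. This fails precisely at the degenerate regime you need to exclude: a sequence of two-valued maps with non-trivial monodromy on a fixed annulus can converge in $C^0$ (and in $L^2$) to a double sheet $2\a{h}$, which has trivial monodromy. Concretely, at an essential singularity one has $\eta(p)=2\a{0}$ by continuity, so $\eta(p+\lambda\,\cdot)\to 2\a{0}$ uniformly on annuli; the scaled maps $\lambda^{1-I}\,\eta(p+\lambda\,\cdot)$ only retain the topological obstruction if the scaling exponent $I-1$ exactly matches the vanishing rate of the ``splitting part'' of $\eta$ near $p$, which need not coincide with the frequency of $u$ at $p$ (the monodromy could live at a strictly lower order than the leading homogeneity). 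Moreover, strong $W^{1,2}_{\mathrm{loc}}$ convergence of $u_{p,\lambda}\to g$ does not by itself give convergence of the boundary normal derivatives; that requires the Hölder/excess-decay machinery and is exactly what must be re-established, not assumed. Your ``Main obstacle'' paragraph flags the issue but does not close it.

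The paper sidesteps this entirely. Instead of arguing that the blow-up $g$ is itself non-decomposable, it argues about $u$ directly: if $I(p)=1$, then by the uniqueness of the one-sided linear blow-up at frequency $1$ (Section~10 of \cite{ian2024uniqueness}) one gets decay and hence local decomposition of $u$ near $p$ into two classical harmonic functions; this contradicts the essential singularity of $u$ at $p$. Since $I\ge 1$ always holds for the one-sided linear problem, the conclusion $I>1$ follows at once. This route converts the blow-up information back into a regularity statement about $u$ near $p$ — which is the natural way to exploit ``essential singularity,'' since that notion is a statement about $u$ on a neighborhood, not about any particular blow-up. Your Step~3 could replace the $I\ge1$ input, but you still need the paper's regularity step (or an honest proof that the blow-up does not collapse to $2\a{h}$) to rule out $I=1$; the monodromy-stability shortcut does not do it.
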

\begin{proof}[Proof of Corollary \ref{c:singularlinearproblem}]
We consider $u$ as in the introduction or $u$ as above with $\Omega$ the unit cylinder. Then, there exists a point $p$ with an essential boundary singularity of $u$. If the frequency at $p$ is $1$, then, by Theorem 10.3 \cite{ian2024uniqueness} there is a unique linear blowup and thus a neighborhood of $p$, $u$ can be separated as two classical harmonic functions. This is a contradiction since at $p$, $u$ has an essential singularity.

Thus, the tangent map at $p$ is an $I>1$ homogeneous Dir-minimizer in the half-space which allows us to conclude.
\end{proof}
\bibliographystyle{alpha}
\addtocontents{toc}{\protect\enlargethispage{\baselineskip}}
\bibliography{biblio}
\end{document}